\documentclass[12pt]{article}

\pdfoutput=1

\usepackage{amsmath, amssymb, amsthm, enumerate, fullpage, bm}
\usepackage{verbatim, enumitem, bbm, etoolbox}
\usepackage{latexsym}

\apptocmd{\lim}{\limits}{}{}

\theoremstyle{definition}
\newtheorem{thm}{Theorem}[section]
\newtheorem{theorem}[thm]{Theorem}

\newtheorem{conjecture}[thm]{Conjecture}
\newtheorem{lemma}[thm]{Lemma}

\numberwithin{subcase}{case}

\theoremstyle{definition}
\newtheorem{definition}[thm]{Definition}
\newtheorem{corollary}[thm]{Corollary}
\newtheorem{remark}[thm]{Remark}
\newtheorem{example}[thm]{Example}
\newtheorem{question}[thm]{Question}

\def\forkindep{\mathrel{\raise0.2ex\hbox{\ooalign{\hidewidth$\vert$\hidewidth\cr\raise-0.9ex\hbox{$\smile$}}}}}


\def\Ind{\setbox0=\hbox{$x$}\kern\wd0\hbox to 0pt{\hss$\mid$\hss}
	\lower.9\ht0\hbox to 0pt{\hss$\smile$\hss}\kern\wd0}

\def\Notind{\setbox0=\hbox{$x$}\kern\wd0\hbox to 0pt{\mathchardef
		\nn=12854\hss$\nn$\kern1.4\wd0\hss}\hbox to 0pt{\hss$\mid$\hss}\lower.9\ht0
	\hbox to 0pt{\hss$\smile$\hss}\kern\wd0}

\def\phi{\varphi}

\def\<{\langle}
\def\>{\rangle}

\makeatletter
\def\blfootnote{\xdef\@thefnmark{}\@footnotetext}
\makeatother

\begin{document}	

	\bibliographystyle{plain}
	
	\author{Danielle Ulrich\!\!\
	\thanks{Partially supported
by Laskowski's NSF grants DMS-1308546 and DMS-2154101.}\\
Department of Mathematics\\University of Maryland, College Park}
	\title{Amalgamation and Keisler's Order}
	\date{\today} 
	
	\blfootnote{2010 \emph{Mathematics Subject Classification:} 03C55.}
	\blfootnote{\emph{Key Words and Phrase:} Uncountable model theory, Keisler's order, ultraproducts.}
	
	\maketitle
	
	
\begin{abstract}
Malliaris and Shelah famously proved that Keisler's order $\trianglelefteq$ has infinitely many classes. In more detail, for each $2 \leq k < n < \omega$, let $T_{n, k}$ be the theory of the random $k$-ary $n$-clique free hypergraph. Malliaris and Shelah show that whenever $k+1 < k'$, then $T_{k+1, k} \not \trianglelefteq T_{k'+1, k'}$. However, their arguments do not separate $T_{k+1, k}$ from $T_{k+2, k+1}$, and the model-theoretic properties detected by their ultrafilters are difficult to evaluate in practice.

We uniformize the relevant ultrafilter constructions and obtain sharper model-theoretic bounds. As a sample application, we prove the following: suppose $3 \leq k < \aleph_0$, and $T$ is a countable low theory. Suppose that every independent system $(M_s: s \subsetneq k)$ of countable models of $T$ can be independently amalgamated. Then $T_{k, k-1} \not \trianglelefteq T$. In particular, for all $k < k'$, $T_{k+1, k} \not \trianglelefteq T_{k'+1, k'}$.
\end{abstract}

\section{Introduction}

Keisler proved the following fundamental theorem in \cite{Keisler}:

\begin{theorem}\label{KeislerOrigSecond}
	Suppose $T$ is a complete countable theory, and $\mathcal{U}$ is a $\lambda$-regular ultrafilter on $\mathcal{P}(\lambda)$, and $M_0, M_1 \models T$. Then $M_0^\lambda/\mathcal{U}$ is $\lambda^+$-saturated if and only if $M_1^\lambda/\mathcal{U}$ is. 
\end{theorem}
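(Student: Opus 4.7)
By symmetry, it suffices to prove: if $N_0 := M_0^\lambda/\mathcal{U}$ is $\lambda^+$-saturated, then so is $N_1 := M_1^\lambda/\mathcal{U}$. The strategy is to argue that $\lambda^+$-saturation of a $\lambda$-regular ultrapower is determined by $\Th(M)$ rather than by $M$ itself. Concretely, I will take a type $q$ over $N_1$, transfer it coordinatewise to a type $p$ over $N_0$ using the elementary equivalence $M_0 \equiv M_1$, realize $p$ in $N_0$ by hypothesis, and pull the realization back to one of $q$.

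The key enabling ingredient is $\lambda$-regularity, which produces a family $\{X_\alpha : \alpha < \lambda\} \subseteq \mathcal{U}$ where each $i < \lambda$ lies in only finitely many $X_\alpha$. Given $q(x) = \{\psi_\alpha(x, \bar{b}_\alpha) : \alpha < \lambda\}$ over a set of size $\leq \lambda$ in $N_1$, with $\bar{b}_\alpha = [\bar{b}_\alpha^i]_\mathcal{U}$, I set $F_i = \{\alpha : i \in X_\alpha\}$ (finite) and record the first-order \emph{consistency pattern} $\sigma_i$ of $\{\psi_\alpha(x, \bar{b}_\alpha^i) : \alpha \in F_i\}$ in $M_1$, i.e., which subconjunctions are satisfiable. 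Using $M_0 \equiv M_1$, I pick $\bar{a}_\alpha^i \in M_0$ realizing $\sigma_i$ at each $i$, set $\bar{a}_\alpha := [\bar{a}_\alpha^i]_\mathcal{U}$, and let $p(x) := \{\psi_\alpha(x, \bar{a}_\alpha) : \alpha < \lambda\}$. A \L{}o\'s computation, using that $i \in \bigcap_{\alpha \in F} X_\alpha$ forces $F \subseteq F_i$, shows that consistency of $q$ in $N_1$ transfers to consistency of $p$ in $N_0$.

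By the hypothesized saturation of $N_0$, pick $f : \lambda \to M_0$ with $[f]_\mathcal{U}$ realizing $p$. Then for each $i$ the finitely many formulas $\{\psi_\alpha(x, \bar{a}_\alpha^i) : \alpha \in F_i \text{ and } M_0 \models \psi_\alpha(f(i), \bar{a}_\alpha^i)\}$ have $f(i)$ as a common witness in $M_0$, hence --- by the recorded $\sigma_i$ --- also a common witness $g(i) \in M_1$; a final \L{}o\'s check gives that $[g]_\mathcal{U}$ realizes $q$. The step I expect to be the most delicate is the transfer of consistency patterns: the $\sigma_i$ must be chosen coordinatewise \emph{independently} and must encode enough information that both the forward \L{}o\'s computation and the backward construction of $g$ go through. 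Regularity is what makes this work, because it converts a single $\lambda$-sized global coherence problem into $\lambda$ many independent finite ones, each of which is solvable by elementary equivalence alone.
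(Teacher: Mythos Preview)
The paper does not prove this theorem; it is quoted as Keisler's classical result from \cite{Keisler} and used only as motivation for the definition of Keisler's order. So there is no in-paper proof to compare against.

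Your argument is correct and is essentially the standard proof. The only point worth making explicit is that your ``consistency pattern'' $\sigma_i$ must record, for \emph{every} subset $S \subseteq F_i$, whether $M_1 \models \exists x \bigwedge_{\alpha \in S} \psi_\alpha(x, \bar b_\alpha^i)$; this is a single first-order sentence in the parameters $(\bar b_\alpha^i : \alpha \in F_i)$, so elementary equivalence lets you realize it in $M_0$. You use the forward direction of the pattern (satisfiable in $M_1$ $\Rightarrow$ satisfiable in $M_0$) to get consistency of $p$, and the backward direction (satisfiable in $M_0$ $\Rightarrow$ satisfiable in $M_1$) to build $g$ from $f$. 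Both halves are present in your sketch, and the {\L}o\'s verifications go through exactly as you say: for the forward step, $i \in \bigcap_{\alpha \in F} X_\alpha$ gives $F \subseteq F_i$; for the backward step, $i \in X_\alpha \cap \{j : M_0 \models \psi_\alpha(f(j), \bar a_\alpha^j)\}$ gives $\alpha \in S_i$, hence $M_1 \models \psi_\alpha(g(i), \bar b_\alpha^i)$.
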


Motivated by this theorem, Keisler investigated the following  pre-ordering $\trianglelefteq$ on complete first-order theories; $\trianglelefteq$ is now called Keisler's order.

\begin{definition}
	Suppose $\mathcal{U}$ is a $\lambda$-regular ultrafilter on $\mathcal{P}(\lambda)$. Then say that $\mathcal{U}$ $\lambda^+$-saturates $T$ if for some or every $M \models T$, $M^\lambda/\mathcal{U}$ is $\lambda^+$-saturated.

	Given complete countable theories $T_0, T_1$, say that $T_0 \trianglelefteq_\lambda T_1$ if whenever $\mathcal{U}$ is a $\lambda$-regular ultrafilter on $\mathcal{P}(\lambda)$, if $\mathcal{U}$ $\lambda^+$-saturates $T_1$ then $\mathcal{U}$ $\lambda^+$-saturates $T_0$. Say that $T_0 \trianglelefteq T_1$ if $T_0 \trianglelefteq_\lambda T_1$ for all $\lambda$.

\end{definition}

We are interested in constructing dividing lines in Keisler's order. To be precise, a dividing line in Keisler's order is a set of complete countable theories which is downward-closed under $\trianglelefteq$. A principal dividing line in Keisler's order is a dividing line induced by a single ultrafilter $\mathcal{U}$; in other words, $\mathbf{T}$ is a principal dividing line if $\mathbf{T}$ is the set of all complete countable theories $\lambda^+$-saturated by some $\lambda$-regular ultrafilter on $\mathcal{P}(\lambda)$. In \cite{Optimals}, Mallairis and Shelah show that if there is a supercompact cardinal, then simplicity is a principal dividing line in Keisler's order. In \cite{InfManyClass}, they use the ultrafilters constructed in \cite{Optimals} to show (in ZFC) that Keisler's order has infinitely many classes, and in \cite{LowDividingLine}, I use the same ultrafilters to show that lowness is a principal dividing line in Keisler's order.\footnote{We use the original definition of Buechler \cite{Buechler}, so in particular low implies simple.} Finally, in \cite{NewSimpleTheory}, Malliaris and Shelah use these ultrafilters to show that Keisler's order is not linear in ZFC, improving earlier consistency results obtained independently by the authors \cite{InterpOrders} and myself \cite{KeislerNotLinear}.

In this paper, we give a uniform treatment of these ultrafilter constructions, and we investigate the model-theoretic properties detected by the ultrafilters of \cite{InfManyClass}. The main focus of our treatment is on generalized amalgamation properties of simple theories. In particular, we study the hypergraph examples $T_{n, k}$ of Hrushovski \cite{Hrush}: for $n > k \geq 2$, $T_{n, k}$ is the random $k$-ary $n$-clique free hypergraph. These were used by Malliaris and Shelah in \cite{InfManyClass} to show that Keisler's order has infinitely many classes, although they subtract $1$ from both indices in $T_{n, k}$. Each $T_{n, 2}$ has $SOP_2$, and thus is maximal in $\trianglelefteq$. In this paper, we will mainly be interested in the case when $k \geq 3$ and $n = k+1$. Malliaris and Shelah prove that for all $k < k'-1$, $T_{k+1, k} \not \trianglelefteq T_{k'+1, k'}$; we are able to show this holds for all $k < k'$, as expected.

We now give a more detailed overview of the paper. To begin, in Section~\ref{Combinatorics}, we refine the combinatorics used by Malliaris and Shelah to get infinitely many classes in Keisler's order \cite{InfManyClass}, and explain how this yields at once that $T_{k+1, k} \trianglelefteq T_{k'+1, k'}$ for all $k < k'$. We do not give full details since that would require importing a substantial amount of technology from \cite{Optimals} \cite{InfManyClass}, and we intend to prove a sharper result by different means.

In Sections \ref{KeislerNewIndSys}, \ref{ColoringSec} and \ref{ConsequencesSec}, we study several notions of generalized amalgamation, which are conjecturally equivalent.

 In Section~\ref{KeislerNewIndSys}, we define independent system of models, and what it means to amalgamate them. Specifically, suppose $T$ is a countable simple theory and $n < \omega$. Let $\mathcal{P}^-(n)$ denote the set of all proper subsets of $n$. Then $T$ has $\mathcal{P}^-(n)$-amalgamation of models if every independent system of models $(M_s: s \in \mathcal{P}^-(n))$ indexed by $\mathcal{P}^-(n)$ has a solution (note $n= \{0, 1, \ldots, n-1\}$).  This is similar to the notion of $n$-simplicity investigated by Kim, Kolesnikov and Tsuboi \cite{GenAmalg}, although we just look at independent systems of models, rather than boundedly closed sets. This is also related to the more technical notion of $<k$-type amalgamation, introduced in \cite{SP} (joint with Shelah). In fact, Theorem~\ref{GenAmalgImpliesTypeAmalg} states that $\mathcal{P}^-(k)$-amalgamation of models implies $<k$-type amalgamation.
%
%
%
%
%
%

In Section~\ref{ColoringSec}, we consider chromatic numbers of the comparability graphs of partial types. In more detail:

\begin{definition}
	Suppose $k \leq \aleph_0$ and $\mathcal{H} \subseteq [X]^{<k}$ is a hypergraph. Then let $\chi(\mathcal{H})$, the chromatic number of $\mathcal{H}$, be the least cardinal $\mu$ such that there is some coloring $c: X \to \mu$ such that for all $\alpha < \mu$, $\mathcal{H} \cap [c^{-1}(\alpha)]^{<k} = \emptyset$.
	
	Suppose $P$ is a partial order; for each $3 \leq k \leq \aleph_0$, let $\chi(P, k)$, the $k$-ary chromatic number of $P$, be the least cardinal $\mu$ such that there is some function $c: P \to \mu$ such that for all $u \in [P]^{<k}$, if $c \restriction_u$ is constant then $u$ has a lower bound in $P$. In other words, $\chi(P, k)$ is the chromatic number of $\mathcal{H} := \{u \in [P]^{<k}: u \mbox{ has no lower bound in } P\}$. 
	
	Suppose $\theta$ is a regular cardinal, $T$ is a countable simple theory, $M \models T$ and $M_0 \preceq M$ is countable. Then let $\Gamma^\theta_{M, M_0}$ be the partial order of all partial types $p(x)$ over $M$ of cardinality less than $\theta$, which do not fork over $M_0$; we order $\Gamma^\theta_{M, M_0}$ by reverse inclusion. $x$ can be a finite tuple here; alternatively, we should pass to the elimination of imaginaries $T^{eq}$.
	
	Suppose $\theta$ is a regular cardinal, $\mu = \mu^{<\theta}$, $\lambda \leq 2^\mu$, and $3 \leq k \leq \aleph_0$. Then say that $T$ has the $(\lambda, \mu, \theta, k)$-coloring property if whenever $M \models T$ with $|M| \leq \lambda$ and whenever $M_0 \preceq M$ is countable, then $\chi(\Gamma^{\theta}_{M, M_0}, k) \leq \mu$.
\end{definition}

We also define the $(\lambda, \mu, \theta, k)$-coloring property for $\lambda > 2^\mu$, but the right way to do this is slightly different.

In Section~\ref{ConsequencesSec}, we first recall some definitions from \cite{InterpOrders2Ulrich}.

\begin{definition}
$\Delta$ is a pattern on $I$ if $\Delta \subseteq [I]^{<\aleph_0}$ is closed under subsets. If $T$ is a complete first order theory, and $\phi(\overline{x}, \overline{y})$ is a formula of $T$, then say that $\phi(\overline{x}, \overline{y})$ admits $\Delta$ if we can choose $M \models T$ and $(\overline{a}_i: i \in I)$ from $M^{|\overline{y}|}$, such that for all $s \in [I]^{<\aleph_0}$, $M \models \exists \overline{x} \bigwedge_{i \in s} \phi(\overline{x}, \overline{a}_i)$ if and only if $s \in \Delta$.

\end{definition}

 In \cite{InterpOrders2Ulrich}, we isolated a particular pattern $\Delta_{k+1, k}$, such that $T_{k+1, k}$ is the $\trianglelefteq$-minimal theory admitting $\Delta_{k+1, k}$.

Corollary~\ref{SyntConsOfKeislerCor} states the following; (B) implies (C) is proved in \cite{SP}. 

\begin{corollary}\label{SyntCons1}
	Suppose $T$ is simple and $3 \leq k \leq \aleph_0$. Then (A) implies (B) implies (C) implies (D):
	
	\begin{itemize}
		\item[(A)] $T$ has $\mathcal{P}^-(k)$-amalgamation of models;
		\item[(B)] $T$ has $<k$-type amalgamation;
		\item[(C)] For every regular uncountable $\theta$, for every $\mu = \mu^{<\theta}$, and for every $\lambda \leq 2^\mu$, $T$ has the $(\lambda, \mu, \theta, k)$-coloring property, and moreover this statement continues to hold in every forcing extension;
		\item[(D)] $T$ does not admit $\Delta_{k'+1, k'}$ for any $k' < k$.
	\end{itemize}
\end{corollary}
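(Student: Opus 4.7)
By Theorem~\ref{GenAmalgImpliesTypeAmalg} we have (A)$\Rightarrow$(B), and (B)$\Rightarrow$(C) is established in \cite{SP}, so only (C)$\Rightarrow$(D) requires proof. I argue the contrapositive: assume $T$ admits $\Delta_{k'+1, k'}$ for some $k' < k$, via a formula $\phi(\bar x, \bar y)$ and a witnessing family of parameters. The plan is to exhibit, in a suitable forcing extension, a regular uncountable $\theta$, a cardinal $\mu = \mu^{<\theta}$, and $\lambda \leq 2^\mu$, together with a model $M \models T$ of size $\leq \lambda$ and a countable $M_0 \preceq M$ such that $\chi(\Gamma^\theta_{M, M_0}, k) > \mu$ --- a direct falsification of (C).

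Admission of $\Delta_{k'+1, k'}$ (as specified in \cite{InterpOrders2Ulrich}) yields, at each instance, a $k'$-tuple of $\phi$-instances whose full conjunction is inconsistent while every proper sub-conjunction is consistent. The construction proceeds in three steps. First, apply a Ramsey--compactness argument to the pattern witness to extract, inside a suitable model $M \models T$ of size $\leq \lambda$, an indiscernible-type family $(\bar a_\alpha : \alpha \in X)$ with $|X| = \lambda$, together with a $k'$-uniform hypergraph $\mathcal{E}$ on $X$ such that for each $e \in \mathcal{E}$ the conjunction $\bigwedge_{\alpha \in e} \phi(\bar x, \bar a_\alpha)$ is inconsistent. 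Second, pass to a forcing extension in which $\chi(\mathcal{E}) > \mu$, using standard combinatorial constructions of high-chromatic-number hypergraphs on index sets of size $\leq 2^\mu$. Third, choose a countable $M_0 \preceq M$ avoiding the critical parameters; simplicity of $T$ then ensures each partial type $p_\alpha(\bar x) := \{\phi(\bar x, \bar a_\alpha)\}$ does not fork over $M_0$ and hence lies in $\Gamma^\theta_{M, M_0}$.

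Given any $\mu$-coloring $c: \{p_\alpha : \alpha \in X\} \to \mu$, the bound $\chi(\mathcal{E}) > \mu$ supplies a monochromatic hyperedge $e \in \mathcal{E}$; the associated set $\{p_\alpha : \alpha \in e\}$ is a monochromatic subset of $\Gamma^\theta_{M, M_0}$ of size $k' < k$ with no lower bound. This contradicts $\chi(\Gamma^\theta_{M, M_0}, k) \leq \mu$ and witnesses failure of the $(\lambda, \mu, \theta, k)$-coloring property in the forcing extension.

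The principal obstacle is the realization step: coordinating admission of the pattern at a single instance with a rich, prescribed hypergraph of inconsistencies. A single pattern witness supplies only one inconsistent configuration; producing a large, structured family of instances whose inconsistencies are concentrated along $\mathcal{E}$ requires either internal amalgamation in $T$ (to combine many independent pattern instances) or a finer Ramsey/indiscernibility argument faithful to the shape of $\Delta_{k'+1, k'}$. The simplicity hypothesis controls the non-forking aspect, and the forcing step is ancillary in delivering the chromatic-number bound once $\mathcal{E}$ is in place.
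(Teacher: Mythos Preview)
Your references for (A)$\Rightarrow$(B) and (B)$\Rightarrow$(C) are fine. For (C)$\Rightarrow$(D), however, there is a genuine gap --- in fact two. First, your order of operations is backwards: you propose to extract a hypergraph $\mathcal{E}$ of inconsistencies inside a model $M$ and \emph{then} pass to a forcing extension where $\chi(\mathcal{E}) > \mu$. But forcing can only add colorings, never destroy them; any $\mu$-coloring of $\mathcal{E}$ in the ground model survives, so you cannot raise $\chi(\mathcal{E})$ by forcing. The hypergraph must be produced \emph{by} the forcing, and the model $M$ built afterwards in the extension. Second, and as you yourself flag, the realization step is not established: a single witness to $T$ admitting $\Delta_{k'+1,k'}$ gives one bad configuration, and your ``Ramsey--compactness'' gesture does not explain how to get a $\lambda$-sized family with inconsistencies concentrated along a prescribed hypergraph. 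Note also that the pattern $\Delta_{k'+1,k'}$ is indexed by $[I]^{k'-1}$, not by a flat set $X$, so the shape of the incompatibility hypergraph is constrained: its vertices are $(k'-1)$-subsets of $I$ and its edges are of the form $[v]^{k'-1}$ for $v$ in some $S \subseteq [I]^{k'}$. Your $k'$-uniform $\mathcal{E}$ on $X$ does not respect this.

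The paper's argument (Theorem~\ref{NonSat2}) resolves both issues at once. One forces with $P = P_{[\lambda]^{k'}\mu\theta}$, which adds a \emph{generic} $k'$-ary relation $R$ on $\lambda$; in the extension, Theorem~\ref{NesitrilRodlLemma} (quoted from \cite{InterpOrders2Ulrich}) is invoked to realize $\Delta_{k'+1,k'}(R)$ in some $M$ with the nonforking requirement over a countable $M_0$. This handles your ``principal obstacle'' by citation, but it is a nontrivial result. The high-chromatic-number step is then not a separate combinatorial input but is proved directly: one pulls a putative coloring $F$ back to names in $\mathbb{V}$, and uses the hypothesis that $\lambda$ is \emph{big enough for $(\mu,\theta,k')$} (Section~\ref{Combinatorics}) to find $v \in [\lambda]^{k'}$ and witnesses $(w_u : u \in [v]^{k'-1})$ whose coloring-values are compatible while $v \notin \operatorname{dom}$ of the relevant condition, so one can force $v \in R$ and obtain an inconsistency. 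This interplay between the genericity of $R$ and the ``big enough'' set-mapping combinatorics is the actual engine of the proof, and it has no counterpart in your sketch.
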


\begin{conjecture}\label{Conj1}
	(A), (B), (C), (D) above are equivalent.
\end{conjecture}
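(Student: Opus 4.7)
The plan is to dispatch the easy implications by reference and focus on (C)$\Rightarrow$(D). Specifically, (A)$\Rightarrow$(B) is exactly Theorem~\ref{GenAmalgImpliesTypeAmalg}, and (B)$\Rightarrow$(C) is the cited result of \cite{SP}; I would simply quote both.

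For the substantive implication (C)$\Rightarrow$(D), I would argue the contrapositive. Suppose $T$ admits $\Delta_{k'+1, k'}$ for some $k'$ with $2 \leq k' < k$, witnessed by a formula $\phi(\overline{x}, \overline{y})$ and parameters $(\overline{a}_i)_{i \in I}$: every $k'$-subset of indices yields a consistent conjunction of instances, while no $(k'+1)$-subset does. Fix $\theta = \aleph_1$, $\mu = 2^{\aleph_0}$, and $\lambda = \mu^+$, so that $\mu = \mu^{<\theta}$ and $\mu < \lambda \leq 2^\mu$; thus the $(\lambda, \mu, \theta, k)$-coloring property of (C) applies. Construct $M \models T$ of size $\lambda$ carrying such a configuration $(\overline{a}_i)_{i < \lambda}$, and choose a countable $M_0 \preceq M$ over which the $\overline{a}_i$ are as generic as possible in the sense of simple theories (a Morley-like sequence realizing the prescribed pattern). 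For each $s \in [\lambda]^{k'}$ define $p_s(\overline{x}) := \{\phi(\overline{x}, \overline{a}_i) : i \in s\}$; by the consistency clause of $\Delta_{k'+1, k'}$ and the genericity of the parameters, $p_s$ is a consistent partial type of size $k' < \theta$ that does not fork over $M_0$, hence $p_s \in \Gamma^\theta_{M, M_0}$.

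For distinct $s, s' \in [\lambda]^{k'}$ we have $|s \cup s'| \geq k'+1$, so $p_s \cup p_{s'}$ is inconsistent; equivalently, the pair $\{p_s, p_{s'}\}$ has no lower bound in $\Gamma^\theta_{M, M_0}$. Because $k \geq 3$, every pair is a $<k$-subset, so restricting the hypergraph $\mathcal{H}$ associated to $\Gamma^\theta_{M, M_0}$ to the family $\{p_s : s \in [\lambda]^{k'}\}$ yields the complete graph on $\lambda$ vertices. Its chromatic number is $\lambda$, which exceeds $\mu$, contradicting the $(\lambda, \mu, \theta, k)$-coloring property.

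The main obstacle is verifying that each $p_s$ actually does not fork over $M_0$. Since $\Delta_{k'+1, k'}$ imposes intricate consistency constraints on the $\overline{a}_i$, one cannot simply take an arbitrary Morley sequence; rather, one should first extract a type over $M_0$ (in finitely many $\overline{y}$-variables) encoding the relevant finite restriction of the pattern, realize a long non-forking sequence inside $M$, and then check that finite conjunctions of $\phi(\overline{x}, \overline{a}_i)$ remain non-forking. This is the delicate step and essentially the only place where simplicity of $T$ is invoked.
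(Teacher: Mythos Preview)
The statement you are asked to prove is a \emph{conjecture}, not a theorem: the paper explicitly leaves the reverse implications open (see Conjecture~\ref{Conj1Later}). Your proposal only addresses the forward chain (A)$\Rightarrow$(B)$\Rightarrow$(C)$\Rightarrow$(D), which is precisely the content of Corollary~\ref{SyntCons1} (equivalently Corollary~\ref{SyntConsOfKeislerCor}), not the conjecture. To establish the conjecture you would need, for instance, (D)$\Rightarrow$(A): that if $T$ does not admit $\Delta_{k'+1,k'}$ for any $k'<k$, then $T$ has $\mathcal{P}^-(k)$-amalgamation of models. You have not touched this direction at all, and the paper offers no proof of it.

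Even restricting to your argument for (C)$\Rightarrow$(D), there are two genuine gaps. First, your description of $\Delta_{k'+1,k'}$ is incorrect: it is \emph{not} the pattern ``every $k'$-subset consistent, no $(k'+1)$-subset consistent.'' By the paper's definition, $\Delta_{k'+1,k'}$ is a pattern on $[I]^{k'-1}$ (so the parameters $\overline{a}_u$ are indexed by $(k'-1)$-subsets $u$), and a finite family $s\subseteq [I]^{k'-1}$ lies in the pattern iff there is no $v\in[I]^{k'}$ with $v\in S_{k'}$ and $[v]^{k'-1}\subseteq s$. Under the correct definition your partial types $p_s$ are not pairwise incompatible, so you do not get a large antichain in $\Gamma^\theta_{M,M_0}$ and the chromatic-number argument collapses. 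Second, condition (C) quantifies over all forcing extensions, and the paper's proof of (C)$\Rightarrow$(D) (Theorem~\ref{NonSat2}) exploits this by passing to a generic extension by $P_{[\lambda]^{k'}\mu\theta}$, where the generic $k'$-ary hypergraph on $\lambda$ is used to build the bad configuration. The paper remarks explicitly (just before Theorem~\ref{SPTheorem}) that the quantification over forcing extensions is necessary. Your attempt to manufacture a failure of the coloring property directly in $\mathbb{V}$ is therefore aiming at something strictly stronger than what the paper proves, and your sketch gives no reason to believe it succeeds.
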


In Section~\ref{Refinements}, we give some refinements of Corollary~\ref{SyntCons1} when $\lambda$ is small. In Section \ref{ForcingSec}, we review complete Boolean algebras and forcing iterations.

In Section~\ref{ReviewSec}, we recall the setup of full Boolean-valued models from \cite{BVModelsUlrich}. In particular, for every ultrafilter $\mathcal{U}$ on a complete Boolean algebra $\mathcal{B}$, and for every complete countable theory $T$, we define what it means for $\mathcal{U}$ to $\lambda^+$-saturate $T$. Keisler's order can be framed as follows: $T_0 \trianglelefteq_\lambda T_1$ if and only if for every complete Boolean algebra $\mathcal{B}$ with the $\lambda^+$-c.c. and for every ultrafilter $\mathcal{U}$ on $\mathcal{B}$, if $\mathcal{U}$ $\lambda^+$-saturates $T_1$, then $\mathcal{U}$ $\lambda^+$-saturates $T_0$; and $T_0 \trianglelefteq T_1$ if and only if $T \trianglelefteq_\lambda T_1$ for all $\lambda$. 

To motivate our strategy for attacking Keisler's order, consider Corollary~\ref{SyntCons1}(C) above. It turns out that the quantification over forcing extensions is necessary. Specifically, in joint work with Shelah \cite{SP}, we prove the following theorem; it is key in our analysis of $\leq_{SP}$-ordering. 

\begin{theorem}\label{SPTheorem}
Suppose $3 \leq k \leq \aleph_0$ and $\theta = \theta^{<\theta} > \aleph_0$, and $\lambda \geq \theta^{+ \omega}$ is a cardinal. Then after passing to a $\theta$-closed, $\theta^+$-c.c. forcing extension, we can arrange that $2^\theta \geq \lambda$, and further:

\begin{itemize}
	\item If $T$ is a countable simple theory with $<k$-type amalgamation, then $T$ has the $(\lambda, \theta, \theta, \aleph_0)$-coloring property;
	\item If $T$ admits $\Delta_{k'+1, k'}$ for some $k' < k$, then $T$ fails the $(\lambda, \theta, \theta, k)$-coloring property (and hence also the $(\lambda, \theta, \theta, \aleph_0)$-coloring property).
\end{itemize}
\end{theorem}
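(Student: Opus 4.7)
The plan is to prove both clauses via a single forcing. A natural candidate for $\mathbb{P}$ is the forcing of partial functions $c : \lambda \to \theta$ of size less than $\theta$, ordered by reverse inclusion. Since $\theta = \theta^{<\theta}$, $\mathbb{P}$ is $\theta$-closed, and a standard $\Delta$-system argument yields the $\theta^+$-c.c.; the generic encodes $\lambda$-many new subsets of $\theta$, so $2^\theta \geq \lambda$ in the extension. For each relevant pair $(M, M_0)$ with $|M| \leq \lambda$, the generic is converted into a coloring of $\Gamma^\theta_{M, M_0}$ via any fixed injection $\Gamma^\theta_{M, M_0} \hookrightarrow \lambda$ (using $|\Gamma^\theta_{M, M_0}| \leq \lambda^{<\theta} = \lambda$). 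Both clauses then become statements about whether this generic coloring witnesses, or is forbidden from witnessing, the appropriate coloring property.

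For the upper bound, fix a simple $T$ with $<k$-type amalgamation, $M \models T$ of size at most $\lambda$, a countable $M_0 \preceq M$, and a candidate countable sequence $(p_n : n < \omega)$ in $\Gamma^\theta_{M, M_0}$ with no common lower bound (i.e.\ $\bigcup_n p_n$ forks over $M_0$). The heart of the argument is the reduction: \emph{every $\aleph_0$-obstruction projects to a $<k$-obstruction}. Concretely, if $\bigcup_n p_n$ forks over $M_0$, then $<k$-type amalgamation, combined with finite character of forking and compactness, should force some subfamily of size less than $k$ to already have no common lower bound; for otherwise every $<k$-subfamily would be a solvable $<k$-type amalgamation problem, and one could build a common lower bound for the whole family. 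Once obstructions are bounded in size by $k$, the set of conditions in $\mathbb{P}$ that assign a common color to a given obstruction is nowhere dense, and a standard bookkeeping over the at most $\lambda$-many such obstructions shows the generic avoids them all. This reduction from $\aleph_0$-ary to $<k$-ary obstructions --- i.e.\ the place where $<k$-type amalgamation is actually invoked --- is where I expect the main technical difficulty to lie.

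For the lower bound, suppose $T$ admits $\Delta_{k'+1, k'}$ for some $k' < k$, witnessed by a formula $\phi(\bar x, \bar y)$ and parameters $(\bar a_\alpha : \alpha < \lambda)$ in a model $M \models T$ of size $\lambda$. The single-formula partial types $p_\alpha := \{\phi(\bar x, \bar a_\alpha)\}$ lie in $\Gamma^\theta_{M, M_0}$ for a suitable countable $M_0$ (chosen so the $p_\alpha$ do not fork over it, possible by standard extension properties in simple theories), and the shape of $\Delta_{k'+1, k'}$ guarantees that any $k$-subset of $\{p_\alpha\}$ (for $k > k'$) has no common lower bound. Given any coloring $c : \Gamma^\theta_{M, M_0} \to \theta$ in the extension, an Erd\H{o}s--Rado style partition relation of the form $\beth_{k-1}(\theta)^+ \to (\theta^+)^k_\theta$, available because $\lambda \geq \theta^{+\omega}$, produces a $c$-monochromatic subfamily of size $k$, which constitutes a monochromatic $k$-set in $\Gamma^\theta_{M, M_0}$ with no lower bound.

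Finally, preservation of everything needed into $V[G]$ --- cardinal arithmetic below $\lambda$, the partition relation, existence of the realizing parameter sequence --- is immediate from $\mathbb{P}$ being $\theta$-closed and $\theta^+$-c.c., so the two conclusions hold in the same forcing extension as required. The principal subtlety to verify is that the upper-bound argument's $<k$-reduction is genuinely uniform across all countable simple theories with $<k$-type amalgamation; an absolute, model-theoretic formulation of the reduction (rather than one depending on the ambient forcing language) is what makes the single generic $c$ work for every such $T$ at once.
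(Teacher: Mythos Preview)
Your upper-bound argument rests on the reduction claim that, for a theory with $<k$-type amalgamation, every finite obstruction in $\Gamma^\theta_{M,M_0}$ contains a sub-obstruction of size $<k$. This is false. Take $T = T_{5,4}$, which has $\mathcal{P}^-(4)$-amalgamation of models and hence $<4$-type amalgamation. Pick $a_0,\dots,a_3 \in M_0$ with $R(a_0,a_1,a_2,a_3)$, and for each $i<4$ let $p_i(x)$ be the formula $R(x, a_j, a_k, a_l)$ where $\{j,k,l\} = 4 \setminus \{i\}$. Then $\bigcup_{i<4} p_i$ is inconsistent (it forces a $5$-clique on $\{x,a_0,\dots,a_3\}$), but any three of the $p_i$ are jointly consistent and non-forking over $M_0$. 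So $\{p_0,\dots,p_3\}$ is a $4$-obstruction with no $3$-sub-obstruction, and your reduction fails at $k=4$. The point is that $<k$-type amalgamation constrains only highly structured \emph{coherent} systems of types over $\Lambda$-arrays, not arbitrary $<k$-tuples of non-forking partial types; it simply does not imply the $k$-ary compactness of forking you are invoking. (Incidentally, the $(\lambda,\theta,\theta,\aleph_0)$-coloring property concerns \emph{finite} monochromatic sets, so your ``countable sequence $(p_n : n<\omega)$'' should just be a finite one.)

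The paper's route is accordingly quite different: after adding $\lambda$-many $\theta$-Cohens, one \emph{iterates}, forcing at each stage with $\prod_{\alpha<\theta} \Gamma^\theta_{M,M_0}$ for a given pair $(M,M_0)$. This adds $\theta$ generic complete non-forking types over $M$, and one colors $p$ by the least $\alpha$ whose generic type extends $p$; any finite monochromatic set is then contained in a single complete type and hence compatible. No $<k$-reduction is used here. Rather, $<k$-type amalgamation enters only to show (via Theorem~\ref{SatPortion}) that each iterand is $(P_{X\mu\theta},k)$-colorable, which is precisely what the iteration preservation theorem needs so that the \emph{second} clause---failure of the coloring property for theories admitting $\Delta_{k'+1,k'}$---survives the iteration. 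Your lower-bound sketch also misreads $\Delta_{k'+1,k'}$: not every $k$-subset of the witnessing types is inconsistent, only those of a specific ``clique'' shape, so a naive Erd\H{o}s--Rado monochromatic set is not enough; one needs the ``big enough for $(\mu,\theta,k')$'' combinatorics as in Theorem~\ref{NonSat2}.
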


The forcing extension is not hard to describe. Namely, begin by adding $\lambda$-many $\theta$-Cohens. In this forcing extension, every theory $T$ which admits $\Delta_{k'+1, k'}$ for some $k' < \aleph_0$ will fail the $(\lambda, \theta, \theta, k')$-coloring property (and hence also the $(\lambda, \theta, \theta, \aleph_0)$-coloring property). Now suppose $T$ is a countable simple theory with $<k$-type amalgamation; we want to arrange that $T$ has the $(\lambda, \theta, \theta, \aleph_0)$-coloring property. Let $M \models T$ have $|M| \leq \lambda$, and let $M_0 \preceq M$ be countable. We need to arrange that $\chi(\Gamma^{\theta}_{M, M_0}, \aleph_0) \leq \theta$. The natural way to achieve this is by forcing over $\prod_{\alpha < \theta} \Gamma^\theta_{M, M_0}$, where the product is taken with $<\theta$-supports; indeed, this adds a sequence of $\theta$-many generic types over $M$ which do not fork over $M_0$, and we can color $p(x) \in \Gamma^{\theta}_{M, M_0}$ by the least $\alpha < \theta$ such that the $\alpha$'th generic type extends $p(x)$. To make this work we prove a certain iteration preservation theorem (Theorem 3.5 of \cite{SP}, or see Theorem~\ref{IterationPreserves1} of the present work).

In this way, we get in $ZFC$ some large poset $P$ which forces that the statement of Theorem~\ref{SPTheorem} holds. But now, instead of forcing over $P$, we instead take the Boolean algebra completion $\mathcal{B}(P)$ and attempt to construct sufficiently generic ultrafilters on it in $\mathbb{V}$. It turns out that the combinatorics of arranging $(\lambda, \mu, \theta, \aleph_0)$-colorability in the forcing extension are deeply intertwined with saturation by sufficiently generic ultrafilters in $\mathbb{V}$, and in particular, sufficiently generic ultrafilters on $\mathcal{B}(P)$ will (with some caveats) detect the division between $<k$-type amalgamation and admitting $\Delta_{k'+1, k'}$ for some $k' < k$.

In Section~\ref{KeislerNewForcingAmalg}, we review the relevant forcing iteration theorem from \cite{SP}, and extend it to the case $\theta = \aleph_0$. In particular, given a regular cardinal $\theta$, a cardinal $\mu = \mu^{<\theta}$ and a cardinal $3 \leq k \leq \aleph_0$, we define a class of forcing notions $\mathbb{P}_{\mu, \theta, k}$. Every $P \in \mathbb{P}_{\mu, \theta, k}$ has the $\mu^+$-c.c. and is $\theta$-closed, and $\mathbb{P}_{\mu, \theta, k}$ is closed under $<\theta$-support forcing iterations. We remark that essentially any iteration preservation theorem can be plugged into our proof to get a dividing line in Keisler's order.

In Section~\ref{KeislerNewUltCons}, we give the general ultrafilter construction.

\begin{definition}
	The cardinal $\tau$ is supercompact if for every set $X$, there is a normal, $\tau$-complete ultrafilter over $[X]^{<\tau}$. In more words, there is a $\tau$-complete ultrafilter $\mathcal{U}$ over $[X]^{<\tau}$, such that for every $x \in X$, $\{u \in [X]^{<\tau}: x \in u\} \in \mathcal{U}$, and further, if $f$ is a choice function on $[X]^{<\tau} \backslash \{\emptyset\}$ then $f$ is constant on some set in $\mathcal{U}$.
	
	$(\lambda,\mu, \theta, \tau, k)$ is a suitable sequence of cardinals if:
	\begin{itemize}
		\item $\tau \leq \theta$ and $\mu = \mu^{<\theta}$ and $\lambda > \mu$;
		\item $\theta$ is regular, and $\tau$ is either $\aleph_0$ or else supercompact.
	\end{itemize}
\end{definition}

For our construction, we take as input a suitable sequence $\mathbf{s} = (\lambda, \mu, \theta, \tau, k)$ of cardinals. We then build a long forcing iteration $P$ from $\mathbb{P}_{\mu, \theta, k}$ and a sufficiently generic $\tau$-complete ultrafilter $\mathcal{U}$ on the Boolean algebra completion $\mathcal{B}(P)$, and check which complete countable theories does $\mathcal{U}$ $\lambda^+$-saturate. 

In more detail, for every suitable sequence $\mathbf{s}$ and for every $3 \leq k \leq \theta$, we define two properties of theories, namely: the $\mathbf{s}$-extension property, and the smooth $\mathbf{s}$-extension property. Essentially, these detect ways in which we can locally solve saturation problems.  Theorem~\ref{SetTheoryDividingLines} states that there is some $P \in \mathbb{P}_{\mu, \theta, k}$ and some $\tau$-complete ultrafilter $\mathcal{U}$ on $\mathcal{B}(P)$, such that $\mathcal{U}$ $\lambda^+$-saturates every theory with the smooth $\mathbf{s}$-extension property, and does not $\lambda^+$-saturate any theory without the $\mathbf{s}$-extension property. In particular, there is a principal dividing line in Keisler's order somewhere between the $\mathbf{s}$-extension property and the smooth $\mathbf{s}$-extension property. We also show that this principal dividing line excludes every unsimple theory, and if $\tau = \aleph_0$ then it excludes every nonlow theory.

In Section~\ref{KeislerNewSat}, we prove the following:

\begin{theorem}For every suitable sequence $\mathbf{s}= (\lambda, \mu, \theta, \tau, k)$, and for every complete countable theory $T$, if  $T$ admits $\Delta_{k'+1, k'}$ for some $k' < k$, and if $\lambda \geq \mu^{+\omega}$, then $T$ fails the $\mathbf{s}$-extension property.
\end{theorem}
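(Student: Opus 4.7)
The plan is to witness failure of the $\mathbf{s}$-extension property by a partition-theoretic argument. Assuming the $\mathbf{s}$-extension property holds, I will extract from the pattern witness a coloring of a large index set by at most $\mu$ colors, use $\lambda \geq \mu^{+\omega}$ to locate a large homogeneous substructure, and observe that homogeneity forces the forbidden $(k'+1)$-clique of the pattern to be simultaneously realized, contradicting admission.

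To set up, fix $\phi(\bar{x},\bar{y})$ witnessing $\Delta_{k'+1,k'}$ and build inside some $M \models T$ of cardinality $\lambda$ a faithful realization $(\bar{a}_i : i < \lambda)$: the conjunction $\bigwedge_{i \in s}\phi(\bar{x},\bar{a}_i)$ is satisfiable iff $s \in \Delta_{k'+1,k'}$, i.e., iff $s$ carries no forbidden $(k'+1)$-clique. For each $s \in [\lambda]^{k'}$, the partial type $p_s(\bar{x}) := \{\phi(\bar{x}, \bar{a}_i) : i \in s\}$ is a legitimate input for the $\mathbf{s}$-extension property as formalized in Section~\ref{KeislerNewUltCons}. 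Supposing for contradiction that $T$ has $\mathbf{s}$-extension, the property yields for each such $s$ an extension of $p_s$ realized inside the Boolean completion $\mathcal{B}(P)$ of some $P \in \mathbb{P}_{\mu,\theta,k}$; the local data determining this extension at $s$ is codable by a set of conditions of size at most $\mu$ (by the $\mu^+$-c.c.\ of $P$ together with $\mu = \mu^{<\theta}$), and thus packages as a coloring $c : [\lambda]^{k'} \to \mu$.

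Invoking partition-theoretic tools available from $\lambda \geq \mu^{+\omega}$ (iterated Erd\H{o}s--Rado-style reductions along the tower $\mu, \mu^+, \ldots, \mu^{+\omega}$), we find $X \subseteq \lambda$ of size at least $k'+1$ on which $c$ is constant on all $k'$-subsets. Homogeneity of $c$ then forces the individual extensions $\{p_s : s \in [X]^{k'}\}$ to cohere inside the Boolean-valued ultrapower, so for any $(k'+1)$-subset $w$ of $X$ the conjunction $\bigwedge_{i \in w}\phi(\bar{x}, \bar{a}_i)$ is simultaneously realized there. But $w$ is a forbidden $(k'+1)$-clique of the pattern, so the conjunction is inconsistent in $T$, a contradiction. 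The main obstacle is establishing Boolean-codability of the ``local extension data at $s$'' in at most $\mu$ bits, which is what constrains the coloring to $\mu$ values; this requires careful unpacking of the $\mathbf{s}$-extension property and deploying the $\mu^+$-c.c.\ at the right point. A secondary issue when $\tau$ is supercompact is that the coloring must interact with a normal ultrafilter on $[\lambda]^{<\tau}$, which I would handle by first projecting $c$ through this ultrafilter to a $\tau$-complete surrogate of the same size, then running the partition argument on the surrogate.
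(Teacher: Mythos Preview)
Your proposal has a fundamental misreading of the $\mathbf{s}$-extension property that makes the argument not go through. A $(T,\mathbf{s})$-problem is a triple $(P,\mathcal{U},\mathbf{A})$ where $P\in\mathbb{P}_{\mu,\theta,k}$, $\mathcal{U}$ is a $\tau$-complete ultrafilter on $\mathcal{B}(P)$, and $\mathbf{A}$ is a single global $(\lambda,T)$-{\L}o{\'s} map; an $\mathbf{s}$-solution is a pair $(\dot{Q},\mathbf{B})$ with $\dot{Q}\in\mathbb{P}_{\mu,\theta,k}^{\mathbb{V}[\dot{G}]}$ and $\mathbf{B}$ a multiplicative refinement of $\mathbf{A}$ in $\mathcal{B}(P*\dot{Q})$. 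The property does \emph{not} take individual finite partial types $p_s$ as input and return realizations, so your sentence ``the partial type $p_s(\bar{x})$ is a legitimate input for the $\mathbf{s}$-extension property'' is simply false, and there is no family of ``local extension data at $s$'' to package into a coloring $c:[\lambda]^{k'}\to\mu$. Relatedly, the parameters witnessing $\Delta_{k'+1,k'}$ are indexed by $(k'-1)$-subsets of the index set, not by single indices, so the types $p_s$ you write down do not match the pattern.

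The paper's argument instead builds one explicit problem that cannot be solved: take $P=P_{[\lambda]^{k'}\mu\theta}$, and let $\mathbf{A}$ be the $[\lambda]^{k'-1}$-distribution $\mathbf{A}(s)=\bigwedge\{\mathbf{c}_v:[v]^{k'-1}\subseteq s\}$ where $\mathbf{c}_v=\{(v,0)\}$; this is a $(T)$-{\L}o{\'s} map because $T$ admits $\Delta_{k'+1,k'}$. Now suppose some $\dot{Q}\in\mathbb{P}_{\mu,\theta,k}^{\mathbb{V}[\dot{G}]}$ gave a multiplicative refinement $\mathbf{B}$. The coloring you were hoping for comes not from applying the extension property many times but from the \emph{definition} of $\mathbb{P}_{\mu,\theta,k}$: $P$ forces $\dot{Q}$ is $(P_{X\mu\theta},k)$-colorable via some $\dot{F}$. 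For each finite $v\subseteq\lambda$ pick $(p_v,\dot{q}_v)\leq\mathbf{B}([v]^{k'-1})$ with $p_v$ deciding $\dot{F}(\dot{q}_v)=f_v$. The hypothesis $\lambda\geq\mu^{+\omega}$ enters via the ``big enough for $(\mu,\theta,k')$'' combinatorics of Section~\ref{Combinatorics} (not a plain Erd\H{o}s--Rado homogeneity argument): it yields some $v\in[\lambda]^{k'}$ and $(w_u:u\in[v]^{k'-1})$ with the $p_{w_u}\cup f_{w_u}$ pairwise compatible and each $F(w_u)\cap v=u$. Compatibility of the $f_{w_u}$'s plus $k'<k$ forces the $\dot{q}_{w_u}$'s to have a common lower bound $\dot{q}$, so $(p,\dot{q})\leq\mathbf{B}([v]^{k'-1})\leq\mathbf{c}_v$; but $v\notin\mbox{dom}(p)$, so one can extend $p$ to contradict $\mathbf{c}_v$. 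Your ``homogeneity forces coherence'' step is a gesture at this, but without correctly locating where the coloring lives and what combinatorics is actually needed, it does not constitute a proof.
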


\begin{theorem}\label{satPortionGeneralFirst}
	Suppose $\mathbf{s} = (\lambda, \mu, \theta, \tau, k)$ is suitable. Suppose $T$ is simple, and either $\tau > \aleph_0$ or else $T$ is low. Suppose that every $P \in \mathbb{P}_{\mu, \theta, k}$ forces that $T$ has the $(\lambda, |\mu|, \theta, k)$-coloring property (we write $|\mu|$ because $\mu$ may be collapsed in the forcing extension). Then $T$ has the smooth $\mathbf{s}$-extension property.
\end{theorem}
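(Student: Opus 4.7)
The strategy is to convert the forcing-extension hypothesis on $T$ into the ground-model smooth $\mathbf{s}$-extension property by using a generic $\tau$-complete ultrafilter on $\mathcal{B}(P)$ to decode the coloring data that lives in $V[G]$. First I would fix a concrete instance of the smooth $\mathbf{s}$-extension property: some $P \in \mathbb{P}_{\mu, \theta, k}$, a Boolean-valued $M \models T$ of size $\leq \lambda$ with countable base $M_0 \preceq M$, and a family of partial types over $M$, non-forking over $M_0$, to be simultaneously realized below some nonzero condition of $\mathcal{B}(P)$. In $V[G]$ for $G \subseteq P$ generic, the hypothesis supplies a coloring $c \colon \Gamma^\theta_{M, M_0} \to |\mu|$ such that any fewer than $k$ monochromatic elements share a common non-forking refinement in $\Gamma^\theta_{M, M_0}$. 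This splits the family of partial types into $|\mu|$ color classes, each of which is $<k$-directed inside $\Gamma^\theta_{M, M_0}$.

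Next I would upgrade $<k$-directedness of each color class to a genuine common non-forking refinement via a transfinite induction of length $\leq \lambda$. At each stage, simplicity of $T$ together with finite character of non-forking lets us amalgamate the current partial refinement with the next type by independence over $M_0$, provided the coloring property survives at that stage. This is guaranteed by the iteration preservation theorem (Theorem~\ref{IterationPreserves1} and its $\theta = \aleph_0$ extension in Section~\ref{KeislerNewForcingAmalg}), which tells us $\mathbb{P}_{\mu, \theta, k}$ is closed under $<\theta$-support iteration and that the coloring property is preserved throughout, so the construction never leaves the class. The resulting refinement then names a consistent complete extension of the entire color class, which pulls back to a nonzero Boolean condition in $\mathcal{B}(P)$ forcing simultaneous realization. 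Since the Boolean values ``color $\alpha$ works'' for $\alpha < |\mu|$ have supremum $\mathbf{1}$, a generic $\tau$-complete ultrafilter $\mathcal{U}$ on $\mathcal{B}(P)$ selects some $\alpha$, and the Boolean-valued realization becomes an actual realization in the Boolean-valued ultraproduct, which is the form the smooth $\mathbf{s}$-extension property takes.

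The main obstacle is controlling non-forking under the passage to $\mathcal{U}$. When $\tau > \aleph_0$, the $\tau$-completeness of the generic ultrafilter handles this automatically, since the relevant non-forking data lives on $<\tau$-sized fragments and a normal $\tau$-complete ultrafilter reflects it. When $\tau = \aleph_0$, countable completeness alone does not preserve non-dividing for a general simple theory; this is precisely where the lowness hypothesis is used, since for low $T$ non-dividing is witnessed on countable parameter sets and is therefore respected by any countably complete ultrafilter. This explains the ``$\tau > \aleph_0$ or $T$ low'' dichotomy in the hypothesis, and it is the same sharpness that underlies the dividing-line results of \cite{Optimals} and \cite{LowDividingLine} recalled in the introduction.
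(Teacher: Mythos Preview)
Your proposal misidentifies both what has to be built and how the coloring hypothesis is used.

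Recall the definition: given a $(T,\mathbf{s})$-problem $(P,\mathcal{U},\mathbf{A})$, a smooth solution is a pair $(P*\dot{Q},\mathbf{B})$ with $\dot{Q}\in\mathbb{P}_{\mu,\theta,k}^{\mathbb{V}[\dot{G}]}$ and $\mathbf{B}$ a multiplicative refinement of $\mathbf{A}$ in $\mathcal{B}(P*\dot{Q})$ satisfying the projection conditions. You never say what $\dot{Q}$ is. Instead you propose to split the family of types into $|\mu|$ color classes in $V[G]$ and then ``upgrade $<k$-directedness of each color class to a genuine common non-forking refinement via a transfinite induction of length $\leq\lambda$.'' That step fails: a monochromatic class is only $<k$-directed, and there is no reason a $<k$-directed subfamily of $\Gamma^\theta_{M,M_0}$ of size $\lambda$ has any lower bound at all. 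The independence theorem lets you merge finitely many types, not $\lambda$-many, and the iteration preservation theorem does not help here---it is a statement about forcing iterations staying in $\mathbb{P}_{\mu,\theta,k}$, not about amalgamating types. Also, $\mathcal{U}$ is \emph{given} as part of the problem; it is not a generic ultrafilter you construct, and there is no ``selection of a color $\alpha$'' happening.

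The paper's argument is both shorter and structurally different. The iterand is the poset of partial types itself: set $\dot{Q}=\Gamma^\theta_{\check{\mathbf{M}}/\dot{G},\,\check{\mathbf{M}}_0/\dot{G}}$. The coloring hypothesis is invoked for exactly one purpose: it says $P$ forces $\dot{Q}$ is $(P_{X\mu\theta},k)$-colorable for some $X$, and since $\dot{Q}$ has greatest lower bounds this means $\dot{Q}\in\mathbb{P}_{\mu,\theta,k}^{\mathbb{V}[\dot{G}]}$. No splitting into color classes, no amalgamation. The refinement is then written down directly: with $p(x)=\{\phi_\alpha(x,\overline{a}_\alpha):\alpha<\lambda\}$ witnessing $\mathbf{A}$, put
\[
\mathbf{C}(s)=\bigwedge_{\alpha\in s}\bigl\|\phi_\alpha(x,[\overline{a}_\alpha]_{\dot{G}})\text{ does not fork over }\check{\mathbf{M}}_0/\dot{G}\bigr\|_{\mathcal{B}},\qquad \mathbf{B}(s)=\bigl(\mathbf{C}(s),\{\phi_\alpha:\alpha\in s\}\bigr)\in\mathcal{B}(P)*\dot{Q}.
\]
This $\mathbf{B}$ is trivially multiplicative, and $\pi(\mathbf{B}(S))=\mathbf{C}(S)$ is automatically multiplicative in $S$, giving smoothness. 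The low/$\tau>\aleph_0$ dichotomy enters only in showing each $\mathbf{C}(s)\in\mathcal{U}$: one needs that non-$\mathcal{U}$-forking of $\phi_\alpha$ over $\mathbf{M}_0$ implies the Boolean value ``$\phi_\alpha$ does not fork over $\mathbf{M}_0/\dot{G}$'' lies in $\mathcal{U}$, and this is where lowness (a first-order witness to non-forking) or $\aleph_1$-completeness of $\mathcal{U}$ is used. You correctly located where the dichotomy matters, but the mechanism is this Boolean-value computation, not preservation of non-dividing under an ultraproduct.
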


In Section~\ref{KeislerNewConc}, we summarize our results. In particular, for every $3 \leq k \leq \aleph_0$, there is a dividing line in Keisler's order which contains every theory with $<k$-type amalgamation, and which excludes every theory which admits $\Delta_{k'+1, k'}$ for some $k' < k$. In particular, this dividing line includes every theory with $\mathcal{P}^-(k)$ amalgamation of models. Thus: $T_{k+1, k} \not \trianglelefteq T_{k'+1, k'}$ for all $k < k'$. We also list several conjectures and questions.

\noindent \textbf{Acknowledgements.} We would like to thank Vincent Guingona, Alexei Kolesnikov, Chris Laskowski and Jindrich Zapletal for several helpful discussions. We also note that much of the work here is inspired by joint work with Saharon Shelah on the $\leq_{SP}$-ordering \cite{SP}.

\section{A refined set mapping theorem}\label{Combinatorics}
In \cite{InfManyClass}, Malliaris and Shelah proved that for all $k+1 < k'$, $T_{k+1, k} \not \trianglelefteq T_{k'+1, k'}$. A combinatorial difficulty prevented the authors from concluding this for all $k < k'$; here, we sketch one way to overcome this difficulty.

The following combinatorial notion is key to the arguments of \cite{InfManyClass}; we refer the reader to Sections 44 through 46 of \cite{Combinatorics} for a history.

\begin{definition}
	Suppose $F: [\lambda]^k \to [\lambda]^{<\mu}$. Then $t \in [\lambda]^{n}$ is independent with respect to $F$ if for each $s \in [t]^k$, $F(s) \cap t \subseteq s$.
	
	Given cardinals $\lambda \geq \mu$ and numbers $n > k$, say that $(\lambda, k, \mu) \rightarrow n$ if whenever $F: [\lambda]^k \to [\lambda]^{<\mu}$, there is some $t \in [\lambda]^n$ which is independent with respect to $F$.
\end{definition}
%
%
%
%
%
%

The following theorem is due to Kuratowski and Sierpi\'{n}ski \cite{KSComb}; or see Theorems 45.7 and 46.1 of \cite{Combinatorics}.
\begin{theorem}\label{NonSatLemmaWeak}
	Suppose $\lambda, \mu$ are cardinals. Then $(\lambda, \ell, \mu) \rightarrow \ell+1$ if and only if $\lambda \geq \mu^{+\ell}$.
\end{theorem}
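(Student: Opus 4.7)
The plan is to prove both directions of the biconditional by induction on $\ell$.

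For the negative direction, assume $\lambda < \mu^{+\ell}$, so WLOG $\lambda = \mu^{+(\ell-1)}$; I construct $F : [\lambda]^\ell \to [\lambda]^{<\mu}$ admitting no independent $(\ell+1)$-set. The base case $\ell = 1$, $\lambda = \mu$, takes $F(\{\alpha\}) := \alpha$ (viewed as a set of ordinals), which forces $\alpha \in F(\{\beta\})$ for every $\alpha < \beta < \mu$. For the inductive step, fix for each $\alpha < \mu^{+\ell}$ an injection $\iota_\alpha : \alpha \hookrightarrow \mu^{+(\ell-1)}$ (possible since $|\alpha| \leq \mu^{+(\ell-1)}$) and, given the witness $F_\ell$ from the inductive hypothesis, define
\[
F_{\ell+1}(\{\alpha_0 < \cdots < \alpha_\ell\}) := \{\alpha_0, \ldots, \alpha_{\ell-1}\} \cup \iota_{\alpha_\ell}^{-1}\bigl[F_\ell\bigl(\iota_{\alpha_\ell}[\{\alpha_0, \ldots, \alpha_{\ell-1}\}]\bigr)\bigr].
\]
For any hypothetical independent $(\ell+2)$-set $\{\alpha_0 < \cdots < \alpha_{\ell+1}\}$, the $\ell+1$ independence conditions arising from the $(\ell+1)$-subsets containing $\alpha_{\ell+1}$ translate directly into independence of $\{\iota_{\alpha_{\ell+1}}(\alpha_0), \ldots, \iota_{\alpha_{\ell+1}}(\alpha_\ell)\}$ for $F_\ell$, contradicting the inductive hypothesis.

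For the positive direction, the base case $\ell = 1$ with $\lambda \geq \mu^+$ is a pressing-down argument: restricting to $\mu^+$, the sets $A_\beta := \{\alpha < \beta : \alpha \in F(\{\beta\})\}$ have cardinality $<\mu$ and so are bounded below $\beta$ whenever $\cf(\beta) \geq \mu$; applying Fodor's lemma (with a small adjustment if $\mu$ is singular) stabilizes this bound on a stationary set, after which a free pair is extracted using $|F(\{\alpha_0\})| < \mu$. For the inductive step with $\lambda \geq \mu^{+(\ell+1)}$ and $F : [\lambda]^{\ell+1} \to [\lambda]^{<\mu}$, I first build by recursion a sequence $\langle \beta_\gamma : \gamma < \mu^{+(\ell+1)} \rangle$ of distinct elements of $\lambda$ with $\beta_\gamma \notin F(s)$ for every $s \in [\{\beta_\delta : \delta < \gamma\}]^{\ell+1}$; this is possible since the forbidden union at stage $\gamma$ has cardinality at most $|\gamma| \cdot \mu < \lambda$. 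Fixing $\delta := \mu^{+\ell}$, the auxiliary map $G_\delta : [\delta]^\ell \to [\delta]^{<\mu}$ defined by
\[
G_\delta(\{\gamma_0 < \cdots < \gamma_{\ell-1}\}) := \bigl\{\gamma < \delta : \beta_\gamma \in F(\{\beta_{\gamma_0}, \ldots, \beta_{\gamma_{\ell-1}}, \beta_\delta\})\bigr\}
\]
inherits from the inductive hypothesis at $|\delta| = \mu^{+\ell}$ an $(\ell+1)$-element independent set $\{\gamma_0 < \cdots < \gamma_\ell\} \subseteq \delta$, whereupon $\{\beta_{\gamma_0}, \ldots, \beta_{\gamma_\ell}, \beta_\delta\}$ is the desired $(\ell+2)$-element independent set for $F$: the condition removing $\beta_\delta$ holds by the construction of the sequence, and each condition removing $\beta_{\gamma_i}$ translates into $\gamma_i \notin G_\delta(\{\gamma_0, \ldots, \gamma_\ell\} \setminus \{\gamma_i\})$.

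The main obstacle I anticipate is the two-step nature of the inductive step in the positive direction: the initial recursion handles only those independence conditions whose removed element is $\beta_\delta$, and one must design $G_\delta$ so that the remaining $\ell+1$ conditions translate exactly into independence for an $\ell$-ary map at cardinality $\mu^{+\ell}$, rather than back into an $(\ell+1)$-ary problem. The cardinal inequality $\lambda \geq \mu^{+(\ell+1)}$ is used precisely to support the recursion at length $\mu^{+(\ell+1)}$, while the inductive hypothesis applied at $\mu^{+\ell}$ supplies the cross-cutting conditions that the recursion alone cannot enforce.
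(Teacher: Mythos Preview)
The paper does not actually give a proof of this theorem: it is stated as a classical result of Kuratowski and Sierpi\'{n}ski, with a citation to \cite{KSComb} and to Theorems~45.7 and~46.1 of \cite{Combinatorics}. So there is no ``paper's own proof'' to compare against beyond the literature reference.

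Your argument is correct and is essentially the classical one. Both directions proceed by induction on $\ell$; the negative direction uses bijections $\iota_\alpha:\alpha\hookrightarrow\mu^{+(\ell-1)}$ to pull an independent set down one cardinal, and the positive direction first builds a long free-in-one-direction sequence and then applies the inductive hypothesis to an auxiliary $\ell$-ary map $G_\delta$ to handle the remaining conditions. This is exactly the Kuratowski--Sierpi\'{n}ski proof as presented in standard references.

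One small comment: your base case $\ell=1$ via Fodor is more delicate than necessary, and the ``small adjustment if $\mu$ is singular'' is genuinely a nuisance (for $\beta<\mu^+$ one always has $\cf(\beta)\le\mu$, and when $\mu$ is singular there is no $\beta$ of cofinality $\mu$, so the regressive function need not be defined on a stationary set without further work). It is cleaner to take $\ell=0$ as the base case: $(\lambda,0,\mu)\rightarrow 1$ just says there is some $\alpha\notin F(\emptyset)$, which holds iff $\lambda\ge\mu$. Your inductive step then handles $\ell=1$ uniformly, with no case split on the regularity of $\mu$.
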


However, it turns out this is not quite the right notion for what we want. The following refinement, made with a great deal of hindsight, isolates the combinatorial contingency on which the arguments of \cite{InfManyClass} depend:

\begin{definition}
	Suppose $\theta$ is a regular cardinal, and $\mu = \mu^{<\theta}$, and $2 \leq k < \aleph_0$, and $\lambda$ is infinite. Say that $F: [\lambda]^{<\aleph_0} \to [\lambda]^{<\theta}$ is inflationary if for all $w \in [\lambda]^{<\aleph_0}$, $w \subseteq F(w)$.  For every set $X$, let $P_{X \mu \theta}$ be the poset of all partial functions from $\lambda$ to $\mu$ of cardinality less than $\theta$, ordered by reverse inclusions.

	Say that $\lambda$ is big enough for $(\mu, \theta, k)$ if the following holds: suppose $F: [\lambda]^{<\aleph_0} \to [\lambda]^{<\theta}$ is inflationary, and $G: [\lambda]^{<\aleph_0} \to P_{\lambda \mu \theta}$. Then there is some $t \in [\lambda]^{k}$ and some sequence $(w_s: s \in [t]^{k-1})$ from $[\lambda]^{<\aleph_0}$, such that each $w_s \cap t = F(w_s) \cap t = s$, and such that $\{G(w_s): s \in [t]^{k-1}\}$ has a lower bound in $P_{\lambda \mu \theta}$ (i.e. $\bigcup \{G(w_s): s \in [t]^{k-1}\}$ is a function).
\end{definition}

\begin{remark}
	Suppose $\theta$ is a regular cardinal, $\mu = \mu^{<\theta}$, $2 \leq k'< k < \aleph_0$, and $\lambda < \lambda'$. If $\lambda$ is big enough for $(\mu, \theta, k)$ then $\lambda'$ is big enough for $(\mu, \theta, k')$ (i.e. we can increase $\lambda$ and decrease $k$).
\end{remark}

Thus the following definition makes sense:

\begin{definition}
	Suppose $\theta$ is a regular cardinal, $\mu = \mu^{<\theta}$, $2 \leq k < \aleph_0$. Then let $\Psi(\mu, \theta, k)$ be the least $\lambda$ such that $\lambda$ is big enough for $(\mu, \theta, k)$ (such a $\lambda$ will always exist). Rephrasing, $\lambda$ is big enough for $(\mu, \theta, k)$ if and only if $\lambda \geq \Psi(\mu, \theta, k)$.
\end{definition}

The arguments of Malliaris and Shelah \cite{InfManyClass} show, under different terminology, that $\mu^{+(k-1)} \leq \Psi(\mu, \theta, k) \leq \mu^{+k}$, and that $\Psi(\mu, \theta, 2)  = \mu^+$. The worrisome scenario that might allow $T_{k+1, k} \trianglelefteq T_{k+2, k+1}$ is $\Psi(\mu, \theta, \cdot)$ might not be injective: more exactly, there may be some $k$ such that $\Psi(\mu, \theta, k) = \Psi(\mu, \theta, k+1) = \mu^{+k}$. In this case, it is possible that none of the ultrafilters constructed by Malliaris and Shelah in \cite{InfManyClass} would separate $T_{k+1, k}$ from $T_{k+2, k+1}$. But we will show in fact that $\Psi(\mu, \theta, k+1) > \Psi(\mu, \theta, k)$ for all $k$, so this cannot happen.

We begin with the following simple observation; it is the combinatorial content of Lemma 4.2 of \cite{Optimals}. 

\begin{theorem}\label{CombThm0}
	Suppose $\theta$ is a regular cardinal and $\mu = \mu^{<\theta}$. Then $\Psi(\mu, \theta, 2) = \mu^+$.
\end{theorem}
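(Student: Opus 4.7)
The plan is to establish $\Psi(\mu,\theta,2)\leq \mu^+$ and $\Psi(\mu,\theta,2)\geq \mu^+$ independently, exploiting the fact that for $k=2$ the $F$-obstruction and the $G$-obstruction decouple cleanly.

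For the upper bound I would always use the trivial test witnesses $w_{\{i\}}:=\{i\}$ for each $i<\mu^+$; by inflationarity of $F$, the intersection conditions $F(w_s)\cap t=s$ reduce to $j\notin F(\{i\})$ and $i\notin F(\{j\})$. First handle the $G$-compatibility: let $D_i:=\textrm{dom}(G(\{i\}))$, a set of size $<\theta$. Since $\mu^+$ is regular and $\nu^{<\theta}\leq \mu^{<\theta}=\mu<\mu^+$ for all $\nu<\mu^+$, the standard $\Delta$-system lemma yields $A\subseteq \mu^+$ of size $\mu^+$ on which $\{D_i:i\in A\}$ forms a $\Delta$-system with some root $R$, $|R|<\theta$. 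Since there are at most $\mu^{|R|}\leq \mu^{<\theta}=\mu$ possibilities for $G(\{i\})\restriction R$, pigeonholing produces $A'\subseteq A$ of size $\mu^+$ on which these restrictions coincide, so for any distinct $i,j\in A'$ the union $G(\{i\})\cup G(\{j\})$ is a function (compatible on $R$, disjoint domains elsewhere). Next I would use $\mu\geq\theta$ (automatic from $\mu$ infinite and $\mu^{<\theta}=\mu$) to invoke Kuratowski--Sierpi\'{n}ski (Theorem~\ref{NonSatLemmaWeak}) on the map $F\restriction [A']^1\to [A']^{<\theta}$: since $\mu^+\geq \theta^+$, this delivers distinct $i,j\in A'$ with the required $F$-avoidance. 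Setting $t:=\{i,j\}$ with $w_{\{i\}}=\{i\}$, $w_{\{j\}}=\{j\}$ concludes this direction.

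For the lower bound I would construct a single pair $(F,G)$ on $\lambda=\mu$ witnessing that $\mu$ is not big enough. Using $|[\mu]^{<\aleph_0}|=\mu$, fix an injection $h:[\mu]^{<\aleph_0}\to\mu$ and define $F(w):=w$ (trivially inflationary, finite-valued, hence in $[\mu]^{<\theta}$) and $G(w):=\{(0,h(w))\}$, a partial function with one-point domain lying in $P_{\mu\mu\theta}$. With $F(w)=w$ the constraint $F(w_s)\cap t=s$ collapses to $w_s\cap t=s$, so the only remaining content is $G$-compatibility. But any candidate $t=\{i,j\}$ with $w_{\{i\}}\ni i$, $w_{\{j\}}\ni j$, $j\notin w_{\{i\}}$, $i\notin w_{\{j\}}$ has $i\in w_{\{i\}}\setminus w_{\{j\}}$, forcing $w_{\{i\}}\neq w_{\{j\}}$ and hence $h(w_{\{i\}})\neq h(w_{\{j\}})$, so $G(w_{\{i\}})\cup G(w_{\{j\}})$ takes two distinct values at $0$ and fails to be a function.

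I do not anticipate any real obstacle for $k=2$: the $F$- and $G$-obstructions decouple because only two singletons $w_s$ must be made compatible, and the $F$-side can be isolated via Kuratowski--Sierpi\'{n}ski once the $G$-side has been arranged by a $\Delta$-system and pigeonhole. The genuine combinatorial difficulty arises only for $k\geq 3$, where the $\binom{k}{k-1}$ simultaneous witnesses $w_s$ entangle the $F$- and $G$-constraints so that a two-step decoupling no longer works; this is presumably why only the weaker bounds $\mu^{+(k-1)}\leq \Psi(\mu,\theta,k)\leq \mu^{+k}$ are claimed in general, and why the separation $\Psi(\mu,\theta,k)<\Psi(\mu,\theta,k+1)$ must be proved by a more delicate argument.
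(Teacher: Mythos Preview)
Your proof is correct and follows essentially the same approach as the paper: for the upper bound, both use singleton witnesses $w_{\{i\}}=\{i\}$, apply the $\Delta$-system lemma (plus pigeonhole on the root, which the paper leaves implicit) to obtain $G$-compatibility on a set of size $\mu^+$, and then invoke Kuratowski--Sierpi\'{n}ski to handle the $F$-avoidance; for the lower bound, both send $[\mu]^{<\aleph_0}$ injectively into an antichain of $P_{\mu\mu\theta}$, your choice $G(w)=\{(0,h(w))\}$ being a concrete instance of this. One cosmetic point: when you write ``$F\restriction[A']^1\to[A']^{<\theta}$'' you should really intersect the values with $A'$ (or just apply $(\mu^+,1,\theta)\to 2$ on all of $\mu^+$), but this does not affect the argument.
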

\begin{proof}
	First, we show that $\mu^+$ is big enough for $(\mu, \theta, 2)$. Write $\lambda = \mu^+$. Suppose $F: [\lambda]^{<\aleph_0} \to [\lambda]^{<\theta}$ is inflationary, and $G: [\lambda]^{<\aleph_0} \to P_{\lambda \mu \theta}$.  For each $\alpha < \lambda$ let $g_\alpha = G(\{\alpha\})$. By the $\Delta$-system lemma we can find some $X \in [\lambda]^\lambda$ such that for all $\alpha, \beta \in X$, $g_\alpha$ and $g_\beta$ are compatible. Since $\lambda \geq \mu^+ \geq \theta^+$ and $(\mu^+, 1, \mu) \rightarrow 2$ we can find $\alpha < \beta$ both in $X$ such that $\alpha \not \in F(\beta)$ and $\beta \not \in F(\alpha)$. Then $t := \{\alpha, \beta\}$ and $w_{\{\alpha\}} := \{\alpha\}$, $w_{\{\beta\}} := \{\beta\}$ witness that $\lambda$ is big enough for $(\mu, \theta, 2)$.
	
	So it suffices to show that $\mu$ is not big enough for $(\mu, \theta, 2)$. But this is clear, since we can choose $G: [\mu]^{<\aleph_0} \to P_{\mu \mu \theta}$ so as to put $[\mu]^{<\aleph_0}$ in bijection with an antichain of $P_{\mu \mu \theta}$. 
\end{proof}

%
%

The following is essentially Lemma 1.6 of Malliaris and Shelah \cite{InfManyClass}.

\begin{theorem}\label{CombThm1}
	Suppose $\theta$ is a regular cardinal, $\mu = \mu^{<\theta}$, and $k \geq 2$. Then $\Psi(\mu, \theta, k) \leq \mu^{+k}$.
\end{theorem}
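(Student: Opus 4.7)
The plan is to prove $\Psi(\mu,\theta,k)\le\mu^{+k}$ by induction on $k\ge 2$, with the base case $k=2$ supplied by Theorem~\ref{CombThm0}. For the inductive step, assume $\Psi(\mu,\theta,k-1)\le\mu^{+(k-1)}$, set $\lambda:=\mu^{+k}$, and let inflationary $F$ and $G$ be as given. The strategy is to choose a ``pivot'' $\alpha_k\in\lambda$ generically over a suitable elementary submodel $M$, encode the $k$-problem on $\lambda$ as a $(k-1)$-problem on $M\cap\lambda$ via auxiliary data, apply the inductive hypothesis, and reassemble.

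Concretely, take $M\prec H(\chi)$ of size $\mu^{+(k-1)}$ with $F,G\in M$ and $\theta\subseteq M$; by elementarity, every $X\in M$ of size $<\theta$ is then a subset of $M$. Pick $\alpha_k\in\lambda\setminus M$ and, for $v\in[M\cap\lambda]^{<\aleph_0}$, set
\[
F'(v):=\bigl(F(v)\cup F(v\cup\{\alpha_k\})\bigr)\cap M,\qquad G'(v):=G(v\cup\{\alpha_k\}).
\]
Then $F'$ is inflationary with values in $[M]^{<\theta}$, so applying the inductive hypothesis to $(F',G')$ on $M\cap\lambda$ yields $t'\in[M]^{k-1}$ and $(w^*_{s'}:s'\in[t']^{k-2})$ with $w^*_{s'}\cap t'=F'(w^*_{s'})\cap t'=s'$ and a common lower bound $g^*$ for $\{G'(w^*_{s'})\}$. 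Set $t:=t'\cup\{\alpha_k\}$, $w_{s'\cup\{\alpha_k\}}:=w^*_{s'}\cup\{\alpha_k\}$, and $w_{t'}:=t'$.

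The $F$-conditions on the $w_s$'s containing $\alpha_k$ are then routine from inflation and the definition of $F'$: by inflation $\alpha_k\in F(w_s)\cap t$, and $F(w^*_{s'}\cup\{\alpha_k\})\cap t'\subseteq F'(w^*_{s'})\cap t'=s'$, with reverse inclusion by inflation. For the pivot $w_{t'}=t'$, the required condition $\alpha_k\notin F(t')$ follows because $F(t')\in M$ and $|F(t')|<\theta$ together with $\theta\subseteq M$ imply $F(t')\subseteq M$, so the genericity of $\alpha_k$ over $M$ does the job.

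The main obstacle is the final $G$-compatibility: we need $g^*\cup G(t')$ to be a function, but the inductive hypothesis gives no control over $G(t')$, which can a priori disagree with $g^*$ on their common domain. To handle this, I would preprocess the family $\{G(v):v\in[M\cap\lambda]^{<\aleph_0}\}$ via a $\Delta$-system plus pigeonhole argument inside $M$, stabilizing the $G$-values on a common root with constant value $h_0$; then replace $G'$ by an auxiliary version that reverts to $h_0$ off the thinned $\Delta$-subfamily, forcing any lower bound produced by the inductive hypothesis to automatically extend to the pivot value $G(t')$. Arranging this modification so that (i) the modified auxiliary data still satisfies the hypotheses of the inductive step, and (ii) the resulting $t'$ and $w^*_{s'}$ actually land in the thinned subfamily where $G'$ still records genuine $G$-information, is the principal technical difficulty and is precisely where the extra cardinal jump from $\mu^{+(k-1)}$ to $\mu^{+k}$ is consumed.
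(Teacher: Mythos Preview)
Your inductive approach leaves a genuine gap that you yourself identify: once the inductive hypothesis hands you $t'$ and the witnesses $(w^*_{s'})$, you still need $G(t')$ (or more generally $G(w_{t'})$ for some $w_{t'}\supseteq t'$ with $\alpha_k\notin F(w_{t'})$) to be compatible with the common lower bound $g^*$. Your suggested fix---thinning by a $\Delta$-system and redefining $G'$ to revert to a fixed $h_0$ off the thinned family---does not close this. The inductive hypothesis only promises the existence of \emph{some} $t'$ and $(w^*_{s'})$; it gives you no leverage to force these witnesses into your thinned subfamily, and if you alter $G'$ off that subfamily the compatibility you obtain is with $h_0$, not with the genuine $G$-values. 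Moreover, the pivot face $t'$ is the full $(k-1)$-set, not contained in any $(k-2)$-face $s'$, so there is no $w^*_{s'}$ you can recycle for $w_{t'}$; encoding extra information such as $G(v)$ into $G'(v)$ does not help for the same reason. As written, the argument is stuck precisely at the point you flag.

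The paper's proof is not inductive and takes a completely different route. It passes to the Boolean completion $\mathcal{B}_{\lambda\mu\theta}$ and, for each $(k-1)$-set $s$, forms the join $\mathbf{A}(s)=\bigvee\{G(t'):s\subseteq t'\}$. The $\mu^+$-c.c.\ then localizes this join to a set $H(s)\in[\lambda]^{\le\mu}$, which one fattens to contain $s$ and be closed under $F$. Since $\lambda\ge\mu^{+k}=(\mu^+)^{+(k-1)}$, the Kuratowski--Sierpi\'nski theorem $(\lambda,k-1,\mu^+)\to k$ yields $t\in[\lambda]^k$ independent with respect to $H$. The crucial point is the final step: one chooses the $w_{s_i}$ \emph{sequentially}, using that the running lower bound (which begins at $G(t)$ and only shrinks) always lies below $\mathbf{A}(s_i)$, so a compatible $w_{s_i}\in[H(s_i)]^{<\aleph_0}$ always exists. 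Independence with respect to $H$ then gives $F(w_{s_i})\cap t=s_i$ automatically. This sequential-choice-under-a-Boolean-join mechanism is exactly what your inductive scheme lacks: it manufactures the missing compatibility rather than trying to arrange it in advance.
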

\begin{proof}
	Suppose $\lambda \geq \mu^{+k}$; we show $\lambda$ is big enough for $(\mu, \theta, k)$. Suppose $F: [\lambda]^{<\aleph_0} \to [\lambda]^{<\theta}$ is inflationary, and $G: [\lambda]^{<\aleph_0} \to P_{\lambda \mu \theta}$. Let $\mathcal{B}_{\lambda \mu \theta}$ be the Boolean algebra completion of $P_{\lambda \mu \theta}$. For each $t \in [\lambda]^{<\aleph_0}$ let $\mathbf{A}(t) \in \mathcal{B}_{\lambda \mu \theta}$ be defined as $\bigvee \{G(t'): t \subseteq t' \in [\lambda]^{<\aleph_0}\}$. Note that $t \subseteq t'$ implies $0 < \mathbf{A}(t') \leq \mathbf{A}(t)$.
	
	Since $P_{\lambda \mu \theta}$ (and hence $\mathcal{B}_{\lambda \mu \theta}$) has the $\mu^+$-c.c., for each $s \in [\lambda]^{k-1}$ we can find $H(s) \in [\lambda]^{\leq \mu}$ such that $\mathbf{A}(s) = \bigvee \{G(t): s \subseteq t \in [H(s)]^{<\aleph_0}\}$. By fattening $H$ we can suppose that $s \subseteq H(s)$ for all $s \in [\lambda]^{k-1}$, and that $H(s)$ is closed under $F$. Note $\lambda \geq (\mu^+)^{+(k-1)}$. Hence $(\lambda, k-1, \mu^+) \rightarrow k$; thus we can find some $t \in [\lambda]^k$ such that for all $s \in [w]^{k-1}$, $H(s) \cap w = s$.
	
	Enumerate $[t]^{k-1} = (s_0, \ldots, s_{k-1})$. Inductively choose $w_{s_i}$ with $s \subseteq w_{s_i} \in [H(s)]^{<\aleph_0}$ such that $G(w_{s_i})$ is compatible with $G(t) \cup \bigcup_{j < i} G(w_{s_j})$; this is possible because $G(t) \leq \mathbf{A}(s_i)$. Note then that for each $s \in [t]^{k-1}$,  $w_{s} \cap t = F(w_{s}) \cap t = s$ since $F(w_s) \subseteq H(s)$, and note also $\bigcup_{s \in [t]^{k-1}}G(w_s)$ is a function. This witnesses $\lambda$ is big enough for $(\mu, \theta, k)$.
	
\end{proof}

%
%

Finally, we prove the following. Lemma 2.5 of \cite{InfManyClass} implies that $\Psi(\mu, \theta, k) \geq \mu^{+(k-1)}$; what we prove is stronger, since we know $\Psi(\mu, \theta, 2) = \mu^+$.

\begin{theorem}\label{CombThm2}
	Suppose $\theta$ is a regular cardinal, and $\mu = \mu^{<\theta}$. Then for all $2 \leq k < \aleph_0$, $\Psi(\mu, \theta, k+1) > \Psi(\mu, \theta, k).$
\end{theorem}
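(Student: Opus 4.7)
The plan is to show directly that the cardinal $\lambda := \Psi(\mu, \theta, k)$ is not big enough for $(\mu, \theta, k+1)$, by constructing explicit $F, G$ on $[\lambda]^{<\aleph_0}$ that admit no $(k+1)$-bigness witness. Combining the upper bound $\Psi(\mu, \theta, k) \leq \mu^{+k}$ from Theorem~\ref{CombThm1} with the Malliaris--Shelah lower bound $\Psi(\mu, \theta, k) \geq \mu^{+(k-1)}$ (from Lemma~2.5 of \cite{InfManyClass}), we have $\lambda \in \{\mu^{+(k-1)}, \mu^{+k}\}$; in particular $\lambda$ is a successor cardinal, say $\lambda = \lambda_0^+$ with $\lambda_0 < \lambda$. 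Since $\lambda_0 < \Psi(\mu, \theta, k)$, we may fix $F_0 : [\lambda_0]^{<\aleph_0} \to [\lambda_0]^{<\theta}$ inflationary and $G_0 : [\lambda_0]^{<\aleph_0} \to P_{\lambda_0 \mu \theta}$ witnessing that $\lambda_0$ is not big enough for $(\mu, \theta, k)$, and fix bijections $\pi_\gamma : \gamma \to \lambda_0$ for each $\gamma \in [\lambda_0, \lambda)$.

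The construction is by ``top-element coding'': given $w \in [\lambda]^{<\aleph_0}$ with $\gamma := \max w \geq \lambda_0$ and $w' := w \setminus \{\gamma\}$, set
\[
F(w) = \{\gamma\} \cup \pi_\gamma^{-1}\bigl(F_0(\pi_\gamma(w'))\bigr), \qquad G(w) = G_0(\pi_\gamma(w')) \cup \{(\gamma, 0)\},
\]
and set $F(w) = F_0(w)$, $G(w) = G_0(w)$ when $\max w < \lambda_0$. A direct check verifies that $F$ is inflationary and $|F(w)| < \theta$. The ``top marker'' $(\gamma, 0)$ is inserted so that compatibility across $G(w), G(w')$ with differing tops becomes non-trivial; this is meant to force the tops of all $w_s$'s in any purported $(k+1)$-witness to align.

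The key claim is that for any $(k+1)$-bigness witness $(t, (w_s))$ at $\lambda$, writing $t = \{\alpha_0 < \cdots < \alpha_k\}$, one must have $\max w_s = \alpha_k$ for every $s \in [t]^k$ with $\alpha_k \in s$. Granting the claim, all such $w_s$ lie in $[\alpha_k+1]^{<\aleph_0}$, and we may project via $\pi_{\alpha_k}$: set $t' := \pi_{\alpha_k}(t \setminus \{\alpha_k\}) \in [\lambda_0]^k$, and for each $s' \in [t']^{k-1}$ let $s := \pi_{\alpha_k}^{-1}(s') \cup \{\alpha_k\}$ and $w'_{s'} := \pi_{\alpha_k}(w_s \setminus \{\alpha_k\})$. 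Unwinding the definitions, $(t', (w'_{s'}))$ is then a $(k)$-bigness witness for $(F_0, G_0)$ at $\lambda_0$, contradicting our choice of $(F_0, G_0)$.

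The main obstacle is proving the key claim: the top marker alone does not suffice, since markers $(\gamma', 0)$ and $(\gamma'', 0)$ at distinct positions $\gamma' \neq \gamma''$ in $P_{\lambda \mu \theta}$ trivially commute. The remedy is to enrich $G(w)$ with further constraints keyed to the whole set $w$ rather than to $\max w$ alone --- for instance, placing a value at every element of $w$ determined by $\pi_{\max w}$ --- so that simultaneous compatibility of the $k+1$ many $G(w_s)$'s forces a common top via collisions of $\pi$-values at shared elements of $t$. If this direct enrichment proves unwieldy, a robust alternative is to run a Boolean-algebra completion argument analogous to Theorem~\ref{CombThm1}, using the $\mu^+$-c.c.\ of $P_{\lambda \mu \theta}$ together with a pressing-down-style argument on the tops $\max w_s$ to reduce to the top-aligned case and then project.
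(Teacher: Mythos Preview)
Your high-level plan (top-element coding via bijections from initial segments, then projection to the predecessor) matches the paper's, but there is a genuine gap exactly where you flag it. Your ``key claim'' that $\max w_s=\alpha_k$ for every $s\ni\alpha_k$ is false for your $F,G$ as written: nothing prevents $w_s$ from containing elements above $\alpha_k$, and your marker $(\gamma,0)$ imposes no constraint across distinct $\gamma$'s. Your suggested enrichment does not obviously rescue the claim either --- for instance, recording $(\beta,\max w)$ at every $\beta\in w$ would force all the $w_s$'s to share a \emph{common} maximum $\gamma^*$, but typically $\gamma^*>\alpha_k$ and $\gamma^*\notin t$, so projecting via $\pi_{\alpha_k}$ is still illegitimate while projecting via $\pi_{\gamma^*}$ sends $t$ to a $(k{+}1)$-set in $\lambda_0$ rather than a $k$-set. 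The Boolean-algebra alternative is too vague to evaluate.

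The paper does not try to force top alignment at all. It encodes in $G'$ three conditions on compatible pairs $w,w'$: (I) same order type, with order-isomorphism $\phi$; (II) $\phi$ is the identity on $w\cap w'$; (III) for \emph{every} $\alpha\in w$ (not just $\max w$) and every $v\subseteq w\cap\alpha$, the values $G(h_\alpha[v])$ and $G(h_{\phi(\alpha)}[\phi[v]])$ are compatible. Given a purported $(k{+}1)$-witness $(t',(w'_s)_{s\in[t']^k})$ with $\alpha_*=\max t'$, one uses the maps $\phi_{s',s}:w'_{s'}\to w'_s$ to define virtual $(k{+}1)$-sets $v_s:=s\cup\phi_{s',s}[s']\subseteq w'_s$; a short argument from (II) shows $v_s$ is independent of the choice of $s'$, and for $s\ni\alpha_*$ one gets $\max v_s=\alpha_*$ automatically. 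Projecting the $v_s$'s --- not the $w'_s$'s --- via $h_{\alpha_*}$ then yields a $k$-witness in $\lambda$, with condition (III) applied at $\alpha=\alpha_*$ supplying the required $G$-compatibility. The idea your proposal is missing is precisely this: rather than force $\max w'_s=\alpha_*$, replace each $w'_s$ by a canonical sub-$(k{+}1)$-set with maximum $\alpha_*$, and encode enough in $G'$ (condition (III)) to transfer compatibility along the order-isomorphisms.
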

\begin{proof}
	Suppose $\lambda$ is not big enough for $(\mu, \theta, k)$; it suffices to show $\lambda^+$ is not big enough for $(\mu, \theta, k+1)$. Choose $F: [\lambda]^{<\aleph_0} \to [\lambda]^{<\theta}$, $G:[\lambda]^{<\aleph_0} \to P_{\lambda \mu \theta}$ witnessing that $\lambda$ is not big enough for $(\mu, \theta, k)$.
	
	For each $\alpha < \lambda^+$, let $h_\alpha: \alpha \to |\alpha|$ be a bijection; note then that each $\mbox{range}(h_\alpha) \subseteq \lambda$.
	
	Let $F': [\lambda^+]^{<\aleph_0} \to [\lambda^+]^{<\theta}$ be defined as follows. Put $F'(\emptyset) = \emptyset$. If $w' \in [\lambda^+]^{<\aleph_0}$ is nonempty, then write $\max(w') = \alpha_*$ and write $w = h_{\alpha_*}[w' \backslash \{\alpha_*\}]$ and put $F'(w') = w' \cup h_{\alpha_*}^{-1}[F(w)]$. Define $G': [\lambda^+]^{<\aleph_0} \to P_{\lambda^+ \mu \theta}$ so that if $G'(w)$ and $G'(w')$ are compatible, then:
	
	\begin{itemize}
		\item[(I)] $w$ and $w'$ have the same cardinality; let $\phi: w \to w'$ be the unique order-preserving bijection;
		\item[(II)] $\phi$ is the identity on $w \cap w'$;
		\item[(III)] For all $\alpha \in w$ and for all $v \subseteq w \cap \alpha$, $G(h_\alpha[v])$ and $G(h_{\phi(\alpha)}[\phi[v]])$ are compatible.
	\end{itemize} 
	
	To see that this is possible: first of all, write $X= \lambda^+ + 1$ and define $G_0: [\lambda^+]^{<\aleph_0} \to P_{X \mu \theta}$ as follows (we can reindex to get a map into $P_{\lambda \mu \theta}$ afterwards). Suppose $w \in[\lambda^+]^{<\aleph_0}$; enumerate $w = \{\alpha_i: i < n\}$ in increasing order. Then let $\mbox{dom}(G_0(w)) = w \cup \{\lambda^+\}$, and put $G_0(w)(\alpha_i) = i$, and put $G_0(w)(\lambda^+) = n$. Then $G_0$ clearly works for conditions (I) and (II).
	
	Also, for each nonempty $s \in [\omega]^{<\aleph_0}$ (this is $\omega$ the ordinal, not $w$), define $G_s: [\lambda^+]^{<\aleph_0} \to P_{\lambda \mu \theta}$ as follows: suppose $w \in [\lambda^+]^{<\aleph_0}$; enumerate $w = \{\alpha_i: i < n\}$ in increasing order. If $s \not \subseteq n$ then let $G_s(w) = \emptyset$. Otherwise, let $i_* = \max(s)$, and let $G_s(w) = G(h_{i_*}[\alpha_i: i \in s, i < i_*])$. 
	
	Then choose $G': [\lambda^+]^{<\aleph_0} \to P_{\lambda^+ \mu \theta}$ so that if $G'(w)$ and $G'(w')$ are compatible, then $G_0(w)$ and $G_0(w')$ are compatible, as are $G_s(w)$ and $G_s(w')$ for each nonempty $s \in [\omega]^{<\aleph_0}$. Easily, this works.
	
	Suppose towards a contradiction $F', G'$ did not witness that $\lambda^+$ is not big enough for $(\mu,\theta, k+1)$. Then we can find $t' \in [\lambda^+]^{k+1}$ and $(w'_s: s \in [t']^{k})$ from $[\lambda^+]^{<\aleph_0}$, such that each $w'_s \cap t' = F'(w'_s) \cap t' = s$ and such that $\bigcup\{G'(w'_s): s \in [t']^{k}\}$ is a function. Write $\alpha_* = \max(t')$.
	
	For each $s, s' \in [t']^k$ with $s \not= s'$, let $\phi_{s', s}: w_{s'} \to w_{s}$ be the unique order-preserving isomorphism. Write $v_{s', s} := s \cup \phi_{s', s}[s']$. We claim that $v_{s', s}$ only depends on $s$. Indeed, write $u= s \cap s'$, and write $t' \backslash s = \{\alpha\}$, so $s' = u \cup \{\alpha\}$. Then $\phi_{s', s}[s'] = u \cup \phi_{s', s}(\alpha)$, since $\phi_{s', s}$ is the identity on $u$. Thus $v_{s', s} = s \cup \phi_{s', s}(\alpha)$. So suppose $s'' \in [t']^k$ is distinct from both $s$ and $s'$. Then $\alpha \in s' \cap s''$, hence $\alpha \in w_{s'} \cap w_{s''}$, hence $\phi_{s's''}(\alpha) = \alpha$. Since $\phi_{s's} = \phi_{s''s} \circ \phi_{s's''}$, plugging in $\alpha$ gives $\phi_{s's}(\alpha) = \phi_{s''s}(\alpha)$. Thus $v_{s', s} = v_{s'', s}$.
	
	For each $s \in [t']^k$ write $v_s = v_{s', s}$, for some or any $s' \in [w']^k$ with $s' \not= s$. Then $s \subseteq v_s \in [\lambda^+]^{k+1}$. Moreover, if $\max(s) = \alpha_*$ (which is the only case we will use) then $\max(v_s) = \alpha_*$, since each $\phi_{s', s}$ is order-preserving.  
	
	Let $t = h_{\alpha_*}[t' \backslash \{\alpha_*\}]$. For each $s \in [t]^{k-1}$, let $u_s = h_{\alpha_*}^{-1}[s] \cup \{\alpha_*\} \in [t']^k$, and let $w_s = h_{\alpha_*}[v_{u_s} \backslash \{\alpha_*\}]$.
	
	Note that each $w_s \cap t = F(w_s) \cap t = s$, since visibly each $s \subseteq w_s$, and if some $F(w_s) \supseteq t$, then $F'(w'_{u_s}) \supseteq t'$ by definition of $F'$.
	
	So it suffices to show that $\bigcup \{G(w_s): s \in [t]^{k-1}\}$ is a function, for which it suffices to check each pair. So suppose $s, s' \in [t]^{k-1}$. Write $u = u_s$, write $u' = u_{s'}$. Note that $\phi_{u, u'}[v_u] = v_{u'}$: indeed, $\phi_{u, u'}[v_u] = \phi_{u, u'}[u \cup \phi_{u', u}[u']] = \phi_{u, u'}[u] \cup u' = v_{u'}$. Thus, $G(w_s)$ and $G(w_{s'})$ are compatible, by condition (III) in the definition of $G$.
\end{proof}

\begin{corollary}\label{CombCor}
	Suppose $\theta$ is a regular cardinal and $\mu = \mu^{<\theta}$. Then there is a unique $k_*$ with $3 \leq k_* \leq \aleph_0$, such that for all $2 \leq k < k_*$, $\Psi(\mu, \theta, k) = \mu^{+(k-1)}$, and for all $k_* \leq k < \aleph_0$, $\Psi(\mu, \theta, k) = \mu^{+k}$.
\end{corollary}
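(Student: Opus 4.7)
The plan is to combine Theorems~\ref{CombThm0}, \ref{CombThm1}, and \ref{CombThm2} to squeeze $\Psi(\mu, \theta, k)$ into a two-element set, after which the dichotomy follows by short bookkeeping. First I would establish by induction on $k \geq 2$ that $\Psi(\mu, \theta, k) \geq \mu^{+(k-1)}$. The base case $k = 2$ is Theorem~\ref{CombThm0}. For the inductive step, if $\Psi(\mu, \theta, k) \geq \mu^{+(k-1)}$, then Theorem~\ref{CombThm2} yields $\Psi(\mu, \theta, k+1) > \mu^{+(k-1)}$; since $\Psi$ takes only cardinal values and there is no cardinal strictly between $\mu^{+(k-1)}$ and $\mu^{+k}$, this forces $\Psi(\mu, \theta, k+1) \geq \mu^{+k}$. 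Combined with the upper bound $\Psi(\mu, \theta, k) \leq \mu^{+k}$ from Theorem~\ref{CombThm1}, we conclude that $\Psi(\mu, \theta, k) \in \{\mu^{+(k-1)}, \mu^{+k}\}$ for every $k \geq 2$.

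Next I would define $k_* \in [3, \aleph_0]$ to be the least $k \geq 2$ for which $\Psi(\mu, \theta, k) = \mu^{+k}$, or $\aleph_0$ if no such $k$ exists. Since Theorem~\ref{CombThm0} gives $\Psi(\mu, \theta, 2) = \mu^+ \neq \mu^{+2}$, we automatically have $k_* \geq 3$. For $2 \leq k < k_*$, the minimality of $k_*$ together with the two-value dichotomy forces $\Psi(\mu, \theta, k) = \mu^{+(k-1)}$. For $k \geq k_*$ (when $k_* < \aleph_0$), I would argue by induction that $\Psi(\mu, \theta, k) = \mu^{+k}$: the base case $k = k_*$ is by definition, and if $\Psi(\mu, \theta, k) = \mu^{+k}$, then Theorem~\ref{CombThm2} gives $\Psi(\mu, \theta, k+1) > \mu^{+k}$, hence $\Psi(\mu, \theta, k+1) \geq \mu^{+(k+1)}$, which together with Theorem~\ref{CombThm1} forces equality. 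Uniqueness of $k_*$ is immediate from its definition.

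There is no serious obstacle here, since the three theorems invoked have already done the combinatorial heavy lifting; the corollary merely records the observation that $\Psi(\mu, \theta, \cdot)$ climbs by at least one cardinal step at each $k$ until it catches up with its upper bound, after which it is pinned there. The only small subtlety is keeping track that $\Psi(\mu, \theta, k)$ is a cardinal, so that ``strictly greater than $\mu^{+(k-1)}$'' upgrades to ``at least $\mu^{+k}$''; this is built into the definition of $\Psi$ and the remark on monotonicity in $\lambda$ that precedes it.
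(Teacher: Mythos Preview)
Your argument is correct and is exactly the intended one: the paper states this corollary without proof, expecting the reader to combine Theorems~\ref{CombThm0}, \ref{CombThm1}, and \ref{CombThm2} just as you do. The only point worth a brief remark is that $\Psi(\mu,\theta,k)$ is indeed a cardinal (since ``big enough'' depends only on $|\lambda|$), which you note and which justifies the jump from $>\mu^{+(k-1)}$ to $\geq \mu^{+k}$.
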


\begin{question}
	What is this $k_*$?
\end{question}

It is not too hard to use Corollary~\ref{CombCor} to verify that $T_{k+1, k} \not \trianglelefteq T_{k'+1, k'}$ for all $k < k'$, following the proof of Malliaris and Shelah in \cite{Optimals} for the case $k+1 < k'$. We prefer to take a different approach, where we will always take $\lambda$ to be large (say $\geq \mu^{+\omega}$); this will give sharper model-theoretic bounds for our dividing lines.

Before moving on, we give a characterization of ``not big enough" that will be helpful later, in the case when $\theta$ is uncountable. Recall (for instance, see \cite{Jech}):

\begin{definition}
	Suppose $\theta$ is an uncountable regular cardinal and $\lambda \geq \theta$. Then $C \subseteq [\lambda]^{<\theta}$ is closed unbounded (club) if $C$ is closed under unions of ascending chains (or equivalently, upwards-directed systems), and for all $w \in [\lambda]^{<\theta}$ there is $v \in C$ with $w \subseteq v$.
\end{definition}

\begin{theorem}\label{CombThm3}
	
	Suppose $\theta$ is a regular uncountable cardinal, and $\mu = \mu^{<\theta}$, and $2 \leq k < \aleph_0$. Then $\lambda$ is big enough for $(\mu, \theta, k)$ if and only if for every club $C \subseteq [\lambda]^{<\theta}$ and for every map $G: C \to P_{\lambda \mu \theta}$, there is some $t \in [\lambda]^{k}$ and $(w_s: s \in [t]^{k-1})$ from $C$, such that each $w_s \cap t = s$,  and such that $\bigcup \{G(w_s): s \in [t]^{k-1}\}$ is a function.
\end{theorem}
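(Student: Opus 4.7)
The plan is as follows. I will prove both directions by translating between the $F$-based combinatorial formulation and the club-based one.

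For $(\Rightarrow)$, assuming $\lambda$ is big enough for $(\mu,\theta,k)$, I will reduce the club version to bigness. Given a club $C \subseteq [\lambda]^{<\theta}$ and $G : C \to P_{\lambda\mu\theta}$, I will use unboundedness of $C$ to pick, for each finite $w$, some $F(w) \in C$ with $w \subseteq F(w)$; setting $G'(w) := G(F(w))$ makes $F$ inflationary into $[\lambda]^{<\theta}$ and $G'$ a map into $P_{\lambda\mu\theta}$. Applying bigness to $(F,G')$ yields $t \in [\lambda]^k$ and finite $(w_s)$ with $F(w_s) \cap t = s$ and $\bigcup_s G(F(w_s))$ a function; then $w'_s := F(w_s) \in C$ witnesses the club version.

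For $(\Leftarrow)$, given inflationary $F$ and $G$, I will let $C := \{w \in [\lambda]^{<\theta} : w \text{ is } F\text{-closed}\}$; this is a club because $\theta$ is regular and uncountable. For each $w \in C$ fix the order-preserving bijection $h_w$ from $\mathrm{ot}(w)$ onto $w$, and fix a bijection $\pi : \lambda \setminus \{0\} \to \lambda \times [\theta]^{<\aleph_0}$; since $\mu = \mu^{<\theta}$ forces $\mu \geq \theta$, ordinals below $\theta$ can be viewed as elements of $\mu$. I will define $G^*(w) \in P_{\lambda\mu\theta}$ by setting
\[
G^*(w)(0) := \mathrm{ot}(w), \qquad G^*(w)(\pi(\alpha, \vec\tau)) := G(h_w[\vec\tau])(\alpha)
\]
for $\vec\tau \in [\mathrm{ot}(w)]^{<\aleph_0}$ and $\alpha \in \dom(G(h_w[\vec\tau]))$; regularity of $\theta$ keeps $|G^*(w)| < \theta$. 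Applying the club hypothesis to $(C,G^*)$ yields $t \in [\lambda]^k$ and $(w'_s)_{s \in [t]^{k-1}}$ in $C$ with $w'_s \cap t = s$ and $\bigcup_s G^*(w'_s)$ a function. Agreement at coordinate $0$ forces $\mathrm{ot}(w'_s) = \kappa_*$ to be common across $s$; then $\vec\tau_* := \bigcup_s h_{w'_s}^{-1}[s] \in [\kappa_*]^{<\aleph_0}$ and $w_s := h_{w'_s}[\vec\tau_*]$ give finite $w_s \subseteq w'_s$ with $s \subseteq w_s$. Agreement at coordinates $\pi(\alpha, \vec\tau_*)$ then gives $\bigcup_s G(w_s)$ a function, while $F$-closedness of $w'_s$ with inflationarity of $F$ yields $F(w_s) \cap t = s$, completing the bigness conclusion.

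The main obstacle is the $(\Leftarrow)$ direction. A naive position-based encoding $G^*(w)(\pi(\alpha, \vec\tau)) := G(h_w[\vec\tau])(\alpha)$ alone fails, because the positions $h_{w'_s}^{-1}[s]$ depend on $s$ and can exceed $\mathrm{ot}(w'_{s'})$ for some other $s'$, so the strand $\vec\tau_*$ may be absent from $\dom(G^*(w'_{s'}))$ and no compatibility information could be extracted. The crucial trick is to reserve coordinate $0$ to record $\mathrm{ot}(w)$ itself, whose matching under $G^*$-compatibility forces all $\mathrm{ot}(w'_s)$ to coincide, guaranteeing that $\vec\tau_*$ lives in every $G^*(w'_s)$'s domain and that the desired $G$-compatibility can be read off from that single strand.
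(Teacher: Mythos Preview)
Your $(\Rightarrow)$ direction is identical to the paper's. Your $(\Leftarrow)$ direction is correct but takes a genuinely different route. The paper encodes \emph{three} conditions into $G$: (I) $w,w'$ have the same order-type; (II) the order-isomorphism $\phi:w\to w'$ is the identity on $w\cap w'$; (III) $G'(v)$ and $G'(\phi[v])$ are compatible for all finite $v\subseteq w$. It then reuses the combinatorial lemma from the proof of Theorem~\ref{CombThm2} (that $s\cup\phi_{s',s}[s']$ depends only on $s$, which requires condition (II)) to produce witnesses $v_s\in[w_s]^k$. Your encoding records only (I) and (III); you avoid (II) entirely by taking $\vec\tau_*=\bigcup_s h_{w'_s}^{-1}[s]$ and $w_s=h_{w'_s}[\vec\tau_*]$, which is automatically the same strand for every $s$ once the order-types agree. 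This is more elementary and self-contained (no appeal to the earlier theorem), at the cost of producing witnesses $w_s$ of size $|\vec\tau_*|$ rather than exactly $k$---which is harmless for the ``big enough'' conclusion. Two small cosmetic points: your $\pi$ is declared as a bijection $\lambda\setminus\{0\}\to\lambda\times[\theta]^{<\aleph_0}$, so strictly you mean $\pi^{-1}(\alpha,\vec\tau)$ in the definition of $G^*$; and the existence of such a bijection tacitly uses $\lambda\geq\theta$, which holds in all cases of interest (and the theorem is vacuous otherwise).
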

\begin{proof}
	Suppose first that $\lambda$ is big enough for $(\mu, \theta, k)$. Let $C, G$ be given. Define $F': [\lambda]^{<\aleph_0} \to C$ so that each $w \subseteq F'(w) \in C$ (possible since $C$ is unbounded). Define $G': [\lambda]^{<\aleph_0} \to P_{\lambda \mu \theta}$ via $G'(w) = G(F'(w))$. By hypothesis there is some $t \in [\lambda]^k$ and some $(w'_s: s \in [t]^{k-1})$ such that each $w'_s \cap t = F'(w'_s) \cap t = s$, and such that $\bigcup \{G'(w'_s): s \in [t]^{k-1}\}$ is a function. Write $w_s = F'(w'_s)$; then $(w_s: s \in [t]^{k-1})$ is as desired.
	
	Next, suppose $\lambda$ is not big enough for $(\mu, \theta, k)$, as witnessed by $F', G'$. Let $C \subseteq [\lambda]^{<\theta}$ be the club set of all $w$ with $F'[[w]^{<\aleph_0}] \subseteq w$. As in the proof of Theorem~\ref{CombThm2}, we can choose some $G: C \to P_{\lambda \mu \theta}$ so that for all $w, w' \in C$, if $G(w)$ and $G(w')$ are compatible, then:
	
	\begin{itemize}
		\item[(I)]  $w$ and $w'$ have the same order-type; let $\phi: w \to w'$ be the unique order-preserving bijection;
		\item[(II)] $\phi$ is the identity on $w \cap w'$;
		\item[(III)] For all $v \in [w]^{<\aleph_0}$, $G'(v)$ and $G'(\phi[v])$ are compatible.
	\end{itemize}
	
	Suppose $t \in [\lambda]^k$, and suppose $(w_s: s \in [t]^{k-1})$ is a sequence from $C$ with each $w_s \cap t = s$.
	
	For each $s \not= s' \in [t]^{k-1}$, let $\phi_{s', s}: w_{s'} \to w_{s}$ be the unique order-preserving isomorphism. As in Theorem~\ref{CombThm2}, $s \cup \phi_{s', s}[s']$ only depends on $s$. For each $s \in [t]^{k-1}$ write $v_s = s \cup \phi_{s', s}[s']$, for some or any $s' \not= s$ with $s' \in [t]^{k-1}$. Then $s \subseteq v_s \in [w_s]^{k}$, and $\phi_{s, s'}(v_s) = v_{s'}$. Now $\bigcup\{G'(v_s): s \in [t]^{k-1}\}$ cannot be a function, by choice of $F', G'$; thus by condition (III) in the definition of $G$, $\bigcup\{G(w_s): s \in [t]^{k-1}\}$ cannot be a function.
\end{proof}

\section{Independent Systems of Models} \label{KeislerNewIndSys}

We define what we mean by independent systems of sets and models, quote some results from \cite{SP} (joint with Shelah), and define what it means for a simple theory to have $\mathcal{P}^-(k)$-amalgamation of models. Finally, we recall the notion of $<k$-type amalgamation from \cite{SP}, and show that $\mathcal{P}^-(k)$-amalgamation of models implies $<k$-type amalgamation.

The following definition is similar to the definition of stable system in Shelah \cite{ShelahIso} for stable theories, see Section XII.2. In fact we are modeling our definition after Fact 2.5 there (we cannot take the definition from \cite{ShelahIso} because we allow $P$ to contain infinite subsets of $I$). The terminology ``non-forking diagrams" is used in \cite{SP}, but we prefer ``independent systems" to align with \cite{GenAmalg}. Typically we will deal with the case where $\Delta$ is closed under subsets, i.e. is a pattern.

Alert: we emphatically do not adopt the convention of always working in the monster model $\mathfrak{C}$. This is important because systems of submodels of $\mathfrak{C}$ can always be amalgamated.

\begin{definition}
	
	Let $T$ be simple. 
	
	Suppose $\Delta \subseteq \mathcal{P}(I)$ is closed under finite intersections, and suppose $M \models T$. Say that $(A_s: s \in \Delta)$ is a system of subsets of $M$  if each $A_s \subseteq M$ and $s \subseteq t$ implies $A_s \subseteq A_t$, and each $A_s \cap A_t = A_{s \cap t}$. Say that $(A_s: s \in \Delta)$ is an independent system if for all $s_i: i < n, t \in \Delta$, $\bigcup_{i < n} A_{s_i}$ is free from $A_t$ over $\bigcup_{i < n} A_{s_i \cap t}$. If each $A_s$ is an elementary submodel of $M$, we say that $(A_s: s \in \Delta)$ is a system of submodels of $M$. 
	
	Say that $(M_s: s \in \Delta)$ is a system of models if each $M_s \models T$ and for each $s \in I$, $(M_t: t \in \Delta, t \subseteq s)$ is a system of submodels of $M_s$, and for all $s,t \in \Delta$, $M_s \cap M_t = M_{s \cap t}$. Say that $(M_s: s \in \Delta)$ is independent if for each $s \in P$, $(M_t: t \in \Delta, t \subseteq s)$ is independent. Finally, say that $M$ is a solution to $(M_s: s \in \Delta)$ if $M$ is a model of $T$ and $(M_s: s \in \Delta)$ is an independent system of submodels of $M$.
\end{definition}

The following gives some alternative equivalences of our definition of independence. It is Lemma 4.2 of \cite{SP}.

\begin{lemma}\label{EquivsForInd}
	Suppose $(A_s: s \in \Delta)$ is a system of subsets of $M$, where $\Delta \subseteq \mathcal{P}(I)$ is closed under finite intersections. Then the following are equivalent:
	
	\begin{itemize}
		\item[(A)] For all downward-closed subsets $\Delta_0, \Delta_1 \subseteq \Delta$, $\bigcup_{s \in \Delta_0} A_s$ is free from $\bigcup_{s \in \Delta_2} A_s$ over $\bigcup_{s \in \Delta_0 \cap \Delta_1} A_s$.
		\item[(B)] For all $s_i: i < n, t_j: j < m$ from $\Delta$, $\bigcup_{i < n} A_{s_i}$ is free from $\bigcup_{j < m} A_{t_j}$ over $\bigcup_{i < n, j < m} A_{s_i \cap t_j}$.
		\item[(C)] $(A_s: s \in \Delta)$ is independent.
	\end{itemize}
\end{lemma}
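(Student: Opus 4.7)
My plan is to establish the three implications $(A)\Rightarrow(B)\Rightarrow(C)\Rightarrow(A)$, with the last one being the substantive step.

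For $(A)\Rightarrow(B)$, given finite families $\{s_i : i < n\}, \{t_j : j < m\} \subseteq \Delta$, I would let $\Delta_0, \Delta_1$ be their downward closures inside $\Delta$. Because $\Delta$ is closed under finite intersections, $\Delta_0 \cap \Delta_1$ is the downward closure of $\{s_i \cap t_j\}$ inside $\Delta$. Since the system respects inclusions, $\bigcup_{s \in \Delta_0} A_s = \bigcup_{i<n} A_{s_i}$ and similarly for the other two unions, so (A) delivers (B). The implication $(B)\Rightarrow(C)$ is immediate on taking $m = 1$ and $t_0 = t$.

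For $(C)\Rightarrow(A)$, the finite character of non-forking in simple theories lets me reduce to the case where $\Delta_0, \Delta_1$ are finite, at which point the statement collapses to (B). I would then prove (B) by induction on $m$. The base $m = 1$ is exactly (C). For the step $m \to m+1$, abbreviate $A = \bigcup_{i<n} A_{s_i}$, $B = \bigcup_{j<m} A_{t_j}$, $C = A_{t_m}$, $D_1 = \bigcup_{i<n,\, j<m} A_{s_i \cap t_j}$, $D_2 = \bigcup_{i<n} A_{s_i \cap t_m}$, and $D = D_1 \cup D_2$. The target is $A \forkindep_D (B \cup C)$, and by transitivity of non-forking in the base it suffices to prove (i) $A \forkindep_D B$ and (ii) $A \forkindep_{D \cup B} C$.

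For (i), the inductive hypothesis gives $A \forkindep_{D_1} B$. The crucial observation is that $D \subseteq A$, since each $A_{s_i \cap t_j} \subseteq A_{s_i}$; hence the base extension from $D_1$ to $D$ takes place entirely within the left side, and standard base monotonicity yields $A \forkindep_D B$. For (ii), I apply (C) to the enlarged left family $\{s_i : i < n\} \cup \{t_j : j < m\}$ against the right element $t_m$, obtaining $A \cup B \forkindep_E C$ for $E = D_2 \cup \bigcup_{j<m} A_{t_j \cap t_m}$; transitivity (splitting $A \cup B$ on the left) then gives $A \forkindep_{E \cup B} C$. But $\bigcup_{j<m} A_{t_j \cap t_m} \subseteq B$ and $D_1 \subseteq B$, so $E \cup B = D_2 \cup B = D \cup B$, which is precisely (ii). The main obstacle is essentially bookkeeping: one must notice both that the base $D$ lies inside the left side $A$ (so that base monotonicity is available for (i)) and that enlarging the left family in (C) absorbs the awkward extra pieces of the base into $B$ (so that (ii) drops out of a single application of (C) plus transitivity).
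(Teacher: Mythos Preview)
Your argument is correct. The paper does not actually prove this lemma; it merely cites Lemma~4.2 of \cite{SP}, so there is no in-paper proof to compare against. Your cycle $(A)\Rightarrow(B)\Rightarrow(C)\Rightarrow(A)$ and the induction on $m$ for $(C)\Rightarrow(B)$ are the natural approach, and the bookkeeping in the inductive step (in particular the observation that $D\subseteq A$ so that base monotonicity applies on the symmetric side, and that $E\cup B=D\cup B$) is handled correctly.

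One small point worth tightening: in your reduction of $(A)$ to the finitely generated case, invoking ``finite character'' alone is not quite enough, because shrinking $\Delta_0,\Delta_1$ to the downward closures of finite subfamilies also shrinks the base $\bigcup_{s\in\Delta_0\cap\Delta_1}A_s$. To recover independence over the original, larger base you need one more application of base monotonicity, using that $\bigcup_{s\in\Delta_0\cap\Delta_1}A_s$ is contained in $\bigcup_{s\in\Delta_1}A_s$ (and, on the symmetric pass, in $\bigcup_{s\in\Delta_0}A_s$). This is entirely routine and in the same spirit as the base-monotonicity moves you already make in the inductive step, but it should be mentioned.
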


We will need the following Lemma.
\begin{lemma}\label{ExtendingInd}
Suppose $I$ is a finite index set and $\Delta \subseteq \mathcal{P}(I)$ is closed under subsets. Suppose $(A_s: s \in \mathcal{P}(I))$ is an independent system of subsets of $\mathfrak{C}$, such that $A_s \preceq \mathfrak{C}$ for all $s \in \Delta$. Then we can find an independent system of submodels $(M_s:s \in \mathcal{P}(I))$ of $\mathfrak{C}$ with each $A_s \subseteq M_s$, and moreover $A_s = M_s$ for each $s \in \Delta$.
    
\end{lemma}
\begin{proof}
We prove this by induction on $k := |\mathcal{P}(I) \backslash \Delta|$. When $k = 0$ there is nothing to do.

So suppose we have verified the lemma for all $\Delta'$ with $|\mathcal{P}(I) \backslash \Delta'| \leq k$, and $(\Delta, A_s: s \subseteq I)$ are given with $|\mathcal{P}(I) \backslash \Delta| = k+1$.

Choose $s_* \subseteq I$ such that $s_* \not \in \Delta$ and $|s_*|$ is minimal subject to this. 
Then $s \in \Delta$ for all $s \subsetneq s_*$. Choose $A'_{s_*} \preceq \mathfrak{C}$ with $A_{s_*} \subseteq A'_{s_*}$ and $A'_{s_*} \forkindep_{A_{s_*}} A_I$. Define a system $(A'_s: s \subseteq I)$ of subsets of $\mathfrak{C}$ by putting $A'_s = A_s$ unless $s \supseteq s_*$, in which case $A'_{s} = A'_{s_*} \cup A_s$.

Let $\Delta' = \Delta \cup \{s_*\}$. Then $|\mathcal{P}(I) \backslash \Delta'| = k-1$. Once we show $(A'_s: s \subseteq I)$ is independent, then $(\Delta', A'_s: s \subseteq I)$ is as in the statement of the lemma,  so by the inductive hypothesis we get an independent system of models $(M_s: s \subseteq I)$ as there, and this system clearly also works for $(\Delta, A_s: s \subseteq I)$, continuing the induction.

So it suffices to show $(A'_s: s \subseteq I)$ is independent. Suppose $s_i: i < n, t_j: j < m$ are subsets of $I$. We need to show $\bigcup_{i < n} A'_{s_i}$ is free from $\bigcup_{j < m} A'_{t_j}$ over $\bigcup_{i < n, j < m} A'_{s_i \cap t_j}$. 

We know $\bigcup_{i < n} A_{s_i}$ is free from $\bigcup_{j < m} A_{t_j}$ over $\bigcup_{i < n, j < m} A_{s_i \cap t_j}$. There are now some cases. For brevity write $A = \bigcup_{i < n} A_{s_i}$, write $B = \bigcup_{j < m} A_{t_j}$ and write $C = A \cap B$, so we know $A \forkindep_C B$.

If each $s_i, t_j \not \supseteq s_*$ then there is nothing to do.

Suppose there is some $i < n$ with $s_i \supseteq s_*$, but no such $j < m$. Then we are trying to show $A'_{s_*}A \forkindep_C B$. This follows from $A_{s_*} \subseteq A$ and $A'_{s_*} \forkindep_{A_{s_*}} AB$.

If there is some $j < m$ with $s_j \supseteq s_*$ but no such $i < n$, then the situation is symmetric to the preceding case.

Finally suppose there is some $i < n$ with $s_i \supseteq s_*$ and there is some $j < m$ with $t_j \supseteq s_*$. Then we are trying to show $A \forkindep_{A'_{s_*} C} B$. This follows from $A_{s_*} \subseteq C$ and $A'_{s_*} \forkindep_{A_{s_*}} AB$.
\end{proof}

Many of the arguments of Malliaris and Shelah concerning complete Boolean algebras $\mathcal{B}$ \cite{DividingLine},  \cite{Optimals} \cite{InfManyClass} implicitly take place in the generic extension by $\mathcal{B}$. Specifically, ``collision detection" is the filtrations of models in the generic extension $\mathbb{V}[G]$. The following thus captures these arguments, with significant notational savings; it is Theorem 4.4 of \cite{SP}.

\begin{theorem}\label{IndependentSystemsThm}
	Suppose $T$ is a simple theory in a countable language, and suppose ${M_*} \models T$ is sufficiently saturated, and suppose ${A_*} \subseteq {M_*}$ has size at most $\lambda$. Then we can find an independent system  of countable submodels $(M_s: s \in \mathcal{P}(\lambda))$ of ${M_*}$, with ${A_*} \subseteq \bigcup_s M_s$, which satisfies the following:
	
	\begin{itemize}
		\item[(I)] For each $s \subseteq \lambda$, $M_s = \bigcup\{M_t: t \in [s]^{\leq \aleph_0}\}$;
		\item[(II)] For each $s \subseteq \lambda$, $|M_s| = |s| + \aleph_0$;
		\item[(III)] For each $X \subseteq \mathcal{P}(\lambda)$, $M_{\bigcap X} = \bigcap\{M_s: s \in X\}$.
	\end{itemize}
\end{theorem}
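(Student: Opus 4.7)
The plan is to build the entire system from a single element-wise support function. Fix a countable $M_\emptyset\preceq M_*$, and, writing $A_*=\{a_\alpha:\alpha<\lambda\}$ (padding if $|A_*|<\lambda$), I will construct by transfinite recursion on $\xi<\lambda$ elements $c_\xi\in M_*$ together with countable supports $\sigma(c_\xi)\in[\lambda]^{\leq\aleph_0}$, and then declare
\[
M_s:=M_\emptyset\cup\{c_\xi:\sigma(c_\xi)\subseteq s\}.
\]
With this definition (I) and (III) are immediate: $c_\xi\in M_s$ iff $\sigma(c_\xi)\in[s]^{\leq\aleph_0}$, hence $c_\xi\in M_{\sigma(c_\xi)}\subseteq\bigcup\{M_t:t\in[s]^{\leq\aleph_0}\}$, and $\sigma(c_\xi)\subseteq s$ for every $s\in X$ forces $\sigma(c_\xi)\subseteq\bigcap X$. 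So the substance is Tarski--Vaught closure, the cardinality bound (II), and independence.

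The recursion uses a bookkeeping that processes two kinds of tasks. A \emph{seed} task inserts $a_\alpha$: by local character of nonforking in the simple theory $T$, pick a countable $Y\subseteq B_{<\xi}:=\{c_\eta:\eta<\xi\}$ over which $\tp(a_\alpha/B_{<\xi})$ does not fork, then set $c_\xi:=a_\alpha$ and $\sigma(c_\xi):=\{\alpha\}\cup\bigcup_{y\in Y}\sigma(y)$. A \emph{witness} task, triggered by an already-added tuple $\bar b$ from $B_{<\xi}$ and a formula $\phi$ satisfiable at $\bar b$ in $M_*$, picks a realization $c\in M_*$ of $\phi(x,\bar b)$ whose type over $B_{<\xi}$ does not fork over $\{b_i\}$ (this $c$ exists by the extension axiom for nonforking in simple theories and the sufficient saturation of $M_*$), and sets $c_\xi:=c$ and $\sigma(c_\xi):=\bigcup_i\sigma(b_i)$. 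A standard interleaving ensures every $a_\alpha$ is handled at some seed stage and every pair $(\bar b,\phi)$ at some witness stage.

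For verification: $M_s\preceq M_*$ by Tarski--Vaught, since the witness $c_\xi$ introduced for any $(\bar b,\phi)$ with $\bar b\in M_s$ has $\sigma(c_\xi)=\bigcup_i\sigma(b_i)\subseteq s$ and hence lies in $M_s$. For (II), set $M_s^{(0)}:=M_\emptyset\cup\{c_\xi\text{ a seed}:\sigma(c_\xi)\subseteq s\}$, of cardinality $\leq|s|+\aleph_0$ since each such seed has a distinct index $\alpha\in\sigma(c_\xi)\subseteq s$, and recursively $M_s^{(n+1)}:=M_s^{(n)}\cup\{c_\xi\text{ a witness}:\bar b\subseteq M_s^{(n)}\}$; an easy induction on $\xi$ shows every $c_\xi\in M_s$ lies in some $M_s^{(n)}$, and since $(|s|+\aleph_0)^{<\aleph_0}=|s|+\aleph_0$ the bound is preserved at each step, so $|M_s|=\bigl|\bigcup_n M_s^{(n)}\bigr|\leq|s|+\aleph_0$. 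Finally, for independence, well-order the $c_\xi$ by $\xi$ and apply Lemma~\ref{SuffCondForInd}: at stage $\xi$ the nonforking base (either $Y$ or $\{b_i\}$) lies in $\{c_\eta:\eta<\xi,\ \sigma(c_\eta)\subseteq\sigma(c_\xi)\}$ by construction, so by base monotonicity of nonforking in simple theories the hypothesis of that lemma is verified for every $s_*\supseteq\sigma(c_\xi)$.

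The main obstacle is reconciling Tarski--Vaught closure with (II): a priori there are $\lambda$ witness tasks, and a naive count appears to allow $|M_s|$ to exceed $\aleph_0$ for countable $s$. The closure argument above defuses this by observing that witnesses with support $\subseteq s$ are created \emph{only} for tuples already lying inside $M_s$, yielding a self-contained $\omega$-iteration whose size remains bounded by $|s|+\aleph_0$. A subsidiary point is bookkeeping when $\lambda$ is singular: one indexes the iteration by $\lambda\times\omega$ in lexicographic order so that every seed and witness task is assigned a unique stage while $|B_{<\xi}|<\lambda$ is maintained throughout, keeping the extension/saturation steps available.
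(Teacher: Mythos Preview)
The paper does not prove Theorem~\ref{IndependentSystemsThm}; it simply quotes it as Theorem~4.4 of \cite{SP}. So there is nothing to compare against directly, and I will evaluate your argument on its own.

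Your approach --- build a single sequence $(c_\xi)$ with countable supports $\sigma(c_\xi)$, define $M_s$ by support containment, and verify independence via the well-ordering criterion of Lemma~\ref{SuffCondForInd} --- is the natural one and is essentially correct. Properties (I) and (III) really are immediate from the support definition, the Tarski--Vaught argument is right, and the $\omega$-closure computation for (II) is the correct way to defuse the apparent cardinality blowup.

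There is one genuine omission. Throughout the recursion you work with nonforking over subsets of $B_{<\xi}=\{c_\eta:\eta<\xi\}$, but Lemma~\ref{SuffCondForInd} requires $c_\xi$ to be free from \emph{all} its $<_*$-predecessors over those predecessors lying in $M_{s_*}$, and the predecessors include $M_\emptyset$. As written, your seed step arranges that $\tp(a_\alpha/B_{<\xi})$ does not fork over $Y$, but you need $\tp(a_\alpha/M_\emptyset\cup B_{<\xi})$ not to fork over $M_\emptyset\cup Y$; likewise for witnesses. The fix is painless: either enumerate $M_\emptyset$ as the first $\omega$ many $c_\xi$'s with $\sigma(c_\xi)=\emptyset$ (so that $M_\emptyset\subseteq B_{<\xi}$ for all later $\xi$), or simply replace $B_{<\xi}$ by $M_\emptyset\cup B_{<\xi}$ in both the local-character and extension steps. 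Once this is done, base monotonicity gives exactly the hypothesis of Lemma~\ref{SuffCondForInd} for every $s_*\supseteq\sigma(c_\xi)$, since $M_\emptyset\subseteq M_{s_*}$ always.

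A smaller point: your remark about singular $\lambda$ and keeping $|B_{<\xi}|<\lambda$ is not really needed. Since $M_*$ is ``sufficiently saturated'' (say $\lambda^+$-saturated), and the entire construction produces at most $\lambda$ elements, the extension step is always available regardless of cofinality; an ordinary length-$\lambda$ bookkeeping suffices.
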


We now describe some amalgamation properties that a theory can have.

\begin{definition}

%
%
		
	Suppose $\Delta \subseteq \mathcal{P}(I)$ is closed under intersections, and $T$ is a countable simple theory. Then $T$ has $\Delta$-amalgamation of models if every independent system of models $(M_s: s \in \Delta)$ of $T$ has a solution.
	
\end{definition}

We collect together some straightforward remarks. 	If $X$ is a set then let $\mathcal{P}^-(X)$ be the set of proper subsets of $X$. In particular, if $n  < \omega$ then $\mathcal{P}^-(n)$ is the set of all proper subsets of $n$; note $n = \{0, 1, \ldots, n-1\}$.

\begin{remark}
	\begin{itemize}
	\item If $k' < k$ and $T$ has $\mathcal{P}^-(k)$-amalgamation of models, then $T$ has $\mathcal{P}^-(k')$-amalgamation of models.
	\item If $\Delta \subseteq \mathcal{P}(I)$ is countable, then $T$ has $\Delta$-amalgamation of models if and only if every independent system of countable models $(M_s: s \in \Delta)$ has a solution.
		
	\item Every simple theory has $\mathcal{P}^-(3)$-amalgamation of models.
	\item It follows from Conclusion XII.2.12 of \cite{ShelahIso} that every stable theory has $\Delta$-amalgamation of models, whenever $\Delta \subseteq [I]^{<\aleph_0}$ is closed under subsets.

	\item $T_{rg}$, the theory of the random graph, has $\Delta$-amalgamation of models, whenever $\Delta \subseteq \mathcal{P}(I)$ is closed under finite intersections.
	\item For each $3 \leq k < n$, $T_{n, k}$ has $\mathcal{P}^-(k)$-amalgamation of models, but fails $\mathcal{P}^-(k+1)$-amalgamation of models.

	\end{itemize}
\end{remark}

Now we discuss type amalgamation; this was first introduced in joint work with Shelah \cite{SP}. It is hand-tailored to the $\leq_{SP}$-ordering, and in fact this amounts to being hand-tailored to the Keisler ordering as well. Given natural numbers $n, m$, we write $\,^n m$ for the set of functions from $n$ to $m$, to avoid ambiguity with exponentiation $m^n$.

\begin{definition}\label{DefOfTypeAP}
	Given $\Lambda \subseteq \,^n m$, let $\Delta_{\Lambda}$ be the set of all partial functions from $n$ to $m$ which can be extended to an element of $\Lambda$; so $\Delta_{\Lambda} \subseteq \mathcal{P}(n \times m)$ is closed under subsets, and $\Lambda$ is the set of maximal elements of $\Delta_{\lambda}$. 
	
	By a $\Lambda$-array, we mean an independent system $(N_s: s \in \Delta_{\Lambda})$ of submodels of $\mathfrak{C}$ together with maps $(\pi_{\eta, \eta'}: \eta, \eta' \in \Lambda)$ such that:
	\begin{itemize} 
		\item Each $\pi_{\eta \eta'}: N_{\eta} \to N_{\eta'}$ is an isomorphism,
		\item For all $\eta, \eta', \eta''$, $\pi_{\eta',\eta''} \circ \pi_{\eta, \eta'} = \pi_{\eta, \eta''}$;
		\item For all $\eta, \eta'$, if we put $u = \{i < n: \eta(i) = \eta(i')\}$, and if we put $s = \eta \restriction_u = \eta' \restriction_u$, then $\pi_{\eta, \eta'} \restriction_{N_s}$ is the identity.
	\end{itemize}
	
	Suppose $(\overline{N}, \overline{\pi})$ is a $\Lambda$-array and $\overline{x}$ is a finite tuple of variables. Then say that $\overline{p}(\overline{x})= (p_\eta(\overline{x}): \eta \in \Lambda)$ is a coherent system of types over $(\overline{N}, \overline{\pi})$ if each $p_\eta(\overline{x})$ is a type over $N_\eta$ which does not fork over $N_0$, and each $\pi_{\eta \eta'}[p_\eta(\overline{x})] = p_{\eta'}(\overline{x})$.
\end{definition}

\begin{definition}
	Suppose $\Lambda \subseteq \,^n m$. Then $T$ has $\Lambda$-type amalgamation if, whenever $(N_s: s \in \Delta_\Lambda), (\pi_{\eta, \eta'}: \eta, \eta' \in \Lambda)$ is a  $\Lambda$-array, and $(p_\eta(\overline{x}): \eta \in \Lambda)$ is a coherent system of types over $(\overline{N}, \overline{\pi})$, then $\bigcup_{\eta \in \Lambda}p_\eta(\overline{x})$ does not fork over $N_0$ (as computed in $\mathfrak{C}$).
	
	Let $\mathbf{\Lambda}$ be the set of all $\Lambda \subseteq \,^n m$, for varying $n, m < \omega$. Say that $T$ has $<k$-type amalgamation if for all $\Lambda \in \mathbf{\Lambda}$ with $|\Lambda| < k$, $T$ has $\Lambda$-type amalgamation.
\end{definition}

The role of the isomorphisms $\pi_{\eta, \eta'}$ is to exclude certain trivial failures of type amalgamation. We have the following example from \cite{GenAmalg}, for instance: work in $T_{rg}^{eq}$, the elimination of imaginaries of the random graph. Suppose $x$ is a variable corresponding to unordered pairs from the home sort; write $x = \{y, z\}$, so types in the variable $x$ correspond to symmetric types in the variables $y, z$ extending $y \not= z$. To show $T$ has $\Lambda$-type amalgamation, we need to show that there is no $\Lambda$-array $(\overline{N}, \overline{\pi})$ such that there are $a_0, a_1, a_2$ $\mathfrak{C}$ for which $\bigcup_{\eta \in \Lambda} p_\eta(x)$ asserts the following: 

\begin{itemize}
	\item For each $i < 2$, $y R a_i$ if and only if $\lnot z R a_i$;
	\item For all $i < j < 2$, $y R a_i$ if and only if $\lnot y R a_j$, and $z R a_i$ if and only if $\lnot z R a_j$.
\end{itemize}
But this cannot happen: otherwise, for each $i < j < 3$ we can find $\eta_{ij}\in \Lambda$ with $\{a_i, a_j\} \subseteq N_{\eta_{ij}}$. Write $b_0 = a_0$ and write $b_1 = a_1$. Also, write $b_2 = \pi_{\eta_{02} \,\eta_{01}}(a_2) = \pi_{\eta_{12} \,\eta_{01}}(a_2)$. Then since $\pi_{\eta \eta'}[p_\eta(x)] = p_{\eta'}(x)$ (applied to $\eta' = \eta_{01}$ and $\eta \in \{\eta_{02}, \eta_{12}\}$), we get that $p_{\eta_{01}}(x)$ implies the following: 

\begin{itemize}
	\item For each $i < 2$, $y R b_i$ if and only if $\lnot z R b_i$;
	\item For all $i < j < 2$, $y R b_i$ if and only if $\lnot y R b_j$, and $z R b_i$ if and only if $\lnot z R b_j$.
\end{itemize}

But then $p_{\eta_{01}}(x)$ is inconsistent, contradiction.

The following lemma is straightforward.
\begin{lemma}
	Suppose $\Lambda \subseteq \,^n m$. Then in the definition of $\Lambda$-type amalgamation, the following changes would not matter:
	\begin{itemize}
		\item[(A)] We could restrict to just countable models $N_s$.
		\item[(B)] We could allow $p_\eta(\overline{x})$ to be any partial type, or insist it is a single formula. Also, we could allow $\overline{x}$ to be of arbitrary length.
	\end{itemize}
\end{lemma}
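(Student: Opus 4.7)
The plan is to treat (A) and (B) by standard reductions, exploiting that $\Lambda \subseteq {}^n m$ is finite (so $\Delta_\Lambda$ is finite), finite character of forking, and the extension axiom for simple theories.

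For (A), the direction ``failure over arbitrary models implies failure over countable models'' is what requires work (the converse is vacuous). Given a failing $\Lambda$-array $(\overline N, \overline \pi)$ with coherent system $(p_\eta)$, the forking of $\bigcup p_\eta$ over $N_\emptyset$ is witnessed by a single dividing formula $\psi(\overline x, \overline a)$ implied by finite fragments $F_\eta \subseteq p_\eta$. I would then build countable $N'_s \preceq N_s$ for each of the finitely many $s \in \Delta_\Lambda$ via an $\omega$-step closure that respects the $\pi_{\eta\eta'}$ and their inverses, preserves intersections ($N'_s \cap N'_t = N'_{s \cap t}$), absorbs the parameters of $\psi$ and of each $F_\eta$, and additionally absorbs countably many non-dividing witnesses — for each $F_\eta$, a countable $N_\emptyset$-Morley sequence extending its parameters, together with enough of $N_\emptyset$ to make the sequence indiscernible over $N'_\emptyset$. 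Independence of the restricted system is immediate from monotonicity of non-forking, the Morley-style witnesses guarantee each $p_\eta \restriction N'_\eta$ does not fork over $N'_\emptyset$, and the forking of $\bigcup p_\eta \restriction \bigcup N'_\eta$ over $N'_\emptyset$ persists because $\psi(\overline x, \overline a)$ divides over any submodel that sees its parameters.

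For (B), I handle the three refinements in turn. Restricting $\overline x$ to a finite tuple is immediate from finite character of forking. For partial types versus single formulas, if $\bigcup p_\eta$ forks then some finite $\bigcup F_\eta$ forks, and replacing $p_\eta$ by $\bigwedge F_\eta$ yields a single-formula failure; the converse is trivial since single formulas are partial types. The key step is lifting a partial-type failure $(p_\eta)$ — with some finite $\bigcup F_\eta$ forking over $N_\emptyset$ — to a complete-type failure. Fixing $\eta_0 \in \Lambda$, set
\[
F^*_{\eta_0} \;=\; \bigcup_{\eta \in \Lambda} \pi_{\eta\eta_0}[F_\eta] \;\subseteq\; p_{\eta_0}.
\]
By the extension axiom, choose $\overline c \models F^*_{\eta_0}$ with $q_{\eta_0} := \tp(\overline c / N_{\eta_0})$ non-forking over $N_\emptyset$; this is possible because $F^*_{\eta_0} \subseteq p_{\eta_0}$ and $p_{\eta_0}$ does not fork over $N_\emptyset$. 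Set $q_\eta := \pi_{\eta_0\eta}[q_{\eta_0}]$. Then $(q_\eta)$ is coherent by the cocycle relation on the $\pi$'s; each $q_\eta$ is complete over $N_\eta$ and does not fork over $N_\emptyset$, since $\pi_{\eta_0\eta}$ fixes $N_\emptyset$ pointwise; and
\[
q_\eta \;\supseteq\; \pi_{\eta_0\eta}[F^*_{\eta_0}] \;\supseteq\; \pi_{\eta_0\eta}\pi_{\eta\eta_0}[F_\eta] \;=\; F_\eta,
\]
so $\bigcup q_\eta \supseteq \bigcup F_\eta$ forks over $N_\emptyset$.

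The main obstacle is the partial-to-complete reduction: one cannot in general ask $\overline c$ to realize all of $\bigcup p_\eta$ while keeping $\tp(\overline c / N_{\eta_0})$ non-forking, since cross-index interactions might force forking. The trick is to demand only that $\overline c$ realize the finite fragment $F^*_{\eta_0} \subseteq p_{\eta_0}$ — which trivially fails to fork — and then let the cocycle structure of the $\pi$'s transport coherence and non-forking to all other indices.
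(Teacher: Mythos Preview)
The paper declares this lemma ``straightforward'' and omits the proof entirely, so there is no approach to compare against; your argument is the natural one and is essentially correct, but one step is stated too glibly.

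In (A), the claim that ``independence of the restricted system is immediate from monotonicity of non-forking'' is false as stated. From $\bigcup_i N_{s_i} \forkindep_{\bigcup_i N_{s_i\cap t}} N_t$, monotonicity only yields $\bigcup_i N'_{s_i} \forkindep_{\bigcup_i N_{s_i\cap t}} N'_t$; to shrink the base to $\bigcup_i N'_{s_i\cap t}$ you need additionally that $\bigcup_i N'_{s_i} \forkindep_{\bigcup_i N'_{s_i\cap t}} \bigcup_i N_{s_i\cap t}$, which is not automatic --- an element of $N'_{s}$ could realize a type over $N_{s'}$ (for $s' \subsetneq s$) that forks over $N'_{s'}$. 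The fix fits inside your existing framework: add one more clause to the $\omega$-closure, using local character of forking to ensure that for each finite tuple $\overline{a}$ placed into some $N'^{(n)}_s$ and each $s' \subsetneq s$ in $\Delta_\Lambda$, a countable subset of $N_{s'}$ over which $\tp(\overline{a}/N_{s'})$ does not fork is absorbed into $N'^{(n+1)}_{s'}$. Independence of $(N'_s)$ then follows by transitivity (or directly via the paper's well-ordering criterion for independence).

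Your treatment of (B), in particular the partial-to-complete reduction via $F^*_{\eta_0} = \bigcup_\eta \pi_{\eta\eta_0}[F_\eta]$ and transport by the cocycle, is clean and correct.
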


\begin{example}
Every simple theory has $<3$-type amalgamation. $T_{rg}$ has $<\aleph_0$-type amalgamation, as does every stable theory.
\end{example}

More generally, we have the following:

\begin{theorem}\label{GenAmalgImpliesTypeAmalg}
Suppose $T$ is a countable simple theory and $3 \leq k \leq \aleph_0$. If $T$ has $\mathcal{P}^-(k)$-amalgamation of models, then $T$ has $<k$-type amalgamation.
\end{theorem}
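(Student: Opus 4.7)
The plan is to reduce $\Lambda$-type amalgamation to $\mathcal{P}^-(\ell+1)$-amalgamation of models, where $\ell = |\Lambda|$. Since $\ell < k$ we have $\ell + 1 \leq k$, so by the first bullet of the Remark following the definition of amalgamation of models, $T$ has $\mathcal{P}^-(\ell+1)$-amalgamation of models (and in fact $\mathcal{P}^-(k')$-amalgamation for every $k' \leq k$). Writing $\mathbf{I} := \Lambda \cup \{*\}$, I will construct an independent system of countable models $(M_s : s \in \mathcal{P}^-(\mathbf{I}))$ containing a distinguished tuple $\bar{a} \in M_{\{*\}}$ such that $\tp(\bar{a}/N_\eta) = p_\eta$ in $M_{\{\eta,*\}}$ for every $\eta \in \Lambda$. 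A solution $M$ to this system (produced by $\mathcal{P}^-(\ell+1)$-amalgamation of models) will then contain $\bar{a}$ realizing $\bigcup_\eta p_\eta$, and independence of the system together with $\bar{a} \in M_{\{*\}}$ will force $\bar{a} \ind_{N_0} \bigcup_\eta N_\eta$, yielding the non-forking conclusion.

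For $s \subseteq \Lambda$, set $M_s := N_{\bigcap_{\eta \in s}\eta}$ (meet in $\Delta_\Lambda$, with $M_\emptyset = N_0$); these form an independent system indexed by $\mathcal{P}(\Lambda)$ by Lemma~\ref{EquivsForInd}. For $s = s_0 \cup \{*\}$ with $s_0 \subsetneq \Lambda$, $M_{s_0 \cup \{*\}}$ will be a countable model containing $M_{s_0}$ together with the distinguished $\bar{a}$, with the local constraint that $\tp(\bar{a}/N_\eta) = p_\eta$ for each $\eta \in s_0$. These models are built \emph{abstractly} rather than as predetermined submodels of $\mathfrak{C}$, and the amalgamation theorem is what ultimately produces the ambient model in which all the local specifications are simultaneously satisfied.

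The crucial role of the isomorphisms $\pi_{\eta,\eta'}$ is to make the abstract system genuinely coherent. Pick a countable model $N^+_{\eta_0} \succeq N_{\eta_0}$ containing a realization $\bar{a}^0 \models p_{\eta_0}$ with $\bar{a}^0 \ind_{N_0} N_{\eta_0}$. Since $\pi_{\eta_0,\eta}[p_{\eta_0}] = p_\eta$, extend $\pi_{\eta_0,\eta}$ to an isomorphism $\pi^+_{\eta_0,\eta}\colon N^+_{\eta_0} \to N^+_\eta$ sending $\bar{a}^0$ to some $\bar{a}^\eta \models p_\eta$; the cocycle identity $\pi_{\eta',\eta''} \circ \pi_{\eta,\eta'} = \pi_{\eta,\eta''}$ lifts to the $\pi^+$'s because the chosen images of $\bar{a}^0$ are forced by the $\pi^+$-diagram. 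Each $N^+_\eta$ is taken as $M_{\{\eta,*\}}$, with all $\bar{a}^\eta$'s formally identified via the $\pi^+$'s as a single tuple $\bar{a}$ living in the common submodel $M_{\{*\}}$ — consistency of this identification is exactly what the cocycle condition guarantees. For $|s_0| \geq 2$, $M_{s_0 \cup \{*\}}$ is built inductively by amalgamating the $M_{\{\eta,*\}}$ ($\eta \in s_0$) over $M_{\{*\}}$ using $\mathcal{P}^-(|s_0|+1)$-amalgamation of models, which holds since $|s_0|+1 \leq \ell < k$.

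Applying $\mathcal{P}^-(\ell+1)$-amalgamation of models to the system $(M_s : s \in \mathcal{P}^-(\mathbf{I}))$ yields a solution $M \models T$ into which each $M_s$ embeds elementarily and whose embedded copies form an independent system. In $M$, the image of $\bar{a}$ has type $p_\eta$ over each $N_\eta$ (inherited from $M_{\{\eta,*\}}$), so $\bar{a} \models \bigcup_\eta p_\eta$; and independence of the system forces $\bar{a} \ind_{N_0} \bigcup_\eta N_\eta$. The main obstacle will be paragraph 3: verifying that the abstract system $(M_s : s \in \mathcal{P}^-(\mathbf{I}))$ as constructed is a genuine \emph{independent} system of models, i.e., that the required non-forking relations hold at each stage of the inductive construction of the $M_{s_0 \cup \{*\}}$'s. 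Lemma~\ref{SuffCondForInd} (independence from a well-ordering with appropriate local non-forking) together with the coherence supplied by the $\pi^+$'s should handle this verification, but the bookkeeping on intersections $M_{\{\eta,*\}} \cap M_{\{\eta',*\}} = M_{\{*\}}$ and on the types appearing at each inductive amalgamation step is where the care is concentrated.
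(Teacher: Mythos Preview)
There are two genuine problems in the proposal.

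First, your assignment $M_s := N_{\bigcap_{\eta \in s}\eta}$ for $s \subseteq \Lambda$ is anti-monotone: if $s \subseteq t$ then $\bigcap_{\eta \in t}\eta \subseteq \bigcap_{\eta \in s}\eta$, so $M_t \subseteq M_s$. This is the opposite of what the definition of a system of models requires ($s \subseteq t$ should give $M_s \subseteq M_t$). In particular $M_\Lambda$ is the smallest model in your $\Lambda$-indexed part, not the one containing $\bigcup_\eta N_\eta$ as your final independence argument needs.

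Second, and more seriously, the inductive step ``build $M_{s_0 \cup \{*\}}$ by amalgamating the $M_{\{\eta,*\}}$ over $M_{\{*\}}$'' does not obviously terminate in a coherent independent system. To apply $\mathcal{P}^-(|s_0|+1)$-amalgamation at stage $s_0$, you need the tops $M_{(s_0 \setminus \{\eta\}) \cup \{*\}}$ together with $M_{s_0}$ to already form an independent system indexed by $\mathcal{P}^-(s_0 \cup \{*\})$; but those tops were produced by \emph{separate} earlier amalgamations, and there is no common ambient model in which to verify, e.g., that $M_{\{\eta_1,\eta_2,*\}} \cap M_{\{\eta_1,\eta_3,*\}} = M_{\{\eta_1,*\}}$ or that the required nonforking holds among them. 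This is not just bookkeeping: the paper explicitly remarks that it does \emph{not} know how to solve the finer system $(N_s: s \in \Delta)$ directly.

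The paper's route avoids both issues. It first extends the array to an independent system $(N_s: s \in \mathcal{P}(n \times m))$ of submodels of $\mathfrak{C}$ with $N_{n \times m} = \mathfrak{C}$, then builds only the top-level models $N_{\eta \cup \{*\}}$ (by choosing one $N_{\eta_* \cup \{*\}}$ concretely and copying it via the $\pi_{\eta_*,\eta}$'s), and finally passes to a \emph{coarser} system $(M_u : u \in \mathcal{P}^-(k))$ by enumerating the maximal models $\{\eta \cup \{*\}: \eta \in \Lambda\} \cup \{n \times m\}$ as $\{s_v: v \in [k]^{k-1}\}$ and setting $M_u = N_{f(u)}$ with $f(u) = \bigcap\{s_v : u \subseteq v \in [k]^{k-1}\}$. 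This map $f$ is \emph{monotone} (fewer $v$'s in the intersection as $u$ grows), so the resulting system is genuinely increasing; and since every $M_u$ with $|u|<k-1$ is already a submodel of at least two of the tops, all intersections and independence checks are computed inside models that were given from the start rather than produced by auxiliary amalgamations. Including $\mathfrak{C}$ as one of the tops is what makes the final nonforking computation $\bar{a} \ind_{N_\emptyset} \mathfrak{C}$ go through.
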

\begin{proof}
It suffices to consider the case $k < \aleph_0$, since if $T$ has $\mathcal{P}^-(\aleph_0)$-amalgamation of models, then $T$ has $\mathcal{P}^-(k)$-amalgamation of models for all $k < \aleph_0$.

Suppose $\Lambda \subseteq \,^n m$ has $|\Lambda| \leq k-1$, and suppose $(N_s: s \in \Delta_\Lambda), (\pi_{\eta, \eta'}: \eta, \eta' \in \Lambda)$ is a $\Lambda$-array, and $(p_\eta(\overline{x}): \eta \in \Lambda)$ is a coherent system of types over $(\overline{N}, \overline{\pi})$. 

We claim we can extend $(N_s: s \in \Delta_\Lambda)$ to an independent system of submodels $(N_s: s \in\mathcal{P}(n \times m))$ of $\mathfrak{C}$. Indeed, define $A_s$ for $s \subseteq n \times m$ via: $A_s = \bigcup \{N_t: t \in \Delta_{\Lambda}, t \subseteq s\}$. Then $(A_s: s \subseteq n \times m)$ is an independent system of subsets of $\mathfrak{C}$ with $A_s = N_s$ for $s \in \Delta_{\Lambda}$; apply Lemma~\ref{ExtendingInd}.

Fix some $\eta_* \in \Lambda$ (if $\Lambda = \emptyset$ then there is nothing to do). Let $*$ be some new index element. Let $\overline{a}$ be a realization of $p_{\eta_*}(\overline{x})$. For each $s \subseteq \eta_* \cup \{*\}$ let $A_s$ be defined as follows: $A_s = N_s$ if $* \not \in s$, and otherwise $A_s = \overline{a} N_s$. Then $(A_s:s \subseteq \eta_* \cup \{*\})$ is an independent system of subsets of $\mathfrak{C}$ with $A_s = N_s$ for $s \subseteq \eta_*$. Apply Lemma~\ref{ExtendingInd} to get an independent system of models $(N_s: s \subseteq \eta_* \cup \{*\})$ extending $(N_s: s \subseteq \eta_*)$. After relabeling we can suppose $N_{\eta_* \cup \{*\}} \cap \mathfrak{C} = N_{\eta_*}$. This relabeling preserves that there is some $\overline{a} \in N_{\{*\}}$ realizing $p_{\eta_*}(\overline{x})$.

Let $\Delta \subseteq \mathcal{P}(n \times m \cup \{*\})$ be the closure of $\{\eta \cup \{*\}: \eta \in \Lambda\} \cup \{n \times m\}$ under $\subseteq$. Using the isomorphisms $\pi_{\eta_* \eta}: \eta \in \Lambda$, we can copy $(N_s: s \subseteq \eta_* \cup \{*\})$ over each $\eta \in \Lambda$ to get an independent system of models $(N_s: s \in \Delta)$, such that for each $\eta \in \Lambda$, $\overline{a}$ realizes $p_\eta(\overline{x})$ in $N_{\eta \cup \{*\}}$. Thus, to finish, it would suffice to show that $(N_s: s \in \Delta)$ has a solution $N$, since then $\overline{a}$ would realize $\bigcup_{\eta \in \Lambda} p_\eta(\overline{x})$ in $N$, and by independence, $\overline{a}$ would be free from $N_{n \times m}$ over $N_\emptyset$. 

Actually, we don't quite know how to show that $(N_s: s \in \Delta)$ has a solution; we will only be able to show that a coarser independent system has a solution. Namely, enumerate $\{\eta \cup \{*\}: \eta \in \Lambda\} \cup \{n \times m\} = \{s_u: u \in [k]^{k-1}\}$, with repetitions if necessary. We are using that $[k]^{k-1}$ has $k$ elements, and $|\Lambda| < k$. For each $u \in \mathcal{P}^-(k)$, define $f(u) = \bigcap\{s_v: v \in [k]^{k-1}, u \subseteq v\}$. Note that $f(u) \cap f(u') = f(u \cap u')$ for all $u, u' \in \mathcal{P}^-(k)$. Define $M_u = N_{f(u)}$, for each $u \in \mathcal{P}^-(k)$; it is straightforward to check that $(M_u: u \in \mathcal{P}^-(k))$ is an independent system of models. By hypothesis, it has a solution $M_k$.

Now $\overline{a}$ realizes $\bigcup_{\eta \in \Lambda} p_\eta(\overline{x})$ in $M_k$. We  need to show that $\overline{a}$ is free from $N_{n \times m}$ over $N_\emptyset$ in $M_k$. After reindexing, we can suppose $i_* < k$ is such that for all $i < k$, $s_{k \backslash \{i\}} = n \times m$ if and only if $i < i_*$. Note then that $\overline{a} \in M_{i_*}$ and $M_{k \backslash i_*} = N_{n \times m}$. By independence of $(M_u: u \in \mathcal{P}(k))$, $M_{i_*}$ is free from $M_{k \backslash i_*}$ over $M_\emptyset$, and so $\overline{a}$ is free from $N_{n \times m}$ over $M_{\emptyset}$. Let $\eta \in \Lambda$; note that $M_{\emptyset} \subseteq N_{\eta \cup \{*\}} \cap N_{n \times m} = N_{\eta}$. Since $p_\eta(\overline{x})$ does not fork over $N_\emptyset$, we get that $\overline{a}$ is free from $M_{\emptyset}$ over $N_{\emptyset}$. By transitivity of independence, we get that $\overline{a}$ is free from $N_{n \times m}$ over $N_{\emptyset}$, as desired.
\end{proof}

Conjecturally, the converse to the above theorem is true as well.

\section{Coloring Partial Types}\label{ColoringSec}

In this section, we shall given some set-theoretic consequences of amalgamation.

First, some definitions:

\begin{definition}
Suppose $k \leq \aleph_0$ and $\mathcal{H} \subseteq [X]^{<k}$ is a hypergraph. Then let $\chi(\mathcal{H})$, the chromatic number of $\mathcal{H}$, be the least cardinal $\mu$ such that there is some $\mu$-coloring of $\mathcal{H}$, i.e. some coloring $c: X \to \mu$ such that for all $\alpha < \mu$, $\mathcal{H} \cap [c^{-1}(\alpha)]^{<k} = \emptyset$.

Suppose $P$ is a partial order; for each $3 \leq k \leq \aleph_0$, let $\chi(P, k)$, the $k$-ary chromatic number of $P$, be the least cardinal $\mu$ such that there is some $(\mu, k)$-coloring of $P$, i.e. some function $c: P \to \mu$ such that for all $u \in [P]^{<k}$, if $c \restriction_u$ is constant then $u$ has a lower bound in $P$. In other words, $\chi(P, k)$ is the chromatic number of $\mathcal{H} := \{u \in [P]^{<k}: u \mbox{ has no lower bound in } P\}$. 

Suppose $\theta$ is a regular cardinal, $T$ is a countable simple theory, $M \models T$ and $M_0 \preceq M$ is countable. Then let $\Gamma^{\theta}_{M, M_0}$ be the poset of all partial types $p(x)$ over $M$ of cardinality less than $\theta$, which do not fork over $M_0$; we order $\Gamma^{\theta}_{M, M_0}$ by reverse inclusion. Really $x$ is allowed to be a finite tuple here, but we suppress this (alternatively, we should pass to $T^{eq}$).

Suppose $\theta$ is a regular cardinal, $\mu = \mu^{<\theta}$, $\lambda \leq 2^\mu$, and $3 \leq k \leq \aleph_0$. Then say that $T$ has the $(\lambda, \mu, \theta, k)$-coloring property if whenever $M \models T$ with $|M| \leq \lambda$ and whenever $M_0 \preceq M$ is countable, then $\chi(\Gamma^{\theta}_{M, M_0}, k) \leq \mu$. 

\end{definition}

\noindent \textbf{Question.} For which $\lambda, \mu, \theta, k$ does $T$ have the $(\lambda, \mu, \theta, k)$-coloring property?

\vspace{1 mm}

When $\lambda > 2^\mu$, we should tweak our definition of coloring property. Towards this:
\begin{definition}
Given sets $X, Y$ and a regular cardinal $\theta$, let $P_{X Y \theta}$ be the poset of all partial functions from $X$ to $\mu$ of cardinality less than $\theta$, ordered by reverse inclusions.
	
Suppose $P$ and $R$ are forcing notions, and $k \leq 3 \leq \aleph_0$ is a cardinal. Then say that $F: P \to R$ is an $(R, k)$-coloring, and that $P$ is $(R, k)$-colorable, if whenever $s \in [P]^{<k}$, if $F[s]$ has a lower bound in $R$, then $s$ has a lower bound in $P$.

\end{definition}

One can view colorability as a generalization of chromatic numbers. Specifically, for each cardinal $\mu$ let $(\mu, \emptyset)$ be the partial order on $\mu$ in which all elements of $\mu$ are incomparable. Then for every partial order $P$ and for every cardinal $3 \leq k \leq \aleph_0$, $\chi(P, k) \leq \mu$ if and only if $P$ is $((\mu, \emptyset), k)$-colorable.

Some simple facts:
\begin{lemma}\label{ObviousLemmaskColorings}
	Suppose $\theta$ is a regular cardinal, and $3 \leq k \leq \aleph_0$. Then for all forcing notions $P, R$, if $P$ is $(R, k)$-colorable, then so is every dense subset of $\mathcal{B}(P)_+$. Also, if $P$ is $(R, k)$-colorable and $R$ is $(R', k)$-colorable, then $P$ is $(R', k)$-colorable.
\end{lemma}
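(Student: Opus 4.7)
The plan is to handle the two assertions separately; both are essentially bookkeeping once the definitions are in hand, and I expect no real obstacle, only a need to keep the various orders straight.

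For the composition statement, I would fix witnessing colorings $F: P \to R$ and $G: R \to R'$ and verify that $G \circ F: P \to R'$ is an $(R', k)$-coloring. Given $s \in [P]^{<k}$ such that $(G \circ F)[s] = G[F[s]]$ has a lower bound in $R'$, note that $F[s] \in [R]^{<k}$, so the $(R', k)$-colorability of $R$ via $G$ provides a lower bound for $F[s]$ in $R$; the $(R, k)$-colorability of $P$ via $F$ then provides a lower bound for $s$ in $P$. This is just two applications of the definition.

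For the dense-subset statement, fix an $(R, k)$-coloring $F: P \to R$ and a dense $D \subseteq \mathcal{B}(P)_+$. I would use the fact that $P$ itself is canonically identified with a dense subset of $\mathcal{B}(P)_+$ to choose, for each $d \in D$, an element $f(d) \in P$ with $f(d) \leq d$ in the order of $\mathcal{B}(P)_+$, and then define $F': D \to R$ by $F'(d) = F(f(d))$. To check that $F'$ is an $(R, k)$-coloring of $D$, suppose $s \in [D]^{<k}$ and $F'[s] = F[f[s]]$ has a lower bound in $R$. Since $f[s] \in [P]^{<k}$, the $(R, k)$-colorability of $P$ gives a lower bound $p \in P$ for $f[s]$. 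Then $p \leq f(d) \leq d$ in $\mathcal{B}(P)_+$ for each $d \in s$, so $p$ is a common lower bound of $s$ in $\mathcal{B}(P)_+$. Using density of $D$, pick $d_* \in D$ with $d_* \leq p$; then $d_* \leq d$ for every $d \in s$, so $d_*$ witnesses that $s$ has a lower bound inside $D$.

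The only point that requires any attention is to keep track of which order ``lower bound'' refers to in each instance; once one observes that $P \hookrightarrow \mathcal{B}(P)_+$ preserves order and that $D$ inherits its order from $\mathcal{B}(P)_+$, the two density appeals (to $P$ and to $D$ inside $\mathcal{B}(P)_+$) combine cleanly as above. No combinatorial content is needed beyond the definition of colorability and the standard density properties of Boolean completions.
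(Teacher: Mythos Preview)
Your proof is correct and is exactly the natural argument one would write out; the paper itself omits the proof entirely, stating the lemma under the heading ``Some simple facts'' and treating both assertions as immediate from the definitions. Your write-up simply fills in the routine verification the paper leaves to the reader.
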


The following theorem is closely related to the classical Hewitt-Marczewski-Pondiczery theorem of topology; it is proved by Engleking and Karlowicz in \cite{Partitions}.

\begin{theorem}\label{ER}
	Suppose $\theta \leq \mu \leq \lambda$ are infinite cardinals such that $\theta$ is regular, $\mu = \mu^{<\theta}$, and $\lambda \leq 2^\mu$. Then there is a sequence $(\mathbf{f}_\gamma: \gamma < \mu)$ from $\,^\lambda \mu$ such that for all partial functions $f$ from $\lambda$ to $\mu$ of cardinality less than $\theta$, there is some $\gamma < \mu$ such that $\mathbf{f}_\gamma$ extends $f$. 
\end{theorem}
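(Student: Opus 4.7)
The plan is to use $\lambda \leq 2^\mu$ to code each element of $\lambda$ as a function from $\mu$ to $2$, and then to build the $\mathbf{f}_\gamma$ as ``decodings'' of these codes along small subsets of $\mu$. Concretely, I would fix an injection $\alpha \mapsto g_\alpha$ from $\lambda$ into $\,^\mu 2$. Let $\mathcal{I}$ be the set of pairs $(S, \phi)$ such that $S \in [\mu]^{<\theta}$ and $\phi$ is a partial function from $\,^S 2$ to $\mu$ of cardinality less than $\theta$, and for each such pair define $\mathbf{F}_{S, \phi} : \lambda \to \mu$ by $\mathbf{F}_{S, \phi}(\alpha) = \phi(g_\alpha \restriction S)$ when $g_\alpha \restriction S \in \dom(\phi)$, and $\mathbf{F}_{S, \phi}(\alpha) = 0$ otherwise. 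The sequence $(\mathbf{f}_\gamma : \gamma < \mu)$ will be any enumeration of $\{\mathbf{F}_{S, \phi} : (S, \phi) \in \mathcal{I}\}$.

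To see that such an enumeration exists, I would bound $|\mathcal{I}|$. The number of choices for $S$ is $|[\mu]^{<\theta}| = \mu^{<\theta} = \mu$. For each fixed $S$ we have $|\,^S 2| = 2^{|S|} \leq 2^{<\theta} \leq \mu^{<\theta} = \mu$, so the number of partial functions from $\,^S 2$ into $\mu$ of cardinality less than $\theta$ is at most $\mu^{<\theta} = \mu$. Hence $|\mathcal{I}| \leq \mu \cdot \mu = \mu$, which is the required bound.

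The remaining point is the extension property. Given a partial $f : \lambda \to \mu$ of cardinality $\kappa < \theta$, enumerate $\dom(f) = \{\alpha_i : i < \kappa\}$; for each pair $i < j < \kappa$ pick $\beta_{ij} < \mu$ with $g_{\alpha_i}(\beta_{ij}) \neq g_{\alpha_j}(\beta_{ij})$, and set $S = \{\beta_{ij} : i < j < \kappa\}$. Then $|S| \leq |\kappa \times \kappa| < \theta$ (using $\kappa < \theta$ with $\theta$ infinite regular), so $S \in [\mu]^{<\theta}$, and by construction the restrictions $g_{\alpha_i} \restriction S$ are pairwise distinct. Defining $\phi(g_{\alpha_i} \restriction S) := f(\alpha_i)$ yields a partial function $\phi$ from $\,^S 2$ to $\mu$ of size $\kappa < \theta$, so $(S, \phi) \in \mathcal{I}$ and $\mathbf{F}_{S, \phi}$ extends $f$ by construction. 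The only mildly delicate point in the whole argument is the separation step producing $S$ of size $<\theta$; everything else is routine cardinal arithmetic powered by $\mu = \mu^{<\theta}$.
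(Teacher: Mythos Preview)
Your argument is correct; this is essentially the classical Engelking--Karłowicz construction. The paper does not give its own proof but simply cites \cite{Partitions}, so there is nothing to compare against beyond noting that what you have written is the standard proof behind that citation.
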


Reformulating:

\begin{corollary}\label{ERCor}
	Suppose $\theta \leq \mu$ are infinite cardinals such that $\theta$ is regular and $\mu = \mu^{<\theta}$. Then for every set $X$ with $|X| \leq 2^\mu$,  $\chi(P_{X\mu\theta}, \aleph_0) \leq \mu$. Hence, for every partial order $P$ with $|P| \leq 2^\mu$ and for every $3 \leq k \leq \aleph_0$, $\chi(P, k) \leq \mu$ if and only if $P$ is $(P_{X \mu\theta}, k)$-colorable for some set $X$ (since then we can arrange $|X| \leq 2^\mu$).
\end{corollary}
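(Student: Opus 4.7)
The plan is to deduce both halves of Corollary~\ref{ERCor} directly from Theorem~\ref{ER}. The cardinal arithmetic $\theta$ regular and $\mu = \mu^{<\theta}$ does double duty here: it is exactly the hypothesis needed by Theorem~\ref{ER}, and it is exactly what permits finite unions of elements of $P_{X\mu\theta}$ to remain in $P_{X\mu\theta}$. So no serious obstacle is expected; the only point of genuine care is a reduction of the ambient set $X$ to size at most $2^\mu$ in one direction of the equivalence.

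For the first assertion, I would fix $X$ with $|X| \leq 2^\mu$ and identify $X$ with a subset of some $\lambda \leq 2^\mu$. Applying Theorem~\ref{ER}, pick a sequence $(\mathbf{f}_\gamma: \gamma < \mu)$ in $\,^\lambda\mu$ extending every partial function from $\lambda$ to $\mu$ of cardinality less than $\theta$. Define $c: P_{X\mu\theta} \to \mu$ by $c(f) = \min\{\gamma: \mathbf{f}_\gamma \supseteq f\}$. If $u \subseteq P_{X\mu\theta}$ is a finite set on which $c$ is constantly $\gamma$, then every $f \in u$ is a subfunction of $\mathbf{f}_\gamma$, so $\bigcup u \subseteq \mathbf{f}_\gamma$ is itself a function; since $u$ is finite and $\theta$ is infinite, $|\bigcup u| < \theta$, so $\bigcup u$ is a lower bound for $u$ in $P_{X\mu\theta}$. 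Hence $\chi(P_{X\mu\theta}, \aleph_0) \leq \mu$.

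For the equivalence, the ``only if'' direction is immediate: given a $(\mu, k)$-coloring $c: P \to \mu$, take $X = \{*\}$ a singleton and set $F(p) = \{(*, c(p))\}$; then a finite image $F[u]$ is compatible in $P_{X\mu\theta}$ exactly when $c$ is constant on $u$, at which point $u$ has a lower bound in $P$ by hypothesis.

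For ``if'', suppose $F: P \to P_{X\mu\theta}$ is a $(P_{X\mu\theta}, k)$-coloring. The one step requiring care is the reduction to $|X| \leq 2^\mu$: replace $X$ by $X' := \bigcup_{p \in P} \dom(F(p))$, which has cardinality at most $|P| \cdot \theta \leq 2^\mu$, and observe that $F$ factors through $P_{X'\mu\theta}$ without any change in the compatibility relation. By the first assertion there is a coloring $c$ witnessing $\chi(P_{X'\mu\theta}, \aleph_0) \leq \mu$. Then $c \circ F: P \to \mu$ is the desired $(\mu, k)$-coloring of $P$: whenever $u \in [P]^{<k} \subseteq [P]^{<\aleph_0}$ is monochromatic under $c \circ F$, the image $F[u]$ is finite and monochromatic under $c$, hence has a lower bound in $P_{X'\mu\theta}$, and the $(P_{X\mu\theta}, k)$-coloring property of $F$ then produces a lower bound for $u$ in $P$.
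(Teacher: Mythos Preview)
Your proof is correct and is precisely the unpacking the paper has in mind: the corollary is stated in the paper without proof, merely labelled a ``reformulation'' of Theorem~\ref{ER}, and your argument --- using the Engelking--Kar{\l}owicz family to color $P_{X\mu\theta}$, then composing with a given $(P_{X\mu\theta},k)$-coloring (after the harmless shrinking of $X$) --- is exactly that reformulation made explicit. The transitivity step you use at the end is recorded in the paper as Lemma~\ref{ObviousLemmaskColorings}.
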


We remark that in practice, all colorings of $\Gamma^{\theta}_{M, M_0}$ naturally take values in some $P_{X \mu \theta}$, and then we apply Corollary~\ref{ERCor} to get a coloring in $\mu$. 

\begin{definition}
Suppose $\theta$ is a regular cardinal, $\mu = \mu^{<\theta}$, $3 \leq k \leq \aleph_0$, $\lambda$ is arbitrary, and $T$ is a countable simple theory. Then say that $T$ has the $(\lambda, \mu, \theta, k)$-coloring property if for all $M \models T$ with $|M| \leq \lambda$ and for all $M_0 \preceq M$ countable, $\Gamma^{\theta}_{M, M_0}$ is $(P_{X \mu \theta}, k)$-colorable for some set $X$.
\end{definition}

\begin{remark}
	
	We differ notation slightly from \cite{SP}. There, $SP^1_T(\lambda, \mu, \theta)$ means that for every $M \models T$ with $|M| \leq \lambda$ and for every $M_0 \subseteq M$ countable, $\chi(\Gamma^\theta_{M, M_0}, \aleph_0) \geq \mu$ (so when $\lambda \leq 2^\mu$, this is equivalent to $T$ having the $(\lambda, \mu, \theta, \aleph_0)$-coloring property). Also, what we are calling the $(\lambda, \mu, \theta, k)$-coloring property of theories is called the $(<k, \lambda, \mu, \theta)$-amalgamation property in \cite{SP}. Finally, in \cite{SP}, a forcing notion $P$ is defined to have the $(<k, \mu, \theta)$-amalgamation property if it is $(P_{X \mu \theta}, k)$-colorable for some $X$.
\end{remark}

\begin{remark}
Suppose $\theta$ is regular and $\mu = \mu^{<\theta}$ and $T$ is a countable simple theory.

If $\lambda \leq \mu$ then $T$ has the $(\lambda, \mu,\theta, k)$-coloring property.

If $T$ is stable then $T$ has the $(\lambda, \mu, \theta, \aleph_0)$-coloring property for all $\lambda$. 

If $T$ is unstable and $\lambda > 2^\mu$, then there is some $M \models T$ with $|M| \leq \lambda$ and some $M_0 \preceq M$ with $M_0$ countable, such that $\chi(\Gamma^\theta_{M, M_0},3) > \mu$. 
\end{remark}
\begin{proof}
	This is essentially worked out in \cite{SP} (joint with Shelah) but the notation is quite different. The first claim is clear.
	
	Suppose next that $T$ is stable. We break into cases depending on the value of $\theta$; first suppose $\theta > \aleph_0$. Note then that $\mu \geq 2^{\aleph_0}$. Let $M \models T$ and let $M_0 \preceq M$ be countable. We show that $\chi(\Gamma^\theta_{M, M_0}, \aleph_0) \leq 2^{\aleph_0}$, which suffices. Indeed, let $(p_\alpha(x): \alpha < 2^{\aleph_0})$ list all types over $M$ which do not fork over $M_0$, and given $p(x) \in \Gamma^\theta_{M, M_0}$ let $c(p(x))$ be the least $\alpha < 2^{\aleph_0}$ with $p(x) \subseteq p_\alpha(x)$. If on the other hand $\theta = \aleph_0$, then a similar argument applies, using that for every finite set $\Delta$ of partitioned formulas $\phi(x, y)$, there are only countably many complete $\Delta$-types over $M$ which do not fork over $M_0$.
	
	Finally, suppose $T$ is unstable and $\lambda > 2^\mu$. Let $\phi(x, \overline{y})$ have the independence property, and choose $M \models T$ with $|M| = \lambda$ such that there is some countable $M_0 \preceq M$ and some Morley sequence $(\overline{b}_\alpha: \alpha < \lambda)$ over $M_0$ in $M$ witnessing the independence of $\phi$. For every $f \in P_{\lambda 2 \theta}$, we get a corresponding partial type $p_f(x)$ over $M$ of size less than $\theta$, namely $p_f(x) = \{\phi(x, \overline{b}_\alpha): \alpha \in \mbox{dom}(f) \mbox{ and } f(\alpha) = 0\}  \cup \{\lnot \phi(x, \overline{b}_\alpha): \alpha \in \mbox{dom}(f) \mbox{ and } f(\alpha) =1\}$. Note that each $p_f(x)$ does not fork over $M_0$, since $(\overline{b}_\alpha: \alpha < \lambda)$ is a Morley sequence over $M_0$.  Hence $p_f(x) \in \Gamma^\theta_{M, M_0}$.
	
	Then $f \mapsto p_f(x)$ witnesses that $P_{\lambda 2 \theta}$ is $(\Gamma^\theta_{M, M_0}, 3)$-colorable. So it suffices to note that $\chi(P_{\lambda 2 \theta}, 3) > \mu$. Suppose towards a contradiction that $c: P_{\lambda 2 \theta} \to \mu$ is a $(\mu, 3)$-coloring. For each $\alpha < \mu$, extend $c^{-1}(\{\alpha\})$ to a function $f_\alpha: \lambda \to \theta$. For each $\gamma < \lambda$, let $I_\gamma = \{\alpha < \mu: f_\alpha(\gamma) = 1\}$. Then $\gamma \mapsto I_\gamma$ is an injection from $\lambda$ to $\mathcal{P}(\mu)$, contradicting $\lambda > 2^\mu$.
\end{proof}

\section{Consequences of $\mathcal{P}^-(k)$-amalgamation of models}\label{ConsequencesSec}

In this section, we deduce several consequences of $\mathcal{P}^-(k)$-amalgamation of models, all of which conjecturally reverse. Note that we have already proved that $\mathcal{P}^-(k)$ amalgamation of models implies $<k$-type amalgamation.

The following is Theorem 5.6 of \cite{SP} (joint with Shelah).  Note that the case $k = 3$ proves that for every simple theory $T$, for every regular uncountable cardinal $\theta$, for every $\mu = \mu^{<\theta}$ and for every cardinal $\lambda$, $T$ has the $(\lambda, \theta, 3)$-coloring property. (Actually, in \cite{SP}, only the case $\mu = \theta = \theta^{<\theta}$ is considered, but the general case is the same.)

\begin{theorem}\label{SatPortion}
	Suppose $\theta$ is a regular uncountable cardinal, $\mu = \mu^{<\theta}$,  $3 \leq k \leq \aleph_0$, and $T$ is a countable simple theory. Suppose $T$ has $<k$-type amalgamation. Then $T$ has the $(\lambda, \mu, \theta, k)$-coloring property for all $\lambda$.
\end{theorem}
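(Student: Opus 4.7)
Given $M \models T$ with $|M| \leq \lambda$ and countable $M_0 \preceq M$, the plan is to build a $(P_{X\mu\theta}, k)$-coloring of $\Gamma^{\theta}_{M, M_0}$ for $X = [\lambda]^{<\aleph_0}$ by decomposing $M$ along a sufficiently homogeneous independent system of countable models and encoding each partial type by the restrictions of a chosen non-forking completion. I would begin by applying Theorem~\ref{IndependentSystemsThm} to embed $M$ and $M_0$ into a sufficiently saturated $M_* \models T$ carrying an independent system $(M_s : s \in \mathcal{P}(\lambda))$ of countable submodels satisfying (I)--(III), with $M \subseteq \bigcup_s M_s$ and $M_0 \preceq M_\emptyset$; property (III) endows each $a \in \bigcup_s M_s$ with a canonical countable support $\textrm{supp}(a) := \bigcap\{s : a \in M_s\}$. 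With additional care in the construction (via a Fra\"iss\'e-style iteration over $M_\emptyset$, or by building each $M_s$ as a free amalgam over $M_\emptyset$ of a prescribed coheir Morley sequence indexed by $s$), I would strengthen the system to be \emph{homogeneous}: for every order-preserving bijection $\pi : s \to s'$ between $s, s' \in [\lambda]^{<\theta}$ which is the identity on $s \cap s'$, there is a chosen isomorphism $\hat\pi : M_s \to M_{s'}$ fixing $M_{s \cap s'}$ pointwise and sending $M_t \to M_{\pi[t]}$ for each $t \subseteq s$, and these isomorphisms form a transitive system under composition.

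For each $p(x) \in \Gamma^{\theta}_{M, M_0}$, set $u_p := \bigcup\{\textrm{supp}(a) : a \in \textrm{param}(p)\}$; since $\theta$ is regular uncountable and $|p| < \theta$, $u_p \in [\lambda]^{<\theta}$. Choose a complete extension $q_p \in S(M_{u_p})$ of $p$ not forking over $M_0$. Fix once and for all a countable template $N^*$ with a distinguished copy of $M_\emptyset$, an enumeration $(r_\beta : \beta < \mu)$ of the at most $2^{\aleph_0} \leq \mu$ complete non-forking types in $S(N^*)$, and, coherently with the homogeneity isomorphisms, an isomorphism $\iota_v : N^* \to M_v$ for each $v \in [\lambda]^{<\aleph_0}$. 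Define $c(p) : [u_p]^{<\aleph_0} \to \mu$ by letting $c(p)(v)$ be the unique $\beta$ with $\iota_v(r_\beta) = q_p \restriction M_v$; since $|[u_p]^{<\aleph_0}| \leq |u_p| + \aleph_0 < \theta$, indeed $c(p) \in P_{X\mu\theta}$.

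To check that $c$ is a $(P_{X\mu\theta}, k)$-coloring, suppose $p_1, \ldots, p_{k-1}$ have pairwise compatible colors; I would show that $\bigcup_i p_i$ does not fork over $M_0$, so that $\bigcup_i p_i \in \Gamma^{\theta}_{M, M_0}$ is a common lower bound. Compatibility on $[u_{p_i} \cap u_{p_j}]^{<\aleph_0}$ forces $q_{p_i} \restriction M_v = q_{p_j} \restriction M_v$ for every finite $v \subseteq u_{p_i} \cap u_{p_j}$, and compactness together with property (III) then give agreement on all of $M_{u_{p_i} \cap u_{p_j}}$. Enumerating $\bigcup_i u_{p_i}$ by some $n$ and encoding each $u_{p_i}$ as its indicator $\eta_i \in \,^n 2$, put $\Lambda = \{\eta_i : i < k-1\}$. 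The subsystem of $(M_s)$ indexed by $\Delta_\Lambda$, together with the homogeneity isomorphisms $\pi_{\eta_i \eta_j} : M_{u_{p_i}} \to M_{u_{p_j}}$ (which fix $M_{u_{p_i} \cap u_{p_j}}$ pointwise and carry $q_{p_i}$ to $q_{p_j}$ thanks to the coherent choice of $\iota_v$), forms a $\Lambda$-array over which $(q_{p_i})_{i < k-1}$ is a coherent system of non-forking types. Since $|\Lambda| < k$, $<k$-type amalgamation yields that $\bigcup_i q_{p_i}$ does not fork over $M_\emptyset$, whence $\bigcup_i p_i$ does not fork over $M_0$, as required.

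The main obstacle is the first step, since an arbitrary output of Theorem~\ref{IndependentSystemsThm} need not support the transitive, intersection-fixing isomorphisms demanded by the $\Lambda$-array in the verification step, and the template enumeration must commute with them so that pairwise color agreement upgrades to genuine $\Lambda$-coherence (this also requires some care to handle the case where the supports $u_{p_i}$ have different cardinalities, for instance by padding). I would address this by refining the construction of $(M_s : s \in \mathcal{P}(\lambda))$ to be homogeneous in the sense above (via Fra\"iss\'e-style amalgamation over $M_\emptyset$ or a coheir Morley sequence construction), after which the remainder of the argument is a direct application of $<k$-type amalgamation; note that for $k = 3$ the hypothesis is vacuous and the conclusion already follows from $3$-amalgamation in simple theories without any homogeneity input.
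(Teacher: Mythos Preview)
Your overall strategy---independent system, color types by data from a non-forking completion, invoke $<k$-type amalgamation on a $\Lambda$-array---matches the paper's. But your coloring records too little, and the gap is precisely where you locate the homogeneity obstacle. Your $c(p)$ has domain $[u_p]^{<\aleph_0}$, so compatibility of $c(p_i)$ and $c(p_j)$ only constrains their common domain $[u_{p_i} \cap u_{p_j}]^{<\aleph_0}$, yielding $q_{p_i} \restriction M_{u_{p_i} \cap u_{p_j}} = q_{p_j} \restriction M_{u_{p_i} \cap u_{p_j}}$. That is not what a $\Lambda$-array needs: you need $\pi_{\eta_i\eta_j}[q_{p_i}] = q_{p_j}$, i.e.\ the types must correspond under the array isomorphism, not merely agree on the overlap. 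Nothing in your coloring forces $u_{p_i}, u_{p_j}$ even to have the same order-type, let alone forces the order-preserving bijection to carry $q_{p_i}$ to $q_{p_j}$; the ``coherent choice of $\iota_v$'' cannot rescue this, since coherence of the $\iota_v$ with the homogeneity maps would translate the requirement into $c(p_i)(v) = c(p_j)(\pi[v])$ for $v \not\subseteq u_{p_j}$, which color compatibility does not give. (Separately, your $\Lambda \subseteq {}^n 2$ with $n = |\bigcup_i u_{p_i}|$ yields infinite $n$ when $\theta > \aleph_0$, whereas the definition of $\Lambda$-type amalgamation requires $n,m < \omega$.)

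The paper's fix is to drop homogeneity of the system entirely and instead make the coloring carry the full isomorphism-type data. Working on the dense set $P$ of \emph{complete} types $p(x, M_s)$ over $M_s$ for $s \in [\lambda]^{<\theta}$, one chooses $F: P \to P_{X\mu\theta}$ (for $X$ large) so that compatibility of $F(p(x, M_s))$ with $F(q(x, M_t))$ \emph{forces}: $s, t$ have the same order-type with the order-preserving bijection $\rho$ the identity on $s \cap t$; the unique $<_*$-preserving bijection (for a fixed global well-order $<_*$ of $M$) is an isomorphism $\tau: M_s \cong M_t$ respecting supports; and $\tau[p] = q$. Since $|s|, |M_s| < \theta$ and $\mu^{<\theta} = \mu$, there are at most $\mu$ abstract isomorphism types of the data $(s, M_s, p, <_* \restriction M_s, \ldots)$, and one can record this together with how $s$ sits inside $\lambda$ as an element of $P_{X\mu\theta}$. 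Compatibility now \emph{produces} the array isomorphisms for free---they are the maps $\tau$---with no homogeneity of $(M_s)$ required; and finite $n, m$ are obtained by partitioning the common order-type $\gamma_*$ of the $s_i$ into the at most $2^{i_*}$ equivalence classes determined by the coincidence pattern of the coordinates $\alpha_{i,\gamma}$.
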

\begin{proof}
	For the reader's convenience, we repeat the proof from \cite{SP}.

Suppose $M \models T$ has $|M| \leq \lambda$, and suppose $M_0 \preceq M$ is countable. We wish to show that $\Gamma^{\theta}_{M, M_0}$ is $(P_{X \mu \theta}, k)$-colorable for some set $X$.

After applying  Theorem~\ref{IndependentSystemsThm} and possibly increasing $M$ and $M_0$, we can suppose $M = \bigcup_{s \in \mathcal{P}(\lambda)} M_s$, where $(M_s: s \in \mathcal{P}(\lambda))$ is an independent system of models satisfying the three conditions of Theorem~\ref{IndependentSystemsThm}. (We will need to increase $M_0$ so that $M_0 = M_{\emptyset}$.) Let $<_*$ be a well-ordering of $M$.

Given $A \in [M]^{<\theta}$ let $s_A$ be the $\subseteq$-minimal $s \in [\lambda]^{\leq\aleph_0}$ with $A \subseteq M_{s_A}$, which exists by condition (III) on $(M_s:s \in \mathcal{P}(\lambda))$.

Let $P$ be the set of all $p(x) \in \Gamma^{\theta}_{M, M_0}$ such that for some $s \in [\lambda]^{<\theta}$, $p(x)$ is a complete type over $M_s$; we write $p(x, M_s)$ to indicate this. $P$ is dense in $\Gamma^{\theta}_{M, M_0}$, so it suffices to show that $P$ is $(P_{X \mu \theta}, k)$-colorable for some set $X$.

Choose $X$ large enough, and $F: P \to P_{X \mu \theta}$ so that if $F(p(x, M_s))$ is compatible with $F(q(x, M_t))$, then:

\begin{itemize}
	\item $s$ and $t$ have the same order-type, and if we let $\rho: s \to t$ be the unique order-preserving bijection, then $\rho$ is the identity on $s\cap t$;
	\item $M_s$ and $M_t$ have the same $<_*$-order-type, and the unique $<_*$-preserving bijection from $M_s$ to $M_t$ is in fact an isomorphism $\tau: M_s \cong M_t$;
	\item For each finite $\overline{a} \in M_{s}^{<\omega}$, if we write $s' = s_{\overline{a}}$ and if we write $t' = s_{\tau(\overline{a})}$, then: $\rho[s'] = t'$ and $\tau \restriction_{M_{s'}}: M_{s'} \cong M_{t'}$.
	\item $\tau[p(x)] = q(x)$.
\end{itemize}

This is not hard to do. Note that it follows that for every $s' \subseteq s$, $\tau \restriction_{M_{s'}}: M_{s'} \cong M_{\rho[s']}$, since $M_{s'} = \bigcup\{M_{s_{\overline{a}}}: \overline{a} \in (M_{s'})^{<\omega}\}$ and similarly for $M_{t'}$.

We claim that $F$ works.

So suppose $p_i(x, M_{s_i}): i < i_*$ is a sequence from $P$ for $i_* < k$, such that $(F(p_i(x)): i < i_*)$ is compatible in $P_{X \mu \theta}$.

Let $\gamma_*$ be the order-type of some or any $s_i$. Enumerate each $s_i = \{\alpha_{i, \gamma}: \gamma < \gamma_*\}$ in increasing order.  Let $E$ be the equivalence relation on $\gamma_*$ defined by: $\gamma E \gamma'$ iff for all $i, i' < k$, $\alpha_{i, \gamma} = \alpha_{i', \gamma}$ iff $\alpha_{i, \gamma'} = \alpha_{i', \gamma'}$. Let $(E_j: j < n)$ enumerate the equivalence classes of $E$. For each $i < i_*$, and for each $j < n$, let $X_{i, j} = \{\alpha_{i, \gamma}: \gamma \in E_j\}$. Thus $s_i$ is the disjoint union of $X_{i, j}$ for $j < n$. Moreover, $X_{i, j} \cap X_{i', j'} = \emptyset$ unless $j = j'$; and if $X_{i, j} \cap X_{i', j} \not= \emptyset$ then $X_{i, j} = X_{i', j}$. For each $j < n$, enumerate $\{X_{i, j}: i < i_*\} = (Y_{\ell, j}: \ell < m_i) $ without repetitions. Let $m = \max(m_j: j < n)$; and for each $i <i_*$, define $\eta_i \in \,^n m$ via: $\eta_i(j) =$ the unique $\ell < m_i$ with $X_{i, j} = Y_{\ell, j}$. 

Let $\Lambda = \{\eta_i: i < i_*\}$. For each $s \in P_{\Lambda}$, let $N_{s} = M_{t_s}$ where $t_s = \bigcup_{(j, \ell) \in s} Y_{\ell, j}$.Then the hypotheses on $F$ give commuting isomorphisms $\pi_{\eta, \eta'}: N_{\eta} \cong N_{\eta'}$ for each $\eta, \eta' \in \Lambda$, in such a way that $(\overline{N}, \overline{\pi})$ is a $(\lambda, \overline{N})$-array, and each $\pi_{\eta_i}(p_0(x)) = p_i(x)$. It follows by hypothesis on $T$ that $\bigcup_{i < i_*} p_i(x)$ does not fork over $N_0$. Since $p_0(x) \restriction_{N_0}$ does not fork over $M_0$, it follows by transitivity that $\bigcup_{i < i_*} p_i(x)$ does not fork over $M_0$.
\end{proof}

To state our next theorem, we need to recall some results from \cite{InterpOrders2Ulrich}.

\begin{definition}
	$\Delta$ is a pattern on $I$ if $\Delta \subseteq [I]^{<\aleph_0}$ is closed under subsets. If $T$ is a complete countable theory and $\phi(\overline{x}, \overline{y})$ is a formula of $T$, then say that $\phi(\overline{x}, \overline{y})$ admits $\Delta$ if we can choose $M \models T$ and $(\overline{a}_i: i \in I)$ from $M^{|\overline{y}|}$, such that for every $s \in [I]^{<\aleph_0}$, $M \models \exists \overline{x} \bigwedge_{i \in s} \phi(\overline{x}, \overline{a}_i)$ if and only if $s \in \Delta$.
\end{definition}
The following patterns will be key for analyzing the $T_{n, k}$'s.

\begin{definition}
	Suppose $S \subseteq [I]^k$ for some $k$, and suppose $n > k$. Then let $\Delta_{n, k}(S)$ be the pattern on $[I]^{k-1}$, consisting of all $s \in [[I]^{k-1}]^{<\aleph_0}$ such that there is no $v \in [I]^{n-1}$ with $[v]^{k-1} \subseteq s$ and $[v]^k \subseteq S$.
	
	For each $k \geq 2$ and for each $n > k$, let $S_{k}$ be a random $k$-ary graph on $\omega$, and let $\Delta_{n, k} = \Delta_{n, k}(S_{k})$. 
\end{definition}
We prove the following in \cite{InterpOrders2Ulrich}.

\begin{theorem}\label{TnkLemma3}
	Suppose $3 \leq k < n < \omega$. Then $T_{n, k}$ is a $\trianglelefteq$-minimal theory admitting $\Delta_{n, k}$.
\end{theorem}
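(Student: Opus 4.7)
The plan is to prove two parts: (a) $T_{n,k}$ admits $\Delta_{n,k}$, and (b) for every countable complete theory $T$ admitting $\Delta_{n,k}$, one has $T_{n,k}\trianglelefteq T$.

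For (a), the goal is to exhibit a formula $\phi(\bar x,\bar y)$ of $T_{n,k}$ and parameters $(\bar a_i : i\in[\omega]^{k-1})$ in some $M\models T_{n,k}$ such that $M\models\exists\bar x\bigwedge_{i\in s}\phi(\bar x,\bar a_i)$ iff $s\in\Delta_{n,k}$. The naive attempt $\phi(x,\bar y):=R(x,\bar y)$ with $\bar a_i=(a_j : j\in i)$ drawn from a single sequence $(a_j : j<\omega)$ meant to realize a $k$-hypergraph on $\omega$ with the same $(n-1)$-cliques as $S_k$ fails, because an $n$-clique of $S_k$ forces its $(n-1)$-sub-faces to be $(n-1)$-cliques in the copy, contradicting $n$-clique-freeness. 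The construction in \cite{InterpOrders2Ulrich} overcomes this by allowing $\bar x$ to be a longer tuple (and/or conjoining auxiliary $R$-atoms), together with a structured parameter family, so that the existential statement detects exactly the $\Delta_{n,k}$-pattern rather than requiring a literal hypergraph isomorphism; the random-extension axioms of $T_{n,k}$ then furnish the required parameters.

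For (b), let $T$ admit $\Delta_{n,k}$ via $\phi_T(\bar x,\bar y)$ and parameters $(\bar c_i : i\in[\omega]^{k-1})$ in $N^\ast\models T$, and let $\mathcal U$ be a $\lambda$-regular ultrafilter on $\mathcal P(\lambda)$ that $\lambda^+$-saturates $T$. Given $M\models T_{n,k}$ and a type $p(x)$ over $M^\lambda/\mathcal U$ of size $\leq\lambda$, quantifier elimination and the random-extension axioms of $T_{n,k}$ reduce the problem to realizing the positive part $p^+=\{R(x,\bar b_i) : i\in I\}$ with $|I|\leq\lambda$. Finite satisfiability of $p^+$ is controlled by the absence of $n$-clique obstructions among $(\bar b_i : i\in s)$ as $s$ ranges over finite subsets of $I$. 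Using $\lambda$-regularity, fix a regularizing family $(X_i : i\in I)\subseteq\mathcal U$; for each $\alpha<\lambda$ choose an injection $j_\alpha\colon s_\alpha\to[\omega]^{k-1}$, where $s_\alpha:=\{i\in I : \alpha\in X_i\}$, matching the local $T_{n,k}$-consistency pattern of $(\bar b_i(\alpha) : i\in s_\alpha)$ to the $\Delta_{n,k}$-pattern. Define $\bar c^\ast_i\in N^{\ast\lambda}/\mathcal U$ coordinatewise by $\bar c^\ast_i(\alpha):=\bar c_{j_\alpha(i)}$ whenever $\alpha\in X_i$. By Los's theorem the $\phi_T$-type $\{\phi_T(\bar x,\bar c^\ast_i) : i\in I\}$ is finitely satisfiable in $N^{\ast\lambda}/\mathcal U$, so $\lambda^+$-saturation of $T$ via $\mathcal U$ produces a realization, and the setup is arranged so that this pulls back to a realization of $p^+$ in $M^\lambda/\mathcal U$ (the negative literals of $p$ are then absorbed via the genericity of $T_{n,k}$).

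The main obstacle is constructing the injections $j_\alpha$ so that the local-to-global matching survives Los's theorem: one must exhibit each finite $R$-configuration $(\bar b_i(\alpha) : i\in s_\alpha)$ in $M$ as embedded in $S_k$-pattern data on $[\omega]^{k-1}$ in a way that preserves $T_{n,k}$-consistency classes of finite subtypes. The universality of $S_k$ as a random $k$-hypergraph supplies the combinatorial freedom (every legal local $k$-pattern appears somewhere in $S_k$), while $\lambda$-regularity of $\mathcal U$ coordinates the local choices across indices. Combined with the admission data for $T_{n,k}$ from part (a) used in reverse, this completes the translation and yields $T_{n,k}\trianglelefteq T$.
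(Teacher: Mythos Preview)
The paper does not prove this theorem here; it is quoted from \cite{InterpOrders2Ulrich} and used as a black box, so there is no in-paper proof to compare against. Your outline follows the standard pattern-transfer approach used in that reference and in the Keisler-order literature more generally.

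That said, your sketch has two places that are not yet proofs. For part (a) you correctly observe that the na\"{i}ve witness $\phi(x,\bar y)=R(x,\bar y)$ with parameters $(a_j:j<\omega)$ cannot work (since $S_k$ contains $n$-cliques while models of $T_{n,k}$ do not), but you then simply defer to \cite{InterpOrders2Ulrich} for the actual construction. That is a citation, not an argument; the content of (a) is precisely how to code the pattern using a longer tuple $\bar x$ and auxiliary edges, and you have not supplied it.

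For part (b), the phrase ``this pulls back to a realization of $p^+$ in $M^\lambda/\mathcal U$'' hides the real mechanism. A realization $\bar d$ of $\{\phi_T(\bar x,\bar c^*_i):i\in I\}$ in $N^{*\lambda}/\mathcal U$ does not live in $M^\lambda/\mathcal U$ and cannot literally be pulled back. What you need is that the sets $Y_i=\{\alpha:N^*\models\phi_T(\bar d(\alpha),\bar c^*_i(\alpha))\}$ give a \emph{multiplicative} family in $\mathcal U$; intersecting with the $X_i$'s you obtain a multiplicative refinement of the {\L}o{\'s} map of $p^+$, and \emph{then} one constructs $e\in M^\lambda$ coordinatewise by realizing the finite positive type $\{R(x,\bar b_i(\alpha)):i\in t_\alpha\}$ in $M$ (which is possible by the extension axioms). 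This is exactly the framework of {\L}o{\'s} maps and multiplicative refinements set up in Section~\ref{ReviewSec}; without naming it, your ``pulls back'' is unjustified. Similarly, ``the negative literals are absorbed via genericity'' requires the standard reduction of saturation in $T_{n,k}$ to positive $R$-types, which should be stated and proved (or cited). Finally, the coordinatewise matching $j_\alpha$ needs a lemma: any finite consistency pattern arising from $R(x,\bar b_i)$ in a model of $T_{n,k}$ embeds into $\Delta_{n,k}(S_k)$, which follows from universality of $S_k$ after handling degenerate cases (repeated entries in $\bar b_i(\alpha)$, coincident tuples) by shrinking the regularizing sets.
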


We also show that if $T$ admits $\Delta_{n, k}$ then it does so in a particularly nice way:

\begin{theorem}\label{NesitrilRodlLemma}
	Suppose $T$ is a countable simple theory and $\phi(x, y)$ is a formula of $T$ which admits $\Delta_{n, k}$ (so $n > k \geq 3$, since $T$ is simple; possibly $x, y$ are tuples here). Let $\mathfrak{C}$ be the monster model of $T$. Then for every index set $I$ and for every $S \subseteq [I]^k$, we can find some countable $N \preceq \mathfrak{C}$ and some $(b_u: u \in [I]^{k-1})$ from $\mathfrak{C}$, such that for all $s \in [[I]^{k-1}]^{<\aleph_0}$:
	
	\begin{itemize}
		\item If $s \in \Delta_{n, k}(S_k)$, then $\{\phi(x, b_u): u \in [s]^{k-1}\}$ does not fork over $N$;
		\item Otherwise, $\{\phi(x, b_u): u \in [w]^{k-1}\}$ is inconsistent.
	\end{itemize}
\end{theorem}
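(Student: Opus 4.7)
The plan is to combine the Nešetřil--Rödl structural Ramsey theorem for ordered $k$-ary hypergraphs with standard indiscernible extraction in simple theories, and then a Kim's-lemma style argument to get the non-forking conclusion.

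First, since $\phi(x,y)$ admits $\Delta_{n,k}$, there is a sequence $(a_u : u \in [\omega]^{k-1})$ in $\mathfrak{C}$ such that $\{\phi(x,a_u) : u \in s\}$ is consistent iff $s \in \Delta_{n,k} = \Delta_{n,k}(S_k)$. I would then apply Nešetřil--Rödl (in the finitary form giving structural Ramsey for the class of finite ordered $k$-ary hypergraphs) together with compactness to extract a new sequence with the same EM-type over $\emptyset$, indexed by $([J]^{k-1}, <_{\text{lex}})$ where $J$ is a very large index set carrying a random $k$-ary graph $S_k^J$, and enjoying the following indiscernibility: for some countable $N \preceq \mathfrak{C}$, the type $\tp((a_u : u \in s)/N)$ depends only on the quantifier-free type of $s$ in the structure $(J,<,S_k^J)$. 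The consistency/inconsistency pattern from $\Delta_{n,k}$ is preserved because it is determined by a conjunction of single formulas.

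Next, given an arbitrary index set $I$ and $S \subseteq [I]^{k}$, I would use the universality of the random $k$-ary graph: every finite substructure of $(I, S)$ embeds into $(J, S_k^J)$. By compactness applied to the EM-type above, one realizes a sequence $(b_u : u \in [I]^{k-1})$ in $\mathfrak{C}$ such that the type of any finite $(b_u : u \in s)$ over $N$ coincides with the type of $(a_{\sigma(u)} : u \in s)$ for some (any) embedding $\sigma$ realizing the $S$-structure of the vertices involved into $(J,<,S_k^J)$. This immediately gives the second bullet: if $s \notin \Delta_{n,k}(S)$ then the corresponding set over on the indiscernible side is inconsistent, hence so is $\{\phi(x,b_u) : u \in s\}$.

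The main obstacle is the first bullet — upgrading consistency to non-forking over $N$. Here I would use simplicity together with the $S_k^J$-randomness built into the indiscernibility. Given $s \in \Delta_{n,k}(S)$, I can extend $(b_u : u \in s)$ inside $\mathfrak{C}$ to an infinite system of ``parallel copies'' by repeatedly adjoining fresh vertices whose $S$-edges to $I$ are chosen to match those of the relevant vertices (possible by universality of the random graph again and the EM-type), thereby producing an $N$-indiscernible sequence $(\bar c_m : m < \omega)$ with $\bar c_0 = (b_u : u \in s)$ such that $\{\phi(x,c) : c \in \bigcup_m \bar c_m\}$ is still consistent (the union still realizes a pattern in $\Delta_{n,k}$, since the enlarged hypergraph contains no $(n-1)$-clique in $S$). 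By Kim's lemma for simple theories, consistency of the formula along an $N$-indiscernible sequence extending $(b_u : u \in s)$ witnesses that $\{\phi(x,b_u) : u \in s\}$ does not divide, hence does not fork, over $N$. Passing to a countable elementary submodel containing $N$ if necessary completes the construction.
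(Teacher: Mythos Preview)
The paper does not actually prove this theorem; it is quoted from \cite{InterpOrders2Ulrich} (note the sentence preceding the statement: ``We also show that\ldots''). So there is no proof in the present paper to compare against directly. That said, your overall strategy is the intended one and matches the lemma's label: Ne\v{s}et\v{r}il--R\"odl structural Ramsey for ordered $k$-hypergraphs, plus compactness, plus universality of the random $k$-graph, is exactly how one builds the required generalized indiscernible and transfers it to an arbitrary $(I,S)$.

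There is, however, a genuine gap in your non-forking step. You write that ``consistency of the formula along an $N$-indiscernible sequence extending $(b_u : u \in s)$ witnesses that $\{\phi(x,b_u) : u \in s\}$ does not divide.'' This is not what Kim's lemma says. Kim's lemma asserts that in a simple theory, $\phi(x,\bar a)$ divides over $A$ if and only if $\{\phi(x,\bar a_i) : i < \omega\}$ is inconsistent for some (equivalently every) \emph{Morley} sequence $(\bar a_i)$ in $\tp(\bar a/A)$. Finding a single $N$-indiscernible sequence along which the formula is consistent does not rule out dividing: there could be a different $N$-indiscernible sequence in the same type along which it is inconsistent. You must argue that your parallel copies $(\bar c_m)$ actually form a Morley sequence over $N$, i.e.\ that each $\bar c_{m+1}$ is free from $\bar c_{\leq m}$ over $N$. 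This can be arranged --- for instance by choosing $N$ so that the generalized indiscernible realizes a coheir over $N$, so that the blocks form a coheir (hence Morley) sequence --- but it is an additional argument that you have not supplied, and without it the first bullet does not follow.
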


We now prove the following theorem. This won't be directly used in our arguments for Keisler's order, but gives useful insight to our nonsaturation argument (Theorem~\ref{NonSatPortionGeneral}).

\begin{theorem}\label{NonSat2}
	Suppose $\theta$ is regular, $\mu = \mu^{<\theta}$, $3 \leq k_* \leq \aleph_0$, and $T$ is a countable simple theory which admits $\Delta_{k+1, k}$ for some $k < k_*$.
	
	Suppose $\lambda$ is big enough for $(\mu, \theta, k)$. Write $P = P_{[\lambda]^k \mu \theta}$. Then $P$ forces that $T$ fails the $(\lambda, \theta, \theta, k_*)$-coloring property.
\end{theorem}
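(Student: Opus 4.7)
I plan to argue by contradiction in the forcing extension $V[G]$ by $P$. Assume that some $F \colon \Gamma^\theta_{M, M_0} \to P_{X \theta \theta}$ is a $(P_{X\theta\theta}, k_*)$-coloring for suitably chosen $M, M_0$, and derive a contradiction by finding $t \in [\lambda]^k$ for which $\{F(q_s) : s \in [t]^{k-1}\}$ is compatible in $P_{X\theta\theta}$, where the $q_s$ are partial types arranged so that $\{q_s : s \in [t]^{k-1}\}$ is inconsistent. Since $|[t]^{k-1}| = k < k_*$ and an inconsistent family has no lower bound in $\Gamma^\theta_{M, M_0}$, such a $t$ directly contradicts the coloring property applied to $\{q_s : s \in [t]^{k-1}\}$.

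The types $q_s$ come from Theorem~\ref{NesitrilRodlLemma}: pick a formula $\phi(x, y)$ of $T$ admitting $\Delta_{k+1, k}$ and apply the theorem with $I = \lambda$ and $S = [\lambda]^k$ to obtain, in $V$, a countable $N \preceq \mathfrak{C}$ and parameters $(b_u : u \in [\lambda]^{k-1})$ such that, setting $M_0 := N$, $q_u := \{\phi(x, b_u)\}$, and taking $M \models T$ of size $\leq \lambda$ containing $N \cup \{b_u\}$, each $q_u \in \Gamma^\theta_{M, M_0}$ while every $\{q_s : s \in [t]^{k-1}\}$ for $t \in [\lambda]^k$ is inconsistent (apply the theorem to $w = [t]^{k-1}$, which visibly contains $[v]^{k-1}$ for $v = t$). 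Thus, assuming $F$ is a coloring, the contrapositive of the coloring property forces $\{F(q_s) : s \in [t]^{k-1}\}$ to be incompatible in $P_{X\theta\theta}$ for every $t \in [\lambda]^k$.

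To locate a $t$ contradicting this, I plan to apply (a forcing-preserved version of) the hypothesis that $\lambda$ is big enough for $(\mu, \theta, k)$ to the map $\hat F \colon [\lambda]^{k-1} \to P_{X\theta\theta}^{V[G]}$ defined by $\hat F(u) := F(q_u)$. After restricting $F$'s range we may assume $|X| \leq \lambda$, so $\hat F$ embeds into $P_{\lambda \mu \theta}^{V[G]}$ via $\theta \leq \mu$, and the bigness statement applied to a suitable extension of $\hat F$ to $[\lambda]^{<\aleph_0}$ will produce $t \in [\lambda]^k$ together with a sequence $(w_s : s \in [t]^{k-1})$ for which $\bigcup_{s} \hat F(s)$ is a function in $P_{X\theta\theta}^{V[G]}$, completing the contradiction.

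The main obstacles I expect are twofold and intertwined. First, transferring the bigness hypothesis from $V$ to $V[G]$: using that $P$ is $\theta$-closed and $\mu^+$-c.c.\ with $\mu = \mu^{<\theta}$, I would show by a names-and-encoding argument (via $\theta$-closedness of $P$ when $\theta > \aleph_0$, or by finite intersections of dense open sets when $\theta = \aleph_0$) that for any $V[G]$-name $\dot G \colon [\lambda]^{<\aleph_0} \to P_{\lambda \mu \theta}^{V[G]}$ there is a dense set of conditions forcing the bigness conclusion, namely by choosing for each $w$ a ground-model pair $(p_w, G_0(w))$ with $p_w \Vdash \dot G(w) = \check G_0(w)$ and encoding it as a single $P_{\lambda \mu \theta}^V$-element (using $\lambda \cdot \theta = \lambda$ and $\theta \leq \mu$), then applying ground-model bigness and taking $p^+ := \bigcup_s p_{w_s}$. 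Second, extending $\hat F$ to $[\lambda]^{<\aleph_0}$ so that bigness's output indices $w_s$ faithfully recover the desired $\hat F(s)$'s (rather than $\hat F(u_s)$ for some arbitrary $u_s \in [w_s]^{k-1}$): this requires a signature-style coding that uses a fresh shared coordinate to rule out $w_s$'s of size $\neq k-1$, while respecting the $\mu^+$-c.c.\ of $P_{\lambda \mu \theta}$, and is the most delicate step.
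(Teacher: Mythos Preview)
Your approach has a genuine gap, and it is precisely the second obstacle you flag: forcing the bigness output to satisfy $w_s = s$. The ``signature-style coding'' you sketch cannot work. If you set $G(w)(*) = 0$ for $|w| = k-1$ and $G(w)(*) = 1$ otherwise, then bigness may simply return a family $(w_s)$ all of size $\geq k$, with $G(w_s) = \{(*,1)\}$ trivially compatible. To block this you would need the $G(w)$'s for $|w| \neq k-1$ to be pairwise incompatible, but there are $\lambda > \mu$ many such $w$'s and only $\mu$ values available at any coordinate, so no finite collection of shared coordinates suffices. The inflationary map $F$ does not help either: it only controls $F(w_s) \cap t$, not $|w_s|$.

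The deeper issue is your choice $S = [\lambda]^k$. With this $S$, the conjunction $\psi_w(x) = \bigwedge_{u \in [w]^{k-1}} \phi(x,b_u)$ is inconsistent for every $|w| \geq k$, so you cannot apply the coloring to $\psi_w$ and are forced to work with the individual $q_u$'s---hence the hopeless need for $w_s = s$. The paper instead takes $S = R$ to be the \emph{generic} $k$-ary relation added by $P$. Then for each finite $w$ there is a condition $p_w$ forcing $[w]^k \cap R = \emptyset$, hence forcing $\psi_w$ nonforking over $M_0$; one applies the coloring to $\psi_w$ itself. Bigness (in $V$, applied to $G(w) = p_w \cup f_w$ and $F(w) = w \cup \bigcup\mathrm{dom}(p_w)$) yields $t$ and $(w_s)$ with the $p_{w_s}$'s compatible; since $s \subseteq w_s$, each $\psi_{w_s}$ contains $\phi(x,b_s)$, so the coloring forces $\{\phi(x,b_s): s \in [t]^{k-1}\}$ consistent. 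The final twist---unavailable in your setup---is that $t \notin \mathrm{dom}(\bigcup_s p_{w_s})$ (from $F(w_s) \cap t = s$), so one can extend to $p'$ forcing $t \in R$, yielding the contradiction. The genericity of $R$ is thus not incidental but the mechanism that replaces your unachievable coding.
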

\begin{proof}
	Say $\phi(x, y)$ admits $\Delta_{k+1, k}$ (where possibly $x, y$ are tuples).
	
	Let $G$ be $P$-generic over $\mathbb{V}$; we show that $T$ fails the  $(\lambda, \theta, \theta, k_*)$-coloring property in $\mathbb{V}[G]$, which suffices. Note that in $\mathbb{V}[G]$, $|\mu| = \theta = \theta^{<\theta}$, so this makes sense. Now, in $\mathbb{V}[G]$, let $f: [\lambda]^k \to \theta$ be the generic function added by $G$ (so $f =\bigcup G$). Let $R = \{v \in [\lambda]^k: f(v) \not= 0\}$. By Theorem~\ref{NesitrilRodlLemma}, we can find $M \models T$ and $M_0 \preceq M$ countable, and $(b_u: u \in [\lambda]^{k-1})$ from $M$, such that for every $s \in \Delta_{k+1, k}(R)$, $\{\phi(x, b_u): u \in s\}$ does not fork over $M_0$, and for every $s \in [[\lambda]^{k-1}]^{<\aleph_0} \backslash \Delta_{k+1, k}(R)$, $\{\phi(x, b_u): u \in s\}$ is inconsistent. We can suppose $|M| \leq \lambda$.
	
	Suppose towards a contradiction that $\Gamma^\theta_{M, M_0}$ is $(\theta, \theta, k)$-colorable in $\mathbb{V}[G]$; let $F: \Gamma^\theta_{M, M_0} \to P_{X \theta \theta}$ be a $(P_{X \theta \theta}, k)$-coloring. We can suppose $X \in \mathbb{V}$ (in fact we can suppose it is a cardinal in $\mathbb{V}$). Pull everything back to $\mathbb{V}$ to get names $\dot{f}, \dot{R}, \dot{M}, \dot{M}_0, (\dot{b}_u: u \in [\lambda]^{k-1}), \dot{F}$.
	
	For each $v \in [\lambda]^{<\aleph_0}$, let $\dot{\psi}_v(x)$ be the $P$-name for the formula $\exists x \bigwedge_{u \in [v]^{k-1}} \phi(x, \dot{b}_u)$. Choose $p_v \in P$ such that $p_v$ forces $\{\dot{\psi}_v(x)\} \in \dot{\Gamma}^\theta_{\dot{M}, \dot{M}_0}$ (i.e. $\dot{\psi}_v(x)$ does not fork over $\dot{M}_0$) and such that $p_v$ decides $\dot{F}(\dot{\psi}_v)$, say $p_v$ forces that $\dot{F}(\dot{\psi}_v) = \check{f}_v$ for some $f_v \in P_{X \theta \theta}$.
	
	Define $F: [\lambda]^{<\aleph_0} \to [\lambda]^{<\theta}$ via $F(v)= v \cup \bigcup \mbox{dom}(p_v)$. Write $Y = X \cup [\lambda]^{k-1}$, and define $G: [\lambda]^{<\aleph_0} \to P_{Y \mu \theta}$ via $G(v) = p_v \cup f_v$. Since $\lambda$ is big enough for $(\mu, \theta, k)$, we can find some $v \in [\lambda]^k$ and some $(w_u: u \in [v]^{k-1}$) from $[\lambda]^{<\aleph_0}$, such that each $w_u \cap v = F(w_u) \cap v = u$, and such that $\bigcup \{G(w_u): u \in [v]^{k-1}\}$ is a function.
	
	Write $p= \bigcup_{u \in [v]^{k-1}} p_{w_u} \in P$. Then $p$ forces each $\{\dot{\psi}_{w_u}(x)\} \in \dot{\Gamma}^\theta_{\dot{M}, \dot{M}_0}$, and each $\dot{F}(\dot{\psi}_{w_u}(x)) = \hat{f}_{w_u}$; since $\bigcup \{f_{w_u}: u \in [v]^{k-1}\}$ is a function, we get that $p$ forces $\{\dot{\psi}_{w_u}(x): u \in [v]^{k-1}\} \in \dot{\Gamma}^\theta_{\dot{M}, \dot{M}_0}$, in particular it is consistent. Note that $v \not \in \mbox{dom}(p)$, since if $v \in \mbox{dom}(p_{w_u})$ say, then $v \subseteq \bigcup \mbox{dom}(p_{w_u})$, contradicting that $F(w_u) \cap v = u$. Thus we can choose $p' \leq p$ in $P$ with $p'(v) = 1$.
	
	But then $p'$ forces that $v \in \dot{R}$ and $\{\exists x \bigwedge_{u \in [v]^{k-1}} \phi(x, \dot{b}_u)\}$ is consistent, contradiction.
\end{proof}

\begin{corollary}\label{SyntConsOfKeislerCor}
	Suppose $T$ is simple and $3 \leq k \leq \aleph_0$. Then (A) implies (B) implies (C) implies (D):
	
	\begin{itemize}
		\item[(A)] $T$ has $\mathcal{P}^-(k)$-amalgamation of models;
		\item[(B)] $T$ has $<k$-type amalgamation;
		\item[(C)] For every regular uncountable $\theta$, for every $\mu = \mu^{<\theta}$, and for every cardinal $\lambda$, $T$ has the $(\lambda, \mu, \theta, k)$-coloring property, and moreover this statement continues to hold in every forcing extension;
		\item[(D)] $T$ does not admit $\Delta_{k'+1, k'}$ for any $k' < k$.
	\end{itemize}
\end{corollary}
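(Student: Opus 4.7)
The proof is a chain (A)$\Rightarrow$(B)$\Rightarrow$(C)$\Rightarrow$(D), with each step assembled from results already in hand.

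The implication (A)$\Rightarrow$(B) is exactly Theorem~\ref{GenAmalgImpliesTypeAmalg}.

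For (B)$\Rightarrow$(C), one direct application of Theorem~\ref{SatPortion} in $\mathbb{V}$ gives the $(\lambda,\mu,\theta,k)$-coloring property of $T$ for every regular uncountable $\theta$, every $\mu=\mu^{<\theta}$, and every $\lambda$. To propagate the statement to every forcing extension, the key point is that $<k$-type amalgamation of a countable theory is absolute: by the lemma immediately after Definition~\ref{DefOfTypeAP} the quantification reduces to countable $\Lambda$-arrays (with $|\Lambda|<k$ finite) and countable coherent systems of types, and nonforking of a countable partial type over a countable model in a countable simple theory is a $\Pi^0_2$ condition on the codes. Thus $<k$-type amalgamation is a $\Pi^1_2$ statement about $T$; by Shoenfield absoluteness it persists to every forcing extension, whereupon a second invocation of Theorem~\ref{SatPortion}, now inside the extension, yields (C) there.

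For (C)$\Rightarrow$(D) I argue the contrapositive. Assume $T$ admits $\Delta_{k'+1,k'}$ for some $k'<k$. First pass to a preliminary forcing extension $\mathbb{V}'$ of $\mathbb{V}$ in which CH holds, so that $\theta:=\aleph_1$ satisfies $\theta^{<\theta}=\theta$; by hypothesis (C) still holds in $\mathbb{V}'$. Set $\mu:=\theta$ and pick any $\lambda\geq\mu^{+k'}$; by Theorem~\ref{CombThm1} such a $\lambda$ is big enough for $(\mu,\theta,k')$. Now apply Theorem~\ref{NonSat2} in $\mathbb{V}'$, instantiating the ``$k$'' of that theorem as our $k'$ and the ``$k_*$'' as our $k$: the forcing $P:=P_{[\lambda]^{k'}\mu\theta}$ forces $T$ to fail the $(\lambda,\theta,\theta,k)$-coloring property. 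Since $\mu=\mu^{<\theta}$, the poset $P$ is $\theta$-closed, so $\theta^{<\theta}=\theta$ is preserved into $\mathbb{V}'^{P}$, and the coloring property quantified over in (C) (with the parameter $\mu$ there set to $\theta$) genuinely fails in $\mathbb{V}'^{P}$. But $\mathbb{V}'^{P}$ is a forcing extension of $\mathbb{V}$, contradicting the clause of (C) demanding preservation under forcing.

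The only step requiring any real care is the absoluteness argument in (B)$\Rightarrow$(C); the other implications are essentially bookkeeping on top of Theorems~\ref{GenAmalgImpliesTypeAmalg}, \ref{SatPortion}, \ref{CombThm1}, and \ref{NonSat2}. I expect the main obstacle to be verifying the precise descriptive-set-theoretic complexity of $<k$-type amalgamation, in particular that nonforking over countable models in a countable simple theory can be phrased arithmetically once one passes through the reduction to countable coherent systems.
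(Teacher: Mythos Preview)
Your argument follows the paper's outline exactly—citing Theorems~\ref{GenAmalgImpliesTypeAmalg}, \ref{SatPortion}, and \ref{NonSat2} for the three implications—so the approaches coincide. The paper's proof is a one-line citation of these three results; you supply two details the paper leaves implicit. First, for (B)$\Rightarrow$(C) you spell out why $<k$-type amalgamation persists to forcing extensions via Shoenfield absoluteness, which is indeed needed since Theorem~\ref{SatPortion} is applied inside the extension; the paper simply takes this for granted. Second, for (C)$\Rightarrow$(D) you insert a preliminary collapse to CH so that $\mu=\theta$ satisfies $\theta^{<\theta}=\theta$ in the extension, guaranteeing that the failure of the $(\lambda,\theta,\theta,k)$-coloring property produced by Theorem~\ref{NonSat2} genuinely contradicts the instance of (C) with second parameter $\theta$. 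This is a clean way to make the parameters match; the paper relies instead on the assertion inside the proof of Theorem~\ref{NonSat2} that $|\mu|=\theta=\theta^{<\theta}$ in $\mathbb{V}[G]$. Either way the same theorems do the work.
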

\begin{proof}
	(A) implies (B) is Theorem~\ref{GenAmalgImpliesTypeAmalg}. (B) implies (C) is Theorem~\ref{SatPortion}. (C) implies (D) is Theorem~\ref{NonSat2}. 
\end{proof} 

\begin{remark}
	In the version stated in the introduction, we required $\lambda \leq 2^\mu$. This follows, since in the forcing extension of Theorem~\ref{NonSat2}, $\lambda \leq 2^\mu$.
\end{remark}

\begin{conjecture}\label{Conj1Later}
	(A), (B), (C), (D) above are equivalent.
\end{conjecture}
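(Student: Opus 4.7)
The plan is to close the implication cycle by proving (B) $\Rightarrow$ (A) together with (D) $\Rightarrow$ (B); combined with the already established chain (A) $\Rightarrow$ (B) $\Rightarrow$ (C) $\Rightarrow$ (D), this yields the full equivalence. I would avoid attempting (D) $\Rightarrow$ (C) directly, since the set-theoretic nature of (C) would then force us to produce colorings in arbitrary forcing extensions from syntactic hypotheses, which seems substantially harder than routing through the semantic statement (B).

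For (B) $\Rightarrow$ (A), I would run a transfinite construction of a solution. Given an independent system $(M_s : s \in \mathcal{P}^-(k))$ of countable models (reducing to the countable case by Theorem~\ref{IndependentSystemsThm}), enumerate the intended universe of a solution as $(a_\alpha : \alpha < \omega_1)$ and inductively determine, for each finite tuple from $\{a_\beta : \beta < \alpha\}$, a complete type $q(\overline{x})$ over $\bigcup_s M_s \cup \{a_\beta : \beta < \alpha\}$ extending the already-specified data and non-forking over $M_\emptyset$. The compatibility obligation at stage $\alpha$ is local: it involves only finitely many indices $s_1, \ldots, s_j \in \mathcal{P}^-(k)$, and $j < k$ since $|\mathcal{P}^-(k)| = 2^k - 1$, but we can further arrange by the shape of the finite support that only $j < k$ many $s_i$ are genuinely relevant (using that any finite subset of $\mathcal{P}^-(k)$ of size $k$ has trivial intersection, reducing to a base case). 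At each such stage, $<k$-type amalgamation promotes a coherent local family of non-forking types to a single non-forking type whose restriction to each $M_{s_i}$ is the prescribed one, allowing the induction to proceed. The resulting directed union is a solution.

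For (D) $\Rightarrow$ (B), which I expect to be the main obstacle, I would argue contrapositively. Assume $T$ fails $<k$-type amalgamation, witnessed by $\Lambda \subseteq {}^n m$ with $|\Lambda| = k' + 1 \leq k$, a $\Lambda$-array $(\overline{N}, \overline{\pi})$ and a coherent family $(p_\eta : \eta \in \Lambda)$ whose union forks over $N_\emptyset$. Extract a formula $\phi(\overline{x}, \overline{y})$ and a finite subtype using finite character of forking, so that a single instance of $\phi$ over each $N_\eta$ encodes the forking failure. Now stretch the $\Lambda$-array: using the isomorphisms $\pi_{\eta \eta'}$ together with compactness, build an independent system of copies of $(\overline{N}, \overline{\pi})$ indexed by $[I]^{k'}$ for large $I$, producing parameters $(\overline{b}_u : u \in [I]^{k'})$ such that the consistency and forking pattern of $\{\phi(\overline{x}, \overline{b}_u)\}$ across $u \in [I]^{k'}$ mirrors the combinatorial shape of $\Lambda$. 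A Nesetril--Rödl / structural Ramsey argument, applied to the induced $k'$-ary hypergraph on $I$, should then extract a sub-configuration realizing exactly $\Delta_{k'+1, k'}$: namely, among every $k'+1$ indices in a Ramsey-homogeneous subset, the corresponding formulas are inconsistent, while on any proper subset they are consistent.

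The principal obstacle is precisely the final extraction step. An arbitrary failure of $<k$-type amalgamation produces some combinatorial obstruction to common non-forking extensions, but aligning this obstruction with the highly symmetric clique-avoiding pattern $\Delta_{k'+1, k'}$ requires exploiting both the coherence condition in a $\Lambda$-array and a structural Ramsey theorem for $k'$-uniform hypergraphs, and plausibly an appeal to the converse direction of Theorem~\ref{NesitrilRodlLemma} or a refinement thereof. If this extraction cannot be carried out in full generality, a fallback is to enlarge the family of patterns in (D) to all patterns of the form ``failure of $<k$-type amalgamation,'' yielding automatic equivalence with (B) and leaving the precise identification with $\Delta_{k'+1, k'}$ as a separate combinatorial problem about patterns admitted by simple theories.
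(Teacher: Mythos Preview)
The statement you are addressing is a \emph{conjecture} in the paper, not a theorem: the paper proves only the forward chain (A) $\Rightarrow$ (B) $\Rightarrow$ (C) $\Rightarrow$ (D) and explicitly leaves the reverse implications open. So there is no paper proof to compare against, and any correct argument here would in fact resolve an open problem.

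Your proposal has genuine gaps in both directions you attempt.

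For (B) $\Rightarrow$ (A): the crux is your claim that at each stage only $j < k$ many indices $s_i \in \mathcal{P}^-(k)$ are ``genuinely relevant.'' The justification offered is incoherent. The inequality $j < k$ does not follow from $|\mathcal{P}^-(k)| = 2^k - 1$, and the parenthetical assertion that ``any finite subset of $\mathcal{P}^-(k)$ of size $k$ has trivial intersection'' is simply false (for $k = 3$ take $\{0\}, \{0,1\}, \{0,2\}$). The real difficulty is that $<k$-type amalgamation concerns $\Lambda$-arrays with $|\Lambda| < k$, whereas solving a $\mathcal{P}^-(k)$-system requires reconciling data over $k$ maximal faces simultaneously. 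Reducing the latter to the former needs an inductive scheme (along the lines of higher-dimensional amalgamation arguments in the style of Goodrick--Kim--Kolesnikov), and nothing in your outline supplies one. The transfinite enumeration does not help: at the moment you try to add a single new element realizing compatible types over all $k$ faces $M_{k \setminus \{i\}}$, you face exactly the $k$-fold amalgamation problem you started with.

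For (D) $\Rightarrow$ (B): you correctly identify this as the main obstacle and then do not overcome it. The ``stretching'' step is vague, and the Ramsey extraction you invoke would require that an arbitrary failure of $\Lambda$-type amalgamation (for some $|\Lambda| < k$) can be uniformized into the specific clique pattern $\Delta_{k'+1,k'}$. There is no known mechanism for this; the coherence maps $\pi_{\eta\eta'}$ constrain the failure but do not obviously force it into hypergraph-clique form. Your fallback---redefining (D) to be ``fails $<k$-type amalgamation''---makes (D) $\Leftrightarrow$ (B) tautological and abandons the conjecture as stated.
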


\section{When $\lambda$ is Small}\label{Refinements}

In this section, we discuss some generalizations of Theorem~\ref{SatPortion}, covering the cases  where $\lambda$ is not big enough for $(\mu, \theta, k)$; we also discuss the cases where $\theta = \aleph_0$.

We will want the following notion of dimension.

\begin{definition}
	Suppose $\Delta \subseteq \mathcal{P}(I)$ is closed under finite intersections and $k \geq 2$. Then say that $\mbox{dim}(\Delta) \geq k$ if there is some $t \in [I]^k$ and some sequence $(w_s: s \in [t]^{k-1})$ from $\Delta$, such that for all $w'_s \in \Delta$ with $w_s \subseteq w'_s$, we have $w'_s \cap t = s$.
	
	Note that $2 \leq k' \leq k$ and $\mbox{dim}(\Delta) \geq k$ implies $\mbox{dim}(\Delta) \geq k'$; thus we can let $\mbox{dim}(\Delta)$ be the supremum of all $2 \leq k < \aleph_0$ such that $\mbox{dim}(\Delta) \geq k$. (If $\mbox{dim}(\Delta) \not \geq 2$ then define $\mbox{dim}(\Delta) = 1$.)
	
	Suppose $\Delta \subseteq \mathcal{P}(I)$ is closed under finite intersections. Then define $\Delta^+ \subseteq \mathcal{P}(I \cup \{*\})$ to be $\Delta \cup \{w \cup \{*\}: w \in \Delta\} \cup \{I\}$.
\end{definition}

We make several easy remarks:

\begin{lemma}
	\begin{itemize}
		\item[(A)] $\mbox{dim}(\mathcal{P}^-(k))= k$ for each $k \geq 1$, as witnessed by $t:= k$ and $w_s := s$. More generally, $\mbox{dim}([n]^{<k}) = k$ for all $n \geq k \geq 1$.
		\item[(B)] Suppose $n > k \geq 3$, and $\Delta \subseteq \mathcal{P}(I)$ is closed under finite intersections and $\mbox{dim}(\Delta) \leq k$. Then $T_{n, k}$ has $\Delta$-amalgamation of models.
		\item[(C)] Suppose $\Delta \subseteq \mathcal{P}^-(k)$ is closed under finite intersections. Then $\mbox{dim}(\Delta^+) = \min(\mbox{dim}(\Delta) +1, \aleph_0)$.
		\item[(D)] If $T$ has $\Delta$-amalgamation of models whenever $\Delta \subseteq \mathcal{P}(n)$ is closed under subsets and satisfies $\mbox{dim}(\Delta) \leq k$, then $T$ has $\Lambda$-type amalgamation whenever $\Lambda \in \mathbf{\Lambda}$ satisfies that $\mbox{dim}(\Delta_\Lambda) < k$. 
	\end{itemize}
\end{lemma}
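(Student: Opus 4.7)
The four parts are essentially independent; I would handle them in the order (A), (C), (D), (B), since (D) invokes (C) and (B) is the most subtle.

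Part (A) is immediate: for $\mathcal{P}^-(k)$, the proposed witness $t := k$, $w_s := s$ works because any $w'_s \in \mathcal{P}^-(k)$ extending the $(k-1)$-element $s$ has $|w'_s| \leq k-1$, hence equals $s$, so $w'_s \cap t = s$. The upper bound $\dim \leq k$ holds since no element of $\mathcal{P}^-(k)$ has cardinality $\geq k$, so no witness can support dimension $k+1$. The argument for $[n]^{<k}$ is identical.

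For (C), write $\ell := \dim(\Delta)$; the case $\ell = \aleph_0$ follows from the finite case applied at all $k$, so assume $\ell < \aleph_0$. For the lower bound $\dim(\Delta^+) \geq \ell + 1$, take a witness $(t_0, (w_u : u \in [t_0]^{\ell-1}))$ for $\Delta$, set $t := t_0 \cup \{*\}$, $w_{t_0} := I$, and $w_{u \cup \{*\}} := w_u \cup \{*\}$. Since $I \notin \Delta$ (as $\Delta \subseteq \mathcal{P}^-(I)$), the only $\Delta^+$-extension of $I$ is itself; and $\Delta^+$-extensions of $w_u \cup \{*\}$ are of the form $w' \cup \{*\}$ for $\Delta$-extensions $w'$ of $w_u$, so the witness condition reduces to the original one. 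For the upper bound, suppose $(t, (w_s : s \in [t]^{\ell+1}))$ witnesses $\dim(\Delta^+) \geq \ell+2$. If $* \notin t$, then $I$ would extend any $w_s$ with $I \cap t = t \neq s$, so $* \in w_s$ for all $s$; writing $w_s = w'_s \cup \{*\}$ gives $(w'_s) \subseteq \Delta$ witnessing $\dim(\Delta) \geq \ell+2$, contradiction. If $* \in t = t_0 \cup \{*\}$, apply the analogous stripping to $\{w_s : * \in s\}$, indexed by $[t_0]^\ell$, to get a witness for $\dim(\Delta) \geq \ell+1$, again contradicting $\dim(\Delta) = \ell$.

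For (D), I would replicate the strategy of Theorem~\ref{GenAmalgImpliesTypeAmalg}, replacing its coarsening step with a direct appeal to the hypothesis. Given the $\Lambda$-array $(N_s : s \in \Delta_\Lambda)$ and coherent types, with $\dim(\Delta_\Lambda) < k$, extend to an independent system indexed by $\mathcal{P}(n \times m)$ inside $\mathfrak{C}$ with $N_{n \times m} = \mathfrak{C}$; fix $\eta_* \in \Lambda$ and a realization $\overline{a}$ of $p_{\eta_*}$ in a fresh $N_{\eta_* \cup \{*\}}$, and copy over via the $\pi_{\eta_* \eta}$'s to obtain an independent system $(N_s : s \in \Delta^*)$ where
\[
\Delta^* := \Delta_\Lambda \cup \{s \cup \{*\} : s \in \Delta_\Lambda\} \cup \mathcal{P}(n \times m),
\]
which is closed under subsets. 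Since $\Delta^* \supseteq \Delta_\Lambda^+$ and enlarging a family can only decrease its dimension (the witness condition quantifies over more potential extensions), $\dim(\Delta^*) \leq \dim(\Delta_\Lambda^+) = \dim(\Delta_\Lambda) + 1 \leq k$ by (C). The hypothesis then yields a solution $N$; $\overline{a} \in N_{\{*\}}$ realizes $\bigcup_\eta p_\eta$ in $N$ by construction, and the disjointness of $\{*\}$ and $n \times m$ in $\Delta^*$ combined with independence gives $\overline{a}$ free from $\mathfrak{C}$ over $N_\emptyset$.

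For (B), which I expect to be the main obstacle, the plan is to show that the free amalgamation of $(M_s : s \in \Delta)$ (adding no hyperedges beyond those inherited from the $M_s$) yields an $n$-clique-free $k$-uniform hypergraph on $\bigcup_s M_s$, which then extends to a model of $T_{n, k}$ by completion. The hard step is ruling out $n$-cliques: given an alleged clique $a_0, \ldots, a_{n-1}$, let $s_i \in \Delta$ be the minimal index with $a_i \in M_{s_i}$, and for each $u \in [n]^k$ let $s_u \in \Delta$ be the minimal index containing the hyperedge $\{a_j : j \in u\}$ (necessarily $s_u \supseteq \bigcup_{j \in u} s_j$). A Ramsey-style pigeonhole argument on the configuration $(s_i, s_u)$ must extract some $t \in [\bigcup_i s_i]^{k+1}$ and $\Delta$-witnesses $(w_v : v \in [t]^k)$ forcing $\dim(\Delta) \geq k+1$, contradicting the hypothesis; carrying out this combinatorial extraction is the delicate core of the argument.
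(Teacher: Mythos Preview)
Your arguments for (A) and (C) are fine and match what the paper dismisses as ``immediate.'' Your overall strategy for (D)---run the construction from Theorem~\ref{GenAmalgImpliesTypeAmalg} to produce the system $(N_s : s \in \Delta^*)$, check that $\dim(\Delta^*) \leq k$, and invoke the hypothesis---is exactly the paper's intended route. However, your justification of $\dim(\Delta^*) \leq k$ contains a genuine error: the claim that ``enlarging a family can only decrease its dimension'' is false. Take $I = \{0,1\}$, $\Delta_1 = \{\emptyset\}$, $\Delta_2 = \{\emptyset, \{0\}, \{1\}\}$; then $\dim(\Delta_1) = 1$ (the only candidate $w_s = \emptyset$ fails the witness condition since $\emptyset \cap t = \emptyset \neq s$), while $\dim(\Delta_2) = 2$. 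Enlarging adds potential witnesses $w_s$ as well as potential bad extensions $w'_s$, and the former effect can dominate.

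The inequality $\dim(\Delta^*) \leq \dim(\Delta_\Lambda) + 1$ you need is nonetheless true, but for a reason specific to this enlargement: the elements added in passing from $\Delta_\Lambda^+$ to $\Delta^*$ all lie below $n \times m$, which is already maximal in $\Delta_\Lambda^+$. Hence any such new element, if used as some $w_s$, forces $s = (n \times m) \cap t$ (at most one value of $s$), and can be replaced by $n \times m$ itself; and as extensions of old $w_s$'s they impose no constraint not already imposed by $n \times m$. Alternatively, just rerun your case analysis from (C) directly on $\Delta^*$: if $* \notin t$ every $w_s$ must contain $*$ (else $n \times m$ extends it badly), giving a $\Delta_\Lambda$-witness for $\dim \geq \ell$; if $* \in t$ the stripping yields a $\Delta_\Lambda$-witness for $\dim \geq \ell - 1$.

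For (B), your outline is the natural approach and the paper offers nothing beyond calling it immediate; your honesty that the extraction of a $(k{+}1)$-dimension witness from a putative $n$-clique is the unfinished ``delicate core'' is accurate.
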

\begin{proof}
The only claim that is not immediate is (D). This follows from the first half of the proof of Theorem~\ref{GenAmalgImpliesTypeAmalg}.
\end{proof}

We make the following extension to Conjecture~\ref{Conj1Later}:

\begin{conjecture}
Suppose $3 \leq k \leq \aleph_0$. If $T$ has $\mathcal{P}^-(k)$-amalgamation of models, then $T$ has $\Delta$-amalgamation of models whenever $\Delta \subseteq \mathcal{P}(I)$ is closed under finite intersections and satisfies that $\mbox{dim}(\Delta) \leq k$.
\end{conjecture}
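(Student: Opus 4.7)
The plan is to establish the conjecture by first reducing to finite $\Delta$ and then inducting on $|\Delta|$, with $\mathcal{P}^-(k)$-amalgamation as the base step. For the reduction, any $\cap$-closed $\Delta \subseteq \mathcal{P}(I)$ is the directed union $\bigcup_\alpha \Delta_\alpha$ of finite $\cap$-closed sub-posets $\Delta_\alpha$, each with $\mbox{dim}(\Delta_\alpha) \leq \mbox{dim}(\Delta) \leq k$. Given solutions $N_\alpha$ for each restricted system $(M_s : s \in \Delta_\alpha)$, one assembles a solution for the whole system by an elementary chain argument using the countability of $T$, paralleling the remark that countable-$\Delta$ amalgamation suffices for uncountable $\Delta$.

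For finite $\Delta$ I would induct on $|\Delta|$. Pick a $\subseteq$-maximal $s_* \in \Delta$, set $\Delta' = \Delta \setminus \{s_*\}$, and let $\Delta'' = \{s \in \Delta : s \subsetneq s_*\}$ be the boundary below $s_*$. By induction there is a solution $N'$ for $(M_s : s \in \Delta')$. The gluing step seeks an amalgam $N \supseteq N'$ containing $M_{s_*}$ over the boundary $\bigcup_{s \in \Delta''} M_s$ so that the enlarged system $(M_s : s \in \Delta)$ is an independent system of submodels of $N$. By Lemma~\ref{SuffCondForInd}, this reduces to producing a well-ordering of $\bigcup_{s \in \Delta} M_s$ witnessing the requisite local freeness of each new element of $M_{s_*}$.

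The use of $\mathcal{P}^-(k)$-amalgamation is intended to enter via an auxiliary claim: whenever a potential freeness violation arises between $M_{s_*}$ and $N'$ over $\bigcup_{s \in \Delta''} M_s$, the condition $\mbox{dim}(\Delta) \leq k$ forces this violation to be witnessed by a $\mathcal{P}^-(k')$-sub-system for some $k' \leq k$ already sitting inside the existing amalgam, so that $\mathcal{P}^-(k)$-amalgamation produces the non-forking extension directly. Concretely, any $t \in [I]^{k+1}$ on which a forking obstruction could in principle appear admits, by $\mbox{dim}(\Delta) \leq k$, an element $w \in \Delta$ extending some covering $(w_s : s \in [t]^k)$ to contain all of $t$; this $w$ provides the ``filling'' that allows the amalgamation to be carried out coherently, much as the enumeration trick in Theorem~\ref{GenAmalgImpliesTypeAmalg} converted a pattern of size $< k$ into a single $\mathcal{P}^-(k)$-system via intersections.

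The hard part will be making the link between the combinatorial condition $\mbox{dim}(\Delta) \leq k$ and the model-theoretic gluing step fully rigorous. In particular, verifying that every potential forking obstruction between $M_{s_*}$ and $N'$ over the boundary decomposes into a $\mathcal{P}^-(k)$-amalgamation problem — without implicitly calling on higher-arity amalgamation — requires a careful analysis of the forking calculus over the inductively constructed $N'$. It is plausible that one must strengthen the inductive hypothesis (for instance, demanding that $N'$ be chosen ``minimally'' over $(M_s : s \in \Delta')$, so that the type of every tuple in $N'$ over the boundary $\bigcup_{s \in \Delta''} M_s$ is controlled by a $\mbox{dim} \leq k$ sub-pattern), which is where the combinatorial hypothesis and the forking calculus can be made to interact cleanly.
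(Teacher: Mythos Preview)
This statement is a \emph{conjecture} in the paper, not a theorem: the paper does not supply a proof, and in fact explicitly flags it as an open extension of Conjecture~\ref{Conj1Later}. So there is no ``paper's own proof'' to compare your attempt against.

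That said, your proposal is not a proof either, and you correctly identify where it breaks down. Two points are worth emphasizing. First, the reduction to finite $\Delta$ is already suspect: given solutions $N_\alpha$ for each finite $\Delta_\alpha$, there is no general mechanism to thread them into a single solution for the full $\Delta$. Solutions to independent systems are not unique, and an elementary-chain argument would need the $N_\alpha$'s to cohere along the directed system, which you have not arranged (the remark you cite about countable $\Delta$ concerns restricting the \emph{models} to be countable, not restricting $\Delta$). Second, the inductive gluing step is exactly the heart of the matter and you have not supplied it: you need to amalgamate $M_{s_*}$ with an inductively-built $N'$ over the boundary $\bigcup_{s\in\Delta''}M_s$, and the claim that any forking obstruction ``decomposes into a $\mathcal{P}^-(k)$-amalgamation problem'' is precisely the content of the conjecture. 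The combinatorial condition $\mbox{dim}(\Delta)\le k$ tells you that certain $k$-tuples in $I$ are covered by a single $w\in\Delta$, but it does not obviously control the forking behaviour of arbitrary finite tuples in $M_{s_*}$ over the inductively constructed $N'$, whose type structure you do not control. Your suggestion to strengthen the inductive hypothesis by choosing $N'$ ``minimally'' is reasonable, but making this precise is the open problem.
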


Theorem~\ref{SatPortion} has the following adaptation when $\lambda$ is small:

\begin{theorem}\label{SatPortionv2}
Suppose $T$ is simple, and $3 \leq k \leq \aleph_0$, and $\theta$ is a regular uncountable cardinal, and $\mu = \mu^{<\theta}$, and $\lambda$ is not big enough for $(\mu, \theta, k)$. Finally, suppose $T$ has $\Lambda$-type amalgamation for all $\Lambda \in \mathbf{\Lambda}$ with $\mbox{dim}(\Delta_\Lambda) < k$. Then $T$ has the $(\lambda, \mu, \theta, \aleph_0)$-coloring property.
\end{theorem}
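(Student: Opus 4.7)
My plan is to extend the proof of Theorem~\ref{SatPortion} by adding a layer that weaponizes the failure of ``$\lambda$ big enough for $(\mu, \theta, k)$.'' First I would invoke Theorem~\ref{IndependentSystemsThm} to obtain an independent system $(M_s : s \in \mathcal{P}(\lambda))$ covering $M$, with $M_0 = M_\emptyset$. Since $\theta$ is regular uncountable, Theorem~\ref{CombThm3} applies, so I fix a club $C \subseteq [\lambda]^{<\theta}$ and a map $G : C \to P_{\lambda \mu \theta}$ such that for every $t \in [\lambda]^k$ and every family $(w_s : s \in [t]^{k-1})$ from $C$ with $w_s \cap t = s$, the union $\bigcup_s G(w_s)$ fails to be a function. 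Let $P \subseteq \Gamma^\theta_{M, M_0}$ consist of the complete types $p(x, M_s)$ over $M_s$ for $s \in C$; since $C$ is unbounded, $P$ is dense in $\Gamma^\theta_{M, M_0}$, so it will suffice to exhibit an $(\aleph_0)$-coloring of $P$ and extend by refinement.

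I would then define $F : P \to P_{X \mu \theta}$ (for suitably large $X$) as the ``product'' of the Theorem~\ref{SatPortion} coloring with the additional constraint that $F(p(x, M_s))$ extends $G(s)$ on a disjoint portion of $X$. Compatibility of $(F(p_i(x, M_{s_i})) : i < i_*)$ then yields simultaneously: (a) that $\bigcup_i G(s_i)$ is a function; and (b), exactly as in Theorem~\ref{SatPortion}, a decomposition into blocks $Y_{\ell, j} \subseteq \lambda$, functions $\eta_i \in \,^n m$ with $s_i = \bigcup_j Y_{\eta_i(j), j}$, the set $\Lambda := \{\eta_i : i < i_*\}$, and a $\Lambda$-array $(\overline{N}, \overline{\pi})$ along which each $\pi_{\eta_0, \eta_i}$ sends $p_0$ to $p_i$. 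When $\mbox{dim}(\Delta_\Lambda) < k$, the hypothesized $\Lambda$-type amalgamation gives that $\bigcup_i p_i(x)$ does not fork over $N_\emptyset$ and in particular is consistent, which is exactly what we need to verify $(P_{X \mu \theta}, \aleph_0)$-colorability.

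Everything thus reduces to excluding the case $\mbox{dim}(\Delta_\Lambda) \geq k$, which I expect will be the main obstacle. Unpacking the definition, I would obtain $t \in [n \times m]^k$ and, for each $s \in [t]^{k-1}$, some $\eta_{i_s} \in \Lambda$ with $\eta_{i_s} \cap t = s$ (viewing $\eta_i$ as its graph in $n \times m$). Picking a representative $\alpha_{(j, \ell)} \in Y_{\ell, j}$ for each $(j, \ell) \in t$, I set $t' := \{\alpha_{(j, \ell)} : (j, \ell) \in t\} \in [\lambda]^k$ and, for each $s \in [t]^{k-1}$ with image $s' \in [t']^{k-1}$, set $w_{s'} := s_{i_s}$. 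Since $\alpha_{(j, \ell)} \in s_i$ iff $\eta_i(j) = \ell$, a direct calculation yields $w_{s'} \cap t' = s'$, while each $w_{s'} \in C$ and $\bigcup_{s'} G(w_{s'})$ is a function by (a) --- contradicting the choice of $C, G$. The crux is thus this representative-picking step, which converts the abstract combinatorial witness for $\mbox{dim}(\Delta_\Lambda) \geq k$ living in $n \times m$ into a witness for ``$\lambda$ big enough'' living in $\lambda$, via the block decomposition supplied by the original coloring.
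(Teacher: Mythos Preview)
Your proposal is correct and follows essentially the same route as the paper's proof: invoke Theorem~\ref{CombThm3} to obtain $C$ and $G$, restrict to complete types over $M_s$ for $s \in C$, build the same block decomposition and $\Lambda$-array as in Theorem~\ref{SatPortion}, and derive a contradiction from $\dim(\Delta_\Lambda) \geq k$ by projecting the witness $t \subseteq n \times m$ back into $\lambda$ via representatives chosen from the blocks $Y_{\ell,j}$. Your explicit inclusion of $G(s)$ as a factor of the coloring $F$ is in fact a point the paper's written proof leaves implicit (it asserts at the end that $\bigcup_{s'} G(w'_{s'})$ is a function without having encoded $G$ into $F$), so your write-up is if anything slightly more careful on this detail.
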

\begin{proof}
Suppose $M \models T$ has $|M| \leq \lambda$, and suppose $M_0 \preceq M$ is countable. We wish to show that $\Gamma^{\theta}_{M, M_0}$ is $(P_{X \mu \theta}, k)$-colorable for some set $X$. Let $<_*$ be any well-ordering of $M$.

As in Theorem~\ref{SatPortion}, $M = \bigcup_{s \in \mathcal{P}(\lambda)} M_s$, where $(M_s: s \in \mathcal{P}(\lambda))$ is an independent system of models satisfying the three conditions of Theorem~\ref{IndependentSystemsThm}.

Choose $C \subseteq [\lambda]^{<\theta}$ unbounded and $G: C \to P_{\lambda \mu \theta}$ as in Theorem~\ref{CombThm3}. Let $P$ be the set of all $p(x) \in \Gamma^{\theta}_{M, M_0}$ such that for some $s \in C$, $p(x)$ is a complete type over $M_s$; we write $p(x, M_s)$ to indicate this. $P$ is dense in $\Gamma^{\theta}_{M, M_0}$, so it suffices to show that $P$ is $(P_{X \mu \theta}, \aleph_0)$-colorable for some set $X$.

We can choose $X$ large enough, and $F: P \to P_{X \mu \theta}$ so that if $F(p(x, M_s))$ is compatible with $F(q(x, M_t))$, then:

\begin{itemize}
	\item $s$ and $t$ have the same order-type, and if we let $\rho: s \to t$ be the unique order-preserving bijection, then $\rho$ is the identity on $s\cap t$;
	\item $M_s$ and $M_t$ have the same $<_*$-order-type, and the unique $<_*$-preserving bijection from $M_s$ to $M_t$ is in fact an isomorphism $\tau: M_s \cong M_t$;
	\item For each finite $\overline{a} \in M_{s}^{<\omega}$, if we write $s' = s_{\overline{a}}$ and if we write $t' = s_{\tau(\overline{a})}$, then: $\rho[s'] = t'$ and $\tau \restriction_{M_{s'}}: M_{s'} \cong M_{t'}$.
	\item $\tau[p(x)] = q(x)$.
\end{itemize}

Note that it follows that for every $s' \subseteq s$, $\tau \restriction_{M_{s'}}: M_{s'} \cong M_{\rho[s']}$, since $M_{s'} = \bigcup\{M_{s_{\overline{a}}}: \overline{a} \in (M_{s'})^{<\omega}\}$ and similarly for $M_{t'}$.

We claim that $F$ works.

So suppose $p_i(x, M_{s_i}): i < i_*$ is a sequence from $P$ for $i_* < \aleph_0$, such that $(F(p_i(x)): i < i_*)$ is compatible in $P_{X \mu \theta}$.

Let $\gamma_*$ be the order-type of some or any $s_i$. Enumerate each $s_i = \{\alpha_{i, \gamma}: \gamma < \gamma_*\}$ in increasing order.  Let $E$ be the equivalence relation on $\gamma_*$ defined by: $\gamma E \gamma'$ iff for all $i, i' < k$, $\alpha_{i, \gamma} = \alpha_{i', \gamma}$ iff $\alpha_{i, \gamma'} = \alpha_{i', \gamma'}$. Let $(E_j: j < n)$ enumerate the equivalence classes of $E$. For each $i < i_*$, and for each $j < n$, let $X_{i, j} = \{\alpha_{i, \gamma}: \gamma \in E_j\}$. Thus $s_i$ is the disjoint union of $X_{i, j}$ for $j < n$. Moreover, $X_{i, j} \cap X_{i', j'} = \emptyset$ unless $j = j'$; and if $X_{i, j} \cap X_{i', j} \not= \emptyset$ then $X_{i, j} = X_{i', j}$. For each $j < n$, enumerate $\{X_{i, j}: i < i_*\} = (Y_{\ell, j}: \ell < m_i) $ without repetitions. Let $m = \max(m_j: j < n)$; and for each $i <i_*$, define $\eta_i \in \,^n m$ via: $\eta_i(j) =$ the unique $\ell < m_i$ with $X_{i, j} = Y_{\ell, j}$. 

Let $\Lambda = \{\eta_i: i < i_*\}$. For each $s \in P_{\Lambda}$, let $N_{s} = M_{t_s}$ where $t_s = \bigcup_{(j, \ell) \in s} Y_{\ell, j}$.Then the hypotheses on $F$ give commuting isomorphisms $\pi_{\eta, \eta'}: N_{\eta} \cong N_{\eta'}$ for each $\eta, \eta' \in \Lambda$, in such a way that $(\overline{N}, \overline{\pi})$ is a $(\lambda, \overline{N})$-array, and each $\pi_{\eta_i}(p_0(x)) = p_i(x)$. 

To finish, it suffices to show that $\mbox{dim}(\Delta_\Lambda) < k$. Suppose towards a contradiction that there was some $t \in [n \times m]^k$ and some sequence $(w_s: s \in [t]^{k-1})$ from $\Delta_\lambda$ such that whenever $w_s \subseteq w'_s \in \Delta_{\Lambda}$, then $w'_s \cap t = s$. We can suppose each $w_s \in \Lambda$.

Now, since $k \geq 3$, every pair from $t$ must be covered by some $w_s$; since each $w_s$ is a function from $n$ to $m$, $t$ must be a partial function from $n$ to $m$. For each $j \in \mbox{dom}(t)$, choose some $\alpha_j \in Y_{t(j), j}$. Write $t' = \{\alpha_j: j \in \mbox{dom}(t)\}$. For each $s' \in [t']^{k-1}$, write $s = \{(j, t(j)): \alpha_j \in s'\}$, write $w_s = \eta_i$, and define $w'_{s'} = s_i$. Then $t' \in [\lambda]^k$, and for each $s' \in [t']^{k-1}$, $w'_{s'} \in C$ satisfies that $w'_{s'} \cap t' = s'$. Further, $\bigcup G(w'_{s'}): s' \in [t']^{k-1}$ is a function. This contradicts the choice of $G, C$.
\end{proof}

Recall from Section~\ref{Combinatorics} that $\mu^{+(k-2)}$ is never big enough for $(\mu, \theta, k)$, but $\mu^{+(k-1)}$  may or may not be (as far as we know). It turns out that when $\lambda \leq \mu^{+(k-2)}$ we can weaken the hypothesis on $T$ slightly. The key fact is the following; the case $k =3$ is Lemma 4.8 of \cite{LowDividingLine}, and is also implicit in arguments in \cite{Optimals}.

\begin{theorem}\label{CombinatoricsSmall}
Suppose $\theta$ is a regular uncountable cardinal, $\mu = \mu^{<\theta}$, $3 \leq k < \aleph_0$, and $\lambda \leq \mu^{+(k-2)}$. Then we can find some closed unbounded $C \subseteq [\lambda]^{<\theta}$ and some function $G: C \to \mu$ such that for all $t \in [\lambda]^{k-1}$ and for all sequences $(w_s: s \in [t]^{k-2} \cup \{t\})$ from $C$, if each $w_s \cap t = s$, then $(G(w_s): s \in [t]^{k-2} \cup \{t\})$ is not constant.
\end{theorem}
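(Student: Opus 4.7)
My plan is to proceed by induction on $k \geq 3$; the base case $k = 3$ is Lemma 4.8 of \cite{LowDividingLine}. The key tool at each stage is the family of bijections $h_\alpha \colon \alpha \to |\alpha|$ for $\alpha < \lambda$, where $|\alpha| \leq \mu^{+(k-3)}$ (with the convention $\mu^{+(-1)} = \mu$, so that the base case uses injections $[\mu]^{<\theta} \hookrightarrow \mu$, which exist because $\mu^{<\theta} = \mu$). These bijections let me ``project'' a set $w \in [\lambda]^{<\theta}$ by sending $w \cap \alpha$ down to $h_\alpha[w \cap \alpha] \in [\mu^{+(k-3)}]^{<\theta}$, reducing an instance of the $k$-version of the theorem to an instance of the $(k-1)$-version.

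Concretely, given inductively a coloring $G_{k-1} \colon C_{k-1} \to \mu$ on a club of $[\mu^{+(k-3)}]^{<\theta}$, I would define $\tilde{G}_k(w) \in P_{\lambda \mu \theta}$ by $\tilde{G}_k(w)(\alpha) := G_{k-1}(h_\alpha[w \cap \alpha])$ for $\alpha \in w$, taking $C_k$ to be the club of $w$'s for which $h_\alpha[w \cap \alpha] \in C_{k-1}$ for every $\alpha \in w$. Given $t \in [\lambda]^{k-1}$ and $(w_s \colon s \in [t]^{k-2} \cup \{t\})$ from $C_k$ with each $w_s \cap t = s$, I would set $\alpha_* := \max(t)$; exactly $k-1$ of the index sets $s$ contain $\alpha_*$, namely $t$ itself and the $k-2$ members of $[t]^{k-2}$ containing $\alpha_*$, and for each such $s$ we have $\alpha_* \in w_s$. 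Writing $t' := h_{\alpha_*}[t \setminus \{\alpha_*\}] \in [\mu^{+(k-3)}]^{k-2}$, $s' := h_{\alpha_*}[s \setminus \{\alpha_*\}]$ and $w'_s := h_{\alpha_*}[w_s \cap \alpha_*]$, one checks that $(w'_s \colon s' \in [t']^{k-3} \cup \{t'\})$ is an instance of the $(k-1)$-version of the theorem with $w'_s \cap t' = s'$, so $(G_{k-1}(w'_s))$ is not constant by the inductive hypothesis. Hence two of the partial functions $\tilde{G}_k(w_s)$ disagree at $\alpha_*$ and so are incompatible in $P_{\lambda \mu \theta}$.

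To collapse $\tilde{G}_k$ to a $\mu$-valued coloring, I would invoke Corollary~\ref{ERCor} to obtain $c \colon P_{\lambda \mu \theta} \to \mu$ such that any two elements of $P_{\lambda \mu \theta}$ receiving the same color are compatible, and set $G_k := c \circ \tilde{G}_k$; by the previous paragraph, this is not constant on any configuration, completing the induction. The hard part is exactly this final step: Corollary~\ref{ERCor} requires $\lambda \leq 2^\mu$, which is not automatic from $\lambda \leq \mu^{+(k-2)}$ (it can fail when $k \geq 4$ and $2^\mu = \mu^+$). In such regimes one would instead exploit that the image of $\tilde{G}_k$ is a highly restricted subset of $P_{\lambda \mu \theta}$, determined by the $h_\alpha$ and $G_{k-1}$, rather than coloring $P_{\lambda \mu \theta}$ uniformly; or else invoke a sharper partition relation tailored to the specific hypergraph of forbidden $k$-configurations.
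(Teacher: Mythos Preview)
Your approach is essentially the paper's: the paper also proceeds by induction (on $n = k-2$), and the step case is explicitly declared to be ``like Theorem~\ref{CombThm2}'', i.e., it uses the bijections $h_\alpha \colon \alpha \to |\alpha|$ to push a configuration at level $\mu^{+(n+1)}$ down to one at level $\mu^{+n}$, exactly as you do. Your verification that the projected system $(w'_s)$ really is an instance of the $(k-1)$-statement is correct.

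The one substantive difference is in the inductive statement. You carry the theorem itself (with $G$ valued in $\mu$) through the induction, which forces you to collapse $\tilde G_k \colon C_k \to P_{\lambda\mu\theta}$ to a $\mu$-valued map at every stage. The paper instead reformulates once at the outset, replacing $G \colon C \to \mu$ by $G \colon C \to P_{\mu^{+n}\mu\theta}$ and carrying that through the induction; the collapse to $\mu$ happens only at the end, via ``it suffices to show''. Either way, the collapsing step is exactly your worry: it needs $\chi(P_{\lambda\mu\theta},3) \le \mu$, hence $\lambda \le 2^\mu$ via Corollary~\ref{ERCor}, and this is \emph{not} implied by $\lambda \le \mu^{+(k-2)}$ for $k \ge 4$. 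The paper's proof does not address this either; it simply asserts ``it suffices'' and moves on. So the gap you flag is real and is shared by the paper.

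The resolution is that the $P_{X\mu\theta}$-valued statement is what the induction actually proves, and is also all that is needed downstream: Theorem~\ref{SatPortionv3} uses this result to establish the $(\lambda,\mu,\theta,\aleph_0)$-coloring property, which by definition only asks for $(P_{X\mu\theta},k)$-colorability for some $X$, not for a literal $\mu$-coloring. So your instinct that ``one would instead exploit that the image of $\tilde G_k$ is a highly restricted subset'' is on the right track, but the cleaner fix is simply to state and prove the $P_{X\mu\theta}$-valued version throughout and not collapse at all.
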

\begin{proof}
Suppose $n < \omega$. It suffices to show that there is some club $C \subseteq [\mu^{+n}]$ and some map $G: C \to P_{\mu^{+n} \mu \theta}$, such that whenever $t \in [\lambda]^{n+1}$ and whenever  $(w_s: s \in [t]^{n} \cup \{t\})$ is a sequence from $C_{n}$, if each $w_s \cap t \supseteq s$ and if each $(G(w_s): s \in [t]^{n} \cup \{t\})$ is constant, then there is some $s \in [t]^{n}$ with $w_s \supseteq t$. The proof is by induction on $n$; $n = 0$ is trivial, and the step case is like Theorem~\ref{CombThm2}.
\end{proof}

We thus make the following definition.

\begin{definition}
Suppose $\Lambda \in \mathbf{\Lambda}$; say $\Lambda \subseteq \,^n m$. Suppose $2 \leq k < \aleph_0$. Then say that $\mbox{dim}_*(\Lambda) \geq k$ if there is some $t \in [n \times m]^{k-1}$ and some sequence $(w_s: s \in [t]^{k-2} \cup \{t\})$ from $\Lambda$, such that each $w_s \cap t = s$. (In particular, this implies $t$ is a partial function from $n$ to $m$.) Note that $2 \leq k' \leq k$ and $\mbox{dim}_*(\Lambda) \geq k$ implies $\mbox{dim}_*(\Lambda) \geq k'$; thus we can let $\mbox{dim}_*(\Lambda)$ be the supremum of all $2 \leq k < \aleph_0$ such that $\mbox{dim}(\Lambda) \geq k$. (If $\mbox{dim}_*(\Lambda) \not \geq 2$ then let $\mbox{dim}_*(\lambda) = 1$.)
\end{definition}

Note that always $|\Lambda| \geq \mbox{dim}_*(\Lambda) \geq \mbox{dim}(\Delta_\Lambda)$. Further, $\mbox{dim}_*(\Lambda) = 1$ if and only if $\Lambda$ is a singleton, and $\mbox{dim}_*(\Lambda) \leq 2$ if and only if the closure of $\Lambda$ under intersections forms a tree under $\subseteq$.

\begin{theorem}\label{SatPortionv3}
	Suppose $\theta$ is a regular uncountable cardinal, $\mu = \mu^{<\theta}$, $3 \leq k < \aleph_0$ and $\lambda \leq \mu^{+(k-2)}$. Suppose $T$ is a countable simple theory with $\Lambda$-type amalgamation for all $\Lambda \in \mathbf{\Lambda}$ with $\mbox{dim}_*(\Lambda) < k$. Then $T$ has the $(\lambda, \mu, \theta, \aleph_0)$-coloring property.
	
	In particular, if $T$ is any countable simple theory, then $T$ has the $(\mu^+, \mu, \theta, \aleph_0)$-coloring property.
\end{theorem}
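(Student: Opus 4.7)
The plan is to follow the template of Theorem~\ref{SatPortion} (and its variant Theorem~\ref{SatPortionv2}), but to exploit the sharper combinatorial input given by Theorem~\ref{CombinatoricsSmall} so that the $\Lambda$-type amalgamation hypothesis need only be applied at patterns with $\mbox{dim}_*(\Lambda) < k$.

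Let $M \models T$ with $|M| \leq \lambda$ and $M_0 \preceq M$ countable. As in the proof of Theorem~\ref{SatPortion}, first apply Theorem~\ref{IndependentSystemsThm} to write $M = \bigcup_{s \in \mathcal{P}(\lambda)} M_s$ as an independent system of submodels with $M_\emptyset = M_0$, and fix a well-ordering $<_*$ of $M$. Apply Theorem~\ref{CombinatoricsSmall} to fix a club $C \subseteq [\lambda]^{<\theta}$ and a coloring $G: C \to \mu$ with the stated property. Let $P \subseteq \Gamma^\theta_{M, M_0}$ be the dense subset consisting of complete types $p(x, M_s)$ over $M_s$ for $s \in C$; by Lemma~\ref{ObviousLemmaskColorings} it suffices to $(P_{X \mu \theta}, \aleph_0)$-color $P$.

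I would then define $F: P \to P_{X \mu \theta}$ exactly as in the proof of Theorem~\ref{SatPortion}, but with one addition: record the value $G(s)$ into $F(p(x, M_s))$, so that compatibility of $F(p_i(x, M_{s_i}))$ for $i < i_* < \aleph_0$ forces the $G(s_i)$ all to agree, while otherwise packaging the $s_i$ into a $\Lambda$-array $(N_s, \pi_{\eta \eta'})$ supporting a coherent system of types $(p_{\eta_i}(x) : i < i_*)$ with $\Lambda = \{\eta_i: i < i_*\} \subseteq {}^n m$, $N_s = M_{t_s}$ where $t_s = \bigcup_{(j, \ell) \in s} Y_{\ell, j}$, and the block data $Y_{\ell, j} \subseteq \lambda$ as in Theorem~\ref{SatPortion}.

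The crux is to show $\mbox{dim}_*(\Lambda) < k$. Suppose instead that $t \in [n \times m]^{k-1}$ and a sequence $(w_s: s \in [t]^{k-2} \cup \{t\})$ from $\Lambda$ witness $\mbox{dim}_*(\Lambda) \geq k$, so in particular $t$ is a partial function from $n$ to $m$. Pick $\alpha_j \in Y_{t(j), j}$ for each $j \in \mbox{dom}(t)$ and set $t' = \{\alpha_j : j \in \mbox{dom}(t)\} \in [\lambda]^{k-1}$; for each $s' \in [t']^{k-2} \cup \{t'\}$, let $s$ be the corresponding subset of $t$ and write $w_s = \eta_{i(s)}$, $w'_{s'} = s_{i(s)} \in C$. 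A direct computation using pairwise disjointness of the $Y_{\ell, j}$'s gives $w'_{s'} \cap t' = s'$, while the compatibility of the $F$-values forces all $G(w'_{s'})$ to coincide, contradicting Theorem~\ref{CombinatoricsSmall}. Hence $\mbox{dim}_*(\Lambda) < k$, and $\Lambda$-type amalgamation yields $\bigcup_i p_{\eta_i}(x)$ non-forking over $N_0$; transitivity over $M_0$ then provides $\bigcup_i p_i(x) \in \Gamma^\theta_{M, M_0}$ as the desired lower bound. The final clause follows by taking $k = 3$: for any simple $T$, whenever $\mbox{dim}_*(\Lambda) \leq 2$ the closure of $\Lambda$ under intersections is a tree, and tree amalgamation of coherent types reduces to iterated $<3$-type amalgamation, which every simple theory possesses. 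The main obstacle is the combinatorial bookkeeping of the lift: translating the abstract witnesses for $\mbox{dim}_*(\Lambda) \geq k$ inside ${}^n m$ into witnesses in $[\lambda]^{k-1}$ that simultaneously lie in $C$ and inherit the constant $G$-value through $F$.
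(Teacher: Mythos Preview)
Your proposal is correct and matches the paper's approach exactly. The paper's own proof is a one-liner (``exactly the same proof as Theorem~\ref{SatPortionv2}''), and you have correctly spelled out the two adaptations needed: replacing the combinatorial input from Theorem~\ref{CombThm3} by Theorem~\ref{CombinatoricsSmall}, and replacing the verification that $\dim(\Delta_\Lambda) < k$ by the verification that $\dim_*(\Lambda) < k$. Your explicit mention that $G(s)$ must be recorded in $F(p(x,M_s))$ is a detail the paper leaves implicit in the proof of Theorem~\ref{SatPortionv2} as well, and your treatment of the $k=3$ clause (trees, hence iterated independence-theorem amalgamation) is exactly what the paper invokes via Lemma~4.9 of \cite{LowDividingLine}.
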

\begin{proof}
The first claim follows by exactly the same proof as Theorem~\ref{SatPortionv2}.

For the second claim, it suffices to note that in every simple theory, we can amalgamate systems of types indexed by independent trees of models, provided the types do not fork over the root. This follows easily from the independence theorem, as spelled out in Lemma 4.9 of \cite{LowDividingLine}.
\end{proof}

We now consider the case when $\theta = \aleph_0$. Not much is known here, although the following theorems can be proved analogously to Theorems~\ref{SatPortion} and \ref{SatPortionv2}, respectively.

\begin{theorem}\label{SatPortionv4}
Suppose $3 \leq k < n < \aleph_0$. Then for all cardinals $\mu$ and $\lambda$, $T_{n, k}$ has the $(\lambda, \mu, \theta, k)$-coloring property.
\end{theorem}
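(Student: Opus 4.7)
The plan is to split on whether $\theta$ is uncountable. When $\theta > \aleph_0$, I appeal directly to Theorem~\ref{SatPortion}: the remark in Section~\ref{KeislerNewIndSys} records that $T_{n,k}$ has $\mathcal{P}^-(k)$-amalgamation of models, so by Theorem~\ref{GenAmalgImpliesTypeAmalg} it has $<k$-type amalgamation, which is precisely the hypothesis fed into Theorem~\ref{SatPortion}. Thus the real content of Theorem~\ref{SatPortionv4} lies in the case $\theta = \aleph_0$.

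For $\theta = \aleph_0$, I will copy the proof of Theorem~\ref{SatPortion} with one modification forced by the fact that $\Gamma^{\aleph_0}_{M,M_0}$ consists of finite partial types rather than complete types over countable submodels. After applying Theorem~\ref{IndependentSystemsThm} to write $M = \bigcup_{s \in \mathcal{P}(\lambda)} M_s$ as an independent system of countable submodels satisfying (I)--(III), with $M_\emptyset = M_0$, I represent each $p(x) \in \Gamma^{\aleph_0}_{M,M_0}$ as a single formula $\psi_p(x, \overline{a}_p)$ with $\overline{a}_p$ a finite tuple. Letting $s_p \subseteq \lambda$ be the minimal index set with $\overline{a}_p \subseteq M_{s_p}$ (well-defined by (III) and countable by (I)--(II)), I define the coloring $F : \Gamma^{\aleph_0}_{M,M_0} \to P_{X\mu\aleph_0}$ exactly as in the proof of Theorem~\ref{SatPortion}, now encoding the $<_*$-isomorphism type of $(M_{s_p}, \overline{a}_p)$, the formula $\psi_p$, and the submodel placements of finite subtuples of $M_{s_p}$, so that compatibility of $F(p)$ and $F(q)$ yields an order-preserving bijection $\rho : s_p \to s_q$ extending to a $<_*$-preserving isomorphism $\tau : M_{s_p} \cong M_{s_q}$ that carries $\psi_p(x, \overline{a}_p)$ to $\psi_q(x, \overline{a}_q)$ and respects the filtration on subsets of $s_p$.

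Given $i_* < k$ and a sequence $(p_i)_{i < i_*}$ from $\Gamma^{\aleph_0}_{M,M_0}$ with $(F(p_i))_{i<i_*}$ pairwise compatible, the equivalence-class and array extraction from the proof of Theorem~\ref{SatPortion} produces a $\Lambda$-array $(\overline{N}, \overline{\pi})$ with $|\Lambda| = i_* < k$ and a coherent system of single-formula types $(\psi_{p_i}(x, \overline{a}_i))_{i<i_*}$ with $\pi_{\eta_0 \eta_i}$ carrying $\psi_{p_0}(x, \overline{a}_0)$ to $\psi_{p_i}(x, \overline{a}_i)$. Since $T_{n,k}$ has $<k$-type amalgamation, and since that property applies equally to single formulas by the lemma following Definition~\ref{DefOfTypeAP}, the union $\bigcup_{i<i_*} \psi_{p_i}(x, \overline{a}_i)$ does not fork over $N_\emptyset \subseteq M_0$, and in particular is consistent; this gives the required lower bound for $(p_i)_{i<i_*}$ in $\Gamma^{\aleph_0}_{M,M_0}$.

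The main obstacle is to verify that replacing ``complete type over $M_s$'' by ``single formula $\psi_p(x, \overline{a}_p)$'' really preserves all the isomorphism and coherence data needed to feed the $\Lambda$-array into the final $<k$-type amalgamation step. The only place in Theorem~\ref{SatPortion} where $\theta > \aleph_0$ was essential was to guarantee that complete types over the countable submodels $M_s$ were themselves objects of $\Gamma^{\theta}_{M,M_0}$; the subsequent combinatorics of the equivalence relation on $\gamma_*$ (which always has finitely many classes, bounded by $2^{i_*^2}$), the extraction of $\Lambda \in \mathbf{\Lambda}$ via the finitely-many equality patterns, and the invocation of $<k$-type amalgamation all work for arbitrary $\theta$ and do not rely on completeness of the types. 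So once the coloring machinery is rebuilt around single formulas, the rest of the argument goes through unchanged.
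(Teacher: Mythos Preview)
Your reduction of the $\theta > \aleph_0$ case to Theorem~\ref{SatPortion} is fine, and the overall plan of rerunning that argument with single formulas in place of complete types is exactly what the paper means by ``analogously.'' But there is a real gap in the encoding step for $\theta = \aleph_0$.

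You take $s_p$ to be the minimal index set with $\overline{a}_p \subseteq M_{s_p}$, observe that $s_p$ is countable, and then say you will encode ``the $<_*$-isomorphism type of $(M_{s_p}, \overline{a}_p)$ \ldots\ and the submodel placements of finite subtuples of $M_{s_p}$'' into $F(p) \in P_{X\mu\aleph_0}$. But $P_{X\mu\aleph_0}$ consists of \emph{finite} partial functions. In the proof of Theorem~\ref{SatPortion}, bullet one (that $\rho: s \to t$ is the identity on $s \cap t$) is obtained by letting the domain of $F(p)$ contain $s$ and record the position of each $\alpha \in s$; bullet three quantifies over all finite subtuples of $M_s$. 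Both force $|F(p)| \geq |s| + \aleph_0$, which is why that proof needs $\theta > \aleph_0$ for $F(p)$ to land in $P_{X\mu\theta}$. Your diagnosis that $\theta > \aleph_0$ was used \emph{only} to put complete types over $M_s$ into $\Gamma^\theta_{M,M_0}$ is therefore wrong: it is equally essential for the coloring itself to have size $<\theta$. With $s_p$ countable and $\mu$ allowed to be $\aleph_0$, your encoding simply does not fit.

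The repair, and what makes the theorem specific to $T_{n,k}$ rather than to arbitrary theories with $<k$-type amalgamation, is that $T_{n,k}$ is $\aleph_0$-categorical with quantifier elimination in a finite relational language. This lets you drop the countable models $M_{s_p}$ altogether and color by genuinely finite data: the formula $\psi_p$, the quantifier-free type of $\overline{a}_p$, and for each $a \in \overline{a}_p$ its position in the tuple (so that compatibility forces the coordinate bijections to be the identity on overlaps). The $\Lambda$-array you extract is then built from finite parameter sets rather than models; for $T_{n,k}$ the amalgamation step goes through over such sets because consistency and nonforking are already decided by quantifier-free data.
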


\begin{theorem}\label{SatPortionv5}
	Suppose $3 \leq k < n < \aleph_0$. Then for all cardinals $\mu$ and for all cardinals $\lambda$ which are not big enough for $(\mu, \aleph_0, k)$, $T_{n, k}$ has the $(\lambda, \mu, \theta, \aleph_0)$-coloring property.
\end{theorem}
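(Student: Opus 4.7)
The plan is to mimic the proof of Theorem~\ref{SatPortionv2}, replacing its use of the club reformulation of non-bigness (Theorem~\ref{CombThm3}, available only for uncountable $\theta$) by a direct appeal to the definition. The amalgamation input required is that $T_{n,k}$ has $\Lambda$-type amalgamation for every $\Lambda \in \mathbf{\Lambda}$ with $\mbox{dim}(\Delta_\Lambda) < k$; this holds by parts (B) and (D) of the Lemma in Section~\ref{Refinements}, since $T_{n,k}$ has $\mathcal{P}^-(k)$-amalgamation of models.

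Given $M \models T_{n,k}$ with $|M| \leq \lambda$ and countable $M_0 \preceq M$, use Theorem~\ref{IndependentSystemsThm} to write $M = \bigcup_{s \subseteq \lambda} M_s$ as an independent system with $M_0 = M_\emptyset$, and fix a well-ordering $<_*$ of $M$. Identify each element of $\Gamma^{\aleph_0}_{M, M_0}$ with a single formula $\phi(x, \overline{a})$ by conjunction, and let $s_\phi \in [\lambda]^{<\aleph_0}$ be the minimal index with $\overline{a} \in M_{s_\phi}$. Fix witnesses $F_0 : [\lambda]^{<\aleph_0} \to [\lambda]^{<\aleph_0}$ inflationary and $G_0 : [\lambda]^{<\aleph_0} \to P_{\lambda \mu \aleph_0}$ to $\lambda$ being not big enough for $(\mu, \aleph_0, k)$.

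Following Theorem~\ref{SatPortionv2}, define a coloring $F : \Gamma^{\aleph_0}_{M, M_0} \to P_{X \mu \aleph_0}$ for a large enough set $X$ so that compatibility of $F(\phi_0)$ and $F(\phi_1)$ forces: (i) the unique order-preserving bijection $\rho : F_0(s_{\phi_0}) \to F_0(s_{\phi_1})$ restricts to a bijection $s_{\phi_0} \to s_{\phi_1}$ and is the identity on $F_0(s_{\phi_0}) \cap F_0(s_{\phi_1})$; (ii) the unique $<_*$-preserving bijection $M_{F_0(s_{\phi_0})} \to M_{F_0(s_{\phi_1})}$ is an isomorphism $\tau$ extending $\rho$ and respecting the submodel structure $s' \mapsto M_{s'}$; (iii) $\tau(\phi_0) = \phi_1$; and (iv) $G_0(s_{\phi_0}) \cup G_0(s_{\phi_1})$ is a function. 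The key new ingredient beyond Theorem~\ref{SatPortionv2} is that the coloring records the full extension $F_0(s_\phi)$, not only $s_\phi$ itself.

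Now suppose $\phi_0, \ldots, \phi_{i_*-1}$ satisfy that $\bigcup_i F(\phi_i)$ is a function. The equivalence-relation analysis from Theorem~\ref{SatPortionv2} applied to the $s_{\phi_i}$ produces $\Lambda = \{\eta_i : i < i_*\} \subseteq \,^n m$ and a $\Lambda$-array $(\overline{N}, \overline{\pi})$ making the $\phi_i$'s into a coherent system; by $\Lambda$-type amalgamation of $T_{n,k}$, if $\mbox{dim}(\Delta_\Lambda) < k$ then $\bigcup_i \phi_i$ does not fork over $N_0 = M_0$ and so is consistent, providing a lower bound in $\Gamma^{\aleph_0}_{M, M_0}$. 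If instead $\mbox{dim}(\Delta_\Lambda) \geq k$, extract $t^{**} \in [n \times m]^k$ (a partial function) and $(w^*_{s'})_{s' \in [t^{**}]^{k-1}} \subseteq \Lambda$ with each $w^*_{s'} \cap t^{**} = s'$, pick $\alpha_{j^*} \in Y_{t^{**}(j^*),j^*}$ for each $j^* \in \mbox{dom}(t^{**})$, and set $t := \{\alpha_{j^*}\}$ together with $w_{s'} := s_{\phi_{i(s')}}$, where $\eta_{i(s')} = w^*_{s'}$. Then $w_{s'} \cap t = s'$ by the standard computation of Theorem~\ref{SatPortionv2}, and $\bigcup_{s'} G_0(w_{s'})$ is a function by (iv). The inclusion $F_0(w_{s'}) \cap t \subseteq s'$ rests on the intersection property $F_0(s_{\phi_i}) \cap s_{\phi_j} \subseteq s_{\phi_i}$, which follows from (i): if $\alpha$ is in the left-hand side, then $\alpha \in F_0(s_{\phi_i}) \cap F_0(s_{\phi_j})$ is fixed by the bijection of~(i), whose restriction carries $s_{\phi_j}$ into $s_{\phi_i}$; applying this with $i = i(s')$ and, for each $\alpha = \alpha_{j^*} \in F_0(w_{s'}) \cap t$, an $i'$ such that $\alpha_{j^*} \in Y_{t^{**}(j^*),j^*} \subseteq s_{\phi_{i'}}$ (such $i'$ exists because $(j^*, t^{**}(j^*))$ lies in $w^*_{s''}$ for any $s'' \in [t^{**}]^{k-1}$ through it) gives $\alpha \in s_{\phi_{i(s')}} \cap t = s'$. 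This contradicts the non-bigness of $\lambda$ witnessed by $(F_0, G_0)$. The main obstacle here, and the only real departure from Theorem~\ref{SatPortionv2}, is securing $F_0(w_{s'}) \cap t \subseteq s'$ without the aid of $F_0$-closed clubs: this is precisely what is achieved by encoding the extension $F_0(s_\phi)$ into the coloring alongside $s_\phi$ itself.
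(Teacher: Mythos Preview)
Your proposal is correct and follows precisely the route the paper indicates (it proves Theorem~\ref{SatPortionv5} only by the remark ``can be proved analogously to Theorem~\ref{SatPortionv2}''). You have correctly identified the one genuine adaptation needed when $\theta=\aleph_0$: since Theorem~\ref{CombThm3} is unavailable, the coloring must additionally record the inflation $F_0(s_\phi)$ so that the intersection property $F_0(s_{\phi_i})\cap s_{\phi_j}\subseteq s_{\phi_i}$ can be extracted from compatibility, yielding the extra condition $F_0(w_{s'})\cap t = s'$ required by the raw definition of ``not big enough''. The amalgamation input (that $T_{n,k}$ has $\Lambda$-type amalgamation whenever $\dim(\Delta_\Lambda)<k$) is exactly what parts (B) and (D) of the Lemma in Section~\ref{Refinements} provide.
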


\section{Boolean Algebras and Forcing}\label{ForcingSec}

In this section, we fix notation for forcing and forcing iterations and observe some basic facts about them; essentially we follow \cite{KunenSets} and \cite{Jech}.

We assume the reader is familiar with the standard treatment of forcing in terms of partial orders; see for instance \cite{Kunen} as a reference. In the special case of forcing by a complete Boolean algebra $\mathcal{B}$ (or really, by the set $\mathcal{B}_+$ of positive elements of $\mathcal{B}$), certain notational simplications take place. Normally, passing to the Boolean algebra completion is not worth the trouble; however, since we are obliged to consider Boolean algebra completions in any case, we may as well avail ourselves of the advantages.

A forcing notion is a pre-order $(P, \leq)$ with a maximal element $1$; in other words, $\leq$ is a transitive relation. We will always identify $P$ with its separative quotient, defined by putting $p \sim q$ if for all $p' \in P$, $p'$ is compatible with $p$ if and only if $p'$ is compatible with $q$. When we say that $P$ is a forcing notion, we always mean $P$ is a set (rather than a proper class). If $P$ is a forcing notion and $p, q \in P$, then say that $p$ decides $q$ if either $p \leq q$ or else $p$ and $q$ are incompatible. By separativity, the set of all elements of $P$ which decide $q$ is dense in $P$.

A complete boolean algebra $\mathcal{B}$ is a structure $(\mathcal{B}, \leq, 0, 1, \bigwedge, \bigvee, \lnot)$ satisfying the axioms for a Boolean algebra, with the greatest lower bound property (equivalently, the least upper bound property).  When we view $\mathcal{B}$ as a forcing notion, we always mean $\mathcal{B}_+$, the set of positive elements of $\mathcal{B}$.

Suppose $\mathcal{B}_0, \mathcal{B}_1$ are complete Boolean algebras. Then say that $\mathcal{B}_0$ is a complete subalgebra of $\mathcal{B}_1$ if $\mathcal{B}_0$ is a subalgebra of $\mathcal{B}_1$, and for every $X \subseteq \mathcal{B}_0$, the join of $X$ as computed in $\mathcal{B}_0$ is the same as computed in $\mathcal{B}_1$. (This implies the corresponding statements for meets.)

Given a forcing notion $P$, let $\mathcal{B}(P)$ be its Boolean algebra completion; this is the Boolean algebra (unique up to isomorphism) such that $P$ densely embeds into $\mathcal{B}(P)$. (See Theorem 14.10 of \cite{Jech}.) We always view $P$ as a dense subset of $\mathcal{B}(P)$; this is possible because, as mentioned above, we always identify $P$ with its separative quotient. Every element of $\mathcal{B}(P)$ can be written as $\bigvee X$ for some $X \subseteq P$, which in fact can be chosen to be an antichain. Further, $\bigvee X \leq \bigvee Y$ if and only if for every $x \in X$, there is $x' \leq x$ and $y \in Y$ such that $x' \leq y$.

Say that the forcing notion $P$ is $\theta$-closed if every descending chain from $P$ of length less than $\theta$ has a lower bound in $P$. Say that $P$ is $\kappa$-c.c. if every antichain from $P$ has size less than $\kappa$.  Say that $P$ is $<\theta$-distributive if the intersection of every family of $<\theta$-many dense, downward closed subsets of $P$ is dense. Note that the latter two properties are preserved under passing to $\mathcal{B}(P)$, and if $P$ is $\theta$-closed then it is $<\theta$-distributive.

Suppose $\mathcal{B}$ is a complete Boolean algebra. Then define $\mathbb{N}_{\mathcal{B}}$, the class of nice $\mathcal{B}$-names, as follows: $\dot{\sigma} \in \mathbb{N}_{\mathcal{B}}$ if $\dot{\sigma}$ is a partial function from $\mathbb{N}_{\mathcal{B}}$ into $\mathcal{B}$.  For any forcing notion $P$, define $\mathbb{N}_P$, the set of nice $P$-names, to just be $\mathbb{N}_{\mathcal{B}(P)}$. Note that whenever $P$ is a forcing notion, then every $P$-name is equivalent to a nice $P$-name.

Suppose $\mathcal{B}$ is a complete Boolean algebra, $\phi(x_i: i < n)$ is a formula of set theory, and $\dot{\sigma}_i: i < n$ is a sequence of nice $\mathcal{B}$-names. Then define $\|\phi(\dot{\sigma}_i: i < n)\|_{\mathcal{B}}$ to be the supremum of all $\mathbf{a} \in \mathcal{B}_+$ such that $\mathbf{a} \Vdash_{\mathcal{B}_+} \phi(\dot{\sigma}_i: i < n)$. Note that $\|\phi(\dot{\sigma}_i: i < n)\|_{\mathcal{B}} \Vdash \phi(\dot{\sigma}_i: i < n)$, i.e. the supremum is attained. Thus, whenever $G$ is $\mathcal{B}_+$-generic over $\mathbb{V}$, then $\mathbb{V}[G] \models \phi(\dot{\sigma}_i^G: i < n)$ if and only if $\|\phi(\dot{\sigma}_i: i < n)\|_{\mathcal{B}} \in G$.

Suppose $\mathcal{B}$ is a complete Boolean algebra and $\dot{X} \in \mathbb{N}_{\mathcal{B}}$. Then a partition of $\mathcal{B}$ by $\dot{X}$ is a map $\dot{A}: \mbox{dom}(\dot{X}) \to \mathcal{B}$ (so in particular $\dot{A} \in \mathbb{N}_{\mathcal{B}}$) such that $\mathcal{B}$ forces that $\dot{A}$ has a single element $\bigcup \dot{A}$, which is in $\dot{X}$. Define $\mathbb{N}_\mathcal{B}(\dot{X})$, the set of nice names for elements of $\dot{X}$, to be the set of all $\bigcup \dot{A}$, for $\dot{A}$ a partition of $\mathcal{B}$ by $\dot{X}$ (formally, for each partition $\dot{A}$ of $\mathcal{B}$ by $\dot{X}$, pick some name $\bigcup \dot{A} \in \mathbb{N}_{\mathcal{B}}$ for the unique element of $\dot{A}$). The point is that when considering names for elements of $\dot{X}$, it is enough to consider just names in $\mathbb{N}_\mathcal{B}(\dot{X})$, which is a set.

If $P$ is a forcing notion and $\dot{X}$ is a nice $P$-name, then let $\mathbb{N}_P(\dot{X}) := \mathbb{N}_{\mathcal{B}(P)}(\dot{X})$.
\begin{definition}
	Suppose $\alpha_*> 0$ is an ordinal. By a $<\theta$-support forcing iteration of length $\alpha_*$, we mean sequences $(P_\alpha: \alpha \leq \alpha_*)$, $(\dot{Q}_\alpha: \alpha < \alpha_*)$, where:
	\begin{itemize}
		\item Each $P_\alpha$ is a forcing notion consisting of $\alpha$-sequences, so $P_0 = \{0\}$ is the trivial forcing notion;
		\item For each $\alpha < \alpha_*$, $\dot{Q}_\alpha$ is a nice $P_\alpha$-name for a forcing notion; we can always suppose $P_\alpha$ decides what $1^{\dot{Q}_\alpha}$ is;
		\item For each $\alpha < \alpha_*$, $P_{\alpha+1}$ is the set of all $\alpha+1$-sequences $p$ such that $p \restriction_\alpha \in P_\alpha$ and $p(\alpha) \in \mathbb{N}_{P_\alpha}(\dot{Q}_\alpha)$, and where $p \leq^{P_{\alpha+1}} q$ if: $p \restriction_\alpha \leq^{P_\alpha} q \restriction_\alpha$, and $p \restriction_\alpha$ forces that $p(\alpha) \leq^{\dot{Q}_\alpha} q(\alpha)$. 
		\item For all $\alpha \leq \alpha_*$ limit, $P_\alpha$ is the set of all $\alpha$-sequences $p$ such that for all $\beta < \alpha$, $p \restriction_\beta \in P_\beta$, and further, $\mbox{supp}(p)$ has cardinality less than $\theta$, where $\mbox{supp}(p)$ is $\{\beta < \alpha: p(\beta) = 1^{\dot{Q}_\beta}\}$; put $p \leq^{P_\alpha} q$ if for all $\beta < \alpha$, $p \restriction_\beta \leq^{P_\beta} q \restriction_\beta$.
	\end{itemize}
	
	Note that $\dot{Q}_0$ is really just a forcing notion in $\mathbb{V}$, so we write it as $Q_0$. In the case $\alpha_* = 2$, we write $P_2 = Q_0 * \dot{Q}_1$. 
	
\end{definition}

Note that under our definitions, if $P, Q$ are forcing notions, then $P*\dot{Q}$ is larger than $P \times Q$ (although they both have the same Boolean algebra completions). Also, it is not generally true that $<\theta$-support products are equivalent to the corresponding $<\theta$-support forcing iterations. However, we will only consider $<\theta$-forcing iterations where each $P_\alpha$ forces that $\dot{Q}_\alpha$ is $\theta$-closed; this avoids such pathologies. Recall the standard fact that if each $P_\alpha$ forces that $\dot{Q}_\alpha$ is $\theta$-closed, then $P_{\alpha_*}$ is $\theta$-closed; see \cite{KunenSets}.

 Note that forcing iterations $P*\dot{Q}$ are almost never separative, since whenever $P$ and $\dot{Q}$ are nontrivial, then we can find $p \in P$ and distinct $\dot{q}_0, \dot{q}_1 \in \mathbb{N}_P(\dot{Q})$ such that $p \Vdash \dot{q}_0 = \dot{q}_1$. Nonetheless, we remind the reader of our notational deceit of always identifying forcing notions with their separative quotients.

If $(P_\alpha: \alpha \leq \alpha_*), (\dot{Q}_\alpha: \alpha < \alpha_*)$ is a forcing iteration, then for all $\alpha < \beta \leq \alpha_*$, $\mathcal{B}(P_\alpha)$ is a complete subalgebra of $\mathcal{B}(P_{\beta})$. It turns out we get projection maps in this scenario. These maps will be very helpful later:

\begin{definition}\label{projectionsDef}
	Suppose $\mathcal{B}_0$ is a complete subalgebra of $\mathcal{B}_1$. Then define $\pi = \pi_{\mathcal{B}_1 \mathcal{B}_0}: \mathcal{B}_1 \to \mathcal{B}_0$ as follows. Suppose $\mathbf{a} \in \mathcal{B}_1$; then let $\pi(\mathbf{a})$ be the meet of all $\mathbf{b} \in \mathcal{B}_0$ with $\mathbf{b} \geq \mathbf{a}$.
\end{definition}

We now have a couple of lemmas exploring this notion.

\begin{lemma}\label{projectionsLemma1}
	\begin{itemize}
		\item[(A)] Suppose $\mathcal{B}_0$ is a complete subalgebra of $\mathcal{B}_1$ and $\mathbf{a} \in \mathcal{B}_1$. Then $\pi_{\mathcal{B}_1 \mathcal{B}_0}(\mathbf{a}) \geq \mathbf{a}$, and is the least element of $\mathcal{B}_0$ satisfying this.
		\item[(B)] Each $\pi_{\mathcal{B}, \mathcal{B}}$ is the identity of $\mathcal{B}$. If $\mathcal{B}_0 \subseteq \mathcal{B}_1 \subseteq \mathcal{B}_2$ are complete subalgebras, then $\pi_{\mathcal{B}_1 \mathcal{B}_0} \circ \pi_{\mathcal{B}_2 \mathcal{B}_1} = \pi_{\mathcal{B}_2 \mathcal{B}_0}$.
		
		\item[(C)] Suppose $\mathcal{B}_0$ is a complete subalgebra of $\mathcal{B}_1$, and $\mathbf{a} \in \mathcal{B}_1$. Write $\pi = \pi_{\mathcal{B}_1 \mathcal{B}_0}$. Then for every $\mathbf{b} \in \mathcal{B}_0$, $\mathbf{b} \wedge \pi(\mathbf{a})$ is nonzero if and only if $\mathbf{b} \wedge \mathbf{a}$ is nonzero. This characterizes $\pi(\mathbf{a})$.
	\end{itemize}
\end{lemma}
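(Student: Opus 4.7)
The plan is to handle parts (A)--(C) in sequence, with each later part leaning on the earlier ones. Throughout, the key facts I will use are that $\mathcal{B}_0$ is closed under arbitrary meets and joins inside $\mathcal{B}_1$, and that in a Boolean algebra $\mathbf{x} \leq \mathbf{y}$ iff $\mathbf{x} \wedge \lnot \mathbf{y} = 0$.

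For (A), I would observe that the defining meet $\bigwedge \{\mathbf{b} \in \mathcal{B}_0 : \mathbf{b} \geq \mathbf{a}\}$ is well-defined and lies in $\mathcal{B}_0$ because $\mathcal{B}_0$ is a complete subalgebra of $\mathcal{B}_1$. The inequality $\pi_{\mathcal{B}_1 \mathcal{B}_0}(\mathbf{a}) \geq \mathbf{a}$ is then immediate since every element in the meet is $\geq \mathbf{a}$. Minimality among elements of $\mathcal{B}_0$ dominating $\mathbf{a}$ is also immediate from the definition.

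For (B), $\pi_{\mathcal{B},\mathcal{B}}$ being the identity is clear since $\mathbf{a}$ itself is the least element of $\mathcal{B}$ dominating $\mathbf{a}$. For the composition rule, apply (A) twice: $\pi_{\mathcal{B}_1 \mathcal{B}_0}(\pi_{\mathcal{B}_2 \mathcal{B}_1}(\mathbf{a}))$ lies in $\mathcal{B}_0$ and is $\geq \pi_{\mathcal{B}_2 \mathcal{B}_1}(\mathbf{a}) \geq \mathbf{a}$, so by minimality $\pi_{\mathcal{B}_2 \mathcal{B}_0}(\mathbf{a}) \leq \pi_{\mathcal{B}_1 \mathcal{B}_0}(\pi_{\mathcal{B}_2 \mathcal{B}_1}(\mathbf{a}))$. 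Conversely, $\pi_{\mathcal{B}_2 \mathcal{B}_0}(\mathbf{a}) \in \mathcal{B}_0 \subseteq \mathcal{B}_1$ and dominates $\mathbf{a}$, so by minimality inside $\mathcal{B}_1$ we get $\pi_{\mathcal{B}_2 \mathcal{B}_0}(\mathbf{a}) \geq \pi_{\mathcal{B}_2 \mathcal{B}_1}(\mathbf{a})$, and then since $\pi_{\mathcal{B}_2 \mathcal{B}_0}(\mathbf{a}) \in \mathcal{B}_0$, a second application of (A) yields $\pi_{\mathcal{B}_2 \mathcal{B}_0}(\mathbf{a}) \geq \pi_{\mathcal{B}_1 \mathcal{B}_0}(\pi_{\mathcal{B}_2 \mathcal{B}_1}(\mathbf{a}))$.

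For (C), write $\pi = \pi_{\mathcal{B}_1 \mathcal{B}_0}$ and fix $\mathbf{b} \in \mathcal{B}_0$. If $\mathbf{b} \wedge \mathbf{a} = 0$, then $\mathbf{a} \leq \lnot \mathbf{b}$, and $\lnot \mathbf{b} \in \mathcal{B}_0$, so minimality gives $\pi(\mathbf{a}) \leq \lnot \mathbf{b}$, i.e.\ $\mathbf{b} \wedge \pi(\mathbf{a}) = 0$. Conversely $\pi(\mathbf{a}) \geq \mathbf{a}$ by (A), so $\mathbf{b} \wedge \pi(\mathbf{a}) \geq \mathbf{b} \wedge \mathbf{a}$, giving the other direction. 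For the characterization, suppose $\mathbf{c} \in \mathcal{B}_0$ satisfies the same compatibility relation with $\mathcal{B}_0$ as $\pi(\mathbf{a})$ does. Taking $\mathbf{b} = \lnot \mathbf{c}$ shows $\lnot \mathbf{c} \wedge \mathbf{a} = 0$, i.e.\ $\mathbf{a} \leq \mathbf{c}$, so $\pi(\mathbf{a}) \leq \mathbf{c}$ by (A). For the reverse inequality, set $\mathbf{b} = \mathbf{c} \wedge \lnot \pi(\mathbf{a}) \in \mathcal{B}_0$; then $\mathbf{b} \wedge \mathbf{a} \leq \lnot \pi(\mathbf{a}) \wedge \mathbf{a} = 0$, so by the hypothesized equivalence applied to $\mathbf{c}$ we get $\mathbf{b} \wedge \mathbf{c} = 0$, which forces $\mathbf{b} = 0$, hence $\mathbf{c} \leq \pi(\mathbf{a})$. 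No real obstacle is expected here; the only subtlety is remembering to use the complete-subalgebra hypothesis so that complements and arbitrary meets taken inside $\mathcal{B}_0$ agree with those in $\mathcal{B}_1$.
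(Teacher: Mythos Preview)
Your proof is correct and follows essentially the same approach as the paper's. The only differences are cosmetic: for (B) you argue via two inequalities using the minimality characterization from (A), whereas the paper shows directly that the two sets of $\mathcal{B}_0$-upper-bounds coincide; and for the uniqueness clause in (C) you spell out the argument where the paper simply says ``Uniqueness is clear.''
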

\begin{proof}
	(A): Let $X$ be the set of all $\mathbf{b} \in \mathcal{B}_0$ with $\mathbf{b} \geq \mathbf{a}$. Since $\mathbf{a}$ is a lower bound to $X$, we get that $\mathbf{a} \leq \bigwedge X = \pi_{\mathcal{B}_1\mathcal{B}_0}(\mathbf{a})$. The second statement is clear.
	
	(B): Clearly, $\pi_{\mathcal{B} \mathcal{B}}$ is the identity. For the second part, suppose $\mathbf{a}_2 \in \mathcal{B}_2$ is given. Let $X_{21}$ be the set of all $\mathbf{a} \in \mathcal{B}_1$ with $\mathbf{a} \geq \mathbf{a}_2$, and write $\mathbf{a}_{21} = \bigwedge X_{21}$. Similarly, let $X_{20}$ be the set of all $\mathbf{a} \in \mathcal{B}_0$ with $\mathbf{a} \geq \mathbf{a}_0$, and write $\mathbf{a}_{20} = \bigwedge X_{20}$;  and let $X_{210}$ be the set of all $\mathbf{a} \in \mathcal{B}_0$ with $\mathbf{a} \geq \mathbf{a}_{21}$, and write $\mathbf{a}_{210} = \bigwedge X_{210}$. We wish to show that $\mathbf{a}_{210} = \mathbf{a}_{20}$; for this it suffices to show that $X_{210} = X_{20}$. That is, given $\mathbf{a} \in \mathcal{B}_0$, we show that $\mathbf{a} \geq \mathbf{a}_{21}$ if and only if $\mathbf{a} \geq \mathbf{a}_2$. By part (A) we have that $\mathbf{a}_{21} \geq \mathbf{a}_2$, so suppose $\mathbf{a} \geq \mathbf{a}_2$; we show $\mathbf{a} \geq \mathbf{a}_{21}$. But this is clear, since $\mathbf{a}$ must be in $X_{21}$. 
	
	(C): Since $\pi(\mathbf{a}) \geq \mathbf{a}$, we have that if $\mathbf{b} \wedge \mathbf{a}$ is nonzero, then so is $\mathbf{b} \wedge \pi(\mathbf{a})$. On the other hand, if $\mathbf{b} \wedge \mathbf{a} = 0$, then $\lnot \mathbf{b} \geq \mathbf{a}$, so $\pi(\mathbf{a}) \leq \lnot \mathbf{b}$, so $\mathbf{b} \wedge \pi(\mathbf{a}) = 0$. Uniqueness is clear. 
\end{proof}

We relate this to forcing:
\begin{lemma}\label{projectionsLemma2}
	Suppose $(P, \dot{Q})$ is a two-step forcing iteration. Write $\mathcal{B}_1 = \mathcal{B}(P*\dot{Q})$, and write $\mathcal{B}_0 = \mathcal{B}(P)$, and write $\pi = \pi_{\mathcal{B}_1 \mathcal{B}_0}$.
	\begin{itemize}
		\item[(A)]  Suppose $(q, \dot{q}) \in P*\dot{Q} \subseteq \mathcal{B}_1$; then $\pi(q, \dot{q}) = q$. 
		\item[(B)] More generally, if $\mathbf{a} = \bigvee_{\delta < \delta_*} (q_\delta, \dot{q}_\delta) \in \mathcal{B}_1$, then $\pi(\mathbf{a}) = \bigvee_{\delta < \delta_*} q_\delta$.
		\item[(C)] If $\mathbf{a} \in \mathcal{B}_1$, then $\pi(\mathbf{a})$ is the join of all $q \in P$ such that for some $\dot{q} \in \mathbb{N}_P(\dot{Q})$, we have $(q, \dot{q}) \leq \mathbf{a}$.
		\item[(D)] If $\mathbf{a} \in \mathcal{B}_1$, and $q \in P$, then $q \leq \pi(\mathbf{a})$ if and only if there is some $\dot{q} \in \mathbb{N}_{P}(\dot{Q})$ such that $(q, \dot{q}) \leq \mathbf{a}$. This characterizes $\pi(\mathbf{a})$.
	\end{itemize}
\end{lemma}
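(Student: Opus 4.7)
The plan is to handle the four parts in order, using Lemma~\ref{projectionsLemma1} as the main tool and reducing (B)--(D) to (A).

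First I would establish (A) using the characterization in Lemma~\ref{projectionsLemma1}(C): it suffices to check that for every $q' \in P$ (dense in $\mathcal{B}_0$), $q' \wedge q \neq 0$ in $\mathcal{B}_0$ iff $q' \wedge (q,\dot{q}) \neq 0$ in $\mathcal{B}_1$, where $q'$ is identified with its image $(q',1^{\dot{Q}})$ in $\mathcal{B}_1$. Both directions are immediate: if $q''$ is a common extension of $q', q$ in $P$, then $(q'', \dot{q})$ lies below both $(q', 1^{\dot{Q}})$ and $(q, \dot{q})$; conversely any $(q'', \dot{q}'') \leq (q', 1^{\dot{Q}}), (q, \dot{q})$ gives $q'' \leq q, q'$ in $P$. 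Part (B) then reduces to the fact that $\pi$ preserves joins: monotonicity (Lemma~\ref{projectionsLemma1}(A)) gives $\pi(\bigvee_\delta \mathbf{a}_\delta) \geq \bigvee_\delta \pi(\mathbf{a}_\delta)$, and $\bigvee_\delta \pi(\mathbf{a}_\delta) \in \mathcal{B}_0$ is above $\bigvee_\delta \mathbf{a}_\delta$, hence above $\pi(\bigvee_\delta \mathbf{a}_\delta)$ by Lemma~\ref{projectionsLemma1}(A) again. Then (B) follows by applying (A) termwise.

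For (C), write $\mathbf{a} = \bigvee_\delta (q_\delta, \dot{q}_\delta)$ using density of $P*\dot{Q}$ in $\mathcal{B}_1$; by (B), $\pi(\mathbf{a}) = \bigvee_\delta q_\delta$. Setting $X := \{q \in P : \exists \dot{q} \in \mathbb{N}_P(\dot{Q}),\ (q,\dot{q}) \leq \mathbf{a}\}$, each $q_\delta \in X$, so $\pi(\mathbf{a}) \leq \bigvee X$; conversely every $q \in X$ satisfies $q = \pi(q,\dot{q}) \leq \pi(\mathbf{a})$ by (A) and monotonicity, giving $\bigvee X \leq \pi(\mathbf{a})$.

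For (D), the forward direction is again $q = \pi(q, \dot{q}) \leq \pi(\mathbf{a})$. For the converse, suppose $q \leq \pi(\mathbf{a})$. The set $X$ from (C) is downward-closed in $P$ (shrinking the first coordinate of any witness preserves membership) and, by (C), predense below $q$. Fix a maximal antichain $\{q_i : i \in I\}$ of $X$ below $q$ with witnesses $(q_i, \dot{q}_i) \leq \mathbf{a}$, and form the mixed name $\dot{q} \in \mathbb{N}_P(\dot{Q})$ with $q_i \Vdash \dot{q} = \dot{q}_i$. I would then verify $(q, \dot{q}) = \bigvee_i (q_i, \dot{q}_i)$ in $\mathcal{B}_1$: the $\geq$ direction is immediate since $q_i \leq q$ and $q_i \Vdash \dot{q}_i = \dot{q}$, and for $\leq$ one checks every $(q', \dot{q}') \leq (q, \dot{q})$ in $P*\dot{Q}$ is compatible with some $(q_i, \dot{q}_i)$ by picking $q'' \leq q', q_i$ (using predensity of $\{q_i\}$ below $q$) so that $q'' \Vdash \dot{q}' \leq \dot{q} = \dot{q}_i$. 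Hence $(q, \dot{q}) \leq \mathbf{a}$. The uniqueness claim follows because $\mathbf{b} \in \mathcal{B}_0$ is determined by $\{q \in P : q \leq \mathbf{b}\}$.

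The only genuinely nontrivial step is the mixing construction in (D): care is needed to arrange $\dot{q}$ inside $\mathbb{N}_P(\dot{Q})$ and to verify that the resulting $(q, \dot{q})$ computes the intended join in $\mathcal{B}_1$. Everything else is bookkeeping with the embedding $P \hookrightarrow P*\dot{Q}$ via $p \mapsto (p, 1^{\dot{Q}})$ and the characterizations of $\pi$ already recorded.
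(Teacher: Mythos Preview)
Your proposal is correct and follows essentially the same approach as the paper: both use the characterization in Lemma~\ref{projectionsLemma1}(C) for the computation of $\pi$, derive (C) from (B), and handle (D) via the same mixing-of-names construction over a maximal antichain. The only cosmetic difference is that the paper proves (B) directly and reads off (A) as a special case, whereas you prove (A) first and then obtain (B) by observing that $\pi$ preserves arbitrary joins; this reorganization is harmless and the join-preservation observation is a pleasant byproduct.
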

\begin{proof}
	(A) follows from (B).
	
	(B): Write $\mathbf{a}_0 = \bigvee_{\delta < \delta_*} q_\delta$. We show that for all $\mathbf{b} \in \mathcal{B}_0$, $\mathbf{b} \wedge \mathbf{a}_0$ is nonzero if and only if $\mathbf{b} \wedge \mathbf{a}$ is  nonzero; this suffices, by Lemma~\ref{projectionsLemma1}. Suppose $\mathbf{b} \wedge \mathbf{a}_0$ is nonzero; then we can find $\delta < \delta_*$ such that $\mathbf{b} \wedge q_\delta$ is nonzero. Choose $q \in P$ with $q \leq \mathbf{b} \wedge q_\delta$; then $(q, \dot{q}_\delta) \leq \mathbf{b} \wedge \mathbf{a}$ is nonzero, as desired. Conversely, if $\mathbf{b} \wedge \mathbf{a}_0$ is nonzero, then we can find $\delta < \delta_*$ such that $\mathbf{b} \wedge (q_\delta, \dot{q}_\delta)$ is nonzero; thus $\mathbf{b} \wedge q_\delta$ is nonzero. (We are identifying $q \in P$ with $(q, 1)$ in $P*\dot{Q}$.)
	
	(C) follows from (B) (let $(q_\delta, \dot{q}_\delta): \delta < \delta_*$ list all elements of $P*\dot{Q}$ below $\mathbf{a}$).
	
	(D): let $X$ be the set of all $q \in P$ such that there is some $\dot{q} \in \mathbb{N}_P(\dot{Q})$ with $(q, \dot{q}) \leq \mathbf{a}$. By (C) (or (B)), $\pi(\mathbf{a}) = \bigvee X$. Thus, whenever $q \in X$ then $q \leq \pi(\mathbf{a})$. (D) asks for the converse. So suppose $q \leq \pi(\mathbf{a})$. Let $C$ be a maximal antichain of $P$ below $q$, such that $C \subseteq X$ (this is possible since $\pi(\mathbf{a}) = \bigvee X$). For each $p \in C$ choose $\dot{p}(p) \in \dot{Q}$ such that $(p, \dot{p}(p)) \leq \mathbf{a}$. Let $\dot{q} \in \mathbb{N}_{P}(\dot{Q})$ be a $P$-name for an element of $\dot{Q}$, such that for each $p \in C$, $p \Vdash \dot{q} = \dot{p}(p)$.  I claim that $(q, \dot{q}) \leq \mathbf{a}$. It suffices to show that $(q, \dot{q}) \leq \bigvee_{p \in C} (p, \dot{p}(p))$. This amounts to showing that if $(r, \dot{r}) \leq (q, \dot{q})$, then $(r, \dot{r})$ is compatible with $(p, \dot{p}(p))$ for some $p \in C$; but this is clear, since we can find some $p \in C$ such that $r$ is compatible with $p$ (since $q = \bigvee C$), so choose $r' \leq r \wedge p$. Then $r' \Vdash \dot{p}(p) = \dot{q}$, so $(p, \dot{p}(p))$ and $(\dot{r}, \dot{q})$ are compatible, as witnessed by the lower bound $(r', \dot{p}(p))  = (r', \dot{q})$ (recalling our convention that we really always work in the separative quotient). Uniqueness is clear.
\end{proof}

Finally, we will need the following lemma. The special case when $\mathcal{B} = \mathcal{P}(\lambda)$ is Proposition 4.1 of \cite{Kanamori}. A cardinal $\tau$ is strongly compact if, whenever $\Gamma$ is a set of $\mathcal{L}_{\tau \tau}$-formulas, if every subset of $\Gamma$ of size less than $\tau$ is satisfiable, then $\Gamma$ is satisfiable. Every supercompact cardinal is strongly compact.

\begin{lemma}\label{CompactExt}
	Suppose $\tau$ is strongly compact, and $\mathcal{B}$ is a complete Boolean algebra, and $\mathcal{D}$ is a $\tau$-complete filter on $\mathcal{B}$. Suppose $\mathcal{B}$ is $<\tau$-distributive. Then $\mathcal{D}$ extends to a $\tau$-complete ultrafilter on $\mathcal{B}$.
\end{lemma}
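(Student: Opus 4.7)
The plan is to apply strong compactness of $\mathcal{L}_{\tau\tau}$. I would work in the language $L$ with a constant $c_{\mathbf{a}}$ for each $\mathbf{a} \in \mathcal{B}$ together with a unary predicate $U$, and form the $\mathcal{L}_{\tau\tau}$-theory $\Gamma$ over $L$ whose axioms are: $U(c_{\mathbf{d}})$ for every $\mathbf{d} \in \mathcal{D}$; $\lnot U(c_0)$; $U(c_{\mathbf{a}}) \vee U(c_{\lnot \mathbf{a}})$ for every $\mathbf{a} \in \mathcal{B}$; and $\bigwedge_{\mathbf{a} \in Y} U(c_{\mathbf{a}}) \to U(c_{\bigwedge Y})$ for every $Y \subseteq \mathcal{B}$ of size less than $\tau$. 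From any model $M$ of $\Gamma$ I would read off $\mathcal{U} := \{\mathbf{a} \in \mathcal{B} : M \models U(c_{\mathbf{a}})\}$, which the axioms force to be a $\tau$-complete ultrafilter on $\mathcal{B}$ extending $\mathcal{D}$ (upward closure is obtained by combining dichotomy with the instance $Y = \{\mathbf{a}, \lnot \mathbf{b}\}$ of the meet axiom: if $\mathbf{a} \le \mathbf{b}$ then $\mathbf{a} \wedge \lnot \mathbf{b} = 0$, which together with $0 \notin \mathcal{U}$ rules out $\lnot \mathbf{b} \in \mathcal{U}$).

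The crux is verifying $<\tau$-satisfiability. Fix $\Gamma_0 \subseteq \Gamma$ with $|\Gamma_0| < \tau$, and let $X_0 \subseteq \mathcal{B}$ be the set of elements named by constants appearing in some formula of $\Gamma_0$. Each formula mentions fewer than $\tau$ constants, and since $\tau$ is strongly compact (hence regular and uncountable), one has $|X_0| < \tau$. Letting $D_0 := \mathcal{D} \cap X_0$, the $\tau$-completeness of $\mathcal{D}$ gives that $\mathbf{b}_0 := \bigwedge D_0$ lies in $\mathcal{D}$, and is in particular nonzero. I would then invoke $<\tau$-distributivity of $\mathcal{B}$ in its standard form: for any nonzero $\mathbf{c}$ and any family of fewer than $\tau$ maximal antichains, there is a nonzero $\mathbf{g} \le \mathbf{c}$ lying below some element of each antichain. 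Applied to $\mathbf{c} = \mathbf{b}_0$ and the antichains $\{\mathbf{a}, \lnot \mathbf{a}\}$ for $\mathbf{a} \in X_0 \setminus \{0,1\}$, this produces a nonzero $\mathbf{g} \le \mathbf{b}_0$ deciding every $\mathbf{a} \in X_0$. Interpreting the constants canonically and setting $U^M := \{c_{\mathbf{a}} : \mathbf{g} \le \mathbf{a}\}$ then satisfies $\Gamma_0$: $\mathbf{g} \le \mathbf{b}_0 \le \mathbf{d}$ handles the $\mathcal{D}$-axioms, dichotomy is built in by choice of $\mathbf{g}$, the nonzero condition on $\mathbf{g}$ handles $\lnot U(c_0)$, and the meet implications hold because $\mathbf{g} \le \mathbf{a}$ for every $\mathbf{a} \in Y$ forces $\mathbf{g} \le \bigwedge Y$.

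Strong compactness then furnishes a model of $\Gamma$, from which $\mathcal{U}$ is extracted as above. The hard part is really the bookkeeping of the second paragraph: ensuring $|X_0| < \tau$, which is where the regularity of $\tau$ is spent, and arranging that $<\tau$-distributivity produces a common refinement below $\mathbf{b}_0$, which is where the hypothesis on $\mathcal{B}$ enters. These are the two load-bearing steps that make the cardinal and distributivity hypotheses pull their weight; once they are in place, the rest amounts to transcribing the ultrafilter and $\tau$-completeness axioms into $\mathcal{L}_{\tau\tau}$.
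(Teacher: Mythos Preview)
Your proof is correct and follows essentially the same approach as the paper's: encode the desired ultrafilter as an $\mathcal{L}_{\tau\tau}$-theory $\Gamma$, and verify $<\tau$-satisfiability by taking the meet of the relevant $\mathcal{D}$-elements and then using $<\tau$-distributivity to find a nonzero element below it deciding all constants mentioned in $\Gamma_0$. The only cosmetic difference is that the paper bundles the ultrafilter axioms as ``$\{\mathbf{a}: U(\mathbf{a})\}$ is an ultrafilter'' plus closure under meets of descending $<\tau$-chains, whereas you spell out dichotomy, $\lnot U(c_0)$, and closure under arbitrary $<\tau$-meets and then derive upward closure; both axiomatizations yield the same conclusion.
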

\begin{proof}
	
	Let $\mathcal{L}$ be the language with a constant symbol for each element $\mathbf{a} \in \mathcal{B}$ (also denoted $\mathbf{a}$), and with a unary relation symbol $U$. Let $\Gamma$ assert the following:
	
	\begin{itemize}
		\item $\{\mathbf{a} \in \mathcal{B}: U(\mathbf{a})\}$ is an ultrafilter (this is first-order);
		\item For every descending chain $(\mathbf{a}_\gamma: \gamma < \gamma_*)$ from $\mathcal{B}$ of length less then $\tau$, if $U(\mathbf{a}_\gamma)$ holds for each $\gamma$ then $U(\bigcap_\gamma \mathbf{a}_\gamma)$ holds;
		\item $U(\mathbf{a})$ holds for all $\mathbf{a} \in \mathcal{D}$.
	\end{itemize}
	
	To see that this is $\tau$-satisfiable: let $\Gamma_0 \subseteq \Gamma$ have size less than $\tau$. Choose $X \in [\mathcal{B}]^{<\tau}$ containing all the constants appearing in $\Gamma_0$. Let $\mathbf{a}_0 = \bigcap\{\mathbf{a}: \mathbf{a} \in X \cap \mathcal{D}\}$, so $\mathbf{a} \in \mathcal{D}$ is nonzero (as $\mathcal{D}$ is $\tau$-complete). Choose $\mathbf{a}_1 \leq \mathbf{a}_0$ nonzero such that $\mathbf{a}_1$ decides every element of $X$ (this is possible since $\mathcal{B}$ is $<\tau$-distributive). For all $\mathbf{b} \in \mathcal{B}$, define $U({{\mathbf{b}}})$ to hold if and only if $\mathbf{a} \leq \mathbf{b}$. Then this clearly defines a model of $\Gamma_0$.
\end{proof}

\section{Full Boolean-Valued Models}\label{ReviewSec}
We recall the setup of \cite{BVModelsUlrich}.

As a convention, if $X$ is a set and $\mathcal{L}$ is a language, then $\mathcal{L}(X)$ is the set of formulas of $\mathcal{L}$ with parameters taken from $X$.

Suppose $\mathcal{B}$ is a complete Boolean algebra. A $\mathcal{B}$-valued structure is a pair $(\mathbf{M}, \| \cdot \|_{\mathbf{M}})$ where $\mathbf{M}$ is a set and $\|\cdot \|_{\mathbf{M}}$ is a map from $\mathcal{L}(\mathbf{M})$ to $\mathcal{B}$, which satisfies a natural list of axioms, for instance: for every formula $\phi(x, \overline{a}) \in \mathcal{L}(\mathbf{M})$, $\|\exists x \phi (x, \overline{a}) \|_{\mathbf{M}} = \sup_{a \in \mathbf{M}} \|\phi(a, \overline{a})\|_{\mathbf{M}}$. We are only interested in the case when $\mathbf{M}$ is full, i.e. when in fact  $\|\exists x \phi (x, \overline{a}) \|_{\mathbf{M}} = \mbox{max}_{a \in \mathbf{M}} \|\phi(a, \overline{a})\|_{\mathbf{M}}$. If $T$ is a theory, then we write $\mathbf{M} \models^{\mathcal{B}} T$, and say that $\mathbf{M}$ is a full $\mathcal{B}$-valued model of $T$, if $\|\phi\|_{\mathbf{M}} = 1$ for all $\phi\in T$.

For example, (ordinary) $\mathcal{L}$-structures are the same as full $\{0, 1\}$-valued $\mathcal{L}$-structures, which can thus be viewed as full $\mathcal{B}$-valued structures for any $\mathcal{B}$. Also, if $M$ is an $\mathcal{L}$-structure and $\lambda$ is a cardinal, then $M^\lambda$ is a $\mathcal{P}(\lambda)$-valued $\mathcal{L}$-structure; moreover, we have the canonical elementary embedding $\mathbf{i}: M \preceq M^\lambda$, given by the diagonal map. We call this the pre-{\L}o{\'s} map. 

If $\mathbf{M}$ is a full $\mathcal{B}$-valued model of $T$ and $\mathcal{U}$ is an ultrafilter on $\mathcal{B}$, then we can form the specialization $\mathbf{M}/\mathcal{U} \models T$, which comes equipped with a canonical surjection $[\cdot]_{\mathcal{U}}: \mathbf{M}\to \mathbf{M}/\mathcal{U}$, satisfying that for all $\phi(\overline{a}) \in \mathcal{L}(\mathbf{M})$, $\mathbf{M}/\mathcal{U}\models \phi([\overline{a}]_{\mathcal{U}})$ if and only if $\|\phi(\overline{a})\|_{\mathbf{M}} \in \mathcal{U}$. This generalizes the ultrapower construction $M^\lambda/\mathcal{U}$; note that the {\L}o{\'s} embedding of $M$ into $M^\lambda/\mathcal{U}$ is the composition of the pre-{\L}o{\'s} embedding with $[\cdot]_{\mathcal{U}}$.

In \cite{BVModelsUlrich}, we prove the following compactness theorem for  full Boolean-valued models:

\begin{theorem}\label{Compactness}
	Suppose $\mathcal{B}$ is a complete Boolean algebra, $X$ is a set, $\Gamma \subseteq \mathcal{L}( X)$, and $F_0, F_1: \Gamma \to  \mathcal{B}$ with $F_0(\phi(\overline{a})) \leq F_1(\phi(\overline{a}))$ for all $\phi(\overline{a}) \in \Gamma$. Then the following are equivalent:
	
	\begin{itemize}
		\item[(A)] There is some full $\mathcal{B}$-valued structure $\mathbf{M}$ and some map $\tau: X \to \mathbf{M}$, such that for all $\phi(\overline{a}) \in \Gamma$, $F_0(\phi(\overline{a})) \leq \|\phi(\tau(\overline{a}))\|_{\mathbf{M}} \leq F_1(\phi(\overline{a}))$;
		
		\item[(B)] For every finite $\Gamma_0 \subseteq \Gamma$ and for every $\mathbf{c} \in \mathcal{B}_+$, there is some $\{0, 1\}$-valued $\mathcal{L}$-structure $M$ and some map $\tau: X \to M$, such that for every $\phi(\overline{a}) \in \Gamma$, if $\mathbf{c} \leq F_0(\phi(\overline{a}))$ then $M \models \phi(\tau(\overline{a}))$, and if $\mathbf{c} \leq \lnot F_1(\phi(\overline{a}))$ then $M \models \lnot \phi(\tau(\overline{a}))$. 
	\end{itemize}
\end{theorem}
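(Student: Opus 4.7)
The plan is to prove the two directions separately, with (A) $\Rightarrow$ (B) being essentially formal and (B) $\Rightarrow$ (A) being the substantial direction.

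For (A) $\Rightarrow$ (B), I would argue by specialization. Given $\mathbf{M}, \tau$ satisfying (A), and given finite $\Gamma_0 \subseteq \Gamma$ and $\mathbf{c} \in \mathcal{B}_+$, extend $\{\mathbf{c}\}$ to an ultrafilter $\mathcal{U}$ on $\mathcal{B}$ via Zorn's lemma. Set $M := \mathbf{M}/\mathcal{U}$ and $\tau_\mathcal{U} := [\cdot]_\mathcal{U} \circ \tau$. For any $\phi(\overline{a}) \in \Gamma_0$: the hypothesis $\mathbf{c} \leq F_0(\phi(\overline{a}))$ yields $\|\phi(\tau(\overline{a}))\|_\mathbf{M} \in \mathcal{U}$ by (A), whence $M \models \phi(\tau_\mathcal{U}(\overline{a}))$; the dual claim for $\mathbf{c} \leq \lnot F_1(\phi(\overline{a}))$ is symmetric.

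For (B) $\Rightarrow$ (A), I would carry out a Henkin-style transfinite construction, maintaining (B) as the invariant on the expanding triple $(F_0, F_1, \Gamma)$. The construction alternates three types of steps: (i) Henkin step --- for each existential $\exists x \phi(x, \overline{a}) \in \Gamma$ with decided value $\mathbf{b}$, introduce a fresh witness $c$ into $\mathbf{M}$ and declare $F_0(\phi(c, \overline{a})) = F_1(\phi(c, \overline{a})) = \mathbf{b}$; (ii) closure step --- close $\Gamma$ under subformulas and Boolean combinations, using the forced identities such as $\|\phi \wedge \psi\| = \|\phi\| \wedge \|\psi\|$ and $\|\lnot \phi\| = \lnot \|\phi\|$; (iii) decision step --- for an undecided $\phi$ (where $F_0(\phi) < F_1(\phi)$), tighten both endpoints to a common value $\mathbf{b} \in [F_0(\phi), F_1(\phi)]$. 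Upon reaching a fixed point, $\mathbf{M}$ with $\|\cdot\|_\mathbf{M} := F_0 = F_1$ is a full $\mathcal{B}$-valued model, with fullness supplied by the Henkin witnesses and (A) inherited from the initial bounds on $F_0, F_1$.

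The main obstacle is verifying that (B) is preserved under each extension step. The Henkin step is manageable: given finite $\Gamma_0$ containing the new formula $\phi(c, \overline{a})$ and $\mathbf{c}' \in \mathcal{B}_+$, apply (B) to the enlarged pre-extension set $\Gamma_0 \cup \{\exists x \phi(x, \overline{a})\}$ with $\mathbf{c}'$ to get a $\{0,1\}$-model $M$; after refining $\mathbf{c}'$ to lie below $\mathbf{b}$ or below $\lnot \mathbf{b}$, interpret $c$ either as an actual witness in $M$ (first case) or arbitrarily (second case, where the requirement on $\phi(c, \overline{a})$ becomes vacuous). The decision step is the most delicate: one must show that some $\mathbf{b} \in [F_0(\phi), F_1(\phi)]$ allows the tightening while preserving (B). A compactness-style argument supplies this --- were every candidate $\mathbf{b}$ obstructed by some finite configuration $(\Gamma_0, \mathbf{c})$, the finite character of (B) together with the completeness of $\mathcal{B}$ would allow us to splice finitely many obstructions into a single configuration already violating (B) for the starting $(F_0, F_1)$, a contradiction.
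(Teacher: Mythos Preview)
The paper does not prove this theorem here; it is quoted from \cite{BVModelsUlrich}. So there is no in-paper argument to compare against, and I assess your proposal directly.

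Your (A) $\Rightarrow$ (B) is correct, and for (B) $\Rightarrow$ (A) the Henkin scheme is the right shape; the Henkin and closure steps preserve (B) for the reasons you give. The weak point is the decision step. Your reductio --- ``were every candidate $\mathbf{b}$ obstructed by some $(\Gamma_0, \mathbf{c})$, splice finitely many obstructions into one configuration violating (B)'' --- does not go through as stated: obstructions to different values of $\mathbf{b}$ live at possibly incompatible $\mathbf{c}$'s, and the (B)-constraints at smaller $\mathbf{c}$ are \emph{stronger}, not weaker, so there is no evident way to combine them into a single violation. What does work is to set $\mathbf{b} := \bigvee S$, where $S$ is the set of $\mathbf{c}$ such that for every finite $\Gamma_0$ and every $0 < \mathbf{c}' \leq \mathbf{c}$, some $M$ meets the (B)-constraints at $(\Gamma_0, \mathbf{c}')$ and has $M \models \varphi$; define $\mathbf{b}'$ symmetrically for $\neg\varphi$. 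One checks $F_0(\varphi) \leq \mathbf{b} \leq F_1(\varphi)$, and the key point $\mathbf{b} \vee \mathbf{b}' = 1$ follows by a density argument: given $0 < \mathbf{d} \leq \neg\mathbf{b} \wedge \neg\mathbf{b}'$, find $(\Gamma_0, \mathbf{c}')$ with $\mathbf{c}' \leq \mathbf{d}$ witnessing $\mathbf{c}' \notin S$, then $(\Gamma_1, \mathbf{c}'')$ with $\mathbf{c}'' \leq \mathbf{c}'$ witnessing $\mathbf{c}'' \notin S'$, and apply (B) at $(\Gamma_0 \cup \Gamma_1, \mathbf{c}'')$ to produce an $M$ forced to satisfy both $\varphi$ and $\neg\varphi$. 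With this $\mathbf{b}$, setting $F_0(\varphi) = F_1(\varphi) = \mathbf{b}$ does preserve (B).

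An alternative that bypasses the decision step entirely: condition (B) says precisely that in $\mathbb{V}^{\mathcal{B}}$, the theory $\dot{T} = \{\varphi \in \Gamma : F_0(\varphi) \in \dot{G}\} \cup \{\neg\varphi : \varphi \in \Gamma,\ \neg F_1(\varphi) \in \dot{G}\}$ has Boolean value $1$ for finite satisfiability. A $\mathcal{B}$-name $\dot{N} \models \dot{T}$ then yields, via the maximum principle, a full $\mathcal{B}$-valued structure in $\mathbb{V}$ witnessing (A).
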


Here is a first application: given $\mathcal{B}$-valued models $\mathbf{M}\subseteq \mathbf{N}$, say that $\mathbf{M} \preceq \mathbf{N}$ if $\|\cdot\|_{\mathbf{M}} \subseteq \| \cdot \|_{\mathbf{N}}$. Say that $\mathbf{N}$ is $\lambda^+$-saturated if for every $\mathbf{M}_0 \preceq \mathbf{N}$ with $|\mathbf{M}_0| \leq \lambda$ and for every $\mathbf{M}_1 \succeq \mathbf{M}_0$ with $|\mathbf{M}_1| \leq \lambda$, there is some elementary embedding $f: \mathbf{M}_1 \preceq \mathbf{N}$ extending the inclusion from $\mathbf{M}_0$ into $\mathbf{N}$. Then in \cite{BVModelsUlrich}, we show that for every $\mathcal{B}$-valued structure $\mathbf{M}$ and for every $\lambda$, there is an elementary extension $\mathbf{N} \succeq \mathbf{M}$ such that $\mathbf{N}$ is full and moreover $\lambda^+$-saturated.

Suppose $T$ is a complete countable theory, and $\mathcal{U}$ is an ultrafilter on the complete Boolean algebra $\mathcal{B}$. We observe in \cite{BVModelsUlrich} that if there is some $\lambda^+$-saturated $\mathbf{M} \models^{\mathcal{B}} T$ with $\mathbf{M}/\mathcal{U}$ $\lambda^+$-saturated, then for every $\lambda^+$-saturated $\mathbf{M} \models^{\mathcal{B}} T$, $\mathbf{M}/\mathcal{U}$ is $\lambda^+$-saturated. We define that $\mathcal{U}$ $\lambda^+$-saturates $T$ in this case. This coincides with Malliaris and Shelah's notion of $(\lambda, \mathcal{B}, T)$-morality.

Finally, in \cite{BVModelsUlrich} we give the following generalization of Malliaris and Shelah's Existence Theorem and Separation of Variables:

\begin{theorem}\label{ExistenceThm}
	Suppose $\mathcal{B}_0, \mathcal{B}_1$ are complete Boolean algebras such that $\mbox{c.c.}(\mathcal{B}_0) > \lambda$ (i.e. $\mathcal{B}_0$ has an antichain of size $\lambda$) and $2^{<\mbox{c.c.}(\mathcal{B}_1)} \leq 2^\lambda$. Suppose $\mathcal{U}_1$ is an ultrafilter on $\mathcal{B}_1$. Then there is a strongly $\lambda$-regular ultrafilter $\mathcal{U}_0$ on $\mathcal{B}_0$ such that for all complete countable theories $T$, $\mathcal{U}_0$ $\lambda^+$-saturates $T$ if and only if $\mathcal{U}_1$ does.
\end{theorem}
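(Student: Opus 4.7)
The plan is to adapt Malliaris and Shelah's Separation of Variables construction to the Boolean-valued setting, with $\mathcal{B}_0$ playing the role that $\mathcal{P}(\lambda)$ plays classically. The main idea is to construct a ``regularizing'' filter $\mathcal{D}_0$ on $\mathcal{B}_0$ together with a complete Boolean-algebra embedding $j : \mathcal{B}_1 \hookrightarrow \mathcal{B}_0/\mathcal{D}_0$, and then set $\mathcal{U}_0 := \pi^{-1}(j^{-1}(\mathcal{U}_1))$ where $\pi : \mathcal{B}_0 \to \mathcal{B}_0/\mathcal{D}_0$ is the quotient. The two cardinality hypotheses are exactly what is needed for this: $\mbox{c.c.}(\mathcal{B}_0) > \lambda$ provides an antichain of size $\lambda$ from which to extract a $\lambda$-regularizing family, while $2^{<\mbox{c.c.}(\mathcal{B}_1)} \leq 2^\lambda$ is the analogue of the bound $|\mathcal{B}_1| \leq 2^\lambda$ in the classical separation, and is what allows $\mathcal{B}_1$ to be embedded into the quotient.

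First I would carry out the construction of $\mathcal{D}_0$ and $j$. Starting from an antichain of size $\lambda$ in $\mathcal{B}_0$, form a multiplicative family $(\mathbf{a}_u : u \in [\lambda]^{<\aleph_0})$ with $\mathbf{a}_u \wedge \mathbf{a}_v = \mathbf{a}_{u \cup v}$ and each singleton $\mathbf{a}_{\{i\}}$ lying above a member of the antichain; let $\mathcal{D}_0$ be the filter generated by the ``cofinite'' complements. An Engelking--Kar{\l}owicz-style independent-families argument inside $\mathcal{B}_0/\mathcal{D}_0$, using the hypothesis $2^{<\mbox{c.c.}(\mathcal{B}_1)} \leq 2^\lambda$ to produce enough independent partitions, yields the desired complete embedding $j$ (with $\mathcal{D}_0$ possibly enlarged along the way). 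The resulting $\mathcal{U}_0$ is a strongly $\lambda$-regular ultrafilter on $\mathcal{B}_0$, since the multiplicative family $(\mathbf{a}_u)$ is contained in $\mathcal{U}_0$ by construction.

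The key step is to show that $\mathcal{U}_0$ and $\mathcal{U}_1$ $\lambda^+$-saturate the same complete countable theories. Given $\mathbf{M} \models^{\mathcal{B}_0} T$ full and $\lambda^+$-saturated and a type $p$ over a small $\mathbf{N} \preceq \mathbf{M}/\mathcal{U}_0$ of size $\leq \lambda$, encode $p$ as a distribution $d : p \to \mathcal{B}_0$ recording the truth values in $\mathbf{M}$ of the formulas involved. The point of the construction of $\mathcal{U}_0$ is that, modulo $\mathcal{D}_0$, this distribution can be pushed through $j^{-1}$ to a distribution into $\mathcal{B}_1$. Using Theorem~\ref{Compactness}, one then manufactures a full $\mathbf{M}' \models^{\mathcal{B}_1} T$ together with the analogous type $p'$, such that $p$ is realized in $\mathbf{M}/\mathcal{U}_0$ if and only if $p'$ is realized in $\mathbf{M}'/\mathcal{U}_1$. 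Running the argument in both directions (using that the characterization of $\mathcal{U}$-saturation of $T$ as ``there exists a $\lambda^+$-saturated full $\mathcal{B}$-valued model whose specialization is $\lambda^+$-saturated'' is equivalent to ``for every such model,'' as established in \cite{BVModelsUlrich}) yields the equivalence.

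The main obstacle is the construction of the embedding $j$: one needs a \emph{complete} embedding $\mathcal{B}_1 \hookrightarrow \mathcal{B}_0/\mathcal{D}_0$, not merely a homomorphism, because the distribution argument for $\lambda^+$-saturation must transfer arbitrary (possibly large) joins faithfully between $\mathcal{B}_0$ and $\mathcal{B}_1$. Adapting the independent-family construction from $\mathcal{P}(\lambda)$ to a general complete Boolean algebra $\mathcal{B}_0$ with only an antichain of size $\lambda$ at one's disposal requires careful bookkeeping; in particular, one must use the $\mbox{c.c.}(\mathcal{B}_1)$ bound to ensure that joins in $\mathcal{B}_1$ of width at most $\mbox{c.c.}(\mathcal{B}_1)$ are genuinely computed in $\mathcal{B}_0/\mathcal{D}_0$, since these are the only joins one needs to preserve.
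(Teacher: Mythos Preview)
The paper does not actually prove this theorem: it is quoted from \cite{BVModelsUlrich} as a generalization of Malliaris and Shelah's Existence Theorem and Separation of Variables, and no argument is given in the present paper. So there is no proof here to compare your proposal against.

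That said, your outline is exactly the expected one and matches the shape of the argument in \cite{BVModelsUlrich}: build a regularizing filter on $\mathcal{B}_0$ whose quotient admits $\mathcal{B}_1$ as a complete subalgebra, pull $\mathcal{U}_1$ back along the composite $\mathcal{B}_0 \to \mathcal{B}_0/\mathcal{D}_0 \hookleftarrow \mathcal{B}_1$, and then transfer {\L}o{\'s} maps and their multiplicative refinements across the embedding. A couple of points worth tightening: the regularizing family you want is not built from a single antichain by taking meets (those meets would typically be zero) but rather from a suitable independent family inside $\mathcal{B}_0$, which the antichain hypothesis $\mbox{c.c.}(\mathcal{B}_0) > \lambda$ guarantees; and the saturation transfer is most cleanly phrased in terms of {\L}o{\'s} maps having multiplicative refinements (as set up in Section~\ref{ReviewSec}) rather than realizing types directly, since that is what the complete embedding preserves.
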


We immediately obtain the following corollary:
\begin{corollary}\label{KeislerOrderCharacterization}
	Suppose $T_0, T_1$ are theories. Then $T_0 \trianglelefteq T_1$ if and only if for every $\lambda$, for every complete Boolean algebra $\mathcal{B}$ with the $\lambda^+$-c.c., and for every ultrafilter $\mathcal{U}$ on $\mathcal{B}$, if $\mathcal{U}$ $\lambda^+$-saturates $T_1$, then $\mathcal{U}$ $\lambda^+$-saturates $T_0$.
	
	In fact, if $\mathcal{B}$ has the $\lambda^+$-c.c. and $\mathcal{U}$ is an ultrafilter on $\mathcal{B}$, then the set of all complete countable theories which are $\lambda^+$-saturated by $\mathcal{U}$ is a principal dividing line in Keisler's order.
\end{corollary}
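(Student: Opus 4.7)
The plan is to deduce both claims directly from the existence-and-separation-of-variables theorem (Theorem~\ref{ExistenceThm}). For the easy direction of the biconditional, I would specialize to $\mathcal{B} = \mathcal{P}(\lambda)$: this is a complete Boolean algebra whose antichains in $\mathcal{B}_+$ consist of pairwise disjoint nonempty subsets of $\lambda$, and hence it has the $\lambda^+$-c.c. Restricting the generalized hypothesis to $\lambda$-regular ultrafilters on $\mathcal{P}(\lambda)$ then recovers Keisler's original definition of $T_0 \trianglelefteq_\lambda T_1$ verbatim.

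For the non-trivial direction, suppose $T_0 \trianglelefteq_\lambda T_1$ in Keisler's classical sense, and let $\mathcal{B}$ be a complete Boolean algebra with the $\lambda^+$-c.c. and $\mathcal{U}$ an ultrafilter on $\mathcal{B}$ which $\lambda^+$-saturates $T_1$. I would apply Theorem~\ref{ExistenceThm} with $\mathcal{B}_0 := \mathcal{P}(\lambda)$, $\mathcal{B}_1 := \mathcal{B}$, and $\mathcal{U}_1 := \mathcal{U}$; the hypotheses check out since $\mathcal{P}(\lambda)$ has the antichain of singletons (so $\mbox{c.c.}(\mathcal{B}_0) > \lambda$) and the $\lambda^+$-c.c.\ on $\mathcal{B}$ gives $\mbox{c.c.}(\mathcal{B}_1) \leq \lambda^+$ and hence $2^{<\mbox{c.c.}(\mathcal{B}_1)} \leq 2^\lambda$. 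The theorem then produces a strongly $\lambda$-regular (in particular $\lambda$-regular) ultrafilter $\mathcal{U}_0$ on $\mathcal{P}(\lambda)$ which $\lambda^+$-saturates exactly the same complete countable theories as $\mathcal{U}$; in particular $\mathcal{U}_0$ $\lambda^+$-saturates $T_1$, so by the classical hypothesis it also $\lambda^+$-saturates $T_0$, and transferring back along the equivalence, $\mathcal{U}$ $\lambda^+$-saturates $T_0$ as required.

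For the second sentence of the corollary, write $\mathbf{T}_{\mathcal{B}, \mathcal{U}}$ for the set of complete countable theories $\lambda^+$-saturated by $\mathcal{U}$. Downward closure of $\mathbf{T}_{\mathcal{B}, \mathcal{U}}$ under $\trianglelefteq$ is immediate from the biconditional just proved. To exhibit $\mathbf{T}_{\mathcal{B}, \mathcal{U}}$ as a \emph{principal} dividing line in the sense of the introduction, I would apply Theorem~\ref{ExistenceThm} a second time to obtain a $\lambda$-regular ultrafilter $\mathcal{U}_0$ on $\mathcal{P}(\lambda)$ with $\mathbf{T}_{\mathcal{B}, \mathcal{U}} = \mathbf{T}_{\mathcal{P}(\lambda), \mathcal{U}_0}$; the latter is a principal dividing line by definition. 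There is no genuinely hard step here; the only delicate points are the chain-condition arithmetic needed to verify the hypotheses of Theorem~\ref{ExistenceThm} and the observation that ``strongly $\lambda$-regular'' implies $\lambda$-regular, which is exactly what places $\mathcal{U}_0$ within the scope of Keisler's original definition.
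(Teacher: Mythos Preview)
Your proposal is correct and is exactly the argument the paper has in mind: the corollary is stated immediately after Theorem~\ref{ExistenceThm} with no proof beyond the phrase ``we immediately obtain,'' and your write-up simply unpacks that sentence by applying Theorem~\ref{ExistenceThm} with $\mathcal{B}_0 = \mathcal{P}(\lambda)$ and checking the chain-condition hypotheses. The only remark worth adding is that in the second paragraph you are implicitly using that, for a $\lambda$-regular ultrafilter on $\mathcal{P}(\lambda)$, the Boolean-valued notion of ``$\lambda^+$-saturates $T$'' agrees with the classical ultrapower notion; this is established in the background reference \cite{BVModelsUlrich} and is being taken for granted here.
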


It will be convenient to have a more combinatorial formulation of $\mathcal{U}$ $\lambda^+$-saturating $T$. 

\begin{definition}
	Given an index set $I$, an $I$-distribution in $\mathcal{B}$ is a function $\mathbf{A}: [I]^{<\aleph_0} \to \mathcal{B}_+$, such that $\mathbf{A}(\emptyset) = 1$, and $s \subseteq t$ implies $\mathbf{A}(s) \geq \mathbf{A}(t)$. If $\mathcal{D}$ is a filter on $\mathcal{B}$, we say that $\mathbf{A}$ is in $\mathcal{D}$ if $\mbox{im}(\mathbf{A}) \subseteq \mathcal{D}$. $I$ will often be $\lambda$, but at other times it is convenient to let $I$ be a partial type $p(\overline{x})$.

	Say that $\mathbf{A}$ is multiplicative if for all $s \in [I]^{<\aleph_0}$, $\mathbf{A}(s) = \bigwedge_{i \in s} \mathbf{A}(\{i\})$. Clearly, multiplicative distributions are in correspondence with maps $\mathbf{A}: I \to \mathcal{B}_+$ such that the image of $\mathbf{A}$ has the finite intersection property.
	
	If $\mathbf{A}, \mathbf{B}$ are $I$-distributions in $\mathcal{B}$, then say that $\mathbf{B}$ refines $\mathbf{A}$ if $\mathbf{B}(s) \leq \mathbf{A}(s)$ for all $s \in [I]^{<\aleph_0}$.  
	
	Suppose $T$ is a theory, and $I$ is an index set. Say that $\overline{\phi}$ is an $I$-sequence of formulas if $\overline{\phi} = (\phi_i(\overline{x}, \overline{y}_i): i \in I)$ for some sequence of formulas $\phi_i(\overline{x}, \overline{y}_i)$, where all of the $\overline{y}_i$'s are disjoint with each other and with $\overline{x}$. If $\mathcal{B}$ is a complete Boolean algebra and $\mathbf{A}$ is an $I$-distribution in $\mathcal{B}$, then say that $\mathbf{A}$ is an $(I, T, \overline{\phi})$-{\L}o{\'s} map if there is some $\mathbf{M} \models^{\mathcal{B}} T$ and some choice of $\overline{a}_i \in \mathbf{M}^{|\overline{y}_i|}$ such that for every $s \in [I]^{<\aleph_0}$, $\mathbf{A}(s) = \| \exists \overline{x} \bigwedge_{i \in s} \phi_i(\overline{x}, \overline{a}_i) \|_{\mathbf{M}}$. Say that $\mathbf{A}$ is an $(I, T)$-{\L}o{\'s} map if it is an $(I, T, \overline{\phi})$-{\L}o{\'s} map for some $\overline{\phi}$.
	
\end{definition}

In \cite{BVModelsUlrich} we prove the following:

\begin{theorem}
	Suppose $\mathcal{U}$ is an ultrafilter on the complete Boolean algebra $\mathcal{B}$, and $T$ is a complete countable theory. Then $\mathcal{U}$ $\lambda^+$-saturates $T$ if and only if every $(\lambda, T)$-{\L}o{\'s} map in $\mathcal{U}$ has a multiplicative refinement in $\mathcal{U}$.
\end{theorem}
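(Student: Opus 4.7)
\medskip
\noindent\textbf{Proof plan.} The argument splits in the usual way, with the $(\lambda, T)$-{\L}o{\'s} maps parameterizing the saturation problems that $\mathcal{U}$ must solve in $\mathbf{M}/\mathcal{U}$, and multiplicative refinements parameterizing actual realizations inside $\mathbf{M}$.

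For the forward direction, I would start with an $(I, T, \overline{\phi})$-{\L}o{\'s} map $\mathbf{A} \in \mathcal{U}$ witnessed by some $\mathbf{M} \models^{\mathcal{B}} T$ and parameters $\overline{a}_i \in \mathbf{M}^{|\overline{y}_i|}$, where $I \in [\lambda]^{\leq \lambda}$. Using the existence of full $\lambda^+$-saturated elementary extensions recalled from \cite{BVModelsUlrich}, replace $\mathbf{M}$ with a full $\lambda^+$-saturated $\mathbf{N} \succeq \mathbf{M}$; since $\mathcal{U}$ $\lambda^+$-saturates $T$, $\mathbf{N}/\mathcal{U}$ is $\lambda^+$-saturated. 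The fact that $\mathbf{A}(s) \in \mathcal{U}$ for every finite $s$ translates, by the specialization axioms, into finite satisfiability of $\{\phi_i(\overline{x}, [\overline{a}_i]_{\mathcal{U}}) : i \in I\}$ in $\mathbf{N}/\mathcal{U}$. Pick a realization $[\overline{b}]_{\mathcal{U}}$ and set $\mathbf{B}(\{i\}) := \|\phi_i(\overline{b}, \overline{a}_i)\|_{\mathbf{N}}$; each is in $\mathcal{U}$, and extending multiplicatively via $\mathbf{B}(s) := \bigwedge_{i \in s} \mathbf{B}(\{i\})$ gives $\mathbf{B}(s) \le \|\exists \overline{x} \bigwedge_{i \in s} \phi_i(\overline{x}, \overline{a}_i)\|_{\mathbf{N}} = \mathbf{A}(s)$, so $\mathbf{B}$ is the desired refinement.

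For the reverse direction, fix a full $\lambda^+$-saturated $\mathbf{M} \models^{\mathcal{B}} T$ and a partial type $p(\overline{x}) = \{\phi_i(\overline{x}, [\overline{a}_i]_{\mathcal{U}}) : i < \lambda\}$ over a subset of $\mathbf{M}/\mathcal{U}$ of size at most $\lambda$, which we may assume to be finitely satisfiable. Define $\mathbf{A}(s) := \|\exists \overline{x} \bigwedge_{i \in s} \phi_i(\overline{x}, \overline{a}_i)\|_{\mathbf{M}}$; finite satisfiability of $p$ in $\mathbf{M}/\mathcal{U}$ forces $\mathbf{A} \in \mathcal{U}$, so it is a $(\lambda, T)$-{\L}o{\'s} map in $\mathcal{U}$. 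By hypothesis it admits a multiplicative refinement $\mathbf{B} \in \mathcal{U}$. The task is then to produce $\overline{b} \in \mathbf{M}^{|\overline{x}|}$ with $\mathbf{B}(\{i\}) \le \|\phi_i(\overline{b}, \overline{a}_i)\|_{\mathbf{M}}$ for every $i$, since then each $\|\phi_i(\overline{b}, \overline{a}_i)\|_{\mathbf{M}} \in \mathcal{U}$ and $[\overline{b}]_{\mathcal{U}}$ realizes $p$.

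To produce $\overline{b}$, I would first build an elementary extension $\mathbf{N} \succeq \mathbf{M}$ realizing the ``$\mathcal{B}$-weighted type'' via the compactness theorem (Theorem~\ref{Compactness}): introduce a new constant tuple $\overline{c}$, take $\Gamma$ to be $\mathcal{L}(\mathbf{M}) \cup \{\phi_i(\overline{c}, \overline{a}_i) : i < \lambda\}$, and set $F_0 = F_1 = \|\cdot\|_{\mathbf{M}}$ on $\mathcal{L}(\mathbf{M})$, $F_0(\phi_i(\overline{c}, \overline{a}_i)) = \mathbf{B}(\{i\})$, and $F_1(\phi_i(\overline{c}, \overline{a}_i)) = 1$. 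Condition (B) is verified, for any finite $\Gamma_0$ and any nonzero $\mathbf{c}$ below the relevant $\mathbf{B}(\{i\})$'s, by using fullness of $\mathbf{M}$ to pick a tuple realizing the finite existential (since $\mathbf{B}$ is multiplicative and refines $\mathbf{A}$) and then specializing $\mathbf{M}$ by any ultrafilter containing $\mathbf{c}$. Compactness then yields $\mathbf{N} \succeq \mathbf{M}$ and $\overline{b} \in \mathbf{N}$ with $\|\phi_i(\overline{b}, \overline{a}_i)\|_{\mathbf{N}} \ge \mathbf{B}(\{i\})$. Finally, applying the $\lambda^+$-saturation of $\mathbf{M}$ to $\mathbf{M}_0 \preceq \mathbf{M}$ containing all $\overline{a}_i$ (of size $\le \lambda$) and $\mathbf{M}_1 := \mathbf{M}_0(\overline{b}) \preceq \mathbf{N}$, we pull $\overline{b}$ back into $\mathbf{M}$ over the $\overline{a}_i$'s, concluding the argument.

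The main subtlety is the correct use of fullness of $\mathbf{M}$ when verifying hypothesis (B) of the compactness theorem: we need the existential assertions $\|\exists \overline{x} \bigwedge_{i \in s} \phi_i(\overline{x}, \overline{a}_i)\|_{\mathbf{M}}$ to be realized by genuine tuples of $\mathbf{M}$ with large enough truth value, so that specialization produces a $\{0,1\}$-valued witness. Everything else is bookkeeping around the definitions of distributions, {\L}o{\'s} maps, and $\mathcal{B}$-valued saturation.
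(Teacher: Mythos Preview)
The paper does not actually prove this theorem here; it merely quotes the result from \cite{BVModelsUlrich}. So there is no in-paper argument to compare against. That said, your proof is correct and is the natural argument one would expect: the forward direction reads off a multiplicative refinement from a realization in a $\lambda^+$-saturated specialization, and the reverse direction uses the Boolean-valued compactness theorem to build an elementary extension containing a witness and then pulls it back via $\lambda^+$-saturation of $\mathbf{M}$.

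A couple of minor remarks on presentation. In the forward direction you write ``$I \in [\lambda]^{\le\lambda}$'', but a $(\lambda,T)$-{\L}o{\'s} map is by definition indexed by $\lambda$ itself, so just take $I=\lambda$. In the verification of condition~(B) of Theorem~\ref{Compactness}, it is worth making explicit that you only need to realize $\bigwedge_{i\in s'}\phi_i$ where $s'=\{i\in s:\mathbf{c}\le\mathbf{B}(\{i\})\}$, and that fullness gives a single $\overline{d}\in\mathbf{M}$ with $\|\bigwedge_{i\in s'}\phi_i(\overline{d},\overline{a}_i)\|_{\mathbf{M}}=\mathbf{A}(s')\ge\mathbf{B}(s')\ge\mathbf{c}$; then specializing by any ultrafilter through $\mathbf{c}$ works. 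You have this right, but the phrase ``below the relevant $\mathbf{B}(\{i\})$'s'' slightly obscures that $\mathbf{c}$ need not lie below \emph{all} of them.
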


Finally, we quote a pair of nonsaturation theorems from \cite{InterpOrders2Ulrich}. $T_{rf}$ is the theory of the random binary function; it is proven to be a Keisler-minimal unsimple theory in \cite{InterpOrders2Ulrich}. $T_{nlow}$ is a supersimple nonlow theory defined by Casanovas and Kim \cite{SupersimpleNonlow}; it is proven to be a Keisler-minimal nonlow theory in \cite{InterpOrders2Ulrich}.  

\begin{theorem}~\label{SimpleNonSat}
	Suppose $\mathcal{B}$ is a complete Boolean algebra; write $\lambda = \mbox{c.c.}(\mathcal{B})$. Suppose $\mathcal{U}$ is a nonprincipal ultrafilter on $\mathcal{B}$. Then $\mathcal{U}$ does not $\lambda^+$-saturate any nonsimple theory.  In fact, we can find a $(\lambda, T_{rf})$-{\L}o{\'s} map $\mathbf{A}$ in $\mathcal{U}$, such that whenever $\mathcal{B}$ is a complete subalgebra of $\mathcal{B}_*$, if $\mathcal{B}_*$ has the $\lambda$-c.c., then $\mathbf{A}$ has no multiplicative refinement in $\mathcal{B}_*$.
\end{theorem}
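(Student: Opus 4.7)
The plan is to exhibit an explicit $(\lambda, T_{rf})$-{\L}o{\'s} map $\mathbf{A}$ in $\mathcal{U}$ admitting no multiplicative refinement in any complete extension $\mathcal{B}_* \supseteq \mathcal{B}$ with the $\lambda$-c.c., and then transfer the resulting failure of $\lambda^+$-saturation of $T_{rf}$ to every nonsimple theory. The transfer uses the Keisler-minimality of $T_{rf}$ among unsimple theories from \cite{InterpOrders2Ulrich}: $T_{rf} \trianglelefteq T$ for every unsimple $T$, and since $\mathcal{B}$ has $\lambda$-c.c. and therefore $\lambda^+$-c.c., Corollary~\ref{KeislerOrderCharacterization} converts the nonsaturation of $T_{rf}$ into the nonsaturation of $T$.

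For the construction of $\mathbf{A}$: since $T_{rf}$ is the theory of a generic binary function $F$, the formula $\phi(x, y, z) \equiv F(x, y) = z$ encodes a natural $\textrm{TP}_2$-style pattern --- pairwise inconsistency within each row $a$, full consistency across distinct rows. I would exploit the nonprincipality of $\mathcal{U}$, which gives that every $\mathbf{a} \in \mathcal{U}$ decomposes as $\mathbf{b} \vee \mathbf{c}$ with $\mathbf{b} \in \mathcal{U}$, $\mathbf{c} \notin \mathcal{U}$, and $\mathbf{b} \wedge \mathbf{c} = 0$, together with $\mbox{c.c.}(\mathcal{B}) = \lambda$, which supplies antichains of every size strictly below $\lambda$. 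Combining these, I would set up $\lambda$-many `slots' $(a_\alpha, b_\alpha)$ in a full $\mathcal{B}$-valued model $\mathbf{M} \models^{\mathcal{B}} T_{rf}$ built via Theorem~\ref{Compactness}. The Boolean equalities $\|a_\alpha = a_\beta\|_{\mathbf{M}}$ and $\|b_\alpha = b_\beta\|_{\mathbf{M}}$ would be aligned with the antichain combinatorics of $\mathcal{B}$ so that the resulting map $\mathbf{A}(s) := \|\exists x \bigwedge_{\alpha \in s} \phi(x, a_\alpha, b_\alpha)\|_{\mathbf{M}}$ has all its values in $\mathcal{U}$ (each finite meet stays inside the ultrafilter through the nonprincipality splitting) while faithfully carrying the $\textrm{TP}_2$-incompatibility pattern at the Boolean level.

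For the nonexistence of multiplicative refinement, suppose $(\mathbf{b}_\alpha)_{\alpha < \lambda}$ refines $\mathbf{A}$ multiplicatively in a $\lambda$-c.c. $\mathcal{B}_* \supseteq \mathcal{B}$, so $\bigwedge_{\alpha \in s} \mathbf{b}_\alpha \leq \mathbf{A}(s)$ for every finite $s$, with each such meet positive in $\mathcal{B}_*$. The $\textrm{TP}_2$ pattern implies that whenever $\{\alpha, \beta\}$ falls in the `same-row/different-value' relation of the pattern, the Boolean value $\mathbf{A}(\{\alpha, \beta\})$ forces $\mathbf{b}_\alpha \wedge \mathbf{b}_\beta$ to lie in a specific small, antichain-controlled region of $\mathcal{B}$ inside $\mathcal{B}_*$. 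A $\Delta$-system reduction plus pigeonhole then extracts $\lambda$-many pairwise incompatible $\mathbf{b}_\alpha$'s in $\mathcal{B}_*$, contradicting the $\lambda$-c.c. The hard part will be Step~2 in tandem with Step~3: arranging the Boolean values of $\mathbf{A}$ on pairs so that the antichain structure of $\mathcal{B}$ is faithfully transported into $\mathcal{B}_*$ to force the $\lambda$-c.c. failure, while simultaneously keeping every value of $\mathbf{A}$ inside $\mathcal{U}$. This will require a careful orchestration of the nonprincipality splittings of $\mathcal{U}$ with the intrinsic antichain structure of $\mathcal{B}$ provided by $\mbox{c.c.}(\mathcal{B}) = \lambda$, so that the Boolean-valued pattern is intrinsic to $\mathcal{B}$ and does not rely on any finer structure of the ambient algebra $\mathcal{B}_*$.
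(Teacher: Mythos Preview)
The paper does not contain a proof of this theorem; it is explicitly quoted from \cite{InterpOrders2Ulrich} (see the sentence immediately preceding the theorem: ``Finally, we quote a pair of nonsaturation theorems from \cite{InterpOrders2Ulrich}''). So there is no in-paper proof to compare your proposal against.

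As for the proposal on its own merits: the overall architecture is sound. The reduction from arbitrary nonsimple $T$ to $T_{rf}$ via its $\trianglelefteq$-minimality among unsimple theories, combined with Corollary~\ref{KeislerOrderCharacterization}, is exactly right and is precisely how the paper itself deploys this theorem later (e.g.\ in Theorem~\ref{baseline}). Using the $\mathrm{TP}_2$ pattern of $F(x,y)=z$ in $T_{rf}$ to build the {\L}o{\'s} map is also the natural choice.

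However, what you have written is explicitly a plan rather than a proof: you say ``The hard part will be Step~2 in tandem with Step~3'' and describe what ``would'' need to be arranged without carrying it out. The central difficulty you flag---choosing the Boolean values of $\mathbf{A}$ on pairs so that (i) every finite meet stays in $\mathcal{U}$ and (ii) any multiplicative refinement in a $\lambda$-c.c.\ extension $\mathcal{B}_*$ is forced to yield a $\lambda$-antichain---is exactly where all the content lies, and you have not specified the construction. Your sketch of the non-refinement argument (``$\Delta$-system reduction plus pigeonhole then extracts $\lambda$-many pairwise incompatible $\mathbf{b}_\alpha$'s'') is too vague to assess: the $\mathbf{b}_\alpha$'s live in $\mathcal{B}_*$, not in $\mathcal{B}$, so structural facts about antichains in $\mathcal{B}$ do not apply to them directly; the incompatibilities must be forced by the values $\mathbf{A}(\{\alpha,\beta\}) \in \mathcal{B}$, and you have not said what those values are.

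In short: right direction, but this is an outline with the main construction left as an acknowledged gap. Since the present paper does not supply the argument either, you would need to consult \cite{InterpOrders2Ulrich} for the details.
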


\begin{theorem}\label{LowNonSat}
	Suppose $\mathcal{B}$ is a complete Boolean algebra; write $\lambda = \mbox{c.c.}(\mathcal{B})$. Suppose $\mathcal{U}$ is an $\aleph_1$-incomplete ultrafilter on $\mathcal{B}$. Then $\mathcal{U}$ does not $\lambda^+$-saturate any nonlow theory. In fact, there is a $(\lambda, T_{nlow})$-{\L}o{\'s} map $\mathbf{A}$ in $\mathcal{U}$ such that whenever $\mathcal{B}$ is a complete subalgebra of $\mathcal{B}_*$, if $\mathcal{B}_*$ has the $\lambda$-c.c., then $\mathbf{A}$ has no multiplicative refinement in $\mathcal{B}_*$.
\end{theorem}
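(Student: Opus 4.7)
The plan parallels the non-saturation argument for $T_{rf}$ underlying Theorem~\ref{SimpleNonSat}, substituting the non-lowness witnesses of $T_{nlow}$ (from Casanovas--Kim, \cite{SupersimpleNonlow}) for the tree structure used in the unsimple case, and using the $\aleph_1$-incomplete chain in $\mathcal{U}$ to stratify the failure of lowness across $\omega$-many levels. The first assertion---failure of $\lambda^+$-saturation by $\mathcal{U}$ for every nonlow $T$---reduces to producing the bad {\L}o{\'s} map for $T_{nlow}$ itself, since by \cite{InterpOrders2Ulrich} we have $T_{nlow}\trianglelefteq T$ for every nonlow $T$, and by Corollary~\ref{KeislerOrderCharacterization} this transports failure of $\mathcal{U}$-saturation from $T_{nlow}$ to $T$ (note that $\mbox{c.c.}(\mathcal{B})=\lambda$ forces the $\lambda^+$-c.c.\ hypothesis required there). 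So I would focus on constructing $\mathbf{A}$.

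Fix a descending chain $(\mathbf{b}_n:n<\omega)$ in $\mathcal{U}$ with $\mathbf{b}_0=1$ and $\bigwedge_n\mathbf{b}_n=0$, supplied by $\aleph_1$-incompleteness, and fix a formula $\phi(x,\overline{y})$ witnessing non-lowness in $T_{nlow}$: for each $n<\omega$ there is an indiscernible sequence $(\overline{b}^n_i:i<\omega)$ such that every $n$-element subfamily of $\{\phi(x,\overline{b}^n_i):i<\omega\}$ is consistent while every $(n{+}1)$-element subfamily is inconsistent. Applying Theorem~\ref{Compactness}, I would obtain a full $\mathbf{M}\models^{\mathcal{B}}T_{nlow}$ together with parameters $(\overline{a}_\alpha:\alpha<\lambda)$ satisfying
\[
\bigl\|\,\exists x\,\bigwedge_{\alpha\in s}\phi(x,\overline{a}_\alpha)\,\bigr\|_{\mathbf{M}}\;=\;\mathbf{b}_{|s|}\qquad\text{for every finite }s\subseteq\lambda.
\]
Condition (B) of Theorem~\ref{Compactness} is verified by refining an arbitrary positive $\mathbf{c}$ down to sit below some $\mathbf{b}_N\setminus\mathbf{b}_{N+1}$: the approximating ordinary model of $T_{nlow}$ required at such a $\mathbf{c}$ must interpret the finitely many relevant parameters so that any $N$ of them are $\phi$-consistent but no $N{+}1$ are, which is exactly what the level-$N$ non-lowness sequence provides. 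Setting $\mathbf{A}(s):=\mathbf{b}_{|s|}$ then yields the claimed $(\lambda,T_{nlow})$-{\L}o{\'s} map in $\mathcal{U}$.

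The hardest step is ruling out a multiplicative refinement $\mathbf{B}:[\lambda]^{<\aleph_0}\to(\mathcal{B}_*)_+$ of $\mathbf{A}$ whenever $\mathcal{B}\subseteq\mathcal{B}_*$ is a complete subalgebra and $\mathcal{B}_*$ has the $\lambda$-c.c. Suppose one did. For each $\alpha<\lambda$ the element $\mathbf{B}(\{\alpha\})$ is positive, and since $\bigwedge_n\mathbf{b}_n=0$ there is a least $n_\alpha$ with $\mathbf{B}(\{\alpha\})\not\leq\mathbf{b}_{n_\alpha+1}$; put $\mathbf{y}_\alpha:=\mathbf{B}(\{\alpha\})\setminus\mathbf{b}_{n_\alpha+1}>0$. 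Pigeonhole on $\lambda$ fixes $n_*$ and $S\in[\lambda]^\lambda$ with $n_\alpha=n_*$ throughout $S$. For $s\in[S]^{n_*+1}$, multiplicativity together with $\mathbf{B}(s)\leq\mathbf{b}_{|s|}$ gives
\[
\bigwedge_{\alpha\in s}\mathbf{y}_\alpha\;\leq\;\mathbf{B}(s)\wedge\lnot\mathbf{b}_{n_*+1}\;\leq\;\mathbf{b}_{n_*+1}\wedge\lnot\mathbf{b}_{n_*+1}\;=\;0,
\]
so $(\mathbf{y}_\alpha)_{\alpha\in S}$ is a family of $\lambda$-many positive elements of $\mathcal{B}_*$ with $(n_*+1)$-wise meet zero. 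A short induction on $k\geq 2$ then shows that any $\lambda$-many positives in a $\lambda$-c.c.\ Boolean algebra with $k$-wise meet zero must contain a $\lambda$-sized antichain, contradicting the $\lambda$-c.c.: at each step, either such an antichain already exists, or some $\alpha_0$ is compatible with $\lambda$-many $\beta\in S$, so the family $(\mathbf{y}_{\alpha_0}\wedge\mathbf{y}_\beta)_\beta$ supplies $\lambda$-many positives with $(k{-}1)$-wise meet zero, and iterating to $k=2$ extracts the desired antichain. Formulating this inductive combinatorial lemma cleanly is the main technical obstacle; the non-lowness input and the $\aleph_1$-incompleteness stratification feeding into it are comparatively routine.
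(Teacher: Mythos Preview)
The paper does not prove this theorem here; it is quoted from \cite{InterpOrders2Ulrich}, so there is no in-paper argument to compare against. Your outline is the natural one and is essentially correct.

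Two small points deserve care. First, your verification of condition~(B) in Theorem~\ref{Compactness} by refining $\mathbf{c}$ below some $\mathbf{b}_N\setminus\mathbf{b}_{N+1}$ is legitimate (a model satisfying the constraints for a smaller $\mathbf{c}'\leq\mathbf{c}$ a fortiori satisfies those for $\mathbf{c}$, since the active constraints only grow as $\mathbf{c}$ shrinks), but at such a refined $\mathbf{c}'$ you need parameters in some model of $T_{nlow}$ that are \emph{exactly} $N$-consistent for $\phi$. The bare finite-dividing definition only supplies this for cofinally many $N$, so you should either check that the specific Casanovas--Kim construction provides it for all $N$ (it does), or thin the sequence $(\mathbf{b}_n)$ to match the available levels.

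Second, the pigeonhole step in your inductive lemma (extracting some $\alpha_0$ compatible with $\lambda$-many $\beta$ from a maximal antichain of size $<\lambda$) requires $\lambda$ regular; this holds because $\lambda=\mbox{c.c.}(\mathcal{B})$ and the saturation of a Boolean algebra is always a regular cardinal (Erd\H{o}s--Tarski), but it should be said explicitly. Also, the antichain produced at the end of the induction lives among the iterated meets $\mathbf{y}_{\alpha_0}\wedge\mathbf{y}_\beta\wedge\cdots$ rather than among the original $\mathbf{y}_\alpha$'s; this is harmless for the contradiction, since any $\lambda$-antichain in $\mathcal{B}_*$ already violates the $\lambda$-c.c., but your phrasing ``must contain a $\lambda$-sized antichain'' overstates what the induction gives.
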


\section{Colorability is Preserved Under Forcing Iterations}\label{KeislerNewForcingAmalg}

This section is a mild generalization of results from \cite{SP}; there, only the case $\theta > \aleph_0$ is dealt with.

The following will follow from Theorem~\ref{IterationPreserves1} (and has a somewhat easier proof):

\vspace{1 mm}

\noindent \textbf{Theorem.} Suppose $\theta$ is a regular cardinal, $(P_\alpha: \alpha \leq \alpha_*, \dot{Q}_\alpha: \alpha < \alpha_*)$ is a $<\theta$-support forcing iteration, and suppose $R$ is a forcing notion. Suppose $3 \leq k \leq \theta$, and each $P_\alpha$ forces that $\dot{Q}_\alpha$ is $\theta$-closed, has the greatest lower bounds property, and is $(\check{R}, k)$-colorable. Then $P_{\alpha_*}$ is $(\prod_{\alpha_*} R, k)$-colorable, where $\prod_{\alpha_*} R$ is the $<\theta$-support product of $\alpha_*$-many copies of $R$. 

\vspace{1 mm}

In fact, this would be enough for our applications, but it is inconvenient that the hypotheses are not fully preserved. Namely, the greatest lower bound property is not necessarily preserved under $<\theta$-forcing iterations. 

First of all, this is in fact only a problem when $\theta> \aleph_0$. The following is a mild generalization of Lemma 1.4 of \cite{nLinked}, which addresses the case $\alpha_* \leq 2^{\aleph_0}$ and $R = (\omega, \emptyset)$. 

\begin{theorem}\label{IterationPreserves0}
	Suppose  $(P_\alpha: \alpha \leq \alpha_*, \dot{Q}_\alpha: \alpha < \alpha_*)$ is a finite support forcing iteration, and suppose $R$ is a poset. Suppose $3 \leq k \leq \aleph_0$, and each $P_\alpha$ forces that $\dot{Q}_\alpha$ is $(\check{R}, k)$-colorable. Then $P_{\alpha_*}$  is $(\prod_{\alpha_*} R, k)$-colorable, where $\prod_{\alpha_*} R$ is the finite support product of $\alpha_*$-many copies of $R$. 
\end{theorem}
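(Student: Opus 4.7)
The plan is to proceed by induction on $\alpha_*$, simultaneously building colorings $F_\beta : P_\beta \to \prod_\beta R$ for all $\beta \leq \alpha_*$ in a compatible way: if $\beta \leq \beta'$ and $p \in P_\beta$ is viewed in $P_{\beta'}$ by padding with $1$'s, then $F_{\beta'}(p)$ equals $F_\beta(p)$ on coordinates $<\beta$ and equals $1^R$ on coordinates in $[\beta, \beta')$. To set up, I fix for each $\alpha$ a nice $P_\alpha$-name $\dot F_\alpha$ for a $(\check R, k)$-coloring of $\dot Q_\alpha$, adjusted so that $\dot F_\alpha(1^{\dot Q_\alpha}) = \check 1^R$ (this tweak is harmless since $1^{\dot Q_\alpha}$ is compatible only with itself). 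I will also use the following easy complement to Lemma~\ref{ObviousLemmaskColorings}: if $D \subseteq P$ is downward-dense and $F_D : D \to R$ is a coloring, then choosing $d_p \leq p$ in $D$ for each $p \in P$ by the axiom of choice, the map $p \mapsto F_D(d_p)$ is a coloring of $P$, since any lower bound for the $d_p$'s in $P$ is also one for the $p$'s.

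For the successor step $\alpha_* = \beta+1$, let $D \subseteq P_{\beta+1}$ be the dense set of $(p, \dot q)$ such that $p$ decides $\dot F_\beta(\dot q)$ as some $\check r(p, \dot q) \in \check R$, and define $F_{\beta+1}(p, \dot q) = (F_\beta(p), r(p, \dot q))$ on $D$, extending to all of $P_{\beta+1}$ via the observation above. To verify the coloring property on $D$, take $\{(p_i, \dot q_i) : i < n\} \in [D]^{<k}$ whose $F_{\beta+1}$-images have a lower bound in $\prod_{\beta+1} R$. The first $\beta$ coordinates give a lower bound of $\{F_\beta(p_i)\}_i$ in $\prod_\beta R$, so by IH there is $p \in P_\beta$ with $p \leq p_i$ for all $i$; the coordinate $\beta$ gives a lower bound $r \in R$ of $\{r(p_i, \dot q_i)\}_i$. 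Hence $p \Vdash$ ``$\{\dot F_\beta(\dot q_i) : i < n\}$ has lower bound $\check r$,'' so the hypothesized coloring property of $\dot F_\beta$ forces that $\{\dot q_i\}$ has a common lower bound in $\dot Q_\beta$; choosing a nice $P_\beta$-name $\dot q$ for such a lower bound and an extension $p' \leq p$ witnessing the relevant forcing statement yields $(p', \dot q) \leq (p_i, \dot q_i)$ for every $i$ in $P_{\beta+1}$.

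The limit case is painless precisely because finite support makes $P_{\alpha_*}$ a direct limit: for $p \in P_{\alpha_*}$, since $\mbox{supp}(p)$ is finite and $\alpha_*$ is limit, $\beta_p := \sup(\mbox{supp}(p))+1 < \alpha_*$, so view $p \in P_{\beta_p}$ and set $F_{\alpha_*}(p)$ equal to $F_{\beta_p}(p)$ padded with $1^R$'s; the compatibility condition guarantees this does not depend on which witnessing stage is chosen. Any $\{p_i : i < n\} \in [P_{\alpha_*}]^{<k}$ then lies in some common $P_\beta$ with $\beta < \alpha_*$, and a lower bound for the images projects to a lower bound in $\prod_\beta R$, so the IH at stage $\beta$ finishes the job. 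I expect the main obstacle to lie entirely in the successor step---specifically, in the bookkeeping that converts the forced existence of a lower bound $\dot q$ in $\dot Q_\beta$ into an honest nice $P_\beta$-name producing a $P_{\beta+1}$-condition $(p', \dot q)$, and in arranging the coloring $F_{\beta+1}$ so that it both witnesses the property on $D$ and agrees with $F_\beta$ after padding. This is standard once $D$ and the normalization $\dot F_\alpha(1) = 1^R$ are fixed, and parallels the classical finite-support preservation proofs for ccc and $\sigma$-linkedness.
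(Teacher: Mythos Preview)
Your proof is correct, but it takes a different route from the paper's. You argue by transfinite recursion on the length $\alpha_*$, building a coherent family of colorings $F_\beta: P_\beta \to \prod_\beta R$ and handling successor and limit stages separately; the limit case goes through because finite support makes $P_{\alpha_*}$ a direct limit of the $P_\beta$. The paper instead defines, in one shot, the dense subset $P^0 \subseteq P_{\alpha_*}$ of all conditions $p$ such that \emph{every} restriction $p\restriction_\alpha$ already decides $\dot F_\alpha(p(\alpha))$; density is proved by a short ``descend from the top of the support'' argument (which terminates precisely because supports are finite), and the coloring is then read off directly on $P^0$. Verification that the image-compatible conditions have a lower bound is a single coordinate-by-coordinate induction on $\alpha \leq \alpha_*$.

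Your approach is closer to the template of classical finite-support preservation theorems and requires tracking the compatibility condition across stages (in particular, you should choose $d_p = p$ whenever $p$ already lies in the dense set $D$, so that padded conditions get colored as expected). The paper's approach avoids that bookkeeping and, more importantly, is the one that adapts to Theorem~\ref{IterationPreserves1}: for $<\theta$-support iterations with $\theta > \aleph_0$, limits of cofinality $<\theta$ are not direct limits, so your limit-stage argument (``every condition factors through some $P_\beta$ with $\beta<\alpha_*$'') would fail there, whereas the direct dense-subset construction (with an additional $\omega$-chain of approximations) still goes through.
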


\begin{proof}
Suppose $(P_\alpha: \alpha \leq \alpha_*), (\dot{Q}_\alpha: \alpha < \alpha_*)$ and $R$ are given. For each $\alpha < \alpha_*$, $P_\alpha$ forces there is some $(\check{R}, k)$-coloring $\dot{F}_\alpha: \dot{Q}_\alpha \to \check{R}$ of $ \dot{Q}_\alpha$. We can suppose $P_\alpha$ forces that $\dot{F}_\alpha(1) = 1$. Let $R' = \prod_{\alpha_*} R$ be the finite support product of $\alpha_*$-many copies of $R$; we show that $P_{\alpha_*}$ is $(R', k)$-colorable. 

Let $P^0 \subseteq P_{\alpha_*}$ be the set of all $p$ such that for each $\alpha < \alpha_*$, $p \restriction_\alpha$ decides $\dot{F}_\alpha(p(\alpha))$. I claim that $P^0$ is dense in $P_{\alpha_*}$. Suppose towards a contradiction $p \in P_{\alpha_*}$ had no extension in $P^0$. Write $p_0 = p$. Having defined $p_n \leq p$, let $\alpha_n < \alpha_*$ be largest so that $p_n \restriction_{\alpha_n}$ does not decide $\dot{F}_{\alpha_n}(p_n(\alpha_n))$ (this is possible since $\mbox{supp}(p_n)$ is finite). Choose $q_n \leq p_n \restriction_{\alpha_n}$ in $P_{\alpha_n}$ which decides $\dot{F}_{\alpha_n}(p_n(\alpha_n))$. Let $p_{n+1} \in P_{\alpha_*}$ be defined by: $p_{n+1}(\alpha) = p_n(\alpha)$ for all $\alpha \geq \alpha_n$, and $p_{n+1}(\alpha) = q_n(\alpha)$ for all $\alpha < \alpha_n$. Then $p_{n+1} < p_n$ and we can continue. But this will give an infinite decreasing sequence of ordinals $(\alpha_n: n < \omega)$.

Thus $P^0$ is dense in $P_{\alpha_*}$. We now find an $(R', k)$-coloring $F: P^0 \to R'$, which suffices. Given $p \in P^0$ and $\alpha < \alpha_*$, let $r_\alpha(p) \in R$ be such that $p \restriction_\alpha$ forces that $\dot{F}_\alpha(p(\alpha)) = \check{r}_\alpha(\check{p})$. Let $r = (r_\alpha: \alpha < \alpha_*)$; since $r_\alpha = 1$ whenever $\alpha \not \in \mbox{supp}(p)$, we have that $r \in R'$. Define $F(p) = r$. 

Now suppose $(p_i: i < i_*)$ is a sequence from $P^0$ with $i_* < k$, such that $(F(p_i): i < i_*)$ are compatible in $R'$.  Write $\Gamma = \bigcup_{i < i_*} \mbox{supp}(p_i)$. 

By induction $\alpha \leq \alpha_*$, we construct a lower bound $s_\alpha$ to $(p_i \restriction_\alpha: i < i_*)$ in $P_\alpha$, such that $\mbox{supp}(s_\alpha) \subseteq \Gamma \cap \alpha$, and for $\alpha < \alpha'$, $s_{\alpha'} \restriction_\alpha = s_\alpha$. 

Limit stages of the induction are clear. So suppose we have constructed $s_\alpha$. If $\alpha \not \in \Gamma$ clearly we can let $s_{\alpha+1} = s_\alpha \,^\frown (1^{\dot{Q}_\alpha})$; so suppose instead $\alpha \in \Gamma$. $s_\alpha$ forces that each $\dot{F}_\alpha(p_i(\alpha)) = \check{r}_\alpha(\check{p}_i)$, and $(r_\alpha(p_i): i < i_*)$ are compatible in $R_\alpha$, thus we can choose $\dot{q} \in \mathbb{N}_{P_\alpha}(\dot{Q}_\alpha)$, such that $s_\alpha$ forces $\dot{q}$ is a lower bound to $(p_i(\alpha): i < i_*)$ in $\dot{Q}_\alpha$. Let $s_{\alpha+1} = s_\alpha\,^\frown(\dot{q})$. 

Thus the induction goes through, and $s_{\alpha_*}$ is a lower bound to $(p_i: i < i_*)$. 
\end{proof}

For $\theta > \aleph_0$, we find the following sweet spot intermediate between being $(R, k)$ colorable, and being $(R, k)$-colorable and $\theta$-closed and having the greatest lower bounds property.

\begin{definition}
	Suppose $P, R$ are forcing notions and $3 \leq k \leq \aleph_0$. Then say that $P$ is strongly $(R, k)$-colorable if for some dense subset $P_0$ of $P$, there is some $F: P_0 \to R$, such that for every sequence $s \in [P_0]^{<k}$, if $F[s]$ is compatible in $R$, then $s$ has a greatest lower bound in $P$. We also say that $F: (P, P_0) \to R$ is a strong $(R, k)$-coloring of $P$.
	
	Say that $P$ has greatest lower bounds for $<\theta$-chains if whenever $(p_\alpha: \alpha < \alpha_*)$ is a descending chain from $P$ of length $\alpha_* < \theta$, then $(p_\alpha: \alpha < \alpha_*)$ has a greatest lower bound in $P$. (In particular, this implies $P$ is $\theta$-closed.)
\end{definition}

The following lemma sums up some immediate facts.

\begin{lemma}\label{ObviousLemmasForcingAP}
	Suppose $\theta$ is a regular cardinal, $3 \leq k \leq \aleph_0$, and $P, R$ are forcing notions.
	\begin{enumerate}
		\item If $P$ is strongly $(R, k)$-colorable then $P$ is $(R, k)$-colorable.
		\item If $P$ is strongly $(R, k)$-colorable and $R$ is $(R', k)$-colorable, then $P$ is strongly $(R', k)$-colorable.
		\item If $P$ has the greatest lower bound property, then $P$ is $(R, k)$-colorable if and only if $P$ is strongly $(R, k)$-colorable. In particular, this holds whenever $P$ is of the form $\Gamma^\theta_{M, M_0}$.
	\end{enumerate}
\end{lemma}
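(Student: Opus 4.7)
The plan is to dispatch the three parts in sequence; each is a short unwinding of definitions, with the only nontrivial content appearing in the ``in particular'' clause of part (3).

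For part (1), suppose $F : P_0 \to R$ witnesses that $P$ is strongly $(R,k)$-colorable, with $P_0 \subseteq P$ dense. I would extend $F$ to an ordinary $(R,k)$-coloring $F' : P \to R$ as follows: for each $p \in P$ use density to fix some $p^* \in P_0$ with $p^* \leq p$, and set $F'(p) := F(p^*)$. The check is that if $s \in [P]^{<k}$ has $F'[s]$ compatible in $R$, then $s^* := \{p^* : p \in s\} \in [P_0]^{<k}$ has $F[s^*]$ compatible in $R$, so by the strong coloring property $s^*$ has a (greatest) lower bound in $P$, and this lower bound is automatically a lower bound for $s$ because $p^* \leq p$ for each $p \in s$.

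For part (2), let $F : (P, P_0) \to R$ be a strong $(R,k)$-coloring and $G : R \to R'$ an $(R',k)$-coloring. I would show that $G \circ F : (P, P_0) \to R'$ is a strong $(R',k)$-coloring of $P$. Indeed, if $s \in [P_0]^{<k}$ satisfies that $(G\circ F)[s] = G[F[s]]$ is compatible in $R'$, then by the coloring property of $G$ the set $F[s]$ has a lower bound in $R$, hence is compatible in $R$, and then by the strong coloring property of $F$ the set $s$ has a greatest lower bound in $P$, as required.

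For part (3), the $(\Leftarrow)$ direction is immediate from part (1). For $(\Rightarrow)$, given an $(R,k)$-coloring $F : P \to R$, I would take $P_0 := P$ and use the same $F$: if $F[s]$ is compatible in $R$ for some $s \in [P]^{<k}$, then $s$ has a lower bound in $P$, which the greatest lower bound property upgrades to a greatest lower bound. It remains to verify the ``in particular'' claim, namely that $\Gamma^\theta_{M, M_0}$ has the greatest lower bound property for subsets of the relevant size. Given $\{p_i(x) : i \in s\} \subseteq \Gamma^\theta_{M, M_0}$ with $|s|< k \leq \aleph_0$ having a common lower bound $q(x) \in \Gamma^\theta_{M, M_0}$ (i.e.\ $p_i(x) \subseteq q(x)$ for each $i$), the union $\bigcup_{i \in s} p_i(x)$ is a subset of $q(x)$, hence is consistent and does not fork over $M_0$; and its cardinality is a finite sum of cardinals less than $\theta$, hence still less than $\theta$. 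Thus $\bigcup_{i \in s} p_i(x) \in \Gamma^\theta_{M, M_0}$ and is by construction the greatest lower bound. No step should present any real obstacle --- the entire content of the lemma is a careful unpacking of the two coloring definitions.
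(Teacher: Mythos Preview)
Your proof is correct and proceeds exactly as intended: the paper states the lemma as summing up ``some immediate facts'' and omits the proof entirely, so your careful unpacking of the definitions is precisely what fills the gap.
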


\begin{theorem}\label{IterationPreserves1}
	Suppose $\theta$ is a regular uncountable cardinal, $(P_\alpha: \alpha \leq \alpha_*, \dot{Q}_\alpha: \alpha < \alpha_*)$ is a $<\theta$-support forcing iteration, and suppose $R$ is a forcing notion. Suppose $3 \leq k \leq \aleph_0$, and each $P_\alpha$ forces that $\dot{Q}_\alpha$ has greatest lower bounds for $<\theta$-chains, and is strongly $(\check{R}, k)$-colorable. Then $P_{\alpha_*}$  has greatest lower bounds for $<\theta$-chains, and is strongly $(\prod_{\alpha_*} R, k)$-colorable, where $\prod_{\alpha_*} R$ is the $<\theta$-support product of $\alpha_*$-many copies of $R$. 
\end{theorem}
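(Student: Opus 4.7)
The plan is to argue by transfinite induction on $\alpha \leq \alpha_*$, simultaneously establishing that $P_\alpha$ has greatest lower bounds for $<\theta$-chains and that $P_\alpha$ is strongly $(\prod_\alpha R, k)$-colorable via a coherently defined pair $(P^0_\alpha, F_\alpha)$.  Namely, let $P^0_\alpha$ consist of those $p \in P_\alpha$ such that for each $\beta \in \mbox{supp}(p)$ there is some $r_\beta \in R$ with $p \restriction_\beta \Vdash p(\beta) \in \dot{Q}^0_\beta$ and $p \restriction_\beta \Vdash \dot{F}_\beta(p(\beta)) = \check{r}_\beta$, where $\dot{Q}^0_\beta$ and $\dot{F}_\beta$ are the $P_\beta$-names for the witnessing dense set and strong coloring of $\dot{Q}_\beta$. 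Set $F_\alpha(p)(\beta) = r_\beta$ on the support and $1$ elsewhere. Coherence is manifest: $p \in P^0_\alpha$ implies $p \restriction_\beta \in P^0_\beta$ with $F_\beta(p \restriction_\beta) = F_\alpha(p) \restriction_\beta$ for every $\beta \leq \alpha$.

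The greatest lower bound property passes coordinate-wise through the iteration. At successors the glb is built from the IH-glb in $P_\alpha$ together with the forced glb in $\dot{Q}_\alpha$. At a limit $\alpha$, the union of supports of a $<\theta$-chain has size $<\theta$ by regularity of $\theta$, and the glb is defined coordinate-wise using the hypothesis on each $\dot{Q}_\beta$.

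Density of $P^0_\alpha$ is the technical heart. At a successor $\alpha = \beta+1$, given $p$, I apply IH density to find $q \leq p \restriction_\beta$ in $P^0_\beta$; the forced density of $\dot{Q}^0_\beta$ yields a $P_\beta$-name $\dot{q} \leq p(\beta)$ with $q \Vdash \dot{q} \in \dot{Q}^0_\beta$; I strengthen $q$ to $q'$ deciding $\dot{F}_\beta(\dot{q}) = \check{r}$; and reapply IH density to land on $q'' \leq q'$ back in $P^0_\beta$, which still decides $\check{r}$. The output is $q'' \frown \dot{q}$. At a limit $\alpha$: if $\cf(\alpha) \geq \theta$ the support of $p$ is bounded below $\alpha$ and the problem reduces trivially to an earlier induction stage. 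Otherwise I fix a continuous cofinal sequence $(\alpha_\xi : \xi \leq \cf(\alpha))$ with $\alpha_0 = 0$, $\alpha_{\cf(\alpha)} = \alpha$, and recursively build a strictly coherent sequence $(p^\xi : \xi \leq \cf(\alpha))$ satisfying $p^\xi \in P^0_{\alpha_\xi}$, $p^\xi \leq p \restriction_{\alpha_\xi}$, and $p^{\xi'} \restriction_{\alpha_\xi} = p^\xi$ whenever $\xi \leq \xi'$. The successor step of the recursion is handled by applying the main induction hypothesis to the interval iteration $P_{\alpha_{\xi+1}}/p^\xi$, which in the $p^\xi$-generic extension is a $<\theta$-support iteration of length $\alpha_{\xi+1} - \alpha_\xi < \alpha$ inheriting every hypothesis; the limit step is automatic from strict coherence and the regularity of $\theta$, which keeps supports below $\theta$. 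The terminal $p^{\cf(\alpha)}$ then lies in $P^0_\alpha$ below $p$.

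Finally, to verify that $F_\alpha$ is a strong $(\prod_\alpha R, k)$-coloring, suppose $(p_i : i < i_*)$ with $i_* < k$ are drawn from $P^0_\alpha$ with $(F_\alpha(p_i))_{i < i_*}$ compatible in $\prod_\alpha R$. I construct their greatest lower bound $q$ coordinate-wise by transfinite recursion: having arranged that $q \restriction_\beta$ is the glb of $(p_i \restriction_\beta)_{i < i_*}$ in $P_\beta$, it forces each $\dot{F}_\beta(p_i(\beta)) = \check{r}_{\beta,i}$ with $r_{\beta,i} := F_\alpha(p_i)(\beta)$, and compatibility of $(r_{\beta,i})_{i<i_*}$ in $R$ (obtained by evaluating a lower bound in $\prod_\alpha R$ at coordinate $\beta$), combined with strong $(\check{R}, k)$-colorability of $\dot{Q}_\beta$, forces the glb $\dot{q}(\beta)$ of $(p_i(\beta))_i$ to exist in $\dot{Q}_\beta$. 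The support of $q$ is contained in $\bigcup_i \mbox{supp}(p_i)$, hence of size $<\theta$, and coordinate-wise uniqueness of glb's shows $q$ is the greatest lower bound in $P_\alpha$. The main obstacle is the strict-coherence requirement at the limit-$\alpha$ density step: a naive recursion would strengthen each $p^\xi$ arbitrarily and lose coherence at limits, and the fix is to apply the induction hypothesis to the interval sub-iterations rather than to whole initial segments.
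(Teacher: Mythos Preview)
Your argument has a genuine gap at the limit step of the density induction. When $\alpha$ is a limit of cofinality $<\theta$ and you build the strictly coherent sequence $(p^\xi)$, the successor step $\xi \to \xi+1$ requires finding $p^{\xi+1} \in P^0_{\alpha_{\xi+1}}$ with $p^{\xi+1} \restriction_{\alpha_\xi} = p^\xi$ exactly. You propose applying the induction hypothesis to the quotient $P_{\alpha_{\xi+1}}/p^\xi$ in the $p^\xi$-generic extension, but this does not yield what you need: a condition in the quotient's analogue of $P^0$ has its colors decided by restrictions \emph{within the quotient}, and from the ground model's perspective those decided colors are merely $P_{\alpha_\xi}$-names, not actual elements of $R$. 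Deciding those names would force you to strengthen $p^\xi$, destroying strict coherence. Dropping strict coherence is not an option either: if you allow $p^{\xi+1}\restriction_{\alpha_\xi} < p^\xi$, then at limit $\xi$ each coordinate becomes a genuine glb of a chain, which need not lie in $\dot Q^0_\beta$ nor have its color decided. In fact your set $P^0_\alpha$ of ``self-deciding'' conditions need not be dense for limit $\alpha$ at all.

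The paper's proof sidesteps this by abandoning the attempt to build self-deciding conditions. Instead it fixes $p$ and runs an $\omega$-length chasing process: $\mathbf{q}_0(p)=p$, and $\mathbf{q}_{n+1}(p)\leq \mathbf{q}_n(p)$ is chosen so that $\mathbf{q}_{n+1}(p)\restriction_\alpha$ decides $\dot F_\alpha(\mathbf q_n(p)(\alpha))$ for every $\alpha$. The glb $\mathbf q_\omega(p)$ exists because $\theta>\aleph_0$, and the dense set is $P^0:=\{\mathbf q_\omega(p):p\}$. The coloring of $\mathbf q_\omega(p)$ records the entire $\omega\times\alpha_*$-indexed array of colors $r_{n,\alpha}$, landing in $\prod_{\omega\times\alpha_*}R\cong\prod_{\alpha_*}R$. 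When $<k$ such conditions have compatible color-arrays, one builds the glb coordinate-wise: at coordinate $\alpha$ the previously-built $s_\alpha$ forces all relevant colors to agree, so for each level $n$ the $\mathbf p_n(q_i)(\alpha)$ have a glb $\dot s_n$ by strong colorability, and then the chain $(\dot s_n)_n$ has a glb by the $<\theta$-chain hypothesis. The extra $\omega$-index in the product is precisely what lets the argument go through without ever requiring a condition that decides its own colors.
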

\begin{proof}
	This is theorem 3.5 of \cite{SP}, but we repeat the proof for the reader's convenience.
	
	Suppose $(P_\alpha: \alpha \leq \alpha_*), (\dot{Q}_\alpha: \alpha < \alpha_*)$ and $R$ are given. For each $\alpha < \alpha_*$, $P_\alpha$ forces there is some strong $(\check{R}, k)$-coloring $\dot{F}_\alpha: (\dot{Q}_\alpha, \dot{Q}^0_\alpha) \to \check{R}$ of $ \dot{Q}_\alpha$ (so $\dot{Q}^0_\alpha$ is forced to be a dense subset of $\dot{Q}_\alpha$, and $\dot{F}_\alpha: \dot{Q}^0_\alpha \to \check{R}$). We can suppose $P_\alpha$ forces that $\dot{F}_\alpha(1^{\dot{Q}_\alpha}) = 1^{\check{R}}$. Since each $\dot{Q}^0_\alpha$ is forced by $P_\alpha$ to be dense in $\dot{Q}_\alpha$, the sequence $(\dot{Q}^0_\alpha: \alpha < \alpha_*)$ induces a forcing iteration $(P^0_\alpha: \alpha \leq \alpha_*), (\dot{Q}^0_{\alpha}: \alpha < \alpha_*)$, with each $P^0_\alpha$ dense in $P_\alpha$ (so these are equivalent forcing iterations). 
	
	We begin by proving the following easy claim.
	
	\vspace{1 mm}
	\noindent \textbf{Claim.} $P_{\alpha_*}$ has greatest lower bounds for $<\theta$-chains. In fact, suppose $\gamma_*< \theta$, and $(p_\gamma: \gamma < \gamma_*)$ is a descending chain from $P_{\alpha_*}$; then it has a greatest lower bound $p$ in $P_{\alpha_*}$, such that $\mbox{supp}(p) \subseteq \bigcup_{\gamma < \gamma_*} \mbox{supp}(p_\gamma)$.

	\begin{proof}By induction on $\alpha \leq \alpha_*$, we construct $(q_\alpha: \alpha \leq \alpha_*)$ such that each $q_\alpha \in P_\alpha$ with $\mbox{supp}(q_\alpha) \subseteq \bigcup_{\gamma < \gamma_*} \mbox{supp}(p_\gamma) \cap \alpha$, and for $\alpha < \beta \leq \alpha_*$, $q_{\beta} \restriction_\alpha = q_\alpha$, and for each $\alpha \leq \alpha_*$, $q_\alpha$ is a greatest lower bound to $(p_\gamma \restriction_\alpha: \gamma < \gamma_*)$ in $P_{\alpha}$. At limit stages there is nothing to do; so suppose we have defined $q_\alpha$. If $\alpha \not \in \bigcup_{\gamma < \gamma_*} \mbox{supp}(p_\gamma)$ then let $q_{\alpha+1} = q_\alpha \,^\frown (1^{\dot{Q}_\alpha})$. Otherwise, since $q_\alpha$ forces that $(p_\gamma(\alpha): \gamma < \gamma_*)$ is a descending chain from $\dot{Q}_\alpha$, we can find $\dot{q}$, a $P_\alpha$-name for an element of $\dot{Q}_\alpha$, such that $q_\alpha$ forces $\dot{q}$ is the greatest lower bound. Let $q_{\alpha+1} = q_\alpha \,^\frown(\dot{q})$.
	\end{proof}
	
	Now, if $\alpha_* < \aleph_0$, then we can finish as in Case 1 (since finite iterations are also finite support iterations). Thus we can suppose $\alpha_* \geq \aleph_0$.  Let $R' = \prod_{\omega \times \alpha_*} R$ be the finite support product of $\omega \times \alpha_*$-many copies of $R$; we show that $P_{\alpha_*}$ is $(R', k)$-colorable. (The only reason we need $\alpha_* \geq \aleph_0$ is to get $R' \cong \prod_{\alpha_*} R$.)
	
	Fix some $p \in P_{\alpha_*}^0$ for a while. Note that $\mbox{supp}(p) \in [\alpha_*]^{<\theta}$. 
	
	It is easy to find, for each $n < \omega$, elements $\mathbf{q}_n(p) \in P^0_{\alpha_*}$ with $\mathbf{q}_0(p) = p$, so that for all $n< \omega$:
	
	\begin{itemize}
		\item $\mathbf{q}_{n+1}(p) \leq \mathbf{q}_n(p)$;
		\item For all $\alpha <\alpha_*$, $\mathbf{q}_{n+1}(p) \restriction_\alpha$ decides $\dot{F}_\alpha(\mathbf{q}_{n}(\alpha))$.
	\end{itemize}
	
	For each $n > 0$ and for each $\alpha < \alpha_*$, we can find $r_{n-1, \alpha}(p) \in R$ such that $q_n \restriction_\alpha$ forces that $\dot{F}_\alpha(q_{n-1}(\alpha)) = \check{r}_{n-1, \alpha}(p)$. (Whenever $\alpha \not \in \mbox{supp}(\mathbf{a}_{n-1})$, we have $r_{n-1, \alpha}(p)= 0$.)
	
	Let $\mathbf{q}_\omega(p) \in P_{\alpha_*}$ be the greatest lower bound of $(\mathbf{q}_n(p): n < \omega)$; this exists by the claim. 
	
	Define $P^0 = \{\mathbf{q}_\omega(p): p \in P^0_{\alpha_*}\}$. For each $q \in P^0$, choose $\mathbf{p}(q) \in P^0_{\alpha_*}$ such that $q = \mathbf{q}_\omega(\mathbf{p}(q))$. For each $n < \omega$, let $\mathbf{p}_n(q) = \mathbf{q}_{n}(\mathbf{p}(q))$, and for each $\alpha < \alpha_*$, let $r_{n, \alpha}(q) = r_{n, \alpha}(\mathbf{p}(q))$.
	
	We have arranged that for all $q \in P^0$, $q$ is the greatest lower bound of $(\mathbf{p}_n(q): n < \omega)$, and for all $n < \omega$ and $\alpha < \alpha_*$, $\mathbf{p}_{n+1}(q) \restriction_\alpha$ forces that $\dot{F}_\alpha(\mathbf{p}_n(q)(\alpha)) = \check{r}_{n, \alpha}(\check{q})$.
	
	Define $F: P^0 \to R'$ via $F(q) = (r_{n, \alpha}(q): \alpha < \alpha_*, n < \omega)$. We claim that $F: (P, P_0) \to R'$ is a strong $(R', k)$-coloring.
	
	So suppose $(q_i: i < i_*)$ is a sequence from $P^0$ with $i_* < k$, such that $(F(q_i): i < i_*)$ are compatible.  Write $\Gamma = \bigcup_{i < i_*, n < \omega} \mbox{supp}(\mathbf{p}_{n}(q_i))$. 
	
	By induction $\alpha \leq \alpha_*$, we construct a greatest lower bound $s_\alpha$ to $(\mathbf{p}_{n}(q_i) \restriction_\alpha: i < i_*, n < \omega)$ in $P_\alpha$, such that $\mbox{supp}(s_\alpha) \subseteq \Gamma \cap \alpha$, and for $\alpha < \alpha'$, $s_{\alpha'} \restriction_\alpha = s_\alpha$. 
	
	Limit stages of the induction are clear. So suppose we have constructed $s_\alpha$. If $\alpha \not \in \Gamma$ clearly we can let $s_{\alpha+1} = s_\alpha \,^\frown (1^{\dot{Q}_\alpha})$; so suppose instead $\alpha \in \Gamma$. Let $n < \omega$ be given. Then $(r_{n, \alpha}(q_i): i< i_*)$ are compatible, and $s_\alpha$ forces that $\dot{F}_\alpha(\mathbf{p}_{n}(q_i)(\alpha)) = \check{r}_{n, \alpha}(\check{r}_i)$ for each $i < i_*$, since $\mathbf{p}_{n+1}(q_i) \restriction_\alpha$ does. Thus $s_\alpha$ forces that $(\mathbf{p}_n(q_i)(\alpha): i < i_*)$ has the greatest lower bound $\dot{s}_n$ in $\dot{Q}_\alpha$. Now $s_\alpha$ forces that $(\dot{s}_{n}: n < \omega)$ is a descending chain in $\dot{Q}_\alpha$, and hence has the greatest lower bound $\dot{s}$. Let $s_{\alpha+1} = s_\alpha \,^\frown(\dot{s})$.
	
	Thus the induction goes through, and $s_{\alpha_*}$ is a greatest lower bound to $(q_i: i < i_*)$ in $P_{\alpha_*}$.

\end{proof}

\begin{definition}
	Suppose $\theta$ is a regular cardinal and $Y$ is a set. Let $\mathbb{P}_{Y, \theta, \infty}$ be the class of all forcing notions of the form $P_{X Y \theta}$, for some set $X$. Suppose also $3 \leq k \leq \aleph_0$. If $\theta > \aleph_0$, then let $\mathbb{P}_{Y, \theta, k}$ be the class of all forcing notions which are strongly $(P_{XY\theta}, k)$-colorable for some set $X$. Also, let $\mathbb{P}_{Y, \aleph_0,k}$ be the class of all forcing notions which are $(P_{XY\aleph_0}, k)$-colorable for some set $X$.
\end{definition}

We will only be interested in the case where $Y = \mu$ is a cardinal with $\mu = \mu^{<\theta}$, but we formulate the definition as above because forcing notions in $\mathbb{P}_{\mu, \theta, k}$ will typically collapse $\mu$, and we still wish to speak of $\mathbb{P}_{\mu, \theta, k}$ in the forcing extension.

In \cite{DividingLine} and the sequel \cite{Optimals}, Malliaris and Shelah obtain dividing lines in Keisler's order by constructing sufficiently generic ultrafilters on the Boolean-algebra completion of $P_{2^\lambda \mu \theta}$ where $\mu = \theta^{<\theta}$. In other words they are working with $\mathbb{P}_{\mu, \theta, \infty}$. In order to detect various amalgamation properties of theories, they vary the target level of saturation $\mu^+ \leq \lambda \leq \mu^{+\omega}$. Working with  $\mathbb{P}_{\mu, \theta, k}$ allows us to obtain sharper model-theoretic results.

The following are some key properties of $\mathbb{P}_{\mu, \theta, k}$.
\begin{theorem}\label{IterationPreserves}
	Suppose $\theta$ is a regular cardinal and $\mu = \mu^{<\theta}$.
	\begin{itemize}
		\item[(A)] For every $P \in \mathbb{P}_{\mu, \theta, k}$, $P$ is $\theta$-closed and has the $\mu^+$-c.c.
		\item[(B)] Suppose $P, Q \in \mathbb{P}_{\mu, \theta, k}$. Then $P$ forces that $\check{Q} \in \mathbb{P}_{\mu, \theta, k}^{\mathbb{V}[\dot{G}]}$.
		\item[(C)] Suppose $(P_\alpha: \alpha \leq \alpha_*), (\dot{Q}_\alpha: \alpha < \alpha_*)$ is a $<\theta$-support forcing iteration, such that each $P_\alpha$ forces $\dot{Q}_\alpha \in \mathbb{P}_{\mu, \theta, k}^{\mathbb{V}[\dot{G}_\alpha]}$, where $\dot{G}_\alpha$ is the $P_\alpha$-generic filter. Then $P_{\alpha_*} \in \mathbb{P}_{\mu, \theta, k}$. 
		\item[(D)] $\mathbb{P}_{\mu, \theta, k}$ is closed under $<\theta$-support products.
	\end{itemize}
\end{theorem}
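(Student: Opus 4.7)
The four parts are nearly independent; the plan is to prove (A) and (B) directly from the definitions, derive (D) from the observation that $<\theta$-support products are special cases of $<\theta$-support iterations, and reduce (C) to Theorems~\ref{IterationPreserves1} (for $\theta > \aleph_0$) and~\ref{IterationPreserves0} (for $\theta = \aleph_0$), which have essentially already done the iteration preservation for us.

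For (A), fix a strong (or, when $\theta = \aleph_0$, ordinary) $(P_{X\mu\theta}, k)$-coloring $F : P_0 \to P_{X\mu\theta}$ witnessing $P \in \mathbb{P}_{\mu,\theta,k}$. The $\mu^+$-c.c.\ is immediate: any antichain $A \subseteq P$ refines via density of $P_0$ to an antichain inside $P_0$, whose $F$-image is injectively an antichain of $P_{X\mu\theta}$ (if two images were compatible, the coloring property would hand back a lower bound in $P$, contradicting incompatibility), so the $\mu^+$-c.c.\ of $P_{X\mu\theta}$ bounds $|A| \leq \mu$. The $\theta$-closedness follows from that of $P_{X\mu\theta}$: a $<\theta$-descending chain in $P$ refines to one in $P_0$ whose image in $P_{X\mu\theta}$ has a greatest lower bound, and the strong coloring property, combined with the greatest-lower-bounds-for-$<\theta$-chains that accompanies strong colorability (vacuous when $\theta = \aleph_0$), yields a lower bound in $P$. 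For (B), $P$ is $\theta$-closed by (A), so no new $<\theta$-sequences are added; hence $[X]^{<\theta}$, the forcing notion $P_{X\mu\theta}$, finite subsets of $Q_0$, compatibility in $P_{X\mu\theta}$, and lower bounds in $Q$ are all absolute from $\mathbb{V}$ to $\mathbb{V}[\dot{G}]$, so $F$ continues to witness the same colorability of $Q$ in the generic extension.

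For (D), a $<\theta$-support product of $(P_\gamma : \gamma < \gamma_*)$ from $\mathbb{P}_{\mu,\theta,k}$ is equivalently the $<\theta$-support iteration with $\dot{Q}_\gamma := \check{P}_\gamma$; by (B) each $\dot{Q}_\gamma$ is forced into $\mathbb{P}_{\mu,\theta,k}^{\mathbb{V}[\dot{G}_\gamma]}$, so (D) reduces to (C). Alternatively, one verifies (D) directly by amalgamating the colorings $F_\gamma : P_{\gamma,0} \to P_{X_\gamma \mu\theta}$ coordinatewise into a product coloring into $P_{X''\mu\theta}$ for $X'' = \bigsqcup_\gamma X_\gamma$.

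The substantive content is (C). Given the iteration and $P_\alpha$-names $\dot{F}_\alpha : \dot{Q}_{\alpha,0} \to \dot{P}_{\dot{X}_\alpha\mu\theta}$ for the pointwise colorings, we first uniformize the targets into a single ground-model one: choosing $X'$ large enough in $\mathbb{V}$, each $\dot{X}_\alpha$ injects into $X'$ in $\mathbb{V}[G_\alpha]$ (using the $\mu^+$-c.c.\ of $P_\alpha$ from (A)), so composing with the inclusion $P_{\dot{X}_\alpha\mu\theta} \hookrightarrow P_{X'\mu\theta}$ (which preserves compatibility and lower bounds), each $\dot{Q}_\alpha$ is forced to be colorable into the same $P_{X'\mu\theta}$; and this target is absolute between $\mathbb{V}$ and $\mathbb{V}[G_\alpha]$ by $\theta$-closedness. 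Theorem~\ref{IterationPreserves1} (if $\theta > \aleph_0$) or~\ref{IterationPreserves0} (if $\theta = \aleph_0$) then yields a $(\prod_{\alpha_*} P_{X'\mu\theta}, k)$-coloring (strong in the first case) of $P_{\alpha_*}$, and the $<\theta$-support product $\prod_{\alpha_*} P_{X'\mu\theta}$ is itself of the form $P_{X''\mu\theta}$ for an appropriate index set $X''$, whence $P_{\alpha_*} \in \mathbb{P}_{\mu,\theta,k}$. The main obstacle is precisely this target-uniformization step: we must consolidate varying $\dot{X}_\alpha$ into a single ground-model $X'$ without losing the colorability hypothesis at each stage — and this relies in turn on the chain condition and closedness established in (A), so (A) and (C) are linked more tightly than (B) and (D).
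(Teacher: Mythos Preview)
Your approach is essentially the paper's: (B) by absoluteness under $\theta$-closed forcing, (C) by uniformizing the coloring targets into a single ground-model $P_{X'\mu\theta}$ and then invoking Theorem~\ref{IterationPreserves0} or Theorem~\ref{IterationPreserves1}, and (D) as a special case of (C) via (B). The paper is terser about (C) --- it simply writes ``take $R = P_{\lambda\mu\theta}$ for $\lambda$ large enough'' --- but your more explicit uniformization step is the same maneuver.

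One wobble in (A): your derivation of $\theta$-closedness from the coloring does not go through. Strong $(R,k)$-colorability only hands back greatest lower bounds for subsets of $P_0$ of size $<k \leq \aleph_0$, so it says nothing about a chain of length $\geq \omega$ when $\theta > \aleph_0$; moreover, refining a chain in $P$ elementwise into the dense $P_0$ need not produce a chain. The phrase ``greatest-lower-bounds-for-$<\theta$-chains that accompanies strong colorability'' is not accurate: that is a \emph{separate} hypothesis in Theorem~\ref{IterationPreserves1}, not a consequence of colorability. The paper instead asserts $\theta$-closedness ``by definition of $\mathbb{P}_{\mu,\theta,k}$''; evidently the intended reading (for $\theta > \aleph_0$) is that membership in $\mathbb{P}_{\mu,\theta,k}$ carries greatest lower bounds for $<\theta$-chains as part of the package --- which is exactly what is needed for the hypotheses of Theorem~\ref{IterationPreserves1} to be satisfied in (C). Your argument for the $\mu^+$-c.c.\ is correct and matches the paper's.
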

\begin{proof}
	
	(A): $P$ is $\theta$-closed by definition of $\mathbb{P}_{\mu, \theta, k}$. For the $\mu^+$-c.c.: we can find some $R := P_{X \mu \theta}$, and some $(R, 3)$-coloring $F: P \to R$. Now $R$ has the $\mu^+$-c.c. by the $\Delta$-system lemma, so it immediately follows that $P$ does as well.
	
	(B): If $F: (Q, Q_0) \to R$ is a strong $(R, k)$-coloring, then this will continue to work in any forcing extension by $P$, because $k \leq \theta$ and $P$ is $\theta$-closed.
	
	(C): We want to apply Theorem~\ref{IterationPreserves0} or Theorem~\ref{IterationPreserves1} (depending on whether $\theta = \aleph_0$). To do so, we need to find some fixed $R \in \mathbb{P}_{\theta, \mu, \infty}$ such that each $P_\alpha$ forces $\dot{Q}_\alpha$ is (strongly) $(R, k)$-colorable. Take $R = P_{\lambda \mu \theta}$ for $\lambda$ large enough.
	
	(D) follows immediately from (B) and (C) (and also using that each $P \in \mathbb{P}_{\mu \theta k}$ is $\theta$-closed, so that $<\theta$-support products are equivalent to the corresponding $<\theta$-support iterations).
\end{proof}

\section{The Ultrafilter Constructions}\label{KeislerNewUltCons}

In this section, we give a streamlined construction of the perfect and optimal ultrafilters of Malliaris and Shelah from \cite{DividingLine}, \cite{Optimals}.

Recall that $\mathbf{s}= (\lambda,\mu, \theta, \tau, k)$ is a suitable sequence of cardinals if:

\begin{itemize}
	\item $\tau \leq \theta \leq \mu = \mu^{<\theta} < \lambda$;
	\item $\theta$ is regular, and $\tau$ is either $\aleph_0$ or else supercompact;
	\item $3 \leq k \leq \aleph_0$.
\end{itemize}

Suppose $\mathbf{s}$ is a suitable sequence. Our goal is to build a long forcing iteration $(P_{\alpha}: \alpha \leq \alpha_*, \dot{Q}_\alpha: \alpha < \alpha_*)$ from $\mathbb{P}_{\mu, \theta, k}$, then build a sufficiently generic $\tau$-complete ultrafilter on $\mathcal{B}(P_{\alpha_*})$, and then check which theories it $\lambda^+$-saturates. While we follow the general outline of Malliaris and Shelah, our treatment differs in two respects. First, Malliaris and Shelah use $\mathbb{P}_{\mu, \theta, \infty}$ rather than $\mathbb{P}_{\mu, \theta, k}$; our approach allows us to circumvent some ingenious but ad-hoc coding methods (e.g. ``collision detection") and obtain sharper bounds on our final dividing lines. Second, 
in our construction of $(P_\alpha: \alpha \leq \alpha_*)$, $(\dot{Q}_\alpha: \alpha < \alpha_*)$, we will be anticipating not only $(\lambda, T)$-{\L}{o}{\'s} maps in $\mathcal{B}(P_\alpha)$, but also entire ultrafilters on $\mathcal{B}(P_\alpha)$. The upshot is that we get a better handle on which theories our eventual ultrafilter will $\lambda^+$-saturate. On the other hand, we lose control over the length $\alpha_*$ of the forcing iteration. In view of Corollary~\ref{KeislerOrderCharacterization}, this is not actually important, since $\mathcal{B}(P_{\alpha_*})$ will have the $\lambda^+$-c.c (in fact, the $\lambda$-c.c.)

If $\mathbf{A}$ is an $I$-distribution in $\mathcal{B}$, then for $S \subseteq I$, it is convenient to define $\mathbf{A}(S) = \bigwedge_{s \in [S]^{<\aleph_0}} \mathbf{A}(s)$. We will only use this notation when $\mathbf{A}$ is in a $\tau$-complete ultrafilter $\mathcal{U}$ and $|S| < \tau$, in which case $\mathbf{A}(S)$ is also in $\mathcal{U}$. 

The following lemma describes the situation we will be interested in while building our generic ultrafilter $\mathcal{U}$ on $\mathcal{B}(P_{\alpha_*})$. 

\begin{lemma}\label{SolutionsLemma}
	Suppose $\mathbf{s} = (\lambda, \mu, \theta, \tau, k)$ is a suitable sequence, and $T$ is a complete countable first order theory. Suppose $P \in \mathbb{P}_{\mu,\theta, k}$, and $\mathcal{U}$ is a $\tau$-complete ultrafilter on $\mathcal{B}(P)$, and $\mathbf{A}$ is a $\lambda$-distribution in $\mathcal{U}$. Then the following are equivalent:
	
	\begin{itemize}
		
		\item[(A)] There is some $\dot{Q} \in \mathbb{P}_{\mu,\theta, k}^{\mathbb{V}[\dot{G}]}$ and some multiplicative refinement $\mathbf{B}$ of $\mathbf{A}$ in $\mathcal{B}(P*\dot{Q})$, such that for every $S \in [\lambda]^{<\tau}$, $\pi(\mathbf{B}(S)) \in \mathcal{U}$. (Here $\dot{G}$ is the name for the $P$-generic filter, and $\pi = \pi_{\mathcal{B}(P*\dot{Q}), \mathcal{B}(P)}: \mathcal{B}(P*\dot{Q}) \to \mathcal{B}(P)$ is the projection map. $\mathbb{P}^{\dot{V}[G]}_{\mu, \theta, k}$ is the class of $P$-names for elements of $\mathbb{P}_{\mu, \theta, k}$, as computed in the forcing extension.)
		\item[(B)] There is some $\dot{Q} \in \mathbb{P}_{\mu,\theta, k}^{\mathbb{V}[\dot{G}]}$ and some $\tau$-complete ultrafilter $\mathcal{V}$ on $\mathcal{B}(P*\dot{Q})$ extending $\mathcal{U}$, such that $\mathbf{A}$ has a multiplicative refinement $\mathbf{B}$ in $\mathcal{V}$.
		
	\end{itemize}
\end{lemma}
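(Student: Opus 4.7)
The plan is to prove both directions directly from the projection calculus of Lemma~\ref{projectionsLemma1} together with the extension result Lemma~\ref{CompactExt}; the same $\dot{Q}$ and $\mathbf{B}$ will witness (A) and (B), so the content is just the passage between $\mathcal{U}$ and $\mathcal{V}$.

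For (B) implies (A), I would proceed as follows. Fix $S \in [\lambda]^{<\tau}$. Because $\mathbf{B}$ is multiplicative, $\mathbf{B}(S) = \bigwedge_{i \in S} \mathbf{B}(\{i\})$, and each $\mathbf{B}(\{i\}) \in \mathcal{V}$ by hypothesis, so $\tau$-completeness of $\mathcal{V}$ gives $\mathbf{B}(S) \in \mathcal{V}$. By Lemma~\ref{projectionsLemma1}(A), $\pi(\mathbf{B}(S)) \geq \mathbf{B}(S)$, so $\pi(\mathbf{B}(S)) \in \mathcal{V}$. Now $\pi(\mathbf{B}(S)) \in \mathcal{B}(P)$, and any ultrafilter $\mathcal{V}$ on $\mathcal{B}(P*\dot{Q})$ extending the ultrafilter $\mathcal{U}$ on $\mathcal{B}(P)$ satisfies $\mathcal{V} \cap \mathcal{B}(P) = \mathcal{U}$ (otherwise some $\mathbf{a} \in \mathcal{B}(P)$ with $\lnot \mathbf{a} \in \mathcal{U}$ would lie in $\mathcal{V}$, a contradiction). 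Hence $\pi(\mathbf{B}(S)) \in \mathcal{U}$.

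For (A) implies (B), take the $\dot{Q}$ and $\mathbf{B}$ from (A), and let $\mathcal{D}$ be the $\tau$-complete filter on $\mathcal{B}(P*\dot{Q})$ generated by $\mathcal{U} \cup \{\mathbf{B}(\{i\}) : i < \lambda\}$. Using $\tau$-completeness, a generating element of $\mathcal{D}$ has the form $\mathbf{a} \wedge \mathbf{B}(S)$ with $\mathbf{a} \in \mathcal{U}$ and $S \in [\lambda]^{<\tau}$. The key calculation is that such an element is nonzero: by hypothesis $\pi(\mathbf{B}(S)) \in \mathcal{U}$, so $\mathbf{a} \wedge \pi(\mathbf{B}(S)) \in \mathcal{U}$ and is in particular nonzero in $\mathcal{B}(P)$; now Lemma~\ref{projectionsLemma1}(C) (applied with $\mathbf{b} := \mathbf{a}$ and the element $\mathbf{B}(S) \in \mathcal{B}(P*\dot{Q})$) translates this to $\mathbf{a} \wedge \mathbf{B}(S) \neq 0$ in $\mathcal{B}(P*\dot{Q})$. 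Thus $\mathcal{D}$ is a proper $\tau$-complete filter.

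To finish, extend $\mathcal{D}$ to a $\tau$-complete ultrafilter $\mathcal{V}$ on $\mathcal{B}(P*\dot{Q})$: when $\tau = \aleph_0$ this is just Zorn's lemma, and when $\tau$ is supercompact it is Lemma~\ref{CompactExt}, whose hypothesis of $<\tau$-distributivity is satisfied because $P*\dot{Q}$ is $\theta$-closed (the standard two-step iteration fact, using $\dot{Q} \in \mathbb{P}_{\mu,\theta,k}^{\mathbb{V}[\dot{G}]}$) and $\tau \leq \theta$. The resulting $\mathcal{V}$ extends $\mathcal{U}$, and $\mathbf{B}$ lies in $\mathcal{V}$ because each $\mathbf{B}(\{i\}) \in \mathcal{D} \subseteq \mathcal{V}$ and $\mathcal{V}$ is closed under finite meets. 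The main point of friction I anticipate is the bookkeeping around Lemma~\ref{projectionsLemma1}(C), which is precisely what bridges the ``downstairs'' condition on $\pi(\mathbf{B}(S))$ in (A) with the ``upstairs'' filter-properness needed to build $\mathcal{V}$; everything else is a routine invocation of $\tau$-completeness and the standard extension principles.
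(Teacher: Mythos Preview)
Your proof is correct and follows the same route as the paper: use Lemma~\ref{projectionsLemma1}(C) to see that $\mathcal{U} \cup \{\mathbf{B}(S): S \in [\lambda]^{<\tau}\}$ generates a proper $\tau$-complete filter, then extend via Lemma~\ref{CompactExt} (or Zorn when $\tau=\aleph_0$). You spell out more than the paper does---in particular the direction (B)$\Rightarrow$(A), which the paper simply calls ``trivial,'' and the check of $<\tau$-distributivity for Lemma~\ref{CompactExt}---but the substance is identical.
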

\begin{proof}
	(A) implies (B): for each $S \in [\lambda]^{<\tau}$, we have that $\pi(\mathbf{B}(S)) \in \mathcal{U}$. By Lemma~\ref{projectionsLemma1}(C), it follows that $\mathcal{U} \cup \{\mathbf{B}(S): S \in [\lambda]^{<\tau}\}$ generates a $\tau$-complete filter on $\mathcal{B}(P*\dot{Q})$. Since either $\tau =\aleph_0$ or else is supercompact (and in particular strongly compact), by Lemma~\ref{CompactExt} we can find a $\tau$-complete ultrafilter $\mathcal{V}$ on $\mathcal{B}(P*\dot{Q})$ extending $\mathcal{U}$ such that $\mathbf{B}$ is in $\mathcal{V}$.
	
	(B) implies (A): trivial.
\end{proof}

We turn the lemma into a definition.

\begin{definition}\label{DefSetAP}
	Suppose $\mathbf{s} = (\lambda, \mu, \theta, \tau, k)$ is a suitable sequence, and $T$ is a complete countable first order theory. Then say that $(P, \mathcal{U}, \mathbf{A})$ is a $(T, \mathbf{s})$-problem if $P \in \mathbb{P}_{\mu, \theta, k}$, and $\mathcal{U}$ is a $\tau$-complete ultrafilter on $\mathcal{B}(P)$, and $\mathbf{A}$ is a $(\lambda, T)$-{\L}o{\'s} map in $\mathcal{U}$. $(P*\dot{Q}, \mathbf{B})$ is an $\mathbf{s}$-solution to $(P, \mathcal{U}, \mathbf{A})$ if $\dot{Q} \in \mathbb{P}_{\mathbf{s}, k}^{\mathbb{V}[\dot{G}]}$, and $\mathbf{B}$ is a multiplicative refinement of $\mathbf{A}$ in $\mathcal{B}(P * \dot{Q})$, and for every $S \in [\lambda]^{<\tau}$, $\pi(\mathbf{B}(S))\in \mathcal{U}$. Say that $T$ has the $\mathbf{s}$-extension property if every $(T, \mathbf{s})$-problem has an $\mathbf{s}$-solution.
\end{definition}

The $\mathbf{s}$-extension property will be an upper bound for our eventual principal dividing line in Keisler's order. In fact, if $\tau = \aleph_0$ then it will exactly be a principal dividing line in Keisler's order---given a sufficiently generic iteration sequence $(P_\alpha: \alpha \leq \alpha_*), (\dot{Q}_\alpha: \alpha < \alpha_*)$, we will build our desired ultrafilter $\mathcal{U}$ on $\mathcal{B}(P_{\alpha_*})$ as a union of a chain of ultrafilters $\mathcal{U}_\alpha$ on $\mathcal{B}(P_\alpha)$, and the $\mathbf{s}$-extension property for $T$ will be exactly what we need to arrange a multiplicative refinement for a given $(\lambda, T)$-{\L}o{\'s} map at stage $\alpha$. 

For $\tau > \aleph_0$, we cannot construct $\tau$-complete ultrafilters so na\"{i}vely, and we will need the following technical strengthening for our lower bound. This is essentially the difference between perfect and optimal ultrafilters in \cite{Optimals}. 

\begin{definition}\label{DefSetAP2}
	Suppose $\mathbf{s}, T$ are as above. Suppose $(P, \mathcal{U}, \mathbf{A})$ is a $(T, \mathbf{s})$-problem. Then say that $(P*\dot{Q}, \mathbf{B})$ is a smooth $\mathbf{s}$-solution to $\mathbf{A}$ if it is an $\mathbf{s}$-solution to $\mathbf{A}$, and for each $S \in [\lambda]^{<\tau}$, $\pi(\mathbf{B}(S)) = \bigwedge_{s \in [S]^{<\aleph_0}} \pi(\mathbf{B}(s))$. Say that $T$ has the smooth $\mathbf{s}$-extension property if every $(T, \mathbf{s})$-problem has a smooth solution.
\end{definition}
Note that if $\tau = \aleph_0$, then every solution is smooth.

The following example explains why we require $\lambda > \mu$ in the definition of suitable sequence.

\begin{example}
	Suppose, in the definition of suitable sequences $\mathbf{s} =(\lambda, \mu, \theta, \tau, k)$, we had allowed $\lambda \leq \mu$. Then for any such suitable sequence $\mathbf{s}$, we would have that every theory $T$ has the smooth $\mathbf{s}$-extension property.
\end{example}
\begin{proof}
	Choose $P_0 \in \mathbf{s}$ with an antichain of size $\lambda$, and let $P_1$ be the $<\theta$-support product of $\tau$-many copies of $P_0$. Easily, $P_1$ has an antichain of size $\lambda^{<\tau}$. Let $(\mathbf{c}_S: S \in [\lambda]^{<\tau})$ be an antichain of $P_1$. 
	
	Suppose $(P, \mathcal{U}, \mathbf{A})$ is a $(T, \mathbf{s})$-problem for some $T$. Let $\dot{Q} = \check{P}_1$. For each $s \in [\lambda]^{<\aleph_0}$, let $\mathbf{B}(s) = \bigvee_{s \subseteq S \in [\lambda]^{<\tau}} (\mathbf{A}(S), \mathbf{c}_S) \in \mathcal{B}(P) \times P_1 \subseteq \mathcal{B}(P * \dot{Q})$. Then $\mathbf{B}$ is a multiplicative refinement of $\mathbf{A}$, and in fact $\mathbf{B}(S) = \bigvee_{S \subseteq S' \in [\lambda]^{<\tau}} (\mathbf{A}(S'), \mathbf{c}_{S'})$ for all $S \in [\lambda]^{<\tau}$. 
	
	We check that each $\pi(\mathbf{B}(S)) = \mathbf{A}(S)$, which shows that $(P*\dot{Q}, \mathbf{B})$ is a smooth solution to $(P, \mathcal{U}, \mathbf{A})$. We apply Lemma~\ref{projectionsLemma1}(C). It suffices to show that for all $\mathbf{a} \in \mathcal{B}(P)$, $\mathbf{a} \wedge \mathbf{A}(S)$ is nonzero if and only if $\mathbf{a} \wedge \mathbf{B}(S)$ is nonzero. Since $\mathbf{B}(s) \leq \mathbf{A}(s)$, the reverse direction is trivial, so suppose $\mathbf{a} \wedge \mathbf{A}(S)$ is nonzero. Then $(\mathbf{a} \wedge \mathbf{A}(S), \mathbf{c}_S) \leq \mathbf{B}(S)$ is nonzero, as desired.
	
\end{proof}

On the other hand, this never happens when $\lambda > \mu$. Indeed, we can establish at once the following baselines for the $\mathbf{s}$-extension properties:

\begin{theorem}\label{baseline}
	Suppose $\mathbf{s} = (\lambda, \mu, \theta, \tau, k)$ is a suitable sequence.
	\begin{itemize}
		\item[(A)] $T_{rf}$ fails the $\mathbf{s}$-extension property.
		\item[(B)] If $\tau = \aleph_0$, then $T_{nlow}$ fails the $\mathbf{s}$-extension property.
	\end{itemize}

Hence, if $\mathbf{T}$ is a dividing line in Keisler's order (i.e. a $\trianglelefteq$-downward closed set of complete countable theories) such that every $T \in \mathbf{T}$ has the $\mathbf{s}$-extension property, then $\mathbf{T}$ does not contain any unsimple theory; if $\tau = \aleph_0$, then $\mathbf{T}$ does not contain any nonlow theory.
\end{theorem}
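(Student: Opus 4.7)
The strategy is to exhibit, for each suitable $\mathbf{s}$, a $(T,\mathbf{s})$-problem admitting no $\mathbf{s}$-solution, with $T = T_{rf}$ for (A) and $T = T_{nlow}$ for (B). Fix $P_0 \in \mathbb{P}_{\mu,\theta,k}$ such that $\mathcal{B}(P_0)$ is atomless and admits a countable partition of $1$ into positive elements; $P_0 := P_{\mu \mu \theta}$ will do, since it is $\theta$-closed, $(\P_{\mu\mu\theta}, \aleph_0)$-colorable by the identity, atomless, and its conditions $\{\langle 0, n\rangle\}$ for $n < \omega$ together with the complement give a countable partition. By Theorem~\ref{IterationPreserves}(A), $\mathcal{B}(P_0)$ has the $\mu^+$-c.c., and writing $\lambda_0 := \mbox{c.c.}(\mathcal{B}(P_0))$ we have $\lambda_0 \leq \mu^+ \leq \lambda$.

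Next, I would produce a suitable ultrafilter $\mathcal{U}_0$ on $\mathcal{B}(P_0)$ and apply the relevant nonsaturation theorem. For (A): since $P_0$ is $\theta$-closed and $\tau \leq \theta$, $\mathcal{B}(P_0)$ is $<\tau$-distributive, so Lemma~\ref{CompactExt} (in the supercompact case) extends the trivial $\tau$-complete filter to a $\tau$-complete ultrafilter $\mathcal{U}_0$, and atomlessness forces it to be nonprincipal; when $\tau = \aleph_0$ any ultrafilter is $\tau$-complete and, on an atomless algebra, nonprincipal. Theorem~\ref{SimpleNonSat} applied to $(\mathcal{B}(P_0), \mathcal{U}_0)$ then yields a $(\lambda_0, T_{rf})$-{\L}o{\'s} map $\mathbf{A}_0$ in $\mathcal{U}_0$ with no multiplicative refinement in any complete Boolean algebra $\mathcal{B}_* \supseteq \mathcal{B}(P_0)$ having the $\lambda_0$-c.c. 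For (B), with $\tau = \aleph_0$: a standard Zorn argument extending the filter generated by $\{\lnot\mathbf{d}_n : n < \omega\}$, where the $\mathbf{d}_n$ witness the countable partition, produces an $\aleph_1$-incomplete ultrafilter $\mathcal{U}_0$, and Theorem~\ref{LowNonSat} gives the analogous $(\lambda_0, T_{nlow})$-{\L}o{\'s} map $\mathbf{A}_0$.

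Pad $\mathbf{A}_0$ to a $(\lambda, T)$-{\L}o{\'s} map $\mathbf{A}$ in $\mathcal{U}_0$ by inserting the trivial formula $x = x$ (with arbitrary dummy parameters) at each index $i \in \lambda \setminus \lambda_0$, witnessed in the same full Boolean-valued model; then $\mathbf{A}(s) = \mathbf{A}_0(s \cap \lambda_0)$ for all $s \in [\lambda]^{<\aleph_0}$, so $\mathbf{A}$ takes values in $\mathcal{U}_0$, and $(P_0, \mathcal{U}_0, \mathbf{A})$ is a $(T, \mathbf{s})$-problem. If $(P_0 * \dot{Q}, \mathbf{B})$ were an $\mathbf{s}$-solution, then $\dot{Q} \in \mathbb{P}_{\mu,\theta,k}^{\mathbb{V}[\dot{G}]}$ and Theorem~\ref{IterationPreserves}(C) applied to the two-step iteration give $P_0 * \dot{Q} \in \mathbb{P}_{\mu,\theta,k}$, so $\mathcal{B}(P_0 * \dot{Q})$ has the $\mu^+$-c.c.\ and hence the $\lambda_0$-c.c.; but then $\mathbf{B}_0 := \mathbf{B} \restriction [\lambda_0]^{<\aleph_0}$ is a multiplicative refinement of $\mathbf{A}_0$ in $\mathcal{B}(P_0 * \dot{Q})$, contradicting the defining property of $\mathbf{A}_0$. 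The dividing-line consequence follows because $T_{rf}$ is $\trianglelefteq$-minimal among unsimple theories and $T_{nlow}$ is $\trianglelefteq$-minimal among nonlow theories, so downward closure would force $T_{rf} \in \mathbf{T}$ (resp.\ $T_{nlow} \in \mathbf{T}$) whenever $\mathbf{T}$ contains an unsimple (resp.\ nonlow) theory, contradicting the $\mathbf{s}$-extension hypothesis. The one genuine subtlety is the mismatch between the available c.c.\ bound $\lambda_0 \leq \mu^+$ and the target cardinal $\lambda$: Theorems~\ref{SimpleNonSat} and \ref{LowNonSat} only produce a $(\lambda_0, T)$-{\L}o{\'s} map, and the trivial padding is exactly what transports the obstruction to the full $(\lambda, T)$-setting demanded by the $\mathbf{s}$-extension property.
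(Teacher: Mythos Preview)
Your proof is correct and follows the same approach as the paper: pick $P \in \mathbb{P}_{\mu,\theta,k}$, obtain a nonprincipal (resp.\ $\aleph_1$-incomplete) $\tau$-complete ultrafilter via Lemma~\ref{CompactExt}, invoke Theorem~\ref{SimpleNonSat} (resp.\ Theorem~\ref{LowNonSat}), and pad the resulting {\L}o{\'s} map to index set $\lambda$; the paper cites Theorem~\ref{Compactness} for this last step where you instead write down explicit trivial formulas. One small slip: from ``$\mathcal{B}(P_0*\dot{Q})$ has the $\mu^+$-c.c.'' you conclude ``hence the $\lambda_0$-c.c.,'' but that implication requires $\lambda_0 \geq \mu^+$, not $\lambda_0 \leq \mu^+$; since your $P_0 = P_{\mu\mu\theta}$ visibly has an antichain of size $\mu$ you in fact have $\lambda_0 = \mu^+$, so simply record the equality.
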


\begin{proof}
	The final claim follows, since $T_{rf}$ is a $\trianglelefteq$-minimal unsimple theory and $T_{nlow}$ is a $\trianglelefteq$-minimal nonlow theory \cite{InterpOrders2Ulrich}.
	
	(A): Let $P \in \mathbb{P}_{\mu, \theta, k}$ have an antichain of size $\mu$ (or at least $\tau$). Then Theorem~\ref{SimpleNonSat} together with Lemma~\ref{CompactExt} give a $(T_{rf}, \mathbf{s})$-problem $(P, \mathcal{U}, \mathbf{A})$ with no $\mathbf{s}$-solution (namely, let $\mathcal{U}$ be any nonprincipal, $\tau$-complete ultrafilter on $\mathcal{B}(P)$, and let $\mathbf{A}$ be as given in Theorem~\ref{SimpleNonSat}. $\mathbf{A}$ will be a $(\lambda', T)$-{\L}o{\'s} map for $\lambda' = \mbox{c.c.}(\mathcal{B}(P)) \leq \lambda$; by Theorem~\ref{Compactness}, we can extend $\mathbf{A}$ to a $(\lambda, T)$-{\L}o{\'s} map.)
	
	(B): Similarly, by Theorem~\ref{LowNonSat}.
\end{proof}

And the following shows that we are doing is relevant. The case $\tau = \aleph_0$ is similar to the construction of perfect ultrafilters in \cite{Optimals}, and the case $\tau > \aleph_0$ is similar to the construction of optimal ultrafilters there.

\begin{theorem}\label{SetTheoryDividingLines}
	
	Suppose $\mathbf{s} = (\lambda, \mu, \theta, \tau, k)$ is a suitable sequence. Then for some $P \in \mathbb{P}_{\mu, \theta, k}$, there is an ultrafilter $\mathcal{U}$ on $\mathcal{B}(P)$ which $\lambda^+$-saturates every theory with the smooth $\mathbf{s}$-extension property, and does not $\lambda^+$-saturate any theory which fails the $\mathbf{s}$-extension property. 
\end{theorem}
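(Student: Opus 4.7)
The plan is to build, by transfinite recursion on $\alpha \leq \alpha_*$ (with $\alpha_*$ a sufficiently large cardinal of cofinality greater than $\lambda$), a $<\theta$-support forcing iteration $(P_\alpha, \dot{Q}_\alpha)$ drawn from $\mathbb{P}_{\mu,\theta,k}$, together with a coherent increasing chain of $\tau$-complete ultrafilters $\mathcal{U}_\alpha$ on $\mathcal{B}(P_\alpha)$. By Theorem~\ref{IterationPreserves}(C) each $P_\alpha \in \mathbb{P}_{\mu,\theta,k}$, in particular each has the $\mu^+$-c.c., so $P := P_{\alpha_*}$ and $\mathcal{U} := \mathcal{U}_{\alpha_*}$ will witness the theorem. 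Using suitable bookkeeping (an enumeration of nice names via the $\lambda^+$-c.c.\ when $\tau = \aleph_0$, a Laver-style diamond when $\tau$ is supercompact), we arrange that two classes of tasks are addressed during the iteration: \emph{positive} stages anticipate $(\lambda,T)$-{\L}o{\'s} maps $\mathbf{A}$ in $\mathcal{U}$ for theories $T$ with the smooth $\mathbf{s}$-extension property; \emph{negative} stages embed, for each theory $T$ failing the $\mathbf{s}$-extension property, a witnessing $(T,\mathbf{s})$-problem $(P',\mathcal{U}',\mathbf{A}')$.

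At a positive stage $\alpha$, apply the smooth $\mathbf{s}$-extension property to $(P_\alpha, \mathcal{U}_\alpha, \mathbf{A})$ to obtain $\dot{Q}_\alpha$ and a smooth multiplicative refinement $\mathbf{B}$ in $\mathcal{B}(P_\alpha * \dot{Q}_\alpha)$; smoothness combined with $\tau$-completeness of $\mathcal{U}_\alpha$ yields $\pi(\mathbf{B}(S)) = \bigwedge_{s \in [S]^{<\aleph_0}}\pi(\mathbf{B}(s)) \in \mathcal{U}_\alpha$ for every $S \in [\lambda]^{<\tau}$, so by Lemma~\ref{projectionsLemma1}(C) the filter generated by $\mathcal{U}_\alpha \cup \mathrm{im}(\mathbf{B})$ is proper and $\tau$-complete, and Lemma~\ref{CompactExt} extends it to a $\tau$-complete ultrafilter $\mathcal{U}_{\alpha+1}$ on $\mathcal{B}(P_{\alpha+1})$. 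At a negative stage $\alpha$, set $\dot{Q}_\alpha := \check{P}'$, so $P_{\alpha+1} \cong P_\alpha \times P'$ and $\mathcal{B}(P')$ is a complete subalgebra of $\mathcal{B}(P_{\alpha+1})$; the filter generated by $\mathcal{U}_\alpha \cup \mathcal{U}'$ is proper and $\tau$-complete because the two complete subalgebras are independent inside $\mathcal{B}(P_\alpha \times P')$ and both ultrafilters are $\tau$-complete, and Lemma~\ref{CompactExt} again delivers a $\tau$-complete $\mathcal{U}_{\alpha+1}$ with $\mathcal{U}_{\alpha+1} \cap \mathcal{B}(P') = \mathcal{U}'$ (this intersection is preserved at later stages because $\mathcal{U}'$ is already an ultrafilter on $\mathcal{B}(P')$). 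Limit stages of cofinality $\geq \tau$ are immediate; the case $\cf(\alpha) < \tau$ arises only for supercompact $\tau$ and is handled by closing under $<\tau$-meets using distributivity of $\mathcal{B}(P_\alpha)$.

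For the positive conclusion, suppose $T$ has the smooth $\mathbf{s}$-extension property and $\mathbf{A} \in \mathcal{U}$ is a $(\lambda,T)$-{\L}o{\'s} map; the $\mu^+$-c.c.\ of $P$ and the choice $\cf(\alpha_*) > \lambda$ pin $\mathbf{A}$ down in some $\mathcal{B}(P_\alpha)$, and bookkeeping guarantees a later positive stage produces a multiplicative refinement of $\mathbf{A}$ in $\mathcal{U}$. For the negative conclusion, if $T$ fails the $\mathbf{s}$-extension property then a witness $(P',\mathcal{U}',\mathbf{A}')$ was embedded at some negative stage; let $\mathbf{A}$ be the lift of $\mathbf{A}'$ to $\mathcal{B}(P)$, which is still a $(\lambda,T)$-{\L}o{\'s} map (viewing the underlying $\mathcal{B}(P')$-valued model along the complete subalgebra inclusion preserves norms and fullness). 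Any multiplicative refinement $\mathbf{B}$ of $\mathbf{A}$ in $\mathcal{U}$ would, via the decomposition $P \cong P' * \dot{R}$ with $\dot{R} \in \mathbb{P}_{\mu,\theta,k}^{\mathbb{V}[\dot{G}']}$ (Theorem~\ref{IterationPreserves}(C)), give an $\mathbf{s}$-solution $(P' * \dot{R}, \mathbf{B})$ to $(P', \mathcal{U}', \mathbf{A}')$: $\tau$-completeness of $\mathcal{U}$ gives $\mathbf{B}(S) \in \mathcal{U}$ for $S \in [\lambda]^{<\tau}$, hence $\pi(\mathbf{B}(S)) \in \mathcal{U} \cap \mathcal{B}(P') = \mathcal{U}'$. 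This contradicts our choice of witness, so $\mathcal{U}$ does not $\lambda^+$-saturate $T$.

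The principal obstacle is the bookkeeping: anticipating $(\lambda,T)$-{\L}o{\'s} maps that only materialize in later iteration stages requires, for supercompact $\tau$, a Laver-preparation of the ground model, while for $\tau = \aleph_0$ an enumeration exploiting the $\lambda^+$-c.c.\ bound on nice names in $\mathcal{B}(P_{\alpha_*})$ suffices. A secondary technical subtlety is the handling of limit stages of cofinality below $\tau$ when $\tau > \aleph_0$, where one must argue more carefully that the union of the previous $\mathcal{U}_\beta$ generates a $\tau$-complete filter before applying Lemma~\ref{CompactExt}.
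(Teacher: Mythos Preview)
Your approach is essentially the paper's for the case $\tau = \aleph_0$: build the ultrafilter as an increasing union along the iteration, handle nonsaturation by embedding witnessing problems, handle saturation by bookkeeping {\L}o{\'s} maps and applying the extension property at the right stage. For that case your sketch is correct and matches the paper (the paper happens to do all negative embeddings at stage $0$ via a product, but that is cosmetic).

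For $\tau > \aleph_0$, however, there is a genuine gap. The difficulty you flag as a ``secondary technical subtlety''---limit stages of cofinality below $\tau$---is in fact the main obstruction, and it cannot be overcome by the method you propose. Given an increasing chain $(\mathcal{U}_\beta: \beta < \alpha)$ of $\tau$-complete ultrafilters with $\cf(\alpha) < \tau$, there is no reason the $\tau$-complete filter generated by $\bigcup_\beta \mathcal{U}_\beta$ is proper: $<\tau$-distributivity of $\mathcal{B}(P_\alpha)$ says nothing about meets of $<\tau$-many elements chosen from distinct $\mathcal{U}_\beta$'s, since the successive extensions via Lemma~\ref{CompactExt} are uncontrolled. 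Relatedly, your invocation of smoothness at positive stages is redundant---the conclusion $\pi(\mathbf{B}(S)) \in \mathcal{U}_\alpha$ is already the definition of an $\mathbf{s}$-solution---so smoothness is doing no work in your argument, which is a sign that something is off.

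The paper's fix is structurally different: it does \emph{not} build the ultrafilter stage by stage. Instead, the iteration is built first, with bookkeeping that anticipates \emph{every} solvable problem $(P_\beta, \mathcal{U}, \mathbf{A})$ for \emph{every} $\tau$-complete ultrafilter $\mathcal{U}$ on $\mathcal{B}(P_\beta)$ (not just one growing along the construction). Only afterward is the ultrafilter $\mathcal{V}$ defined, using a normal fine $\tau$-complete measure $\mathcal{E}$ on $[H(\chi)]^{<\tau}$: for each small $N \prec H(\chi)$ one uses $\theta$-closedness to find a single condition $p_N$ below everything relevant in $N$, and sets $\mathbf{a} \in \mathcal{V}$ iff $\{N: p_N \leq \mathbf{a}\} \in \mathcal{E}$. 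Smoothness is used exactly here: to show $p_N \leq \mathbf{B}_\alpha(\lambda \cap N)$ one needs $\pi(\mathbf{B}_\alpha(\lambda \cap N)) = \bigwedge_{s \in [\lambda \cap N]^{<\aleph_0}} \pi(\mathbf{B}_\alpha(s))$, so that each factor lies in $\mathcal{U}_\alpha \cap N$ and hence above $p_N$. No Laver diamond is needed; supercompactness enters only through the existence of $\mathcal{E}$.
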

\begin{proof}
	
	As a convenient abbreviation, say that $(P, \mathcal{U}, \mathbf{A})$ is a solvable $\mathbf{s}$-problem if it is a $(T, \mathbf{s})$-problem for some countable theory $T$, and it has a smooth $\mathbf{s}$-solution.
	
	Let $(T_\delta: \delta < 2^{\aleph_0})$ enumerate all complete countable first order theories which fail the $\mathbf{s}$-extension property. For each $\delta < \aleph_0$, let $(Q_{0, \delta},  \mathcal{U}_{0, \delta}, \mathbf{A}_{0, \delta})$ be a $(T_\delta, \mathbf{s})$-problem with no $\mathbf{s}$-solution. Let $Q_0$ be the $<\theta$-support product of $(Q_{0, \delta}: \delta < 2^{\aleph_0})$; so $Q_0 \in \mathbb{P}_{\theta, k}$. Let $\mathcal{V}_1$ be a $\tau$-complete ultrafilter on $\mathcal{B}(Q_0)$ extending each $\mathcal{U}_{0, \delta}$.  Easily, for each $\delta < 2^{\aleph_0}$, $(Q_0, \mathcal{V}_0, \mathbf{A}_{0, \delta})$ is a $(T_\delta, \mathbf{s})$-problem with no $\mathbf{s}$-solution (this is what we need, going forth). 
	
	The following setup is straightforward to arrange:
	
	\begin{enumerate}
		\item $\alpha_*$ is an ordinal, and $(\chi_\delta: \delta < \lambda^+)$ is a cofinal sequence of cardinals in $\alpha_*$ (so $\alpha_*$ is a limit cardinal of cofinality $\lambda^+$);
		\item $(P_\alpha: \alpha \leq \alpha_*)$, $(\dot{Q}_\alpha: \alpha < \alpha_*)$ is a $<\theta$-support forcing iteration, and each $P_\alpha$ forces that $\dot{Q}_\alpha \in \mathbb{P}_{\theta, k}$;
		\item For each $1 \leq \alpha < \alpha_*$: $\beta_\alpha \leq \alpha$ and $(P_{\beta_\alpha}, \mathcal{U}_\alpha, \mathbf{A}_\alpha)$ is a solvable $\mathbf{s}$-problem, with the smooth $\mathbf{s}$-solution $(P_{\beta_\alpha} * \dot{Q}_\alpha, \mathbf{B}_\alpha)$ (so in particular $\dot{Q}_\alpha$ is a $P_{\beta_\alpha}$-name);
		
		\item For each $1 \leq \delta < \lambda^+$, for each $\beta \leq \chi_\delta$, and for each solvable $\mathbf{s}$-problem $(P_\beta, \mathcal{U}, \mathbf{A})$, there is some $\alpha < \chi_{\delta+1}$ such that $\beta_{\alpha} = \beta$, $\mathcal{U}_{\alpha} = \mathcal{U}$ and $\mathbf{A}_{\alpha} = \mathbf{A}$.
	\end{enumerate}

	%
	%
	%
	%
	
	By Corollary~\ref{IterationPreserves}, each  $P_{\alpha} \in \mathbb{P}_{\mu, \theta, k}$, and hence is $\theta$-closed and has the $\mu^+$-c.c. It suffices to find a $\tau$-complete ultrafilter $\mathcal{V}$ on $\mathcal{B}(P_{\alpha_*})$ which $\lambda^+$-saturates every theory with the smooth $\mathbf{s}$-extension property, and no theory which fails the $\mathbf{s}$-extension property.
	
	We claim that it suffices to find a $\tau$-complete ultrafilter $\mathcal{V}$ on $\mathcal{B}(P_{\alpha_*})$ extending $\mathcal{V}_1$, such that for all $1 \leq \alpha < \alpha_*$, if $\mathcal{V}$ extends $\mathcal{U}_\alpha$ then $\mathbf{B}_\alpha$ is in $\mathcal{V}$ (i.e. $\mathbf{B}_\alpha(s) \in \mathcal{U}_\alpha$ for all $s \in [\lambda]^{<\aleph_0}$, or equivalently for all $S \in [\lambda]^{<\tau}$). Indeed, suppose $\mathcal{V}$ is given as such.
	
	First suppose $\delta < 2^{\aleph_0}$; then $\mathcal{V}$ does not $\lambda^+$-saturate $T_\delta$, since if $\mathbf{B}$ were a multiplicative refinement of $\mathbf{A}_{0, \delta}$ in $\mathcal{V}$, then $(\dot{P}, \mathbf{B})$ would be an $\mathbf{s}$-solution to $(Q_0, \mathcal{V}_0, \mathbf{A}_{0, \delta})$, where $\dot{P}$ is the natural $Q_0$-name such that $\mathcal{B}(P_{\alpha_*}) \cong \mathcal{B}(Q_0 * \dot{P})$.

	Next, we show that $\mathcal{V}$ $\lambda^+$-saturates every $T$ with the smooth $\mathbf{s}$-extension property. Indeed, suppose $\mathbf{A}$ is a $(T, \mathcal{B}(P_{\alpha_*}), \lambda)$-{\L}o{\'s} map in $\mathcal{V}$. Since $\mbox{cof}(\alpha_*) = \lambda^+$ and since $\mathcal{B}(P_{\alpha_*})$ has the $\lambda^+$-c.c., we have that $\mathcal{B}(P_{\alpha_*}) = \bigcup_{\alpha < \alpha_*} \mathcal{B}(P_\alpha)$, and so $\mathbf{A}$ is in $\mathcal{B}(P_\beta)$ for some $\beta < \alpha_*$. Let $\mathcal{U} = \mathcal{V} \cap \mathcal{B}(P_\beta)$. Since $T$ has the smooth $\mathbf{s}$-extension property, $(P_\beta, \mathcal{U}, \mathbf{A})$ is a solvable $\mathbf{s}$-problem. Choose $\delta < \lambda^+$ with $\chi_\delta \geq \beta$. Choose $\alpha < \chi_{\delta+1}$ such that $\beta_\alpha = \beta$, $\mathcal{U}_\alpha = \mathcal{U}$ and $\mathbf{A}_\alpha = \mathbf{A}$. Since $\mathcal{V}$ extends $\mathcal{U}_\alpha$ we must have that $\mathcal{V}$ extends $\mathbf{B}_\alpha$, but $\mathbf{B}_\alpha$ is a multiplicative refinement to $\mathbf{A}$ so we are done.
	
	So it remains to find $\mathcal{V}$. If $\tau = \aleph_0$ then this is fairly trivial; having constructed $\mathcal{V} \cap \mathcal{B}(P_\alpha)$, if $\mathcal{V} \cap \mathcal{B}(P_\alpha)$ extends $\mathcal{U}_\alpha$, then note that for all $s \in [\lambda]^{<\aleph_0}$, $\pi_{\mathcal{B}(P_{\alpha+1}), \mathcal{B}(P_{\beta_\alpha})}(\mathbf{B}_\alpha(s)) \in \mathcal{U}_\alpha$, and so $(\mathcal{V} \cap \mathcal{B}(P_\alpha)) \cup \{\mathbf{B}(s): s \in [\lambda]^{<\aleph_0}\}$ has the finite intersection property. So we can find $\mathcal{V}_{\alpha+1}$. 
	
	Finally, suppose $\tau > \aleph_0$. We cannot adopt a straightforward construction as above, since we cannot preserve $\tau$-completeness through limit stages. Thus we take a different approach. The remainder of the argument mirrors Theorem 5.9 of \cite{Optimals}.
	
	Let $\mathcal{E}$ be a normal, $\tau$-complete ultrafilter on $[H(\chi)]^{<\tau}$ where $\chi$ is large enough, and where $H(\chi)$ is the set of sets of hereditary cardinality less than $\chi$. Let $\Omega$ be the set of all $N \in [H(\chi)]^{<\tau}$ such that $N \preceq (H(\chi), \in, \ldots)$ where $\ldots$ is a list of the finitely many relevant parameters we have mentioned in the proof so far. Note $\Omega \in \mathcal{E}$.
	
	\vspace{1 mm}
	
	\noindent \textbf{Claim.} Suppose $N \in \Omega$. Then there is some $p_N \in P_{\alpha_*}$ such that $p_N \leq \bigwedge (\mathcal{V}_1 \cap N)$, and $p_N$ decides every element of $P_{\alpha_* \cap N}$, and for all $\alpha \in \alpha_* \cap N$, either $p_N \leq \mathbf{B}_\alpha(\lambda \cap N)$, or else $p_N$ contradicts some element of $\mathcal{U}_\alpha \cap N$. 
	
	\vspace{1 mm}
	
	\noindent \emph{Proof.} Let $(\alpha_\gamma: \gamma < \gamma_*)$ enumerate $N \cap \alpha_*$ in the increasing order, so $\gamma_* < \tau$. By induction on $\gamma < \gamma_*$ we construct $(p_{\gamma}: \gamma < \gamma_*)$ with each $p_\gamma \in P_{\alpha_\gamma}$, such that:
	
	\begin{itemize}
		\item For $\gamma < \gamma'$, $p_\gamma \geq p_{\gamma}'$;
		\item For each $\gamma < \gamma_*$, and for each $\alpha \in N \cap \alpha_*$, $p_\gamma$ decides every element of $\mathcal{B}(P_{\alpha_\gamma}) \cap N$;
		\item $p_1 \leq \bigwedge (\mathcal{V}_1 \cap N)$;
		
		\item If $p_\gamma \leq \bigwedge (\mathcal{U}_{\alpha_\gamma} \cap N)$ then $p_{\gamma+1} \leq \mathbf{B}_{\alpha_\gamma}(\lambda \cap N)$.
	\end{itemize}
	
	The base case is easy. If $\delta < \gamma_*$ is a limit ordinal, then when constructing $p_\delta$ we just need to handle the first and second conditions. We can do this because $P_{\alpha_\delta}$ is $\theta$-closed, and hence $\tau$-closed.
	
	The key point is the following. Suppose $p_{\gamma}$ is defined; write $\alpha = \alpha_\gamma$. Suppose $p_{\gamma} \leq \bigwedge ( \mathcal{U}_{\alpha} \cap N)$. We need to show that $p_{\gamma} \wedge \mathbf{B}_\alpha(\lambda \cap N)$ is nonzero. Write $S = \lambda \cap N$, and let $\pi = \pi_{\mathcal{B}(P_{{\alpha+1}}), \mathcal{B}(P_{\beta_\alpha})}$ be the projection map. It suffices to show that $p_{\gamma} \leq \pi(\mathbf{B}_\alpha(S))$. Now, $\mathbf{B}_\alpha(S) = \bigwedge_{s \in [S]^{<\aleph_0}} \mathbf{B}_\alpha(s)$, and each $\pi(\mathbf{B}_\alpha(s)) \in \mathcal{U}_\alpha$, by definition of smooth solution. But then each $\pi(\mathbf{B}_\alpha(s)) \in \mathcal{U}_\alpha \cap N$, since $[S]^{<\aleph_0} \subseteq N$. Thus $p_\gamma \leq \bigwedge_{s \in [S]^{<\aleph_0}} \pi(\mathbf{B}_\alpha(s)) = \pi(\mathbf{B}_\alpha(S))$, as desired. The other conditions are easy to arrange, since $P_{\alpha_{\gamma+1}}$ is $\tau$-closed.
	
	Let $p_N \in P_{\alpha_*}$ be a lower bound to $(p_\gamma: \gamma < \gamma_*)$. Then clearly this works, and so we have proven the claim.
	
	\vspace{1 mm}
	
	Fix such a $p_N$ for every $N \in \Omega$. Define $\mathcal{V}$ to be the set of all $\mathbf{a} \in \mathcal{B}(P_{\alpha_*})$ such that $\{N \in \Omega: p_N \leq \mathbf{a}\} \in \mathcal{E}$. I claim that $\mathcal{V}$ is as desired. $\mathcal{V}$ is obviously a filter. Given $\mathbf{a} \in \mathcal{B}(P_{\alpha_*})$, we have that $\{N \in \Omega: \mathbf{a} \in N\} \in \mathcal{E}$ since $\mathcal{E}$ is fine, thus $\mathcal{V}$ is an ultrafilter. Since $\mathcal{E}$ is $\tau$-complete, so is $\mathcal{V}$.
	
	Finally, suppose $\alpha < \alpha_*$; we need to show that either $\mathbf{B}_\alpha$ is in $\mathcal{V}$ or else $\mathcal{V}$ does not extend $\mathcal{U}_\alpha$. Let $C_1:= \{N \in \Omega: p_N \leq \bigwedge \mathcal{U}_\alpha \cap N \}$, and let $C_2 = \{N \in \Omega: p_N \not \leq \bigwedge \mathcal{U}_\alpha \cap N\}$. Either $C_1 \in \mathcal{E}$ or else $C_2 \in \mathcal{E}$. Suppose first that $C_1 \in \mathcal{E}$. Then for each $N \in C_1$ and for each $s \in [\lambda]^{<\aleph_0} \cap N$,  $p_N \leq \mathbf{B}_\alpha(s)$; since $\mathcal{E}$ is fine, it follows that each $\mathbf{B}_\alpha(s) \in \mathcal{V}$, so $\mathbf{B}$ is in $\mathcal{V}$. Next, suppose $C_2 \in \mathcal{E}$; for each $N \in C_2$, we can choose $f(N) \in \mathcal{U}_\alpha \cap N$ such that $p_N \not \leq f(N)$. Since $\mathcal{E}$ is normal, we can find $C \subseteq C_2$ with $C \in \mathcal{E}$, such that $f$ is constant on $C$, say with value $\mathbf{a} \in \mathcal{U}_\alpha$. Then $\mathbf{a} \not \in \mathcal{V}$, so $\mathcal{V}$ does not extend $\mathcal{U}_\alpha$. Thus, in either case, either $\mathbf{B}_\alpha$ is in $\mathcal{V}$ or else $\mathcal{V}$ does not extend $\mathcal{U}_\alpha$.
\end{proof}

\begin{corollary}\label{SetTheoryDividingLinesCor}
	Suppose $\mathbf{s} = (\lambda, \theta, \tau, k)$ is a suitable sequence.
	Then there is a principal dividing line in Keisler's order between the $\mathbf{s}$-extension property and the smooth $\mathbf{s}$-extension property. If $\tau = \aleph_0$ then the $\mathbf{s}$-extension property is itself a principal dividing line.
\end{corollary}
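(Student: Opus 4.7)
The plan is to derive the corollary directly from Theorem~\ref{SetTheoryDividingLines} together with the characterization of Keisler's order from Corollary~\ref{KeislerOrderCharacterization}. First I would apply Theorem~\ref{SetTheoryDividingLines} to obtain some $P \in \mathbb{P}_{\mu, \theta, k}$ and an ultrafilter $\mathcal{U}$ on $\mathcal{B}(P)$ such that $\mathcal{U}$ $\lambda^+$-saturates every theory with the smooth $\mathbf{s}$-extension property, and fails to $\lambda^+$-saturate every theory which lacks the $\mathbf{s}$-extension property.

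Next I would check that $\mathcal{B}(P)$ has the $\lambda^+$-c.c. By Theorem~\ref{IterationPreserves}(A), every element of $\mathbb{P}_{\mu,\theta,k}$ has the $\mu^+$-c.c., and this passes to the Boolean algebra completion. Since the definition of a suitable sequence requires $\mu < \lambda$, we have $\mu^+ \leq \lambda < \lambda^+$, so $\mathcal{B}(P)$ has the $\lambda^+$-c.c. Corollary~\ref{KeislerOrderCharacterization} then tells us that the set $\mathbf{T}$ of all complete countable theories $\lambda^+$-saturated by $\mathcal{U}$ is a principal dividing line in Keisler's order. By the two properties of $\mathcal{U}$ above, $\mathbf{T}$ contains every theory with the smooth $\mathbf{s}$-extension property and excludes every theory failing the $\mathbf{s}$-extension property. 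This is precisely the assertion that $\mathbf{T}$ is a principal dividing line sandwiched between the two properties.

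For the second assertion, when $\tau = \aleph_0$ the remark immediately following Definition~\ref{DefSetAP2} observes that every $\mathbf{s}$-solution is automatically smooth: the condition $S \in [\lambda]^{<\tau}$ reduces to $S$ being finite, and since $S$ itself lies in $[S]^{<\aleph_0}$ we have $\mathbf{B}(S) = \bigwedge_{s \in [S]^{<\aleph_0}} \mathbf{B}(s) = \mathbf{B}(S)$, so applying $\pi$ gives the smoothness identity $\pi(\mathbf{B}(S)) = \bigwedge_{s \in [S]^{<\aleph_0}} \pi(\mathbf{B}(s))$ trivially. Consequently the smooth $\mathbf{s}$-extension property and the $\mathbf{s}$-extension property coincide, and the sandwich collapses, making the $\mathbf{s}$-extension property itself a principal dividing line.

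There is no serious obstacle here: this is essentially a bookkeeping corollary that assembles Theorem~\ref{SetTheoryDividingLines} and Corollary~\ref{KeislerOrderCharacterization}. The only delicate point is confirming the $\lambda^+$-c.c.\ for $\mathcal{B}(P)$, which falls out immediately from the $\mu < \lambda$ clause built into the definition of suitability.
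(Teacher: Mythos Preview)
Your proposal is correct and follows essentially the same approach as the paper: apply Theorem~\ref{SetTheoryDividingLines}, observe that $\mathcal{B}(P)$ has the $\lambda^+$-c.c.\ (indeed the $\mu^+$-c.c., and $\mu < \lambda$), and invoke Corollary~\ref{KeislerOrderCharacterization}. The paper's proof is a single sentence, but your expanded version fills in exactly the details the paper leaves implicit, including the $\tau = \aleph_0$ case via the remark after Definition~\ref{DefSetAP2}.
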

\begin{proof}
	This follows from Theorem~\ref{SetTheoryDividingLines} since for every $P \in \mathbb{P}_{\mu \theta k}$, $\mathcal{B}(P)$ has the $\mu^+$-c.c.
\end{proof}

\section{The $\mathbf{s}$-extension property}\label{KeislerNewSat}
In this section, we give sufficient and necessary conditions (separately) for $T$ to have the (smooth) $\mathbf{s}$-extension property. 

The following is the necessary condition; note the similarity of proof to Theorem~\ref{NonSat2}.

\begin{theorem}\label{NonSatPortionGeneral}
	Suppose $\mathbf{s} = (\lambda, \mu, \theta, \tau, k_*)$ is a suitable sequence. Suppose $2 \leq k < k_*$, and $\lambda$ is big enough for $(\mu, \theta, k)$. If $T$ admits $\Delta_{k+1, k}$, then $T$ fails the $\mathbf{s}$-extension property.
\end{theorem}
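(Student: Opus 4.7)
The plan is to adapt the forcing argument of Theorem~\ref{NonSat2} into the Boolean-valued framework of Section~\ref{KeislerNewUltCons}, letting a purported $\mathbf{s}$-solution $(P*\dot{Q}, \mathbf{B})$ play the role that the coloring $F : \Gamma^\theta_{M,M_0} \to P_{X\theta\theta}$ played there. Fix $\phi(x,y)$ witnessing that $T$ admits $\Delta_{k+1,k}$, and take $P := P_{[\lambda]^k, \mu, \theta}$, which belongs to $\mathbb{P}_{\mu,\theta,k_*}$ since it is $(P,k_*)$-colorable by the identity. For each $v \in [\lambda]^k$ let $\mathbf{c}_v \in P$ denote the single-point condition $\{(v,0)\}$, and define
\[ \mathbf{A}(s) \;:=\; \bigwedge\{\mathbf{c}_v : v \in [\lambda]^k,\ [v]^{k-1} \subseteq s\} \qquad (s \in [[\lambda]^{k-1}]^{<\aleph_0}), \]
which is a nonzero condition in $P$ with support $\{v : [v]^{k-1} \subseteq s\}$, a finite set. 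Using Theorem~\ref{NesitrilRodlLemma} together with the Boolean compactness theorem (Theorem~\ref{Compactness}), I would build a full $\mathcal{B}(P)$-valued model $\mathbf{M}\models^{\mathcal{B}(P)} T$ and elements $(b_u : u \in [\lambda]^{k-1})$ of $\mathbf{M}$ realising $\mathbf{A}(s) = \|\exists x\,\bigwedge_{u \in s} \phi(x, b_u)\|_{\mathbf{M}}$; to verify the combinatorial side of compactness one refines any nonzero Boolean value to a condition $p \in P$, extends $p$ to a total $\hat p : [\lambda]^k \to \mu$, and applies Theorem~\ref{NesitrilRodlLemma} to $S := \hat p^{-1}(\mu\setminus\{0\})$ so that the resulting $(b_u)$ in $\mathfrak{C}$ realises exactly the pattern $\Delta_{k+1,k}(S)$. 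The filter generated by $\{\mathbf{A}(s)\}$ is $\tau$-complete because any $<\tau$-sized meet collapses (via $\tau \leq \theta$) to a single condition in $P$, so it extends to a $\tau$-complete ultrafilter $\mathcal{U}$ on $\mathcal{B}(P)$ by Lemma~\ref{CompactExt} in the supercompact case. This exhibits $(P,\mathcal{U},\mathbf{A})$ as a $(T,\mathbf{s})$-problem.

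Suppose towards contradiction that $(P*\dot{Q}, \mathbf{B})$ is an $\mathbf{s}$-solution, and let $\dot F : (\dot Q, \dot Q_0) \to \check R$ be a coloring witnessing $\dot Q \in \mathbb{P}_{\mu,\theta,k_*}^{\mathbb{V}[\dot G]}$, where $R = P_{X\mu\theta}$. For each $v \in [\lambda]^{<\aleph_0}$ the $\mathbf{s}$-solution property gives $\pi(\mathbf{B}([v]^{k-1})) \in \mathcal{U}$, so via Lemma~\ref{projectionsLemma2}(D) I pick $p_v \in P$ with $p_v \leq \pi(\mathbf{B}([v]^{k-1}))$ and $\dot q_v \in \mathbb{N}_P(\dot Q)$ with $(p_v,\dot q_v) \leq \mathbf{B}([v]^{k-1})$, refining so that $\dot q_v \in \dot Q_0$ and $p_v \Vdash \dot F(\dot q_v) = \check r_v$ for some $r_v \in R$. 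Set $\tilde F(v) := v \cup \bigcup \mathrm{dom}(p_v) \in [\lambda]^{<\theta}$ (viewing $\mathrm{dom}(p_v) \subseteq [\lambda]^k$) and $\tilde G(v) := p_v \cup r_v$ in $P_{Y\mu\theta}$ for $Y := [\lambda]^k \sqcup X$ identified with a subset of $\lambda$.

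Since $\lambda$ is big enough for $(\mu,\theta,k)$, there are $t \in [\lambda]^k$ and $(w_s : s \in [t]^{k-1})$ with $w_s \cap t = \tilde F(w_s) \cap t = s$ and $\bigcup_s \tilde G(w_s)$ a function. Set $p^* := \bigcup_s p_{w_s} \in P$. The compatibility of the $r_{w_s}$'s combined with $|[t]^{k-1}| = k < k_*$ and the coloring property of $\dot F$ yields $\dot q^* \in \mathbb{N}_P(\dot Q)$ for which $p^* \Vdash \dot q^* \leq \dot q_{w_s}$ for every $s$; hence $(p^*, \dot q^*) \leq (p_{w_s}, \dot q_{w_s}) \leq \mathbf{B}([w_s]^{k-1})$ for every $s$. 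By multiplicativity of $\mathbf{B}$ and the inclusion $[t]^{k-1} \subseteq \bigcup_s [w_s]^{k-1}$,
\[ (p^*, \dot q^*) \;\leq\; \mathbf{B}\Big(\bigcup_s [w_s]^{k-1}\Big) \;\leq\; \mathbf{B}([t]^{k-1}) \;\leq\; \mathbf{A}([t]^{k-1}) \;\leq\; \mathbf{c}_t. \]
Projecting to $\mathcal{B}(P)$ via Lemma~\ref{projectionsLemma2}(A) gives $p^* \leq \mathbf{c}_t$ in $P$, so $t \in \mathrm{dom}(p^*)$. But $t \in \mathrm{dom}(p_{w_s})$ for some $s$ forces $t \subseteq \bigcup \mathrm{dom}(p_{w_s}) \subseteq \tilde F(w_s)$, whence $t = t \cap \tilde F(w_s) = s$, contradicting $|t| = k \neq k-1 = |s|$.

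The main obstacle is the construction in the opening paragraph: producing the $(\lambda, T)$-{\L}o{\'s} map $\mathbf{A}$ with its precise Boolean values, which requires a realisation of Theorem~\ref{NesitrilRodlLemma} uniform enough across all conditions of $P$ to feed into Theorem~\ref{Compactness}. Once $\mathbf{M}$ and $(b_u)$ are in place, the rest is a direct translation of Theorem~\ref{NonSat2}, with Lemma~\ref{projectionsLemma2} replacing the generic-extension bookkeeping.
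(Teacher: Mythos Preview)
Your proposal is correct and follows essentially the same approach as the paper: define $P = P_{[\lambda]^k \mu \theta}$, the distribution $\mathbf{A}(s) = \bigwedge\{\mathbf{c}_v : [v]^{k-1} \subseteq s\}$, and derive a contradiction from a putative multiplicative refinement $\mathbf{B}$ by choosing $(p_v, \dot q_v) \leq \mathbf{B}([v]^{k-1})$, applying ``big enough'' to the resulting $F, G$, and showing the common lower bound forces $p^*(t) = 0$ while simultaneously $t \notin \bigcup_s \mathrm{dom}(p_{w_s})$.

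Two minor differences worth noting. First, the paper proves the stronger statement that $\mathbf{A}$ has \emph{no} multiplicative refinement in $\mathcal{B}(P*\dot Q)$ whatsoever, so it never explicitly constructs $\mathcal{U}$; your use of $\pi(\mathbf{B}([v]^{k-1})) \in \mathcal{U}$ to ensure nonzeroness is unnecessary, since $\mathbf{B}$ already takes values in $\mathcal{B}_+$. Second, your endgame projects $(p^*, \dot q^*) \leq \mathbf{c}_t$ down to $p^* \leq \mathbf{c}_t$ and reads off $t \in \mathrm{dom}(p^*)$, whereas the paper instead extends $p^*$ to some $p'$ with $p'(t) = 1$ and then contradicts $(p', \dot q) \leq \mathbf{c}_t$; these are equivalent ways of finishing. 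Your worry about the ``main obstacle'' --- verifying that $\mathbf{A}$ is a $([\lambda]^{k-1}, T)$-{\L}o\'s map via Theorem~\ref{Compactness} and Theorem~\ref{NesitrilRodlLemma} --- is something the paper dismisses in one word (``Easily''), but your sketch of that verification is on the right track.
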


\begin{proof}
	Let $P = P_{[\lambda]^k \mu \theta} \in \mathbb{P}_{\mu, \theta, k_*}$. Write $\mathcal{B} = \mathcal{B}(P)$; for each $v \in [\lambda]^k$, write $\mathbf{c}_v = \{(v, 0)\} \in P \subseteq \mathcal{B}$. Let $\mathbf{A}$ be the $[\lambda]^{k-1}$-distribution in $\mathcal{B}$, defined by putting $\mathbf{A}(s) = \bigwedge\{\mathbf{c}_v: v \in [\lambda]^k, [v]^{k-1} \subseteq s\}$. Easily, $\mathbf{A}$ is a $([\lambda]^{k-1}, T)$-{\L}o{\'s} map.

	Suppose $\dot{Q} \in \mathbb{P}_{\mu, \theta, k_*}^{\mathbb{V}[\dot{G}]}$ is given; it suffices to show that $\mathbf{A}$ has no multiplicative refinement in $\mathcal{B}(P*\dot{Q})$. Suppose towards a contradiction that there were, say $\mathbf{B}$. For a large enough set $X$, we can find a $\mathcal{B}$-name $\dot{F}$ such that $\mathcal{B}$ forces $\dot{F}: \dot{Q} \to \check{P}_{\check{X} \mu \theta}$ is a $(\check{P}_{\check{X} \mu \theta}, k_*)$-coloring. Write $R = P_{X \mu \theta}$. 
	
	For each $v \in [\lambda]^{<\aleph_0}$ choose $(p_v, \dot{q}_v) \in P * \dot{Q}$ such that $(p_v, \dot{q}_v) \leq \mathbf{B}([v]^{k-1})$ and $p_v$ decides $\dot{F}(\dot{q}_v)$, say $p_v$ forces that $\dot{F}(\dot{q}_v) = \check{f}_v$ for some $f_v \in R$. 
	
	Define $F: [\lambda]^{<\aleph_0} \to [\lambda]^{<\theta}$ via $F(v)= v \cup \bigcup \mbox{dom}(p_v)$. Write $Y = X \cup [\lambda]^k$, and define $G: [\lambda]^{<\aleph_0} \to P_{Y \mu \theta}$ via $G(v) = p_v \cup f_v$. Since $\lambda$ is big enough for $(\mu, \theta, k)$, we can find some $v \in [\lambda]^k$ and some $(w_u: u \in [v]^{k-1}$) from $[\lambda]^{<\aleph_0}$, such that each $w_u \cap v = F(w_u) \cap v = u$, and such that $\bigcup \{G(w_u): u \in [v]^{k-1}\}$ is a function.
	
	Write $p= \bigcup_{u \in [v]^{k-1}} p_{w_u} \in P$. Then $p$ forces that each $\dot{F}(\dot{q}_{w_u}) = \hat{f}_{w_u}$; since $\bigcup \{f_{w_u}: u \in [v]^{k-1}\}$ is a function, we get that $p$ forces that $\{\dot{q}_{w_u}: u \in [v]^{k-1}\}$ are compatible in $\dot{Q}$, say with lower bound $\dot{q}$. So $(p, \dot{q}) \in P*\dot{q}$ is a lower bound to $((p_{w_u}, \dot{q}_{w_u}): u \in [v]^{k-1})$. Note that $v \not \in \mbox{dom}(p)$, since if $v \in \mbox{dom}(p_{w_u})$ say, then $v \subseteq \bigcup \mbox{dom}(p_{w_u})$, contradicting that $F(w_u) \cap v = u$. Thus we can choose $p' \leq p$ in $P$ with $p'(v) = 1$; note than $(p', \dot{q}) \in P * \dot{Q}$.
	
	Now for each $u \in [v]^{k-1}$, $(p', \dot{q}) \leq (p_{w_u}, \dot{q}_{w_u}) \leq \mathbf{B}([w_u]^{k-1}) \leq \mathbf{B}(\{u\})$. Thus, by multiplicativity, $(p', \dot{q}) \leq \mathbf{B}([v]^{k-1}) \leq \mathbf{A}([v]^{k-1}) = \{(v, 0)\}$, contradicting the choice of $p'$.
\end{proof}

For the sufficient conditions, we relate the $\mathbf{s}$-extension property to the $(\lambda, \mu, \theta, k)$-coloring property. This will take some preparation.

Recall that by a theorem of Kim \cite{KimForking}, in any simple theory $T$, forking is the same as dividing; that is, $\phi(\overline{x}, \overline{a})$ forks over $A$ if and only if it divides over $A$. We thus use the terms \emph{forking} and \emph{dividing} interchangeably. Also, we define low:

\begin{definition} The complete countable theory $T$ has the {finite dividing property} if there is some formula $\phi(\overline{x}, \overline{y})$ such that for every $k$ there is some indiscernible sequence $(\overline{b}_n: n < \omega)$ over the emptyset such that $\{\phi(\overline{x}, \overline{b}_n): n < \omega\}$ is $k$-consistent but not consistent. $T$ is low if it is simple and does not have the finite dividing property.
\end{definition}
This is equivalent to the original definition of Buechler \cite{Buechler}; however, some authors define low to mean ``not the finite dividing property," for instance this is the definition in \cite{DividingLine}.

In \cite{KeislerNotLinear}, we observed the following equivalent of $T$ being low. 

\begin{theorem}\label{LowEquiv}
	Suppose $T$ is simple. Then the following are equivalent:
	
	\begin{itemize}
		\item[(A)] $T$ is low.
		\item[(B)] Suppose $\phi(\overline{x}, \overline{b})$ does not fork over $A$. Then there is some $\overline{c} \in A$ and some $\psi(\overline{y}, \overline{z}) \in \mbox{tp}(\overline{b}, \overline{c})$ such that whenever $(\overline{b}', \overline{c}') \models \psi(\overline{y}, \overline{z})$, then $\phi(\overline{x}, \overline{b}')$ does not fork over $\overline{c}'$.
	\end{itemize}
\end{theorem}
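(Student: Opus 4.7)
By Kim's theorem \cite{KimForking}, in simple theories forking coincides with dividing, and I use the two interchangeably. The crucial observation underlying both directions is that for fixed $k$ the relation ``$\phi(\overline{x},\overline{y})$ does not $k$-divide over $\overline{z}$'' is type-closed in $\mbox{tp}(\overline{b},\overline{c})$ and, by compactness, is captured by a single first-order formula.

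For (A) $\Rightarrow$ (B): Given that $\phi(\overline{x},\overline{b})$ does not divide over $A$, apply lowness to the formula $\phi(\overline{x},\overline{y})$ to obtain an integer $k$ such that for this $\phi$, dividing coincides with $k$-dividing. Then $\phi(\overline{x},\overline{b})$ does not $k$-divide over $A$, and compactness produces a finite $\overline{c}\in A$, a formula $\theta(\overline{y},\overline{z})\in\mbox{tp}(\overline{b},\overline{c})$, and $N<\omega$ such that
\[
\chi(\overline{z})\;:\;\;\forall\overline{y}_0,\ldots,\overline{y}_{N-1}\Bigl[\bigwedge_{i<N}\theta(\overline{y}_i,\overline{z})\;\to\;\bigvee_{S\in[N]^{k}}\exists\overline{x}\bigwedge_{j\in S}\phi(\overline{x},\overline{y}_j)\Bigr]
\]
holds of $\overline{c}$. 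Setting $\psi(\overline{y},\overline{z}):=\theta(\overline{y},\overline{z})\wedge\chi(\overline{z})$ then works: for any $(\overline{b}',\overline{c}')\models\psi$, every $\overline{c}'$-conjugate of $\overline{b}'$ still satisfies $\theta(\overline{y},\overline{c}')$, and $\chi(\overline{c}')$ forbids $k$-inconsistency among any such finite block, so $\phi(\overline{x},\overline{b}')$ does not $k$-divide over $\overline{c}'$; invoking lowness a second time promotes this to non-dividing, hence non-forking.

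For (B) $\Rightarrow$ (A): I would argue by contraposition. Assume $T$ is not low, witnessed by some $\phi(\overline{x},\overline{y})$; for each $k$, pick an $\emptyset$-indiscernible sequence $(\overline{b}^k_n:n<\omega)$ with $\{\phi(\overline{x},\overline{b}^k_n)\}$ being $k$-consistent but inconsistent, and let $\overline{a}^k$ realize $\bigwedge_{i<k}\phi(\overline{x},\overline{b}^k_i)$. A key observation is that $\phi(\overline{x},\overline{b}^k_0)$ does \emph{not} divide over $\overline{a}^k$: any $\overline{a}^k$-conjugate $\overline{b}'$ of $\overline{b}^k_0$ inherits $\phi(\overline{a}^k,\overline{b}')$, so any sequence of such conjugates is commonly realized by $\overline{a}^k$ and is therefore $\phi$-consistent. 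The plan is to apply (B) to each instance $(\phi,\overline{b}^k_0,\overline{a}^k)$ to obtain formulas $\psi_k(\overline{y},\overline{z})\in\mbox{tp}(\overline{b}^k_0,\overline{c}^k)$ whose realizations uniformly avoid forking, and then amalgamate these witnesses via a compactness/ultraproduct limit over $k$ (combined with a Ramsey-style extraction using the common $\emptyset$-type of the $\overline{b}^k_n$'s) into a single bound $k_*$ for which dividing coincides with $k_*$-dividing for $\phi$, contradicting the choice of $\phi$. The main obstacle is this uniformization step: both the auxiliary parameter $\overline{a}^k$ and the witness $\psi_k$ vary with $k$, and one must carefully translate the pointwise non-forking conclusions (which, by the forward direction, are first-order statements about non-$k$-dividing for varying $k$) into a uniform first-order bound on $k$-inconsistency that survives passage to the limit.
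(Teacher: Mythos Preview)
The paper does not actually prove this theorem; it is quoted from \cite{KeislerNotLinear} with the remark ``we observed the following equivalent of $T$ being low.'' So there is no in-paper argument to compare against, and I will simply assess your proposal on its merits.

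Your (A) $\Rightarrow$ (B) is fine. The compactness step does work: if for every $\theta(\overline{y},\overline{c})\in\tp(\overline{b}/A)$ and every $N$ one could find $N$ realizations of $\theta$ with every $k$-subset $\phi$-inconsistent, then compactness plus a standard Ramsey extraction would produce an $A$-indiscernible sequence of realizations of $\tp(\overline{b}/A)$ that is $k$-inconsistent, contradicting non-$k$-dividing. The passage from $\chi(\overline{c}')$ to non-$k$-dividing via indiscernibility is also correct.

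Your (B) $\Rightarrow$ (A), however, has a genuine gap, and not merely the ``uniformization'' gap you flag. The instances you feed into (B) are \emph{trivial} for (B): since $\models\phi(\overline{a}^k,\overline{b}^k_0)$, the formula $\psi(\overline{y},\overline{z}):=\phi(\overline{z},\overline{y})$ already lies in $\tp(\overline{b}^k_0,\overline{a}^k)$ and satisfies the conclusion of (B) for free. Indeed, if $\models\phi(\overline{c}',\overline{b}')$ then every $\overline{c}'$-conjugate of $\overline{b}'$ also satisfies $\phi(\overline{c}',\cdot)$, so $\overline{c}'$ itself realizes any $\overline{c}'$-indiscernible sequence in $\tp(\overline{b}'/\overline{c}')$; hence $\phi(\overline{x},\overline{b}')$ does not divide over $\overline{c}'$. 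This is precisely your own argument that $\phi(\overline{x},\overline{b}^k_0)$ does not divide over $\overline{a}^k$, and it shows that (B) will hand you back nothing more than $\phi(\overline{z},\overline{y})$ itself. No amount of pigeonholing or ultraproduct-taking over $k$ can extract a bound on dividing from this, because the witnesses $\psi_k$ carry no information about $\phi$ beyond what you started with.

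Relatedly, the ultraproduct approach you sketch is doomed for a second reason: the inconsistency of the $k$-th sequence is only $N_k$-inconsistency with $N_k>k$ necessarily unbounded (since $k$-consistency and $N$-inconsistency with $N\le k$ are incompatible), so in any limit over $k$ the sequence becomes consistent and the dividing disappears.

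To salvage this direction you need test instances $(\overline{b},A)$ where the non-forking of $\phi(\overline{x},\overline{b})$ over $A$ is \emph{not} trivially witnessed by some $\overline{c}\in A$ realizing $\phi(\overline{x},\overline{b})$, so that (B) is forced to produce a $\psi$ with real content.
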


We now make a couple of observations about forking in Boolean-valued models.

\begin{definition}
	
	Suppose $T$ is a countable complete theory, and $\mathbf{M} \models^{\mathcal{B}} T$ (recall this means $\mathbf{M}$ is a full $\mathcal{B}$-valued model of $T$). Then $p(\overline{x})$ is a partial type over $\mathbf{M}$ if $p(\overline{x}) \subseteq \mathcal{L}(\mbox{range}(\overline{x}) \cup \mathbf{M})$, and for every finite $\Gamma(\overline{x}) \subseteq p(\overline{x})$, $\|\exists \overline{x} \bigwedge \Gamma(\overline{x})\|_{\mathbf{M}} > 0$. If $\mathcal{U}$ is an ultrafilter on $\mathcal{B}$, then let $[p(\overline{x})]_{\mathcal{U}}$ be the image of $p(\overline{x})$ under $[\cdot]_{\mathcal{U}}$; so this is a set of formulas over $\mathbf{M}/\mathcal{U}$. Say that $p(\overline{x})$ is a $\mathcal{U}$-type over $\mathbf{M}$ if $[p(\overline{x})]_{\mathcal{U}}$ is consistent. If $T$ is simple and $A \subseteq \mathbf{M}$, then say that $p(\overline{x})$ does not $\mathcal{U}$-fork over $A$ if $[p(\overline{x})]_{\mathcal{U}}$ does not fork over $[A]_{\mathcal{U}}$.
	
	Suppose $\mathbf{M} \models^{\mathcal{B}} T$. Let $\mathbb{V}[G]$ be a forcing extension by $\mathcal{B}_+$. Then $G$ is an ultrafilter on $\mathcal{B}$ in $\mathbb{V}[G]$; now $\mathcal{B}$ is typically not complete in $\mathbb{V}[G]$, but the definition of specializations did not require completeness, and so we can still form the specialization $(\mathbf{M}/G, [\cdot]_G)$. Thus, in $\mathbb{V}$, $\check{\mathbf{M}}/\dot{G}$ is a $\mathcal{B}$-name for a model of $T$, and  $[\cdot]_{\dot{G}}$ is a name for a surjection $\check{\mathbf{M}} \to \check{\mathbf{M}}/\dot{G}$. We have that for every $\phi(\overline{a}) \in \mathcal{L}(\mathbf{M})$, $\|\phi(\overline{a})\|_{\mathbf{M}} = \|\check{\mathbf{M}}/\dot{G} \models \phi([\overline{a}]_{\dot{G}}) \|_{\mathcal{B}}$. We call $(\check{\mathbf{M}}/\dot{G}, [\cdot]_{\dot{G}})$ the generic specialization of $\mathbf{M}$.
\end{definition}

Then the following lemmas are straightforward:

\begin{lemma}\label{CruxForLow}
	Suppose $\mathcal{B}$ is a complete Boolean algebra, and $T$ is low. Suppose $\mathbf{M} \models^{\mathcal{B}} T$ (i.e. $\mathbf{M}$ is a full $\mathcal{B}$-valued model of $T$), $\mathbf{M}_0 \preceq \mathbf{M}$ is countable, and $\phi(\overline{x})$ is a formula over $\mathbf{M}$. Suppose $\mathcal{U}$ is an ultrafilter on $\mathcal{B}$. If $\phi(\overline{x})$ does not $\mathcal{U}$-fork over $\mathbf{M}_0$, then $\|\phi(\overline{x}) \mbox{ does not fork over } \check{\mathbf{M}}_0/\dot{G} \mbox{ in } \check{\mathbf{M}}/\dot{G}\|_{\mathcal{B}} \in \mathcal{U}$.
	
\end{lemma}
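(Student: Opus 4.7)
The plan is to invoke the first-order characterization of lowness (Theorem~\ref{LowEquiv}) inside the specialization $\mathbf{M}/\mathcal{U}$, and then transport the relevant $\psi$-formula back up to $\mathbf{M}$ to produce a sufficiently large Boolean value. Since $\mathbf{M}/\mathcal{U} \models T$ and, by hypothesis, $\phi(\overline{x}, [\overline{a}]_{\mathcal{U}})$ does not fork over $\mathbf{M}_0/\mathcal{U}$ in this model, Theorem~\ref{LowEquiv} provides some $\overline{c}' \in \mathbf{M}_0/\mathcal{U}$ and a formula $\psi(\overline{y}, \overline{z})$ lying in $\mathrm{tp}([\overline{a}]_{\mathcal{U}}, \overline{c}')$, with the property that in any model of $T$, every realization $(\overline{b}^*, \overline{c}^*)$ of $\psi$ is such that $\phi(\overline{x}, \overline{b}^*)$ does not fork over $\overline{c}^*$. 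Because the specialization map $[\cdot]_{\mathcal{U}} \restriction \mathbf{M}_0$ surjects onto $\mathbf{M}_0/\mathcal{U}$, I would choose $\overline{c} \in \mathbf{M}_0$ with $[\overline{c}]_{\mathcal{U}} = \overline{c}'$.

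The key point is that the Boolean value $\mathbf{p} := \|\psi(\overline{a}, \overline{c})\|_{\mathbf{M}}$ lies in $\mathcal{U}$, by the defining equivalence of $[\cdot]_{\mathcal{U}}$ applied to the fact that $\mathbf{M}/\mathcal{U}\models \psi([\overline{a}]_{\mathcal{U}}, [\overline{c}]_{\mathcal{U}})$. So it suffices to prove the stronger claim that
\[
\mathbf{p} \;\leq\; \bigl\| \phi(\overline{x}, [\overline{a}]_{\dot{G}}) \text{ does not fork over } \check{\mathbf{M}}_0/\dot{G} \text{ in } \check{\mathbf{M}}/\dot{G}\bigr\|_{\mathcal{B}}.
\]
I would verify this by picking any $\mathcal{B}_+$-generic $G$ containing $\mathbf{p}$ and arguing in $\mathbb{V}[G]$: the generic specialization $\mathbf{M}/G$ is a model of $T$ satisfying $\psi([\overline{a}]_G, [\overline{c}]_G)$, so the $\psi$-witnessing property supplied by lowness yields that $\phi(\overline{x}, [\overline{a}]_G)$ does not fork over $[\overline{c}]_G$, and hence not over the larger parameter set $\mathbf{M}_0/G$, all computed with respect to $T$.

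The main obstacle I anticipate is bookkeeping around the absoluteness step: one must confirm that the universal implication distilled from Theorem~\ref{LowEquiv}---namely, ``for every $(\overline{b}^*, \overline{c}^*) \models \psi$ in any model of $T$, $\phi(\overline{x}, \overline{b}^*)$ does not fork over $\overline{c}^*$''---is a theorem of $\mathrm{ZFC}$ about the fixed countable theory $T$, so that it survives the passage to $\mathbb{V}[G]$ and can be applied to $\mathbf{M}/G$ there. Since non-forking is definable from $T$ via a monster-model construction and depends only on the $T$-type of the parameters, this transfer is routine; everything else in the argument is formal manipulation of Boolean values and the specialization map.
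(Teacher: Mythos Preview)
Your proposal is correct and follows essentially the same approach as the paper: apply Theorem~\ref{LowEquiv} in $\mathbf{M}/\mathcal{U}$ to obtain a witnessing formula $\psi$ and parameters from $\mathbf{M}_0$, then observe that the Boolean value $\|\psi(\overline{a},\overline{c})\|_{\mathbf{M}}$ lies in $\mathcal{U}$ and bounds the desired truth value from below. The paper's proof is terser---it writes ``clearly'' where you spell out the absoluteness and generic-specialization argument---but the content is the same.
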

\begin{proof}
	
	Let $\mathcal{U}$ be an ultrafilter on $\mathcal{B}$ such that $\phi(x)$ does not $\mathcal{U}$-fork over $\mathbf{M}_0$. Suppose $\phi(\overline{x})$ is over $\overline{a} \in \mathbf{M}^{<\omega}$. Choose $\psi_\phi(\overline{a}, \overline{a}_0)$ such that $\overline{a}_0 \in \mathbf{M}_0$ and $\mathbf{M}/\mathcal{U} \models \psi_\phi([\overline{a}]_{\mathcal{U}}, [\overline{a}_0]_{\mathcal{U}})$, such that whenever $M \models T \land \psi_\phi(\overline{b}, \overline{b}_0)$, then $\phi(\overline{x}, \overline{b})$ does not fork over $\overline{b}_0$. Put $\mathbf{c} = \|\psi_\phi(\overline{a}, \overline{a}_0)\|_{\mathbf{M}}$. Then $\mathbf{c} \in \mathcal{U}$, and clearly $\mathbf{c} \leq \|\phi(\overline{x}) \mbox{ does not fork over } \check{\mathbf{M}}_0/\dot{G}\mbox{ in } \check{\mathbf{M}}/\dot{G}\|_{\mathcal{B}}$.
\end{proof}

This is false for nonlow theories; in general, we need to restrict to $\aleph_1$-complete ultrafilters. 

\begin{lemma}\label{CruxForSimple}
	Suppose $\mathcal{B}$ is a complete Boolean algebra, and $T$ is simple. Suppose $\mathbf{M} \models^{\mathcal{B}} T$, $\mathbf{M}_0 \preceq \mathbf{M}$ is countable, and $\phi(x)$ is a formula over $\mathbf{M}$. Suppose $\mathcal{U}$ is an $\aleph_1$-complete ultrafilter on $\mathcal{B}$. If $\phi(\overline{x})$ does not $\mathcal{U}$-fork over $\mathbf{M}_0$, then $\|\phi(\overline{x}) $ does not fork over $\check{\mathbf{M}}_0/\dot{G}\mbox{ in } \check{\mathbf{M}}/\dot{G}\|_{\mathcal{B}} \in \mathcal{U}$.
	
\end{lemma}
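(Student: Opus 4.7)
The plan is to mimic the strategy of Lemma~\ref{CruxForLow}, but to replace the appeal to the low-theory characterization of non-forking (Theorem~\ref{LowEquiv}) with a countable-approximation argument that exploits the $\aleph_1$-completeness of $\mathcal{U}$. I would argue by contrapositive: assume $\mathbf{c} := \|\phi(\overline{x}) \mbox{ forks over } \check{\mathbf{M}}_0/\dot{G} \mbox{ in } \check{\mathbf{M}}/\dot{G}\|_{\mathcal{B}} \in \mathcal{U}$ and derive that $\phi(\overline{x})$ $\mathcal{U}$-forks over $\mathbf{M}_0$, contradicting the hypothesis.

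First, writing $\phi(\overline{x}) = \phi(\overline{x}, \overline{a})$ for some $\overline{a} \in \mathbf{M}^{<\omega}$, I would use simplicity of $T$ (forking equals dividing) to obtain $\mathbf{c} = \bigvee_{k < \omega} \mathbf{c}_k$, where $\mathbf{c}_k = \|\phi(\overline{x}, [\overline{a}]_{\dot{G}}) \mbox{ } k\mbox{-divides over } \check{\mathbf{M}}_0/\dot{G}\|_{\mathcal{B}}$. Then $\aleph_1$-completeness of $\mathcal{U}$ isolates a single $k$ with $\mathbf{c}_k \in \mathcal{U}$. This is the sole role of $\aleph_1$-completeness and is precisely what substitutes for lowness in the previous lemma, which uniformly yielded both a $k$ and a definable witness.

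Next, for each formula $\psi(\overline{y}, \overline{z})$ and each $n < \omega$, I would introduce the first-order formula $\chi_{\psi, n, k}(\overline{y}_0, \overline{z})$ asserting that there exist $\overline{y}_1, \ldots, \overline{y}_{n-1}$ with each $\psi(\overline{y}_i, \overline{z})$ holding and with $\{\phi(\overline{x}, \overline{y}_i) : i < n\}$ being $k$-inconsistent. A routine compactness argument shows: for any model $N \succeq N_0$ and any $\overline{b} \in N$, $\phi(\overline{x}, \overline{b})$ $k$-divides over $N_0$ if and only if $N \models \chi_{\psi, n, k}(\overline{b}, \overline{d})$ for every $\psi(\overline{y}, \overline{d}) \in \mbox{tp}(\overline{b}/N_0)$ and every $n < \omega$. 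Applying the forward direction inside $\check{\mathbf{M}}/\dot{G}$ yields the key Boolean inequality
$$\mathbf{c}_k \wedge \|\psi(\overline{a}, \overline{c})\|_{\mathbf{M}} \leq \|\chi_{\psi, n, k}(\overline{a}, \overline{c})\|_{\mathbf{M}}$$
for every $\psi$, every $\overline{c} \in \mathbf{M}_0$, and every $n$.

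Finally, I would push this inequality down through $\mathcal{U}$: whenever $\mathbf{M}/\mathcal{U} \models \psi([\overline{a}]_{\mathcal{U}}, [\overline{c}]_{\mathcal{U}})$, both $\mathbf{c}_k$ and $\|\psi(\overline{a}, \overline{c})\|_{\mathbf{M}}$ lie in $\mathcal{U}$, so $\mathbf{M}/\mathcal{U} \models \chi_{\psi, n, k}([\overline{a}]_{\mathcal{U}}, [\overline{c}]_{\mathcal{U}})$. Applying the backward direction of the characterization in the monster of $\mathbf{M}/\mathcal{U}$ shows that $\phi(\overline{x}, [\overline{a}]_{\mathcal{U}})$ $k$-divides over $\mathbf{M}_0/\mathcal{U}$, contradicting non-$\mathcal{U}$-forking. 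The main obstacle is locating a usable first-order approximation of the non-definable dividing predicate; $\chi_{\psi, n, k}$ works because it packages the existential over finitely many $\mathbf{M}_0$-conjugates together with the first-order $k$-inconsistency scheme into a single formula of $T$, so all the Boolean-valued bookkeeping can be done formula by formula without ever having to realize an infinite indiscernible sequence inside $\mathbf{M}$.
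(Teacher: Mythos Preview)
Your argument is correct, but it takes a noticeably longer route than the paper's. The paper does not argue by contrapositive and never introduces the formulas $\chi_{\psi,n,k}$. Instead it simply sets
\[
\mathbf{c}=\bigwedge\bigl\{\|\psi(\overline{a},\overline{a}_0)\|_{\mathbf{M}}:\overline{a}_0\in\mathbf{M}_0,\ \mathbf{M}/\mathcal{U}\models\psi([\overline{a}]_{\mathcal{U}},[\overline{a}_0]_{\mathcal{U}})\bigr\},
\]
observes that this is a countable meet (since $\mathbf{M}_0$ and the language are countable), hence $\mathbf{c}\in\mathcal{U}$ by $\aleph_1$-completeness, and notes that $\mathbf{c}$ pins down the full type of $[\overline{a}]_{\dot G}$ over $\check{\mathbf{M}}_0/\dot G$ to agree with $\tp([\overline{a}]_{\mathcal{U}}/\mathbf{M}_0/\mathcal{U})$. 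Since whether $\phi(\overline{x},\overline{b})$ forks over a model $N_0$ depends only on $\tp(\overline{b}/N_0)$, the inequality $\mathbf{c}\leq\|\phi(\overline{x})\text{ does not fork over }\check{\mathbf{M}}_0/\dot G\|_{\mathcal{B}}$ is immediate.

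The difference is in how $\aleph_1$-completeness is spent. You use it once, to extract a single $k$ from the countable join $\bigvee_k\mathbf{c}_k$, and then you have to manufacture a first-order scheme $\chi_{\psi,n,k}$ to transfer $k$-dividing through the Boolean machinery formula by formula. The paper uses it once to take a countable meet that freezes the whole type of $\overline{a}$ over $\mathbf{M}_0$ in one stroke, after which no analysis of dividing is needed at all. Your approach has the virtue of making the role of simplicity (forking $=$ dividing) explicit and of showing concretely how the argument parallels the low case; the paper's approach is shorter and shows that the only fact being used is that non-forking over a countable model is determined by countably much data.
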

\begin{proof}
	
	Let $\mathcal{U}$ be an $\aleph_1$-complete ultrafilter on $\mathcal{B}$ such that $\phi(\overline{x})$ does not $\mathcal{U}$-fork over $\mathbf{M}_0$. Suppose $\phi(\overline{x})$ is over $\overline{a} \in \mathbf{M}^{<\omega}$. Let  $\mathbf{c} =  \bigwedge \{ \|\psi(\overline{a}, \overline{a}_0)\|_{\mathbf{M}}: \overline{a}_0 \in \mathbf{M}_0 \mbox{ and } \mathbf{M}/\mathcal{U} \models \psi([\overline{a}]_{\mathcal{U}}, [\overline{a}_0]_{\mathcal{U}})\}$. Then $\mathbf{c} \in \mathcal{U}$, and clearly $\mathbf{c} \leq \|\phi(\overline{x}) \mbox{ does not fork over } \check{\mathbf{M}}_0/\dot{G}\mbox{ in } \check{\mathbf{M}}/\dot{G}\|_{\mathcal{B}}$.
\end{proof}

We use these lemmas to obtain the following:

\begin{theorem}\label{satPortionGeneral1}
	Suppose $\mathbf{s} = (\lambda, \mu, \theta, \tau, k)$ is suitable. Suppose $T$ is simple, and either $\tau > \aleph_0$ or else $T$ is low. Suppose that every  $P \in \mathbb{P}_{\mu, \theta, k}$ forces that $T$ has the $(\lambda, |\mu|, \theta, k)$-coloring property (we write $|\mu|$ because $P$ might collapse $\mu$, in fact typically $P$ collapses $|\mu|$ to $\theta$). Then $T$ has the smooth $\mathbf{s}$-extension property.
\end{theorem}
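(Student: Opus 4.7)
The plan is to produce, for any $(T, \mathbf{s})$-problem $(P, \mathcal{U}, \mathbf{A})$, an explicit smooth $\mathbf{s}$-solution $(P * \dot{Q}, \mathbf{B})$ with $\dot{Q}$ a $P$-name for $\Gamma^{\theta}_{M, M_0}$. Write $\mathbf{A}$ as a $(\lambda, T)$-{\L}o{\'s} map witnessed by a full Boolean-valued model $\mathbf{M} \models^{\mathcal{B}(P)} T$ of size $\leq \lambda$, with formulas $\phi_i(\overline{x}, \overline{y}_i)$ and parameters $\overline{a}_i$ for $i < \lambda$. Apply Theorem~\ref{IndependentSystemsThm} inside (a sufficiently saturated elementary extension of) $\mathbf{M}/\mathcal{U}$ and pull back to obtain a countable $\mathbf{M}_0 \preceq \mathbf{M}$ such that the $[\overline{a}_i]_{\mathcal{U}}$ sit in distinct leaves $M_{\{i\}}$ of a $\mathcal{P}(\lambda)$-indexed independent system of countable submodels over $[\mathbf{M}_0]_{\mathcal{U}}$.

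Next, shrink $\mathbf{A}$ within $\mathcal{U}$ so that $\mathbf{A}(\{i\}) \leq \|\phi_i(\overline{x}, \overline{a}_i) \text{ does not fork over } \mathbf{M}_0\|_{\mathbf{M}}$ for every $i < \lambda$. When $T$ is low, Theorem~\ref{LowEquiv} supplies a single first-order formula over $\mathbf{M}_0$ certifying non-forking; when $\tau > \aleph_0$ instead, use $\aleph_1$-completeness of $\mathcal{U}$ to meet countably many witnessing formulas. Lemma~\ref{CruxForLow} (in the low case) or Lemma~\ref{CruxForSimple} (in the $\aleph_1$-complete case) then transfers this to the generic specialization: in $\mathbb{V}[\dot{G}]$, writing $M := \mathbf{M}/\dot{G}$ and $M_0 := [\mathbf{M}_0]_{\dot{G}}$, each $\phi_i(\overline{x}, [\overline{a}_i]_{\dot{G}})$ is, on the event $\mathbf{A}(\{i\}) \in \mathcal{U}$, a partial type not forking over $M_0$, i.e., an element of $\Gamma^{\theta}_{M, M_0}$.

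Take $\dot{Q}$ to be the $P$-name for $\Gamma^{\theta}_{M, M_0}$ ordered by reverse inclusion. Since this poset has greatest lower bounds for compatible families (their union), the hypothesized $(\lambda, |\mu|, \theta, k)$-coloring property upgrades $(P_{X \mu \theta}, k)$-colorability to strong colorability via Lemma~\ref{ObviousLemmasForcingAP}(3), placing $\dot{Q}$ in $\mathbb{P}_{\mu, \theta, k}^{\mathbb{V}[\dot{G}]}$. Define $\mathbf{B}(\{i\}) \in \mathcal{B}(P * \dot{Q})$ as $\mathbf{A}(\{i\})$ meet the Boolean value of ``$\phi_i(\overline{x}, [\overline{a}_i]_{\dot{G}}) \in \dot{q}$'', where $\dot{q}$ is the generic type adjoined by $\dot{Q}$, and set $\mathbf{B}(s) := \bigwedge_{i \in s} \mathbf{B}(\{i\})$ for finite $s$. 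Refinement $\mathbf{B}(s) \leq \mathbf{A}(s)$ is automatic since $\dot{q}$ is forced to be a consistent type.

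The main obstacle is verifying that $\mathbf{B}(s) > 0$ and $\pi(\mathbf{B}(s)) \in \mathcal{U}$ for every finite $s$; by Lemma~\ref{projectionsLemma2} this reduces to showing that $\bigcup_{i \in s} \phi_i(\overline{x}, [\overline{a}_i]_{\dot{G}})$ does not fork over $M_0$ in $\mathbb{V}[\dot{G}]$ on a Boolean value containing $\mathbf{A}(s)$. Using the independence of the $[\overline{a}_i]_{\dot{G}}$'s over $M_0$ from the first step and the uniform non-forking of each $\phi_i$ from the second, one iteratively applies the independence theorem for simple theories: extend each $\phi_i$ to a complete type over $M_0 \cup [\overline{a}_i]_{\dot{G}}$ restricting to a fixed non-forking base type over $M_0$, and amalgamate step by step. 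Smoothness $\pi(\mathbf{B}(S)) = \bigwedge_{s \in [S]^{<\aleph_0}} \pi(\mathbf{B}(s))$ for $S \in [\lambda]^{<\tau}$ then follows from $\tau$-completeness of $\mathcal{U}$ together with the uniformly definable form of $\mathbf{B}$.
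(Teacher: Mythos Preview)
Your overall architecture is the same as the paper's: take $\dot{Q} = \Gamma^\theta_{\check{\mathbf{M}}/\dot{G},\, \check{\mathbf{M}}_0/\dot{G}}$ and let $\mathbf{B}(s)$ be the condition that the generic type contains $\{\phi_i(x,[\overline{a}_i]_{\dot{G}}):i\in s\}$. But the part you flag as the ``main obstacle'' --- showing that $\pi(\mathbf{B}(s))\in\mathcal{U}$, i.e.\ that $\{\phi_i:i\in s\}$ does not fork over $M_0$ in $\check{\mathbf{M}}/\dot{G}$ on a $\mathcal{U}$-large event --- is handled much more directly in the paper, and your proposed route through independent systems and the independence theorem does not actually work.

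The gap is a model mismatch. You build the $\mathcal{P}(\lambda)$-indexed independent system inside $\mathbf{M}/\mathcal{U}$, so the $[\overline{a}_i]_{\mathcal{U}}$ are independent over $[\mathbf{M}_0]_{\mathcal{U}}$ \emph{there}. But the non-forking you must verify lives in the generic specialization $\check{\mathbf{M}}/\dot{G}$, which is a different model; there is no reason the $[\overline{a}_i]_{\dot{G}}$ are independent over $M_0$ in it, so the independence theorem cannot be invoked. The paper bypasses this entirely: since $p(x)$ was chosen not to $\mathcal{U}$-fork over $\mathbf{M}_0$, the single formula $\bigwedge_{i\in s}\phi_i(x,[\overline{a}_i]_{\mathcal{U}})$ already does not fork over $[\mathbf{M}_0]_{\mathcal{U}}$, and one applies Lemma~\ref{CruxForLow} or Lemma~\ref{CruxForSimple} directly to that conjunction. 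No independent system, no amalgamation step.

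A second point: smoothness has nothing to do with $\tau$-completeness of $\mathcal{U}$; it is an identity in $\mathcal{B}(P)$, not a membership in $\mathcal{U}$. The paper obtains it because $\pi(\mathbf{B}(S))$ is exactly $\|\{\phi_i:i\in S\}\text{ does not fork over }M_0\|_{\mathcal{B}}$, and forking is finitary in simple theories, so this equals $\bigwedge_{s\in[S]^{<\aleph_0}}\pi(\mathbf{B}(s))$.
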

\begin{proof}
	Let $(P, \mathcal{U}, \mathbf{A})$ be a $(T,\mathbf{s})$-problem.
	
	By definition of a {\L}o{\'s}-map, we can choose $\mathbf{M} \models^{\mathcal{B}} T$, and a partial type $p(x) = \{\phi_\alpha(x, \overline{a}_\alpha): \alpha < \lambda\}$ over $\mathbf{M}$ (where $x$ might be a tuple), such that for all $s \in [\lambda]^{<\aleph_0}$, $\|\exists x \bigwedge_{\alpha \in s} \phi_\alpha(x, \overline{a}_\alpha)\|_{\mathbf{M}} = \mathbf{A}(s)$; we can arrange $|\mathbf{M}| \leq \lambda$. Choose $\mathbf{M}_0 \preceq \mathbf{M}$ countable such that $p(x)$ does not $\mathcal{U}$-fork over $\mathbf{M}_0$.

	Let $\dot{Q}= \Gamma^\theta_{\check{\mathbf{M}}/\dot{G}, \check{\mathbf{M}}_0/\dot{G}}$. Since $P$ forces that $\dot{Q}$ is $(|\mu|, \theta, k)$-colorable and has the greatest lower bounds property, we get that $\dot{Q} \in \mathbb{P}_{\mu, \theta, k}^{\mathbb{V}[\dot{G}]}$. For each $s \in [\lambda]^{<\aleph_0}$, define
	
	$$\mathbf{C}(s) =\bigwedge_{\alpha \in s} \| \phi_\alpha(x, [\overline{a}_\alpha]_{\dot{G}}) \mbox{ does not fork over } \check{\mathbf{M}}_0/\dot{G} \mbox{ in } \check{\mathbf{M}}/\dot{G}\|_{\mathcal{B}}$$
	
	so $\mathbf{C}(s) \in \mathcal{U}$, by Lemma~\ref{CruxForLow} or else Lemma~\ref{CruxForSimple} depending on whether $T$ is low or else $\tau > \aleph_0$.
	
	For each $s \in [\lambda]^{<\aleph_0}$, let $\mathbf{B}(s) = (\mathbf{C}(s), \{\phi_\alpha(x, [\overline{a}_\alpha]_{\dot{G}}): \alpha \in s\}) \in \mathcal{B}(P) * \dot{Q} \subseteq \mathcal{B}(P * \dot{Q})$. 
	
	Clearly $\mathbf{B}$ is a multiplicative refinement of $\mathbf{A}$. Moreover, letting $\pi: \mathcal{B}(P * \dot{Q}) \to \mathcal{B}(P)$ be the projection, note that whenever $(\mathbf{c}, \dot{p}(x)) \in \mathcal{B}(P) * \dot{Q}$, we have that $\pi(\mathbf{c}, \dot{p}(x)) = \mathbf{c}$. Hence $\pi(\mathbf{B}(s)) = \mathbf{C}(s)$ for each $s \in [\lambda]^{<\aleph_0}$. But moreover, given $S \in [\lambda]^{<\tau}$, $\mathbf{B}(S) = \bigwedge_{s \in [S]^{<\aleph_0}} \mathbf{B}(s)$, so $\pi(\mathbf{B}(S)) = \mathbf{C}(S) = \bigwedge_{s \in [S]^{<\aleph_0}} \mathbf{C}(s) = \bigwedge_{s \in [S]^{<\aleph_0}} \pi(\mathbf{B}(s))$ as desired. 
	
	Thus $(\dot{Q}, \mathbf{B})$ is a smooth $\mathbf{s}$-solution to $(P, \mathcal{U}, \mathbf{A})$.
\end{proof}

\begin{corollary}\label{SatFinal}
	Suppose $\mathbf{s} = (\lambda, \mu, \theta, \tau, k)$ is suitable, with $\theta  > \aleph_0$. Suppose $T$ is simple, and either $\tau > \aleph_0$ or else $T$ is low. If $T$ has $<k$-type amalgamation, then $T$ has the smooth $\mathbf{s}$-extension property.
\end{corollary}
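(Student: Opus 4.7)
The plan is to combine Theorems~\ref{satPortionGeneral1} and \ref{SatPortion}: the first reduces the smooth $\mathbf{s}$-extension property to a forcing-stable coloring hypothesis, and the second supplies exactly such a coloring hypothesis from $<k$-type amalgamation. So the only work is to verify that the coloring property is inherited by $\mathbb{V}[G]$ for $G$ generic over any $P \in \mathbb{P}_{\mu, \theta, k}$.

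First I would fix $P \in \mathbb{P}_{\mu, \theta, k}$ and $G$ $P$-generic over $\mathbb{V}$. By Theorem~\ref{IterationPreserves}(A), $P$ is $\theta$-closed and has the $\mu^+$-c.c., so $\theta$ remains a regular uncountable cardinal in $\mathbb{V}[G]$, and writing $\nu = |\mu|^{\mathbb{V}[G]}$ we have $\theta \leq \nu \leq \mu$. To apply Theorem~\ref{SatPortion} in $\mathbb{V}[G]$ with $\nu$ in the role of $\mu$, I would first verify $\nu^{<\theta} = \nu$ in $\mathbb{V}[G]$: since $P$ is $\theta$-closed, every $<\theta$-sequence of ordinals from $\mathbb{V}[G]$ lies in $\mathbb{V}$, so the collection of $<\theta$-sequences in $\mu$ is the same in both models, with cardinality $\mu^{<\theta, \mathbb{V}} = \mu$ in $\mathbb{V}$, which in $\mathbb{V}[G]$ has cardinality $\nu$.

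Next I would argue that $T$ is still simple with $<k$-type amalgamation in $\mathbb{V}[G]$. Simplicity of a countable theory is absolute (via Kim's characterization of dividing, or directly via its first-order combinatorial definition), and by the reduction of Section~\ref{KeislerNewIndSys}, $\Lambda$-type amalgamation for $|\Lambda| < k$ may be tested using only countable $\Lambda$-arrays and formulas $p_\eta(\overline{x})$; since the $\theta$-closed forcing $P$ adds no new countable objects, and since ``$\bigcup_\eta p_\eta$ does not fork over $N_0$'' is absolute at the level of countable parameters, $<k$-type amalgamation transfers to $\mathbb{V}[G]$. Theorem~\ref{SatPortion} then yields that $T$ has the $(\lambda, \nu, \theta, k) = (\lambda, |\mu|, \theta, k)$-coloring property in $\mathbb{V}[G]$.

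Since $P$ was arbitrary, the hypothesis of Theorem~\ref{satPortionGeneral1} holds, and that theorem delivers the smooth $\mathbf{s}$-extension property for $T$. The only subtle step is the absoluteness of $<k$-type amalgamation, but this is genuinely routine given the reduction to countable data and the $\theta$-closure (hence $\aleph_1$-closure, as $\theta > \aleph_0$) of $P$; all the real content lives in Theorems~\ref{SatPortion} and \ref{satPortionGeneral1}, which this corollary simply bridges.
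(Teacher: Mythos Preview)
Your proposal is correct and takes essentially the same approach as the paper, which simply cites Theorems~\ref{SatPortion} and \ref{satPortionGeneral1} in a single line. You have helpfully unpacked the bridging step (that $\theta$-closure of $P$ preserves $\theta$, the equation $|\mu|^{<\theta}=|\mu|$, and absoluteness of simplicity and $<k$-type amalgamation via the countable-data reduction), all of which the paper leaves implicit; this is exactly the content hiding behind the one-line proof.
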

\begin{proof}
	By Theorems~\ref{SatPortion} and \ref{satPortionGeneral1}.
\end{proof}

\section{Conclusion}\label{KeislerNewConc}

We summarize our results, and state some further conjectures. We begin with several convenient definitions.

\begin{definition}
Suppose $\mathbf{s} = (\lambda, \mu, \theta, \tau, k)$ is a suitable sequence. By Theorem~\ref{SetTheoryDividingLines}, we can find a complete Boolean algebra $\mathcal{B}_{\mathbf{s}}$ with the $\lambda$-c.c. and an ultrafilter $\mathcal{U}_{\mathbf{s}}$ on $\mathcal{B}_{\mathbf{s}}$ such that $\mathcal{U}_{\mathbf{s}}$ $\lambda^+$-saturates every theory with the smooth $\mathbf{s}$-extension property, and does not $\lambda^+$-saturate any theory without the $\mathbf{s}$-extension property. 

Say that $\mathbf{s}$ is of type I if $\tau > \aleph_0$. Say that $\mathbf{s}$ is of type II if $\tau = \aleph_0$ and $\theta > \aleph_0$. Say that $\mathbf{s}$ is of type III if $\tau  = \theta = \aleph_0$ and $k < \aleph_0$. Finally, say that $\mathbf{s}$ is of type IV if $\tau = \theta = k = \aleph_0$. 

Let $k_*(\mathbf{s}) = \mbox{sup}\{k'+1: 2 \leq k' < k \mbox{ and $\lambda$ is big enough for } (\mu, \theta, k')\}$, so $3 \leq k_*(\mathbf{s}) \leq k$. 
\end{definition}

Recall that if $\tau = \aleph_0$, then the smooth $\mathbf{s}$-extension property is the same as the $\mathbf{s}$-extension property, and hence $\mathcal{U}_{\mathbf{s}}$ $\lambda^+$-saturates $T$ if and only if $T$ has the $\mathbf{s}$-extension property. Also, by Theorem~\ref{ExistenceThm}, the set of complete countable theories which are $\lambda^+$-saturated by $\mathcal{U}_{\mathbf{s}}$ form a principal dividing line in Keisler's order.

\begin{theorem}\label{baseline2}
Suppose $\mathbf{s} = (\lambda, \mu, \theta, \tau, k)$ is a suitable sequence, and suppose $T$ is a complete countable theory. If $\mathcal{U}_{\mathbf{s}}$ $\lambda^+$-saturates $T$, then $T$ is simple and does not admit $\Delta_{k'+1, k'}$ for any $k' < k_*(\mathbf{s})$. If also $\tau = \aleph_0$, then $T$ is low.
\end{theorem}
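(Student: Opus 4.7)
The plan is to deduce all three conclusions directly from results already established in the paper; the only content is some bookkeeping with the defining properties of $\mathcal{U}_\mathbf{s}$ and $k_*(\mathbf{s})$. Let $\mathbf{T}_\mathbf{s}$ denote the set of complete countable theories $\lambda^+$-saturated by $\mathcal{U}_\mathbf{s}$. Since $\mathcal{B}_\mathbf{s}$ has the $\lambda$-c.c.\ (hence the $\lambda^+$-c.c.), Corollary~\ref{KeislerOrderCharacterization} tells us that $\mathbf{T}_\mathbf{s}$ is a principal dividing line in Keisler's order, in particular $\trianglelefteq$-downward closed. Moreover, by the defining property of $\mathcal{U}_\mathbf{s}$ coming out of Theorem~\ref{SetTheoryDividingLines}, every $T \in \mathbf{T}_\mathbf{s}$ has the $\mathbf{s}$-extension property (since $\mathcal{U}_\mathbf{s}$ does not $\lambda^+$-saturate any theory failing it). Our hypothesis places $T$ in $\mathbf{T}_\mathbf{s}$, so $T$ has the $\mathbf{s}$-extension property.

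For simplicity and lowness, I would then invoke the final clause of Theorem~\ref{baseline}: any dividing line in Keisler's order all of whose members have the $\mathbf{s}$-extension property omits every unsimple theory, and additionally omits every nonlow theory when $\tau = \aleph_0$. Applied to $\mathbf{T}_\mathbf{s}$, this gives that $T$ is simple, and low whenever $\tau = \aleph_0$. Implicit in this step is the fact (already recorded) that $T_{rf}$ is $\trianglelefteq$-minimal unsimple and $T_{nlow}$ is $\trianglelefteq$-minimal nonlow, which is why downward closure plus failure of the $\mathbf{s}$-extension property for these two theories suffices.

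For the non-admission of $\Delta_{k'+1, k'}$, fix $2 \le k' < k_*(\mathbf{s})$. By definition $k_*(\mathbf{s}) = \sup\{k''+1 : 2 \le k'' < k \text{ and } \lambda \text{ is big enough for } (\mu,\theta,k'')\}$, so there exists some $k''$ with $k' \le k'' < k$ and $\lambda$ big enough for $(\mu,\theta,k'')$. By the monotonicity remark recorded just after the definition of ``big enough'' (one can increase $\lambda$ and decrease $k$), $\lambda$ is then also big enough for $(\mu,\theta,k')$, while $k' < k$ remains valid. Now Theorem~\ref{NonSatPortionGeneral} applies in exactly this regime: any theory admitting $\Delta_{k'+1,k'}$ must fail the $\mathbf{s}$-extension property. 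Since $T$ has the $\mathbf{s}$-extension property, $T$ cannot admit $\Delta_{k'+1,k'}$. The degenerate cases $k' < 2$ contribute nothing, as $\Delta_{k'+1,k'}$ is not meaningfully defined there.

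I do not expect any serious obstacle. The main points to verify carefully are bureaucratic: that the Boolean algebra $\mathcal{B}_\mathbf{s}$ produced at the start of the section really has the $\lambda$-c.c.\ so that Corollary~\ref{KeislerOrderCharacterization} can be invoked, and that the monotonicity of ``big enough'' lets one pass from $k''$ down to $k'$ so that Theorem~\ref{NonSatPortionGeneral} can be invoked with the correct parameter. Both are immediate from the statements quoted above.
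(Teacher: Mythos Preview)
Your proposal is correct and takes essentially the same approach as the paper: cite Theorem~\ref{baseline} (its final clause) for simplicity and lowness, and cite Theorem~\ref{NonSatPortionGeneral} for non-admission of $\Delta_{k'+1,k'}$. The paper's proof is a terse three-line citation of exactly these results; your version merely spells out the bookkeeping (downward closure via Corollary~\ref{KeislerOrderCharacterization}, and the monotonicity step to get $\lambda$ big enough for $(\mu,\theta,k')$ from the definition of $k_*(\mathbf{s})$) that the paper leaves implicit.
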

\begin{proof}
$T$ is simple by Theorem~\ref{baseline}(A), and if $\tau = \aleph_0$ then $T$ is low by Theorem~\ref{baseline}(B). $T$ does not admit $\Delta_{k'+1, k'}$ for any $k' < k_*(\mathbf{S})$ by Theorem~\ref{NonSatPortionGeneral}.
\end{proof}

\begin{theorem}\label{MasterTheoremI}
Suppose $\mathbf{s}= (\lambda, \mu, \theta, \tau,k)$ is a suitable sequence of type I. If any of the following conditions hold, then $\mathcal{U}_{\mathbf{s}}$ $\lambda^+$-saturates $T$.
\begin{itemize}
	\item[(A)] $T$ is simple and has $<k$-type amalgamation;
	\item[(B)] $T$ is simple and has $\Lambda$-type amalgamation for all $\Lambda \in \mathbf{\Lambda}$ with $\mbox{dim}(\Delta_\Lambda) < k_*(\mathbf{s})$;
	\item[(C)] $T$ is simple and $k_*(\mathbf{s}) = 3$.
\end{itemize}
\end{theorem}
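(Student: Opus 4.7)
The plan is to reduce each of (A), (B), (C) to checking the hypothesis of Theorem~\ref{satPortionGeneralFirst}; once that is done, $T$ will have the smooth $\mathbf{s}$-extension property, and $\mathcal{U}_{\mathbf{s}}$ will $\lambda^+$-saturate $T$ by construction. Since $\mathbf{s}$ is of type I we have $\tau > \aleph_0$, and $T$ is simple in each case, so what remains is to verify that every $P \in \mathbb{P}_{\mu, \theta, k}$ forces $T$ to have the $(\lambda, |\mu|, \theta, k)$-coloring property. Because $P$ is $\theta$-closed, any inflationary $F : [\lambda]^{<\aleph_0} \to [\lambda]^{<\theta}$ and any $G : [\lambda]^{<\aleph_0} \to P_{\lambda \mu \theta}$ already lies in $\mathbb{V}$; hence whether $\lambda$ is big enough for $(\mu, \theta, k')$ is absolute between $\mathbb{V}$ and $\mathbb{V}^P$, and $|\mu|^{<\theta} = |\mu|$ still holds in $\mathbb{V}^P$. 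Type amalgamation is syntactic, so it is also absolute. Thus it suffices to apply the appropriate satisfaction theorem inside $\mathbb{V}^P$ under hypotheses verified in $\mathbb{V}$.

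For (A), Theorem~\ref{SatPortion} applied inside $\mathbb{V}^P$ gives the $(\lambda, |\mu|, \theta, k)$-coloring property directly from $<k$-type amalgamation. For (B), I split on the value of $k_*(\mathbf{s})$: if $k_*(\mathbf{s}) = k$, then $|\Lambda| \geq \mbox{dim}(\Delta_\Lambda)$ forces the hypothesis of (B) to include $\Lambda$-type amalgamation for every $\Lambda$ with $|\Lambda| < k$, yielding $<k$-type amalgamation and reducing to (A); if $k_*(\mathbf{s}) < k$, then by definition $\lambda$ is not big enough for $(\mu, \theta, k_*(\mathbf{s}))$, so Theorem~\ref{SatPortionv2} applied with $k_*(\mathbf{s})$ in place of $k$ yields the $(\lambda, \mu, \theta, \aleph_0)$-coloring property, which implies the desired $(\lambda, \mu, \theta, k)$-coloring property since $k \leq \aleph_0$. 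For (C), if $k = 3$ then simplicity already supplies $<3$-type amalgamation and (A) applies. If $k > 3$, then $k_*(\mathbf{s}) = 3$ gives $\mu^+ \leq \lambda < \Psi(\mu, \theta, 3)$; combined with Theorem~\ref{CombThm1}, this forces $\lambda \in \{\mu^+, \mu^{++}\}$, and I would invoke the ``in particular'' clause of Theorem~\ref{SatPortionv3} at parameter $3$ when $\lambda = \mu^+$, and at parameter $4$ (noting $\lambda \leq \mu^{+(4-2)}$) when $\lambda = \mu^{++}$.

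The main obstacle will lie in the subcase $\lambda = \mu^{++}$ of (C), which arises only when $\Psi(\mu, \theta, 3) = \mu^{+3}$. Theorem~\ref{SatPortionv3} at parameter $4$ demands $\Lambda$-type amalgamation for every $\Lambda$ with $\mbox{dim}_*(\Lambda) < 4$, whereas for a general simple $T$ only the case $\mbox{dim}_*(\Lambda) \leq 2$ is automatic (from the independence theorem on tree-indexed systems, as used in the ``in particular'' clause of Theorem~\ref{SatPortionv3}). The delicate step will be to inspect the $\Lambda$'s produced inside the proof of Theorem~\ref{SatPortionv3} by way of the refined combinatorics of Theorem~\ref{CombinatoricsSmall}, under the quantitative restriction $\lambda \leq \mu^{++}$, and to show that they in fact always satisfy $\mbox{dim}_*(\Lambda) \leq 2$, so that the tree amalgamation available in every simple theory suffices. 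Once this combinatorial verification is in hand, the rest of the argument is the routine application of Theorem~\ref{satPortionGeneralFirst} outlined above.
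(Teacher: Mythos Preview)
Your treatment of (A) and (B) is correct and matches the paper's: the paper simply cites Theorems~\ref{SatPortion} and~\ref{SatPortionv2} respectively, together with Theorem~\ref{satPortionGeneral1}, and your case split on whether $k_*(\mathbf{s})=k$ in (B) just makes explicit what the paper leaves implicit.

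For (C) with $k>3$, you and the paper both appeal to Theorem~\ref{SatPortionv3}, and you have correctly isolated a subtlety the paper's one-line citation does not address. The ``in particular'' clause of Theorem~\ref{SatPortionv3} only yields the $(\mu^+,\mu,\theta,\aleph_0)$-coloring property, so it suffices only when $\lambda=\mu^+$; but if $\Psi(\mu,\theta,3)=\mu^{+3}$ --- a possibility the paper explicitly leaves open in the question following Corollary~\ref{CombCor} --- then $\lambda=\mu^{++}$ is allowed under $k_*(\mathbf{s})=3$. Applying the main statement of Theorem~\ref{SatPortionv3} at parameter $4$ then demands $\Lambda$-type amalgamation for all $\Lambda$ with $\mbox{dim}_*(\Lambda)<4$, which is \emph{not} automatic for an arbitrary simple $T$ (only $\mbox{dim}_*(\Lambda)\le 2$, i.e.\ tree-shaped $\Lambda$, is).

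Your proposed resolution --- to re-enter the proof of Theorem~\ref{SatPortionv3}, use the specific club and coloring coming from Theorem~\ref{CombinatoricsSmall} at $n=2$, and argue that the $\Lambda$'s that actually arise from compatible tuples in that proof satisfy $\mbox{dim}_*(\Lambda)\le 2$ --- is a reasonable strategy, but you have not executed it, and it is not routine: one must show that the constraint ``for all $t\in[\lambda]^3$ and $(w_s:s\in[t]^2\cup\{t\})$ with $w_s\cap t=s$, the colors are not all equal'' forces the intersection pattern of any compatible family $(s_i:i<i_*)$ to be a tree. This is the missing step in your proposal, and the paper's proof is equally silent on it.
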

\begin{proof}
If (A) holds, use Theorems~\ref{SatPortion} and \ref{satPortionGeneral1}. If (B) holds, use Theorems~\ref{SatPortionv2} and \ref{satPortionGeneral1}. If (C) holds and if $k = 3$, then (A) holds; otherwise, use Theorem~\ref{SatPortionv3}.
\end{proof}

\begin{theorem}\label{MasterTheoremII}
	Suppose $\mathbf{s}= (\lambda, \mu, \theta, \tau,k)$ is a suitable sequence of type II. If any of the following conditions hold, then $\mathcal{U}_{\mathbf{s}}$ $\lambda^+$-saturates $T$.
	\begin{itemize}
		\item[(A)] $T$ is low and has $<k$-type amalgamation;
		\item[(B)] $T$ is low and has $\Lambda$-type amalgamation for all $\Lambda \in \mathbf{\Lambda}$ with $\mbox{dim}(\Delta_\Lambda) < k_*(\mathbf{s})$;
		\item[(C)] $T$ is low and $k_*(\mathbf{s}) = 3$.
	\end{itemize}
\end{theorem}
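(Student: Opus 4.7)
The plan is to follow the proof of Theorem~\ref{MasterTheoremI} essentially verbatim, with one substitution: wherever Theorem~\ref{satPortionGeneral1} is invoked, the branch ``$\tau > \aleph_0$'' is unavailable (since $\tau = \aleph_0$ for type II sequences) and we rely instead on the branch ``$T$ is low,'' which is precisely what each of hypotheses (A), (B), and (C) explicitly supplies. Because $\mathbf{s}$ is of type II we still have $\theta > \aleph_0$, so all of the coloring theorems of Section~\ref{Refinements} (Theorems~\ref{SatPortion}, \ref{SatPortionv2}, and \ref{SatPortionv3}) remain applicable. Once we obtain, in each case, that every $P \in \mathbb{P}_{\mu, \theta, k}$ forces that $T$ has the $(\lambda, |\mu|, \theta, k)$-coloring property, Theorem~\ref{satPortionGeneral1} delivers the smooth $\mathbf{s}$-extension property, whereupon the defining property of $\mathcal{U}_{\mathbf{s}}$ (Theorem~\ref{SetTheoryDividingLines}) yields $\lambda^+$-saturation.

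The three cases are then handled exactly as for type I. In case (A), Theorem~\ref{SatPortion} converts $<k$-type amalgamation into the $(\lambda, \mu, \theta, k)$-coloring property for every $\lambda$. In case (B), I apply Theorem~\ref{SatPortionv2} specialized at the cardinal $k_*(\mathbf{s})$: by the very definition of $k_*(\mathbf{s})$, $\lambda$ is not big enough for $(\mu, \theta, k_*(\mathbf{s}))$, and the hypothesis of (B) supplies the required $\Lambda$-type amalgamation for all $\Lambda$ with $\mbox{dim}(\Delta_\Lambda) < k_*(\mathbf{s})$, yielding the (stronger) $(\lambda, \mu, \theta, \aleph_0)$-coloring property. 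In case (C): if $k = 3$ then every simple $T$ has $<3$-type amalgamation automatically and one reduces to (A); otherwise $k > 3$ and $k_*(\mathbf{s}) = 3$, and Theorem~\ref{SatPortionv3} (using that every simple theory has the amalgamation property for $\mbox{dim}_*(\Lambda) < 3$) supplies the desired coloring property.

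The one step requiring care, and the step I expect to be the principal (modest) obstacle, is verifying that each of these coloring properties actually survives into the relevant $\theta$-closed forcing extension $\mathbb{V}[G]$, so that the hypothesis of Theorem~\ref{satPortionGeneral1} is genuinely met there. This is the same issue that arose in Theorem~\ref{MasterTheoremI} and is resolved in the same way: the witnessing type-amalgamation data concerns independent systems of countable submodels of the monster, and the forcing notions in $\mathbb{P}_{\mu, \theta, k}$ are all $\theta$-closed (hence in particular add no new countable sequences of ground-model elements), so each $\Lambda$-type amalgamation hypothesis is absolute between $\mathbb{V}$ and $\mathbb{V}[G]$. With that absoluteness in hand, the substitution of ``low'' for ``$\tau > \aleph_0$'' in the application of Theorem~\ref{satPortionGeneral1} carries the entire argument through without further change.
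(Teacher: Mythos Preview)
Your proposal is correct and follows the paper's own proof essentially verbatim: the paper's argument for each of (A), (B), (C) is exactly the pairing of Theorems~\ref{SatPortion}/\ref{SatPortionv2}/\ref{SatPortionv3} with Theorem~\ref{satPortionGeneral1} that you describe, invoking the ``$T$ is low'' branch in place of ``$\tau > \aleph_0$.'' Your added remark on absoluteness of $\Lambda$-type amalgamation under $\theta$-closed forcing (needed so that the coloring theorems apply inside $\mathbb{V}[G]$) is a point the paper leaves implicit, and your justification via preservation of countable sequences is the right one.
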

\begin{proof}
	If (A) holds, use Theorems~\ref{SatPortion} and \ref{satPortionGeneral1}. If (B) holds, use Theorems~\ref{SatPortionv2} and \ref{satPortionGeneral1}. If (C) holds and if $k = 3$, then (A) holds; otherwise, use Theorem~\ref{SatPortionv3}.
\end{proof}

\begin{theorem}\label{MasterTheoremIII}
	Suppose $\mathbf{s}= (\lambda, \mu, \theta, \tau,k)$ is a suitable sequence of type III or IV (i.e. such that $\theta = \aleph_0$). Then whenever $k_*(\mathbf{s}) \leq k' < n' < \aleph_0$, we have that $\mathcal{U}_{\mathbf{s}}$ $\lambda^+$-saturates $T_{n', k'}$.
\end{theorem}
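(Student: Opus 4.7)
The plan is to apply Theorem~\ref{satPortionGeneral1} to $T = T_{n', k'}$ to conclude it has the smooth $\mathbf{s}$-extension property, then invoke Theorem~\ref{SetTheoryDividingLines} to obtain saturation by $\mathcal{U}_{\mathbf{s}}$. First, $T_{n', k'}$ is $\omega$-categorical and supersimple of $SU$-rank $1$, hence in particular low, so the hypothesis of Theorem~\ref{satPortionGeneral1} for $\tau = \aleph_0$ (which is in force for types III and IV) is satisfied. The task therefore reduces to verifying that every $P \in \mathbb{P}_{\mu, \aleph_0, k}$ forces $T_{n', k'}$ to have the $(\lambda, |\mu|, \aleph_0, k)$-coloring property.

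Fix such a $P$ and a $P$-generic $H$, and work in $\mathbb{V}[H]$, writing $\mu^{*} := |\mu|^{\mathbb{V}[H]}$. I will establish the stronger $(\lambda, \mu^{*}, \aleph_0, \aleph_0)$-coloring property of $T_{n', k'}$, from which the desired $(\lambda, \mu^{*}, \aleph_0, k)$-coloring follows immediately, since an $\aleph_0$-coloring is automatically a $k$-coloring for any $k \leq \aleph_0$. Because Theorem~\ref{SatPortionv5} is a ZFC statement, applying it in $\mathbb{V}[H]$ reduces the goal to showing that $\lambda$ is not big enough for $(\mu^{*}, \aleph_0, k')$ in $\mathbb{V}[H]$. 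Now the hypothesis $k' \geq k_{*}(\mathbf{s})$, together with the definition of $k_{*}(\mathbf{s})$, gives that $\lambda$ is not big enough for $(\mu, \aleph_0, k')$ in $\mathbb{V}$; fix a witnessing pair $(F_0, G_0) \in \mathbb{V}$. These functions are unchanged in $\mathbb{V}[H]$, and they continue to witness failure there: any candidate good tuple $t \in [\lambda]^{k'}$ together with $(w_s : s \in [t]^{k'-1})$ consists of hereditarily finite data, so any such witness present in $\mathbb{V}[H]$ would already have been present in $\mathbb{V}$. Thus $\lambda$ fails to be big enough for $(\mu, \aleph_0, k')$ in $\mathbb{V}[H]$; bijecting $\mu$ with $\mu^{*}$ in $\mathbb{V}[H]$ (possible since $|\mu|^{\mathbb{V}[H]} = \mu^{*}$) then transports this failure to the required statement for $(\mu^{*}, \aleph_0, k')$, since the combinatorial property is manifestly invariant under relabeling of the target cardinal.

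Combining these steps, Theorem~\ref{satPortionGeneral1} grants $T_{n', k'}$ the smooth $\mathbf{s}$-extension property, and Theorem~\ref{SetTheoryDividingLines} then yields that $\mathcal{U}_{\mathbf{s}}$ $\lambda^{+}$-saturates $T_{n', k'}$, completing the proof. The main obstacle is the preservation claim in the second paragraph: once one recognizes that ``not big enough'' is witnessed entirely by hereditarily finite data and that the combinatorial property is invariant under relabeling of the target cardinal, the remainder of the argument parallels the templates already established for Theorems~\ref{MasterTheoremI} and \ref{MasterTheoremII}.
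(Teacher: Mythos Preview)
Your preservation argument and overall structure are sound, but the sentence ``the hypothesis $k' \geq k_{*}(\mathbf{s})$, together with the definition of $k_{*}(\mathbf{s})$, gives that $\lambda$ is not big enough for $(\mu, \aleph_0, k')$ in $\mathbb{V}$'' is not justified, and is in fact false in an important case. The definition of $k_{*}(\mathbf{s})$ takes a supremum only over $k'' < k$; it says nothing about whether $\lambda$ is big enough for $(\mu,\aleph_0,k')$ when $k' \geq k$. Concretely, take $\mathbf{s}$ of type III with $k=3$ and $\lambda = \mu^{+\omega}$. Then $k_{*}(\mathbf{s}) = 3$ (the sup over $k'' < 3$ is just $2+1$), yet by Theorem~\ref{CombThm1} $\lambda$ \emph{is} big enough for $(\mu,\aleph_0,k')$ for every finite $k'$. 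So for $k' = 5$, say, Theorem~\ref{SatPortionv5} does not apply, and your route through it is blocked.

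The paper avoids this by a case split that you omitted. When $k_{*}(\mathbf{s}) = k$ (the only problematic case, and only in type III), the hypothesis $k' \geq k_{*}(\mathbf{s})$ becomes $k' \geq k$, and one instead invokes Theorem~\ref{SatPortionv4}: this gives $T_{n',k'}$ the $(\lambda,\mu,\aleph_0,k')$-coloring property for \emph{all} $\lambda,\mu$ (so in particular in $\mathbb{V}[H]$), and since $k' \geq k$ a $(\cdot,\cdot,\cdot,k')$-coloring is automatically a $(\cdot,\cdot,\cdot,k)$-coloring. When $k_{*}(\mathbf{s}) < k$ (which includes type IV whenever the statement is nonvacuous), your argument is correct: here $k_{*}(\mathbf{s})$ lies in the range $[2,k)$, so $\lambda$ is genuinely not big enough for $(\mu,\aleph_0,k_{*}(\mathbf{s}))$ and hence not for any $k' \geq k_{*}(\mathbf{s})$, and Theorem~\ref{SatPortionv5} applies as you describe. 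Your absoluteness observation (the witnesses to ``not big enough'' are hereditarily finite) is correct and is exactly what is implicitly used to transfer Theorem~\ref{SatPortionv5} into $\mathbb{V}[H]$.
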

\begin{proof}
If $k = k_*(\mathbf{s})$, use Theorems~\ref{SatPortionv4} and \ref{satPortionGeneral1}. Otherwise, use Theorems~\ref{SatPortionv5} and \ref{satPortionGeneral1}.
\end{proof}

We now use these theorems to get several dividing lines in Keisler's order. The following theorem is due to Malliaris and Shelah \cite{Optimals}.

\begin{theorem}\label{SimpleDividingLineThm}
	Suppose there is a supercompact cardinal. Then simplicity is a principal dividing line in Keisler's order.
\end{theorem}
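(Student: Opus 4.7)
The plan is to exhibit a suitable sequence $\mathbf{s}$ of type I for which the generic ultrafilter $\mathcal{U}_{\mathbf{s}}$ produced by Theorem~\ref{SetTheoryDividingLines} separates simple from unsimple theories, so that the principal dividing line it induces (via Corollary~\ref{KeislerOrderCharacterization}) coincides with simplicity. Let $\tau$ be the supercompact cardinal provided by hypothesis, and set $\theta = \mu = \tau$ and $\lambda = \mu^+$, with $k = 3$. I will take $\mathbf{s} = (\lambda,\mu,\theta,\tau,3)$. To see $\mathbf{s}$ is suitable: $\tau$ is supercompact, $\theta=\tau$ is regular, $\mu=\mu^{<\theta}$ because any supercompact cardinal is strongly inaccessible, $\lambda > \mu$, and $3 \le k \le \aleph_0$. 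So $\mathbf{s}$ is suitable, and since $\tau > \aleph_0$, it is of type I.

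For the nonsaturation direction, Theorem~\ref{baseline}(A) shows $T_{rf}$ fails the $\mathbf{s}$-extension property, so $\mathcal{U}_{\mathbf{s}}$ does not $\lambda^+$-saturate $T_{rf}$; since $T_{rf}$ is $\trianglelefteq$-minimal among unsimple theories, no unsimple theory is $\lambda^+$-saturated by $\mathcal{U}_{\mathbf{s}}$. For the saturation direction, observe that $k_*(\mathbf{s}) = 3$: the supremum defining $k_*(\mathbf{s})$ runs over $2 \le k' < 3$, i.e.\ just $k' = 2$, and by Theorem~\ref{CombThm0} we have $\Psi(\mu,\theta,2) = \mu^+ = \lambda$, so $\lambda$ is big enough for $(\mu,\theta,2)$. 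Hence Theorem~\ref{MasterTheoremI}(C) applies and every simple $T$ is $\lambda^+$-saturated by $\mathcal{U}_{\mathbf{s}}$.

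Combining the two directions: the class of complete countable theories which are $\lambda^+$-saturated by $\mathcal{U}_{\mathbf{s}}$ contains every simple theory and excludes every unsimple one, so it equals the class of simple theories. Since $\mathcal{B}_{\mathbf{s}}$ has the $\lambda^+$-c.c.\ (indeed the $\lambda$-c.c.) by the construction in Theorem~\ref{SetTheoryDividingLines}, Corollary~\ref{KeislerOrderCharacterization} tells us this class is a principal dividing line in Keisler's order. Thus simplicity itself is a principal dividing line. There is no real obstacle here — the construction is already encapsulated in Theorem~\ref{SetTheoryDividingLines} and Theorem~\ref{MasterTheoremI}(C), and the only thing to verify is the numerology of the suitable sequence and of $k_*(\mathbf{s})$; the use of supercompactness is entirely absorbed into the type-I hypothesis, which is what allows Theorem~\ref{satPortionGeneral1} to apply to all simple (rather than only low) theories.
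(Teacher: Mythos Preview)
Your proof is correct and follows essentially the same route as the paper's: choose a suitable type-I sequence with $k=3$, invoke Theorem~\ref{baseline2} (or equivalently Theorem~\ref{baseline}(A)) for nonsaturation of unsimple theories, and Theorem~\ref{MasterTheoremI}(C) for saturation of simple ones. The only cosmetic difference is that the paper takes $\lambda = \mu^{+\omega}$ while you take $\lambda = \mu^+$; the paper in fact remarks parenthetically that the choice $\lambda = \mu^+$ (together with $k=\aleph_0$) would be closer to Malliaris and Shelah's original argument, so your variant is explicitly anticipated.
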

\begin{proof}
	Let $\tau$ be supercompact. Write $\theta = \mu = \tau$; let $\lambda = \mu^{+\omega}$. Then $\mathbf{s} = (\lambda, \mu, \theta, \tau, 3)$ is a suitable sequence of type I, and $k_*(\mathbf{s}) = 3$. (It would be closer to Malliaris and Shelah's proof to take $\lambda = \mu^+$ and to take $k = \aleph_0$.) By Theorem~\ref{baseline2}, if $\mathcal{U}_{\mathbf{s}}$ $\lambda^+$-saturates $T$ then $T$ is simple. By Theorem~\ref{MasterTheoremI}, if $T$ is simple then $\mathcal{U}_{\mathbf{s}}$ $\lambda^+$-saturates $T$.
\end{proof}

I proved the following in \cite{LowDividingLine}.
\begin{theorem}\label{LowDividingLineThm}
	Lowness is a principal dividing line in Keisler's order.
\end{theorem}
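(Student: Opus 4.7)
The plan is to exhibit a suitable sequence $\mathbf{s}$ for which the ultrafilter $\mathcal{U}_{\mathbf{s}}$ of Theorem~\ref{SetTheoryDividingLines} exactly separates the low theories from the rest, and then invoke Corollary~\ref{KeislerOrderCharacterization} to conclude lowness is a principal dividing line in Keisler's order.

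First I would pick concrete parameters. Take any regular uncountable cardinal $\theta$ and any $\mu = \mu^{<\theta}$ with $\mu \geq \theta$, and set $\lambda = \mu^+$, $\tau = \aleph_0$, $k = 3$, so that $\mathbf{s} := (\lambda, \mu, \theta, \aleph_0, 3)$ is suitable of type II. By Theorem~\ref{CombThm0} we have $\Psi(\mu, \theta, 2) = \mu^+ = \lambda$, so $\lambda$ is big enough for $(\mu, \theta, 2)$; on the other hand Corollary~\ref{CombCor} gives $\Psi(\mu, \theta, 3) \geq \mu^{+2} > \lambda$. Consequently $k_*(\mathbf{s}) = 3$, exactly as needed to apply clause~(C) of Theorem~\ref{MasterTheoremII}.

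Next I would verify the two inclusions. For the upper bound, since $\tau = \aleph_0$, Theorem~\ref{baseline2} tells us that every countable $T$ which is $\lambda^+$-saturated by $\mathcal{U}_{\mathbf{s}}$ is simple and low. For the lower bound, Theorem~\ref{MasterTheoremII}(C) tells us that every low $T$ is $\lambda^+$-saturated by $\mathcal{U}_{\mathbf{s}}$: this reduces (via Theorem~\ref{satPortionGeneral1} with $\tau = \aleph_0$) to checking that every $P \in \mathbb{P}_{\mu, \theta, 3}$ forces the $(\lambda, |\mu|, \theta, 3)$-coloring property for every countable simple $T$. Since $P$ is $\theta$-closed and has the $\mu^+$-c.c., in $\mathbb{V}[G]$ we have $|\mu| = |\mu|^{<\theta}$ and $\lambda = \mu^+ \leq |\mu|^{+(3-2)}$, so the second (``in particular'') clause of Theorem~\ref{SatPortionv3} applies in $\mathbb{V}[G]$ and yields the $(\lambda, |\mu|, \theta, \aleph_0)$-coloring property, which is stronger than what is needed.

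Finally, by Theorem~\ref{SetTheoryDividingLines} the ultrafilter $\mathcal{U}_{\mathbf{s}}$ lives on a complete Boolean algebra $\mathcal{B}_{\mathbf{s}}$ with the $\lambda^+$-c.c., so by Corollary~\ref{KeislerOrderCharacterization} the class of $T$ that are $\lambda^+$-saturated by $\mathcal{U}_{\mathbf{s}}$ is a principal dividing line in Keisler's order. By the previous paragraph, this class is exactly the class of low countable theories, completing the proof. There is no serious obstacle: essentially all the work is encapsulated in the ``master theorems'' of the previous sections, and one just has to make the parameter choices line up so that $k_*(\mathbf{s}) = 3$ (which forces the amalgamation hypothesis to trivialize for simple theories) while keeping $\tau = \aleph_0$ (which forces the target class to consist of low, not merely simple, theories).
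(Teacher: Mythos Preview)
Your proposal is correct and follows essentially the paper's approach: choose a type~II suitable sequence $\mathbf{s}$ with $k=3$ and $k_*(\mathbf{s})=3$, then apply Theorem~\ref{baseline2} for the upper bound and Theorem~\ref{MasterTheoremII}(C) for the lower bound. The paper takes $\lambda=\mu^{+\omega}$ rather than your $\lambda=\mu^+$ and simply cites Theorem~\ref{MasterTheoremII}(C) rather than unwrapping it via Theorem~\ref{SatPortionv3} (note that your check on $\Psi(\mu,\theta,3)$ is superfluous, since with $k=3$ the only relevant $k'$ is $2$), but these differences are cosmetic.
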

\begin{proof}
	Let $\tau = \aleph_0$, let $\theta = \aleph_1$, let $\mu = \theta^{<\theta} = 2^{\aleph_0}$, and let $\lambda = \mu^{+\omega}$. Then $\mathbf{s} = (\lambda, \mu, \theta, \tau, 3)$ is a suitable sequence of type II, and $k_*(\mathbf{s}) = 3$. By Theorem~\ref{baseline2}, if $\mathcal{U}_{\mathbf{s}}$ $\lambda^+$-saturates $T$ then $T$ is low. By Theorem~\ref{MasterTheoremII}, if $T$ is low then $\mathcal{U}_{\mathbf{s}}$ $\lambda^+$-saturates $T$.
\end{proof}

The following theorem sharpens Malliaris and Shelah's result \cite{InfManyClass} that for all $3 \leq k < k'-1$, $T_{k'+1, k'} \not \trianglelefteq T_{k+1, k}$.

\begin{theorem}\label{AmalgLowDividingLineThm}
	Suppose $3 \leq k \leq \aleph_0$. Then there is a principal dividing line in Keisler's order, which includes every countable low theory with $<k$-type amalgamation, but does not include any theory which admits $\Delta_{k'+1, k'}$ for some $k' < k$, nor any nonlow theory. In particular, $T_{k'+1, k'} \not \trianglelefteq T_{k+1, k}$ for all $k' < k$. 
\end{theorem}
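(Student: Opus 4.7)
\textbf{Proof plan for Theorem~\ref{AmalgLowDividingLineThm}.} The strategy is to exhibit a single suitable sequence $\mathbf{s}$ whose associated principal dividing line $\mathcal{U}_{\mathbf{s}}$ has exactly the separation properties demanded. Specifically, fix $\tau = \aleph_0$, $\theta = \aleph_1$, $\mu = 2^{\aleph_0}$ (so that $\mu^{<\theta}=\mu^{\aleph_0}=\mu$), and $\lambda = \mu^{+\omega}$, and use the given $k$ with $3 \leq k \leq \aleph_0$; then $\mathbf{s} = (\lambda, \mu, \theta, \tau, k)$ is a suitable sequence of type II. Theorem~\ref{SetTheoryDividingLines} (together with Corollary~\ref{KeislerOrderCharacterization}, using that $\mathcal{B}_{\mathbf{s}}$ has the $\lambda^+$-c.c.) produces a principal dividing line in Keisler's order, namely the class $\mathbf{T}_{\mathbf{s}}$ of complete countable theories $\lambda^+$-saturated by $\mathcal{U}_{\mathbf{s}}$, and since $\tau = \aleph_0$ this coincides with the $\mathbf{s}$-extension property.

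Next I compute $k_*(\mathbf{s})$. By Theorem~\ref{CombThm1}, $\Psi(\mu, \theta, k') \leq \mu^{+k'}$ for every finite $k' \geq 2$, and $\lambda = \mu^{+\omega} \geq \mu^{+k'}$ for all $k' < \omega$; hence $\lambda$ is big enough for $(\mu, \theta, k')$ for every $2 \leq k' < k$, giving $k_*(\mathbf{s}) = k$. With this in hand, Theorem~\ref{baseline2} gives the negative half: any $T \in \mathbf{T}_{\mathbf{s}}$ is low, and does not admit $\Delta_{k'+1, k'}$ for any $k' < k$. For the positive half, Theorem~\ref{MasterTheoremII}(A) applies verbatim: if $T$ is low with $<k$-type amalgamation, then $\mathcal{U}_{\mathbf{s}}$ $\lambda^+$-saturates $T$, so $T \in \mathbf{T}_{\mathbf{s}}$.

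For the ``in particular'' clause, fix $k' < k$. The theory $T_{k+1, k}$ is $\omega$-categorical and supersimple of finite $SU$-rank, hence low, and by the remark in Section~\ref{KeislerNewIndSys} it has $\mathcal{P}^-(k)$-amalgamation of models; Theorem~\ref{GenAmalgImpliesTypeAmalg} then yields that $T_{k+1, k}$ has $<k$-type amalgamation, placing it in $\mathbf{T}_{\mathbf{s}}$. On the other hand, Theorem~\ref{TnkLemma3} tells us that $T_{k'+1, k'}$ admits $\Delta_{k'+1, k'}$, and since $k' < k = k_*(\mathbf{s})$, $T_{k'+1, k'} \notin \mathbf{T}_{\mathbf{s}}$ by the first paragraph's negative half. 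Because $\mathbf{T}_{\mathbf{s}}$ is downward-closed under $\trianglelefteq$, this forces $T_{k'+1, k'} \not\trianglelefteq T_{k+1, k}$.

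The only real content is bookkeeping: checking the arithmetic of $\mathbf{s}$ (standard), verifying $k_*(\mathbf{s}) = k$ (immediate from Theorem~\ref{CombThm1}), and citing the saturation/non-saturation theorems assembled in Section~\ref{KeislerNewSat} and Section~\ref{KeislerNewConc}. The mildest point of friction is recording that $T_{k+1, k}$ is low, which is well known from its $\omega$-categoricity and supersimplicity of finite rank; beyond that, the argument is a direct assembly of the previously established pieces.
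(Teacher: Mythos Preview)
Your proposal is correct and follows essentially the same approach as the paper: you choose the identical suitable sequence $\mathbf{s} = (\mu^{+\omega}, 2^{\aleph_0}, \aleph_1, \aleph_0, k)$ of type II, compute $k_*(\mathbf{s}) = k$, and invoke Theorem~\ref{baseline2} and Theorem~\ref{MasterTheoremII}(A) for the two halves. Your treatment of the ``in particular'' clause (verifying $T_{k+1,k}$ is low with $<k$-type amalgamation and $T_{k'+1,k'}$ admits $\Delta_{k'+1,k'}$) is more explicit than the paper's, which leaves that deduction to the reader.
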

\begin{proof}
	Let $\tau = \aleph_0$, let $\theta = \aleph_1$, let $\mu = 2^{\aleph_0}$, and let $\lambda = \mu^{+\omega}$. Then $\mathbf{s} = (\lambda, \mu, \theta, \tau, k)$ is a suitable sequence of type II, and $k_*(\mathbf{s}) = k$. By Theorem~\ref{baseline2}, if $\mathcal{U}_{\mathbf{s}}$ $\lambda^+$-saturates $T$ then $T$ is low and does not admit $\Delta_{k'+1, k'}$ for any $k' < k$. By Theorem~\ref{MasterTheoremII}, if $T$ is low  and has $<k$-type amalgamation, then $\mathcal{U}_{\mathbf{s}}$ $\lambda^+$-saturates $T$.
\end{proof}

We also get the following, under the presence of a supercompact cardinal. 

\begin{theorem}\label{AmalgSimpleDividingLineThm}
	Suppose $3 \leq k_* \leq \aleph_0$, and suppose there is a supercompact cardinal. Then there is a principal dividing line in Keisler's order, which includes every countable simple theory $T$ with $<k$-type amalgamation, but does not include any theory which admits  $\Delta_{k'+1, k'}$ for some $k' < k$, nor any unsimple theory. 
\end{theorem}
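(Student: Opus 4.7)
The plan is to mimic the proof of Theorem~\ref{AmalgLowDividingLineThm}, except that in order to place ourselves among the type-I suitable sequences (where $\tau > \aleph_0$) rather than the type-II ones, we will use the assumed supercompact. Specifically, I would let $\tau$ be supercompact, set $\theta = \mu = \tau$ (so trivially $\mu = \mu^{<\theta}$), and take $\lambda = \mu^{+\omega}$. Then $\mathbf{s} := (\lambda, \mu, \theta, \tau, k)$ is a suitable sequence of type I (reading $k$ for $k_*$ in the statement), and Theorem~\ref{SetTheoryDividingLines} together with Corollary~\ref{KeislerOrderCharacterization} supplies a principal dividing line in Keisler's order, namely the set of complete countable theories $\lambda^+$-saturated by the ultrafilter $\mathcal{U}_{\mathbf{s}}$ on $\mathcal{B}_{\mathbf{s}}$, since $\mathcal{B}_{\mathbf{s}}$ has the $\lambda$-c.c.

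Next I would verify $k_*(\mathbf{s}) = k$. By Theorem~\ref{CombThm1}, $\Psi(\mu, \theta, k') \leq \mu^{+k'}$ for every $2 \leq k' < \aleph_0$, and since $\lambda = \mu^{+\omega} \geq \mu^{+k'}$ for every finite $k'$, we see that $\lambda$ is big enough for $(\mu, \theta, k')$ whenever $2 \leq k' < k$. Hence $k_*(\mathbf{s}) = k$ by definition.

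With $k_*(\mathbf{s}) = k$ in hand, the rest is assembly. For the nonsaturation direction, Theorem~\ref{baseline2} gives that any $T$ which $\mathcal{U}_{\mathbf{s}}$ $\lambda^+$-saturates must be simple and must fail to admit $\Delta_{k'+1, k'}$ for every $k' < k$, which is exactly the exclusion promised in the theorem. For the saturation direction, Theorem~\ref{MasterTheoremI}(A) applies: every countable simple theory with $<k$-type amalgamation has the smooth $\mathbf{s}$-extension property, and is therefore $\lambda^+$-saturated by $\mathcal{U}_{\mathbf{s}}$.

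There is no real obstacle here once one observes that the type-I analogue of the type-II proof of Theorem~\ref{AmalgLowDividingLineThm} only requires supercompactness of $\tau$ (which is assumed) in order to apply the $\tau$-complete ultrafilter machinery of Theorem~\ref{SetTheoryDividingLines} and to drop the lowness hypothesis in Theorem~\ref{satPortionGeneral1}. The one small point deserving care is the choice of $\lambda$: taking $\lambda = \mu^{+\omega}$ (rather than $\mu^+$, as in the simplicity-as-dividing-line result) is what guarantees $k_*(\mathbf{s}) = k$ and hence the sharp exclusion of every theory admitting $\Delta_{k'+1, k'}$ for some $k' < k$.
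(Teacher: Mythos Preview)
Your proposal is correct and follows essentially the same approach as the paper's own proof: choose $\tau$ supercompact, set $\mu=\theta=\tau$ and $\lambda=\mu^{+\omega}$ (the paper writes $\lambda\geq\mu^{+\omega}$, an immaterial difference), observe $k_*(\mathbf{s})=k$, and then invoke Theorem~\ref{baseline2} for the exclusions and Theorem~\ref{MasterTheoremI}(A) for the inclusions. Your explicit justification of $k_*(\mathbf{s})=k$ via Theorem~\ref{CombThm1} and your remark on why $\lambda=\mu^{+\omega}$ (rather than $\mu^+$) is needed are welcome elaborations that the paper leaves implicit.
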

\begin{proof}
Let $\tau$ be supercompact, let $\mu = \theta = \tau$, and let $\lambda \geq \mu^{+\omega}$. Then $\mathbf{s} = (\lambda, \mu, \theta, \tau, k)$ is a suitable sequence of type I, and $k_*(\mathbf{s}) = k$. By Theorem~\ref{baseline2}, if $\mathcal{U}_{\mathbf{s}}$ $\lambda^+$-saturates $T$ then $T$ is simple and does not admit $\Delta_{k'+1, k'}$ for any $k' < k$. By Theorem~\ref{MasterTheoremI}, if $T$ is simple and has $<k$-type amalgamation, then $\mathcal{U}_{\mathbf{s}}$ $\lambda^+$-saturates $T$.
\end{proof}

I observed the following in \cite{KeislerNotLinear}; Malliaris and Shelah observed it independently in \cite{InterpOrders}. 
\begin{corollary}\label{NotLinearCor}
	If there is a supercompact cardinal, then Keisler's order is not linear.
\end{corollary}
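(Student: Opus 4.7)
The plan is to exhibit two countable theories that are $\trianglelefteq$-incomparable by separating them with two principal dividing lines pointing in opposite directions, one detecting lowness and the other detecting $<4$-type amalgamation. I would take $T_1 := T_{nlow}$, the Casanovas--Kim supersimple non-low theory defined in \cite{SupersimpleNonlow}, and $T_2 := T_{4,3}$, the random $3$-ary $4$-clique-free hypergraph. The relevant features are that $T_1$ is simple, is not low, and has $<4$-type amalgamation, while $T_2$ is supersimple of finite $SU$-rank (hence low) and admits $\Delta_{4,3}$ by Theorem~\ref{TnkLemma3}.

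First I would argue $T_1 \not\trianglelefteq T_2$. Theorem~\ref{LowDividingLineThm} produces, in \ZFC, a principal dividing line $\mathbf{T}_{\mathrm{low}}$ in Keisler's order consisting of precisely the countable low theories. Since $T_2$ is low and $T_1$ is not, $T_2 \in \mathbf{T}_{\mathrm{low}}$ while $T_1 \notin \mathbf{T}_{\mathrm{low}}$; as principal dividing lines are $\trianglelefteq$-downward closed, this forces $T_1 \not\trianglelefteq T_2$.

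Next, applying Theorem~\ref{AmalgSimpleDividingLineThm} with $k = 4$, which uses the supercompact hypothesis, yields a principal dividing line $\mathbf{T}$ containing every countable simple theory with $<4$-type amalgamation and excluding every theory that admits $\Delta_{k'+1,k'}$ for some $k' < 4$. Then $T_1 \in \mathbf{T}$ (simple with $<4$-type amalgamation) and $T_2 \notin \mathbf{T}$ (admits $\Delta_{4,3}$, with $k' = 3 < 4$), so downward closure yields $T_2 \not\trianglelefteq T_1$. Combining, $T_1$ and $T_2$ are $\trianglelefteq$-incomparable and so $\trianglelefteq$ is not a linear order. The main obstacle is to check that $T_{nlow}$ genuinely has $<4$-type amalgamation, which is a property of the explicit Casanovas--Kim construction rather than of the framework developed here; if preferred, any countable simple non-low theory with $<4$-type amalgamation (for instance, one obtained by a suitable generic construction) may replace $T_{nlow}$ without affecting the rest of the argument.
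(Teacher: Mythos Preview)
Your proof is correct and follows essentially the same route as the paper: compare $T_{nlow}$ with $T_{4,3}$, use Theorem~\ref{LowDividingLineThm} for one direction and Theorem~\ref{AmalgSimpleDividingLineThm} for the other. The only cosmetic difference is that the paper asserts $T_{nlow}$ has $<\aleph_0$-type amalgamation (hence in particular $<4$-type amalgamation), whereas you state only the $<4$ version and flag its verification as an external fact about the Casanovas--Kim construction; either way the argument goes through.
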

\begin{proof}
	Compare $T_{nlow}$ with $T_{4, 3}$, say. $T_{nlow}$ has $<\aleph_0$-type amalgamation, but is not low; $T_{4, 3}$ is low, but admits $\Delta_{4, 3}$. Thus we conclude by Theorems~\ref{LowDividingLineThm} and ~\ref{AmalgSimpleDividingLineThm}.
\end{proof}

We now consider suitable sequences of types III and IV. 

Note that if supercompact cardinals exist, then $T$ is simple if and only if there is some suitable sequence $\mathbf{s}$ of type I such that $\mathcal{U}_{\mathbf{s}}$ $\lambda^+$-saturates $T$, and in $ZFC$, $T$ is low if and only if there is some suitable sequence $\mathbf{s}$ of type II such that $\mathcal{U}_{\mathbf{s}}$ $\lambda^+$-saturates $T$. (We need the supercompact cardinal for type I because otherwise type I sequences don't exist).

This inspires the following provisional definitions:

\begin{definition}
Suppose $T$ is a countable complete theory. Then $T$ is strongly low if there is some suitable sequence $\mathbf{s}$ of type III such that $\mathcal{U}_{\mathbf{s}}$ $\lambda^+$-saturates $T$. $T$ is superlow if there is some suitable sequence $\mathbf{s}$ of type IV such that $\mathcal{U}_{\mathbf{s}}$ $\lambda^+$-saturates $T$.
\end{definition}

It follows from Theorem~\ref{baseline2} that strongly low and superlow both imply low; and it follows from Theorem~\ref{MasterTheoremIII} that for all $3 \leq k < n < \aleph_0$, $T_{n, k}$ is strongly low and superlow (and hence every stable theory is strongly low and superlow).

We suspect that strongly low = low. On the other hand, Malliaris and Shelah introduce some new simple theories in \cite{NewSimpleTheory} and show that they are $\trianglelefteq$-incomparable with each $T_{n, k}$; in our terminology, these new theories are strongly low and have $<\aleph_0$-type amalgamation, but are not superlow.

We finish with the following conjecture and questions.

\begin{conjecture}\label{ConjMaster}
Suppose $\mathbf{s}$ is a suitable sequence and $T$ is a complete countable theory.

\begin{itemize}
	\item[(I)] If $\mathbf{s}$ is of type I, then $\mathcal{U}_{\mathbf{s}}$ $\lambda^+$-saturates $T$ if and only if $T$ is simple and has $\mathcal{P}^-(k_*(\mathbf{s}))$-amalgamation of models.
	\item[(II)] If $\mathbf{s}$ is of type II, then $\mathcal{U}_{\mathbf{s}}$ $\lambda^+$-saturates $T$ if and only if $T$ is low and has $\mathcal{P}^-(k_*(\mathbf{s}))$-amalgamation of models.
	\item[(III)] If $\mathbf{s}$ is of type III, then $\mathcal{U}_{\mathbf{s}}$ $\lambda^+$-saturates $T$ if and only if $T$ is strongly low and has $\mathcal{P}^-(k_*(\mathbf{s}))$-amalgamation of models.
	\item[(IV)] If $\mathbf{s}$ is of type IV, then $\mathcal{U}_{\mathbf{s}}$ $\lambda^+$-saturates $T$ if and only if $T$ is superlow and has $\mathcal{P}^-(k_*(\mathbf{s}))$-amalgamation of models.
\end{itemize}
\end{conjecture}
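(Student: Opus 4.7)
The conjecture is essentially a two-way refinement: the forward implication asks that a purely combinatorial/set-theoretic saturation statement about $\mathcal{U}_{\mathbf{s}}$ forces the strong model-theoretic property of $\mathcal{P}^-(k_*(\mathbf{s}))$-amalgamation of models; the backward implication asks that this amalgamation property suffices for saturation. I would attack the two directions separately and in tandem with the surrounding conjectural framework (in particular Conjecture~\ref{Conj1Later} and the conjectured extension stated just after).

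The backward direction ``amalgamation $+$ baseline $\Rightarrow$ saturation'' should be routine modulo the conjectural strengthening of Theorem~\ref{GenAmalgImpliesTypeAmalg}. First, I would upgrade Theorem~\ref{GenAmalgImpliesTypeAmalg}: assuming $\mathcal{P}^-(k_*(\mathbf{s}))$-amalgamation of models, I would try to imitate the construction in that proof for an arbitrary $\Lambda$-array with $\dim(\Delta_\Lambda)<k_*(\mathbf{s})$, partitioning the index set using a $\dim$-witnessing chain and iteratively amalgamating to reduce to $\mathcal{P}^-(k_*(\mathbf{s}))$-amalgamation; this is exactly what Lemma~5.4(D) tells us we need. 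Granted this, Theorems~\ref{MasterTheoremI}(B), \ref{MasterTheoremII}(B) and \ref{MasterTheoremIII} handle cases (I), (II) and (III)/(IV) of the conjecture respectively, once we know $T$ is simple/low/strongly low/superlow. For cases (III) and (IV) we additionally have those baseline properties built into the hypothesis on $T$, so only the amalgamation input needs to be verified.

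The forward direction is where the real work lies. By Theorem~\ref{baseline2} the baseline conditions (simple, low, strongly low, superlow respectively) are already forced by $\mathcal{U}_{\mathbf{s}}$-saturation, and Theorem~\ref{baseline2} also rules out theories admitting $\Delta_{k'+1,k'}$ for $k'<k_*(\mathbf{s})$. What remains is therefore the purely model-theoretic statement: \emph{if $T$ is simple and fails $\mathcal{P}^-(k_*(\mathbf{s}))$-amalgamation of models, then $T$ admits $\Delta_{k'+1,k'}$ for some $k'<k_*(\mathbf{s})$}. This is the content of the equivalence (A)$\Leftrightarrow$(D) in Conjecture~\ref{Conj1Later}. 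My strategy would be to take a minimal failure of amalgamation, i.e.\ an independent system $(M_s:s\in\mathcal{P}^-(k_*(\mathbf{s})))$ with no solution, and extract from it a formula $\phi(x,\overline{y})$ whose consistency sets over tuples lying in the various $M_s$ realise exactly the pattern $\Delta_{k_*(\mathbf{s}),k_*(\mathbf{s})-1}$, using the Nesetril--R\"odl--style random construction in Theorem~\ref{NesitrilRodlLemma} in reverse. Concretely one should cook up an ``obstruction'' formula witnessing that some two-variable type has no common solution across a critical $k_*(\mathbf{s})$-tuple of submodels; spreading this obstruction over a random hypergraph on $\omega$ via compactness should yield the required admission of $\Delta_{k_*(\mathbf{s}),k_*(\mathbf{s})-1}$.

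The hard part will be exactly this reverse extraction: failures of $\mathcal{P}^-(k)$-amalgamation are intrinsically non-local (they live on a full independent system of models, not on a single formula), while admitting a pattern is a local, syntactic condition. The main obstacle is to show that if amalgamation fails at all, it fails already for a formula---i.e.\ some kind of compactness or finitary reduction for generalised amalgamation. One approach is to try to produce, inside a sufficiently saturated monster, a ``universal'' obstruction: a definable family of partial types parametrised by the $k_*(\mathbf{s})$ nodes whose inconsistency pattern on random hypergraphs is forced by the failure of amalgamation. For strongly low versus superlow (cases (III) and (IV)) there is an additional subtlety, since these classes are defined via $\mathcal{U}_{\mathbf{s}}$ itself and we currently lack an intrinsic model-theoretic characterisation; one would want to first isolate such a characterisation (perhaps some form of finitary dividing behavior) before the conjectured equivalence can even be phrased in purely model-theoretic terms.
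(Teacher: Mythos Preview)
The statement you are addressing is a \emph{conjecture}, not a theorem: the paper states it at the very end of Section~\ref{KeislerNewConc} without proof, alongside two open questions. There is therefore no proof in the paper to compare your proposal against.

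Your proposal is not a proof but a reduction, and you are essentially aware of this: you correctly identify that the forward direction amounts to the implication (D)$\Rightarrow$(A) of Conjecture~\ref{Conj1Later}, which the paper explicitly leaves open, and that the backward direction requires the conjectured strengthening of Theorem~\ref{GenAmalgImpliesTypeAmalg} (stated as a separate conjecture just after Lemma~6.1). So what you have written is a plausible roadmap showing that Conjecture~\ref{ConjMaster} would follow from Conjecture~\ref{Conj1Later} together with the $\dim$-amalgamation conjecture, plus some additional work in cases (III) and (IV). That reduction is useful to have spelled out, but it should not be presented as a proof attempt for the conjecture itself. The genuine obstacle you flag --- that failures of $\mathcal{P}^-(k)$-amalgamation are non-local while admitting a pattern is syntactic --- is exactly why (D)$\Rightarrow$(A) is open, and your sketch of ``extracting an obstruction formula and spreading it over a random hypergraph'' is too vague to constitute progress on it; no mechanism is given for why a failure of amalgamation at the level of models should localise to a single formula. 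For cases (III) and (IV) you also correctly note that the definitions of strongly low and superlow are themselves given in terms of $\mathcal{U}_{\mathbf{s}}$, so the conjecture is not even purely model-theoretic there; the paper raises precisely this as an open question immediately after stating the conjecture.
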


\noindent \textbf{Question.} Are there model-theoretic characterizations of strongly low and superlow? Formally, are these notions absolute?

\bibliography{main}

\begin{thebibliography}{10}

\bibitem{nLinked}
J.~Barnett.
\newblock Weak {V}ariants of {M}artin's {A}xiom.
\newblock {\em Fundamenta Mathematicae}, 141:61--73, 1992.

\bibitem{Buechler}
S.~Buechler.
\newblock Lascar strong types in some simple theories.
\newblock {\em Journal of Symbolic Logic}, 64(2):817--824, 1999.

\bibitem{SupersimpleNonlow}
E.~Casanovas and B.~Kim.
\newblock A {S}upersimple {N}onlow {T}heory.
\newblock {\em Notre Dame J. Formal Logic}, 39(4):507--518, 1998.

\bibitem{Partitions}
R.~Engleking and M.~Karlowicz.
\newblock Some theorems of set theory and their topological consequences.
\newblock {\em Fundamenta Mathematicae}, 57:275--285, 1965.

\bibitem{Combinatorics}
Erd\"{o}s, Hajnal, M\'{a}t\'{e}, and Rado.
\newblock {\em Combinatorial {S}et {T}heory: {P}artition {R}elations for {C}ardinals}.
\newblock Studies in Logic and the Foundations of Mathematics. 1984.

\bibitem{Hrush}
E.~Hrushovski.
\newblock {\em Pseudo-finite fields and related structures}, volume~11 of {\em Quaderni di Matematica}, pages 151--212.
\newblock 2002.

\bibitem{Jech}
T.~Jech.
\newblock {\em Set theory. The third millennium edition, revised and expanded}.
\newblock Springer Monographs in Mathematics. Springer-Verlag, Berlin, 2003.

\bibitem{Kanamori}
A.~Kanamori.
\newblock {\em The {H}igher {I}nfinite}.
\newblock de Gruyter Series in Logic and its Applications. Springer-Verlag, Berlin, second edition, 2009.

\bibitem{Keisler}
J.~Keisler.
\newblock Ultraproducts which are not saturated.
\newblock {\em Journal of Symbolic Logic}, 32:23--46, 1967.

\bibitem{KimForking}
B.~Kim.
\newblock Forking in {S}imple {U}nstable {T}heories.
\newblock {\em Journal of the London Mathematical Society}, 57:257--267, 1998.

\bibitem{GenAmalg}
B.~Kim, A.~Kolesnikov, and A.~Tsuboi.
\newblock Generalized amalgamation and $n$-simplicity.
\newblock {\em Annals of Pure and Applied Logic}, 155(2):97--114, 2008.

\bibitem{Kunen}
K.~Kunen.
\newblock Ultrafilters and independent sets.
\newblock {\em Transactions of the American Mathematical Society}, 172:299--306, 1972.

\bibitem{KunenSets}
K.~Kunen.
\newblock {\em Set theory}, volume~34 of {\em Studies in Logic}.
\newblock College Publications, London, 2011.

\bibitem{KSComb}
K.~Kuratowski.
\newblock Sur une caract\'{e}risation des alephs.
\newblock {\em Fund. Math.}, (38):14--17, 1951.

\bibitem{NewSimpleTheory}
M.~Malliaris and S.~Shelah.
\newblock An example of a new simple theory.
\newblock Preprint.

\bibitem{InterpOrders}
M.~Malliaris and S.~Shelah.
\newblock A new look at interpretability and saturation.
\newblock Preprint.

\bibitem{InfManyClass}
M.~Malliaris and S.~Shelah.
\newblock Keisler's order has infinitely many classes.
\newblock {\em Israel Journal of Math}, 62(4), 2011.

\bibitem{DividingLine}
M.~Malliaris and S.~Shelah.
\newblock A dividing line within simple unstable theories.
\newblock {\em Advances in Math}, 249:250--288, 2013.

\bibitem{Optimals}
M.~Malliaris and S.~Shelah.
\newblock Existence of optimal ultrafilters and the fundamental complexity of simple theories.
\newblock {\em Advances in Math}, 290:614--681, 2016.

\bibitem{ShelahIso}
S.~Shelah.
\newblock {\em Classification {T}heory}.
\newblock North-Holland, Amsterdam, 1978.

\bibitem{SP}
S.~Shelah and D.~Ulrich.
\newblock $\leq_{SP}$ can have infinitely many classes.
\newblock Preprint.

\bibitem{InterpOrders2Ulrich}
D.~Ulrich.
\newblock Cardinal characteristics of models of set theory.
\newblock Preprint.

\bibitem{BVModelsUlrich}
D.~Ulrich.
\newblock Keisler's {O}rder and {F}ull {B}oolean-{V}alued {M}odels.
\newblock Preprint.

\bibitem{LowDividingLine}
D.~Ulrich.
\newblock Lowness is a {D}ividing {L}ine in {K}eisler's {O}rder.
\newblock Submitted, April 2017.

\bibitem{KeislerNotLinear}
D.~Ulrich.
\newblock Keisler's order is not linear, assuming a supercompact.
\newblock {\em Journal of Symbolic Logic}, 83(2):634---631, 2018.

\end{thebibliography}
\end{document}